  \def\title@font{\Large\bfseries}
  \let\ltx@maketitle\@maketitle
  \def\@maketitle{\bgroup%
    \let\ltx@title\@title%
    \def\@title{\resizebox{\textwidth}{!}{%
      \mbox{\title@font\ltx@title}%
    }}%
    \ltx@maketitle%
  \egroup}
\title{A Satake homomorphism for the $\bmod \, p$ derived Hecke algebra}
\author{Niccol\`o Ronchetti}
\date{\today}
\theoremstyle{theorem}
\newtheorem{thm}{Theorem}
\newtheorem*{thm*}{Theorem}
\newtheorem{lem}[thm]{Lemma}
\newtheorem{prop}[thm]{Proposition}
\newtheorem{fact}[thm]{Fact}
\newtheorem{claim}[thm]{Claim}
\newtheorem{cor}[thm]{Corollary}
\newtheorem{conj}[thm]{Conjecture}
\theoremstyle{definition}
\newtheorem{defn}{Definition}
\newtheorem{exmp}{Example}
\theoremstyle{remark}
\newtheorem*{rem}{Remark}
\newcommand{\Hom}{\ensuremath{\mathrm{Hom}}}
\newcommand{\Ext}{\ensuremath{\mathrm{Ext}}}
\newcommand{\Ad}{\ensuremath{\mathrm{Ad}}}
\newcommand{\Gal}{\ensuremath{\mathrm{Gal}}}
\newcommand{\Ind}{\ensuremath{\mathrm{Ind}}}
\newcommand{\cInd}[2]{\ensuremath{\iota^{#1}_{#2}}} 
\newcommand{\supp}{\ensuremath{\mathrm{Supp} \,}}
\newcommand{\val}{\ensuremath{\mathrm{val}}} 
\newcommand{\Stab}{\ensuremath{\mathrm{Stab}}}
\newcommand\floor[1]{\lfloor#1\rfloor}
\newcommand{\Ob}{\ensuremath{\mathrm{Ob}}} 
\newcommand{\pr}{\ensuremath{\mathrm {pr}}}
\newcommand{\id}{\ensuremath{\mathrm{id}}}
\newcommand{\im}{\ensuremath{\mathrm{Im}}}
\newcommand{\lra}{\ensuremath{\longrightarrow}}
\newcommand{\Q}{\ensuremath{\mathbb Q}}
\newcommand{\Qp}{\ensuremath{\mathbb Q_p}}
\newcommand{\Fp}{\ensuremath{\mathbb F_p}}
\newcommand{\FFp}{\ensuremath{\overline { \mathbb F_p}}}
\newcommand{\R}{\ensuremath{\mathbb R}}
\newcommand{\Z}{\ensuremath{\mathbb Z}}
\newcommand{\Zp}{\ensuremath{\mathbb Z_p}}
\newcommand{\C}{\ensuremath{\mathbb C}}
\newcommand{\smat}[4]{\left(\begin{smallmatrix} #1 & #2\\ #3 & #4\end{smallmatrix}\right)}
\newcommand{\PGl}[1]{\ensuremath{\mathrm{PGL}_{#1} }}
\newcommand{\GL}[2]{\ensuremath{\mathrm{GL}_{#1} (#2) }}
\newcommand{\PGL}[2]{\ensuremath{\mathrm{PGL}_{#1} (#2) }}
\newcommand{\res}[2]{\ensuremath{\mathrm{res}^{#1}_{#2} }} 
\newcommand{\cores}[2]{\ensuremath{\mathrm{cores}^{#1}_{#2} }} 
\newcommand{\OO}{\ensuremath{\mathcal O}}
\begin{document}

\maketitle

\begin{abstract}
We explore the structure of the derived spherical Hecke algebra of a $p$-adic group, a graded associative algebra whose degree $0$ subalgebra is the classical spherical Hecke algebra.
Working with $\Z/p^a$ coefficients, we establish a Satake homomorphism relating this graded algebra to the corresponding graded algebra for the torus. We investigate the image of this homomorphism in degree $1$, as well as other properties, such as transitivity with respect to inclusion of Levi subgroups. \end{abstract}

\tableofcontents 

\section{Introduction}
%
Let $\mathrm G$ be a connected, reductive, split group over a $p$-adic field $F$. Let $K$ be a hyperspecial maximal compact subgroup and $I$ an Iwahori subgroup. 
The following result (which holds even for non-split $\mathrm G$) has proven extremely important in the study of complex representation of $p$-adic groups.
\begin{thm*}[Borel \cite{Bo}, Matsumoto \cite{Mat}] Let $G \supset I$ be as above. The category of complex smooth representations of $G$ generated by their $I$-invariant vectors is equivalent to the category of finite-dimensional modules over the Iwahori-Hecke algebra $\mathcal H_{\C} \left( G, I \right)$ of complex-valued, compactly supported, bi-$I$-invariant functions on $G$.
\end{thm*}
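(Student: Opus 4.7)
The plan is to construct explicit quasi-inverse functors between the two categories. I would define $\mathcal{F}: V \mapsto V^I$ from smooth $\mathbb{C}$-representations of $G$ to right $\mathcal{H}_\mathbb{C}(G,I)$-modules, with the Hecke action given by convolution using Haar measure normalized so that $\mathrm{vol}(I) = 1$. In the other direction, set $\mathcal{G}(M) = c\text{-Ind}_I^G(\mathbf{1}) \otimes_{\mathcal{H}_\mathbb{C}(G,I)} M$, using the natural right Hecke action on compact induction. The first step would be to verify the adjunction $\Hom_{\mathcal{H}}(M, V^I) \cong \Hom_G(\mathcal{G}(M), V)$, which follows from the universal property of the tensor product together with the canonical identification $c\text{-Ind}_I^G(\mathbf{1})^I = \mathcal{H}_\mathbb{C}(G,I)$ as a left $\mathcal{H}_\mathbb{C}(G,I)$-module.

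Next I would establish two basic technical inputs: first, that the functor $\mathcal{F}$ is exact, which follows because $I$ is profinite and we work in characteristic zero, so averaging over $I$ realizes $V^I$ as a canonical direct summand of $V$; and second, that when $V$ is generated as a $G$-representation by $V^I$, the counit $\mathcal{G}(V^I) \to V$ is automatically surjective. The bulk of the proof is then to show that this counit is in fact an isomorphism, and that the unit $M \to \mathcal{G}(M)^I$ is an isomorphism for every $\mathcal{H}_\mathbb{C}(G,I)$-module $M$.

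This last step is the main obstacle, and both statements ultimately reduce to a structural result about $I$: one must show that $c\text{-Ind}_I^G(\mathbf{1})$ is a projective generator of the subcategory of smooth representations generated by $I$-invariants, with $G$-endomorphism algebra precisely $\mathcal{H}_\mathbb{C}(G,I)$. Borel's approach deduces this from the Iwahori decomposition $G = \bigsqcup_{w \in \widetilde{W}} IwI$ together with the Iwahori-Matsumoto presentation of the Hecke algebra in terms of the affine Weyl group, combined with Casselman's classification showing that the irreducible smooth representations with $I$-fixed vectors are precisely the irreducible subquotients of unramified principal series. A more conceptual variant, due to Bernstein, identifies $(I, \mathbf{1})$ as a type covering the Bernstein component of unramified principal series, after which the equivalence follows from the general type-theoretic formalism; in either case, the key computational input is the combinatorial structure of the double cosets $I \backslash G / I$.
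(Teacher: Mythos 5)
This statement is quoted in the paper as classical background, attributed to Borel and Matsumoto; the paper gives no proof of it, so there is nothing internal to compare your argument against. Judged on its own, your outline follows the standard adjunction framework ($V \mapsto V^I$ against $M \mapsto \cInd{G}{I} \mathbf 1 \otimes_{\mathcal H_{\C}(G,I)} M$), and the formal steps you do carry out — the adjunction via $\left( \cInd{G}{I} \mathbf 1 \right)^I \cong \mathcal H_{\C}(G,I)$, exactness of $(-)^I$ in characteristic zero, surjectivity of the counit on objects generated by their $I$-invariants — are all correct.

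The genuine gap is that the entire mathematical content of the theorem sits in the step you defer: showing that the counit is injective and the unit is bijective. This is equivalent to showing that $\cInd{G}{I}\mathbf 1$ is a projective generator of the subcategory, and, more concretely, that the subcategory of representations generated by their $I$-invariants is closed under subquotients and that $(-)^I$ is faithful on it (i.e.\ a nonzero subquotient of such a representation again has nonzero $I$-invariants and is generated by them). You acknowledge this is ``the main obstacle'' and then cite Borel's argument via the decomposition $G = \bigsqcup_{w \in \widetilde W} I w I$, the Iwahori--Matsumoto presentation, Casselman's classification, or Bernstein's theory of types — which is to say, you cite the very theorem being proved rather than prove it; none of the combinatorial or representation-theoretic input (e.g.\ why an irreducible representation with $V^I \neq 0$ embeds in an unramified principal series, or why Jacquet functors interact correctly with $I$-invariants) is actually supplied. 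A secondary point: the statement as given concerns \emph{finite-dimensional} $\mathcal H_{\C}(G,I)$-modules, whereas your functors are set up for arbitrary modules; you never address which finiteness condition on the representation side (admissibility/finite length) corresponds to finite-dimensionality, nor why the equivalence restricts to these subcategories.
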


In the past few decades, there has been growing interest in modular representations of $p$-adic groups, for example due to their connections to $\bmod l$ and $\bmod p$ automorphic forms. The $\bmod l$-representation theory of $p$-adic groups bears some similarities to the complex theory, but also displays some interesting differences: it has been studied by Dat (\cite{dat1,dat2} and related papers) Vign\'eras (\cite{vigneras,vigneras3,vigneras4,vigneras5} and related work) and others.

On the other hand, the $\bmod p$-representation theory of $p$-adic groups shows radically different features from the complex theory and is a current and exciting area of research.
A crucial new aspect in the $\bmod p$-theory is that the above theorem by Borel and Matsumoto does not hold anymore (see for instance the discussion in sections 2 and 3 of \cite{vigneras2}). On the other hand, Schneider obtained the following noteworthy result by `deriving' the classical picture.
\begin{thm*}[Schneider, \cite{schneider}] Let $G$ be a connected, reductive $p$-adic group and $I'$ be the pro-$p$ Sylow of an Iwahori subgroup. Suppose that $I'$ has no $p$-torsion. The derived category of smooth $\FFp$-representations of $G$ is equivalent to the derived category of modules over a differential graded pro-$p$ Iwahori-Hecke algebra $\mathcal H^{\bullet}$ constructed from $G$ and $I$.
\end{thm*}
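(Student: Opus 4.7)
The plan is to follow the standard recipe for derived Morita equivalences via DG enhancements, in the spirit of Keller's theorem, but adapted to the category of smooth $\FFp$-representations of $G$. First, I would form the compactly induced representation $X := \mathrm{c\text{-}Ind}^{G}_{I'} \mathbf{1}$, whose ordinary endomorphism ring recovers the pro-$p$ Iwahori-Hecke algebra $\mathcal{H}(G,I')$ via Frobenius reciprocity and the identification of $\Hom_G(X,V)$ with $V^{I'}$. Classically (over $\C$), $X$ is even a projective generator of the subcategory generated by $I'$-fixed vectors, but over $\FFp$ projectivity fails, so one must pass to derived categories.

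Next, I would choose a K-projective resolution $P^{\bullet} \twoheadrightarrow X$ in $\Rep^{sm}_{\FFp} G$ and set
\[
\mathcal{H}^{\bullet} := \End^{\bullet}_G(P^{\bullet}),
\]
a differential graded $\FFp$-algebra whose cohomology equals $\Ext^{\bullet}_G(X,X)$; in particular $H^0(\mathcal{H}^{\bullet}) = \mathcal{H}(G, I')$ is the usual pro-$p$ Iwahori-Hecke algebra. The candidate equivalence is the derived functor
\[
\mathbf{R}\Hom_G(X, -) : D(\Rep^{sm}_{\FFp} G) \lra D(\mathcal{H}^{\bullet}),
\]
with quasi-inverse given by derived tensor with $P^{\bullet}$. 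By Keller's theorem, showing this is an equivalence reduces to two verifications: that $X$ is a \emph{compact} object and that $X$ is a \emph{generator} of the unbounded derived category.

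Compactness amounts to the statement that $\Hom_G(X, -) = (-)^{I'}$ commutes with arbitrary direct sums in the derived sense; this follows because $I'$ is compact open, and its higher cohomology functors $H^i(I', -)$ commute with colimits of smooth representations. Generation is the delicate step: I would need to show that any complex $V^{\bullet}$ with $\mathbf{R}\Hom_G(X, V^{\bullet}) = 0$ is acyclic, or equivalently, that the derived functor of $I'$-invariants is conservative on smooth $G$-representations. The input here is the classical (non-derived) fact that every nonzero smooth $\FFp$-representation of $G$ has a nonzero $I'$-fixed vector.

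The main obstacle is precisely this generation/conservativity claim, and it is here that the hypothesis that $I'$ has no $p$-torsion is essential. By Lazard's theorem, a torsion-free pro-$p$ group of finite rank is a compact $p$-adic analytic group of finite cohomological dimension; consequently the functor $\mathbf{R}(-)^{I'}$ has bounded cohomological amplitude, and a spectral sequence / truncation argument lets one bootstrap the classical fact about $I'$-fixed vectors into the derived statement. Once compactness and generation are in place, Keller's reconstruction theorem provides the equivalence of derived categories and identifies the target with $D(\mathcal{H}^{\bullet})$, concluding the proof.
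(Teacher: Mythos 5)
The paper does not prove this theorem; it is quoted as background and attributed to Schneider \cite{schneider}, so the comparison has to be against Schneider's actual argument. Your outline captures the right high-level scaffolding (derived Morita theory via Keller's theorem, compactness of $\cInd{G}{I'}\mathbf{1}$, generation via the pro-$p$ nature of $I'$, Lazard's theorem supplying finite cohomological dimension when $I'$ is torsion-free), but it contains a genuine gap that would sink the construction as stated.

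The problem is the K-projective resolution. The category of smooth $\FFp$-representations of $G$ does \emph{not} have enough projective objects --- in fact it has essentially none, since for $K'$ a pro-$p$ compact open subgroup the functor $(-)^{K'}$ is not exact in characteristic $p$ and the usual projectivity of $\cInd{G}{K'}\mathbf{1}$ fails. Consequently K-projective resolutions need not exist for objects of this abelian category, and you cannot define $\mathcal{H}^{\bullet}$ as $\End^{\bullet}_G(P^{\bullet})$ for a projective resolution of $X$. Schneider instead works dually: the category of smooth representations is a Grothendieck category and so has enough injectives; he takes an injective resolution $X \lra \mathbf{I}^{\bullet}$ and defines $\mathcal{H}^{\bullet} := \End^{\bullet}_G(\mathbf{I}^{\bullet})^{\mathrm{op}}$. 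The equivalence is then realized by $\Hom^{\bullet}_G(\mathbf{I}^{\bullet}, -)$, i.e.\ by the derived $I'$-invariants, and the opposite-ring convention is necessary to make the target a left module category. This injective-vs-projective switch is not cosmetic: it is exactly what Appendix \ref{dhadiscussion} of the present paper also emphasizes when defining the derived Hecke algebra, and it is the reason the whole construction requires care in characteristic $p$. The remaining ingredients of your sketch --- that $H^0(\mathcal{H}^{\bullet})$ recovers the pro-$p$ Iwahori--Hecke algebra, that generation follows from the nonvanishing of $I'$-invariants for nonzero smooth $\FFp[G]$-modules, and that torsion-freeness of $I'$ gives finite cohomological dimension via Lazard, which in turn controls the amplitude of $\mathbf{R}(-)^{I'}$ --- are all in the right direction and match Schneider's argument once the resolution is taken on the injective side.
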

Further study of this algebra (and of its associated cohomology algebra) has been undertaken by Ollivier and Schneider in \cite{OS}.
Their results suggest that an important role in the $\FFp$-representation theory of $p$-adic groups should be played by `derived' versions of the classical Hecke algebras. \\

In this paper, we switch our attentions from Hecke algebras constructed using the Iwahori subgroup (or its pro-$p$ Sylow) to Hecke algebras constructed using the hyperspecial subgroup $K$. The classical spherical Hecke algebra is the algebra of compactly supported, bi-$K$-invariant functions on $G$ under convolution: this algebra detects \emph{unramified} representations of $G$\footnote{A smooth representation $V$ is unramified if it has some nonzero $K$-invariants: $V^K \neq 0$.}, the most accessible class of representations.

We build a graded algebra $\mathcal H(G,K)$, the \emph{derived spherical Hecke algebra}, whose degree $0$ subalgebra is the classical spherical Hecke algebra. This graded algebra was already introduced by Venkatesh in \cite{akshay}, but we consider $p$-torsion coefficients while loc. cit. takes torsion coefficients where $p$ is invertible - the $p$-torsion situation is considerably more complicated. One should think of $\mathcal H(G,K)$ as `$G$-equivariant cohomology classes on $G/K \times G/K$ (see definition \ref{primadefdha}).

We then investigate the structure of this derived Hecke algebra by constructing a \emph{derived Satake homomorphism} into the derived spherical Hecke algebra of a maximal torus, and we study the properties of this morphism. An important consequence of our results is that the subalgebra of the derived Hecke algebra generated in degree $1$ is graded commutative, and relatively well-understood as it embeds into the derived Hecke algebra of a maximal torus.

A good understanding of the structure of $\mathcal H(G,K)$ is very important because of the following `derived' fact\footnote{see section 2.5 of \cite{akshay} for a more detailed explanation.}: just like for each smooth representation $V$ the $K$-invariants $V^K$ are a module for the classical Hecke algebra, for each complex of smooth representations $M^{\bullet}$ the derived $K$-invariants $H^*(K,M^{\bullet})$ are a graded module for $\mathcal H(G,K)$.

From a local perspective, one can then ask how large the class of `derived unramified' representations - those having nonzero derived $K$-invariants and thus detected by $\mathcal H(G,K)$ - is. It would also be interesting to relate this algebra to Schneider's derived Hecke algebra: the classical versions of the spherical and the Iwahori-Hecke algebras are related by the Satake and the Bernstein homomorphisms - is there an analog of the latter in the derived picture?

When taking a more arithmetic viewpoint, one notices (see section 2.6 of \cite{akshay}) that the singular cohomology of an arithmetic manifold\footnote{Arithmetic manifolds can be thought of as generalizations of modular curves, and more generally Shimura varieties, to all reductive groups $\mathrm G$ over number fields. Their singular cohomology is then a good avatar for a space of automorphic forms on $G$, in the spirit of Eichler -Shimura.} can also be interpreted as a graded module for $\mathcal H(G,K)$, it is then very interesting to study the structure of this graded module.
This is the global perspective taken in \cite{akshay}, which allows Venkatesh to prove impressive and extremely interesting results on the cohomology of these arithmetic spaces. In this situation it is easy to see that one can get a lot of mileage by only using `degree $1$' derived Hecke operators, which explains our focus in theorems \ref{mainthm} and \ref{satakeimagerevisited} on the degree $1$ submodule of the derived Hecke algebra. In current work in progress, we investigate similar questions in a '$l=p$' setup, using only $p$-torsion derived Hecke algebras of $p$-adic groups.

\subsection{Statement of results}
For classical spherical Hecke algebras in characteristic $p$ Herzig (\cite{herzig}) has constructed a Satake homomorphism into the Hecke algebra of a maximal split torus, and has explicitly described the image of this homomorphism. Later on, Henniart and Vign\'eras (\cite{HV}) generalized his constructions to Hecke algebras over a general coefficient ring.

Our first result is an extension of these homomorphisms to the degree $1$ submodule of the derived Hecke algebra.
\begin{thm}\label{mainthm} Let $S = \Z / p^a$. Let $k$ be the residue field of $F$, and assume that $|k| \ge 5$.  Fix a maximal split $F$-torus $\mathrm T$ of $\mathrm G$, and let $\mathrm M$ be a standard Levi subgroup of $\mathrm G$.
There exists a graded-algebra homomorphism \[ \mathcal S^G_M: \mathcal H_G^{ \le 1} \lra \mathcal H_M^{\le 1} \]
whose degree zero component agrees with Herzig's and Henniart and Vigneras' Satake morphisms, in their versions with $S$-coefficients (see section \ref{secHerzig}).

Moreover, if $\mathrm M_1 \supset \mathrm M_2$ is an inclusion of standard Levi subgroups of $\mathrm G$, then we have \[ \mathcal S^G_{M_2} = \mathcal S^{M_1}_{M_2} \circ \mathcal S^G_{M_1}. \]
\end{thm}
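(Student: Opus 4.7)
The plan is to define the map $\mathcal S^G_M$ separately in degrees $0$ and $1$ and then verify that the result is a graded algebra morphism. The degree-$0$ component is prescribed: it must be Herzig--Henniart--Vigneras. So the new content lies in constructing the degree-$1$ piece and in checking compatibility with the convolution product. Using the interpretation of $\mathcal H_G$ as $G$-equivariant cohomology of $G/K \times G/K$ with $S$-coefficients, a degree-$1$ element supported on a double coset $KgK$ is realized as a class in $H^1(K \cap gKg^{-1}, S)$, and an analogous description holds for $M$; this reduces the construction to a recipe for passing between such $H^1$ classes.

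To define $\mathcal S^G_M$ on a degree-$1$ class, I would adapt Herzig's geometric strategy. Fix a parabolic $P = MN$ with Levi $M$, and use the Iwasawa decomposition $G = PK$. For $f \in \mathcal H_G^{\le 1}$ supported on $KgK$, decompose the intersection with $PK$ into finitely many $(K \cap M)$-orbits indexed (as in loc.~cit.) by antidominant cocharacters. For each contributing orbit I would take the restriction of the underlying $H^1$ class along the inclusion $K \cap gKg^{-1} \cap M \hookrightarrow K \cap gKg^{-1}$, push it to $K \cap M$ via an appropriate transfer, and twist by Herzig's mod-$p$ modulus character. Summing these contributions defines $\mathcal S^G_M(f)$. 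The residue field hypothesis $|k| \ge 5$ enters already here to guarantee the vanishing of $H^1$ of the relevant $\mathrm{SL}_2(k)$-type subgroups and of the unipotent radicals modulo $K$, so that the transfer/restriction step is canonical and independent of auxiliary choices.

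The main obstacle is multiplicativity. Degree $0$ is Herzig--Henniart--Vigneras, so one is left with the mixed-degree and pure degree-$1$ products. I would reinterpret convolution as pushforward along the multiplication $G/K \times^K G/K \to G/K$ at the level of equivariant cohomology, and show that the analogous pushforward for $M$ intertwines with my recipe. Concretely this amounts to analyzing how the triple intersections $KgK \cdot KhK$ decompose under Iwasawa, rewriting the fibers as $N$-bundles over $M$-orbits, and verifying that the cohomological pushforward commutes with the $N$-integration at the cochain level. The hypothesis $|k|\ge 5$ is used again to kill the cohomological error terms that arise from the nontrivial Bruhat stratification in these triple intersections.

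Transitivity for $M_1 \supset M_2$ should then follow essentially formally. Writing $P_i = M_i N_i$, one has $N_2 = N_1 \cdot (N_2 \cap M_1)$, so the ``integration along $N_2$'' in $G$ factors as integration along $N_1$ (landing in $M_1$) composed with integration along $N_2 \cap M_1$ (inside $M_1$). The corresponding modulus characters multiply correctly, and the transfer and restriction steps compose by functoriality of group cohomology, so once each $\mathcal S^{\bullet}_{\bullet}$ is known to be a well-defined ring homomorphism, the identity $\mathcal S^G_{M_2} = \mathcal S^{M_1}_{M_2} \circ \mathcal S^G_{M_1}$ follows by comparing the two constructions term by term on each double-coset summand.
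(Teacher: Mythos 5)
Your degree-$1$ recipe is in the right spirit (restrict the class $F(K,mvK)\in H^1(K_{mv},S)$ to a subgroup of the Levi, transfer, and sum over orbits coming from the Iwasawa decomposition), but two details are off: the transfer must land in $\mathrm M(\OO)_m=\mathrm M(\OO)\cap m\mathrm M(\OO)m^{-1}$, the stabilizer of the pair $(M_{\OO},mM_{\OO})$, not in all of $K\cap M$; and there is no modulus-character twist in this trivial-coefficient setting --- inserting one would break the required agreement with Herzig's and Henniart--Vigneras' degree-$0$ map.

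The genuine gap is multiplicativity. Your plan --- reinterpret convolution as a pushforward and check ``at the cochain level'' that it commutes with integration along $N$, with $|k|\ge 5$ ``killing the cohomological error terms'' coming from the Bruhat stratification of triple intersections --- names no mechanism by which such error terms vanish, and this is exactly the delicate point (it is also the reason the theorem is stated only in degrees $\le 1$). The paper does not verify multiplicativity by any such computation. Instead it builds a derived universal principal series: the compactly supported equivariant cohomology $\mathcal H_G(K,P^{\circ})\cong\mathcal H_P(P_{\OO},P^{\circ})$ is an $(\mathcal H_G,\mathcal H_P)$-bimodule; the pullback $\mathcal H_P\to\mathcal H_M$ is an isomorphism of graded algebras; and convolution with the spherical vector, $A\mapsto 1.A$, is an isomorphism of $\mathcal H_P^{\le 1}$ onto $\mathcal H_P^{\le 1}(P_{\OO},P^{\circ})$ --- this last step is where $|k|\ge 5$ actually enters, via the fact that restriction $H^1(\mathrm M(\OO)_m\mathrm V(F),S)\to H^1(\mathrm M(\OO)_m\mathrm V(\OO),S)$ and $H^1(\mathrm M(\OO)_m\mathrm V(F),S)\to H^1(\mathrm M(\OO)_m,S)$ are isomorphisms. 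With these inputs, $\mathcal S^G_M$ is defined implicitly by $F.1=1.\mathcal S(F)$ and multiplicativity becomes a two-line formal computation, the degree restriction being precisely the failure of the rank-one statement in higher degrees. Your proposal contains no substitute for this structural input, so the central assertion that the map is an algebra homomorphism remains unproved. The transitivity sketch, by contrast, matches the paper's idea (factor integration along $N_2$ through $N_1$ and $N_2\cap M_1$) and could plausibly be completed, either by the paper's pullback-square manipulations in groupoid cohomology or by a direct double-coset computation, but only after multiplicativity and well-definedness are in place.
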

The restriction to degree $1$ is of a technical nature: we discuss precisely in the remark within the proof of proposition \ref{modisom} why it arises.
We also make explicit an important case where we can get rid of this restriction.
\begin{thm} \label{satakealldegrees} With assumptions as above, suppose moreover that $p \ge h$ and\footnote{$h$ is the Coxeter number, an invariant depending only on the root datum of $\left( \mathrm G, \mathrm T \right)$.} that either \begin{itemize}
    \item $F = \Qp$ or
    \item $F / \Qp$ is a Galois extension of degree $d=ef$, where $|k| = p^f$, and $2d(h-1) < p^f-1$.
\end{itemize}
Then we can extend the homomorphism in theorem \ref{mainthm} to all degrees, and obtain a graded-algebra homomorphism \[ \mathcal S^G_T: \mathcal H_G \lra \mathcal H_T. \]
\end{thm}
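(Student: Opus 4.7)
The plan is to revisit the proof of Theorem \ref{mainthm} and locate the precise step at which the degree-one restriction is forced. The excerpt tells us that this step lives inside the proof of proposition \ref{modisom}, and the obstruction is cohomological in nature: the construction of $\mathcal S^G_M$ factors through a parabolic $P = MU$ with unipotent radical $U$, and extending beyond degree one requires a workable description of the cohomology $H^*(K \cap U, S)$ with $S = \Z/p^a$ coefficients as a graded $M$-module. In degree one this is controlled by the abelianization of $K \cap U$ modulo $p^a$, which causes no trouble; in higher degrees it is much more subtle unless one can show $K \cap U$ has a genuinely Lie-theoretic cohomology.

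My approach is to leverage Lazard's theory of $p$-adic analytic groups to supply exactly such a description. Under $p \ge h$ the Coxeter number bounds the nilpotency class of $U$ by the height of the highest root, so the Campbell--Hausdorff series makes sense. Combined with either $F = \Qp$ or the quantitative bound $2d(h-1) < p^f - 1$, this is precisely enough to ensure that $K \cap U$ is a uniform pro-$p$ group and that $\log$ gives an $M$-equivariant identification of $K \cap U$ with the $\Zp$-lattice $\mathfrak u(\OO_F)$ sitting inside its Lie algebra. Lazard's theorem then supplies an $M$-equivariant isomorphism of graded $S$-algebras
\[ H^*(K \cap U, S) \;\cong\; \bigwedge\nolimits^{*} \Hom_{\Zp}(\mathfrak u(\OO_F), S), \]
in particular exhibiting $H^*(K \cap U, S)$ as a finitely generated free $S$-module with explicit $M$-module structure on each graded piece.

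With this input in hand, the construction of $\mathcal S^G_M$ from Theorem \ref{mainthm} extends verbatim to all degrees: the Hochschild--Serre spectral sequence used to compare cohomology of $K$ with cohomology of $K \cap M$ and $K \cap U$ now has all relevant $E_2$-terms free over $S$, so it degenerates and the derived parabolic pullback--restrict--pushforward construction produces a graded $S$-algebra homomorphism $\mathcal H_G \to \mathcal H_M$ in every degree. Specializing $M = T$ yields $\mathcal S^G_T : \mathcal H_G \to \mathcal H_T$; agreement with the degree-one homomorphism of Theorem \ref{mainthm} is automatic from the construction, since in degree one the Lazard identification collapses to the abelianization used there.

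I expect the principal obstacle to be the verification of the uniform pro-$p$ structure on $K \cap U$ under the delicate inequality $2d(h-1) < p^f - 1$. One must track valuations of Chevalley structure constants carefully after the Weil restriction from $\OO_F$ to $\Zp$, and verify that the factor $2d$ genuinely accommodates both the bound needed for Campbell--Hausdorff to converge on $\mathfrak u(\OO_F)$ and the requirement that the natural filtration of $K \cap U$ be $p$-powerful from the first step. Once this is secured, the remainder of the argument amounts to systematically replacing ``abelianization'' by ``full exterior algebra'' throughout the proof of Theorem \ref{mainthm}, which is essentially bookkeeping.
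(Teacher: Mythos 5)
There is a genuine gap here, and it sits exactly at the point your proposal treats as routine. The obstruction to leaving degree $1$ is not the lack of a description of $H^*(\mathrm U(\OO),S)$ as a graded module; it is the need (in proposition \ref{modisom}) for the restriction map $H^i(\mathrm T(\OO)_t\,\mathrm U(\OO),S)\lra H^i(\mathrm T(\OO)_t,S)$ to be an \emph{isomorphism in every degree}, equivalently for the Hochschild--Serre spectral sequence of $1\to\mathrm U(\OO)\to\mathrm B(\OO)\to\mathrm T(\OO)\to1$ to have $E_2^{i,j}=H^i\left(\mathrm T(\OO),H^j(\mathrm U(\OO),S)\right)=0$ for all $j\ge 1$. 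Your Lazard-type identification $H^*(\mathrm U(\OO),S)\cong\bigwedge^*\Hom_{\Zp}(\mathfrak u(\OO_F),S)$, even granted with full $\mathrm T$-equivariance, produces manifestly nonzero rows $j\ge 1$, and your inference ``the $E_2$-terms are free over $S$, so the spectral sequence degenerates, so the construction extends'' is both unjustified (freeness of $E_2$ does not force degeneration) and, more importantly, aimed at the wrong conclusion: degeneration with nonvanishing higher rows would make $H^*(\mathrm B(\OO),S)$ strictly larger than $H^*(\mathrm T(\OO),S)$, which is precisely what would \emph{break} the isomorphism $F\mapsto 1.F$ of proposition \ref{modisom} and hence the definition $F.1=1.\mathcal S^G_B(F)$ in higher degrees. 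What is actually needed, and what the paper proves, is the vanishing of all the $\mathrm T(\OO)$-cohomology groups of $H^j(\mathrm U(\OO),\Zp)$ for $j\ge1$: one uses the integral Kostant-type theorem (fact \ref{kostantcomputations}, where $p\ge h$ enters, giving a $\mathrm T(\OO)$-equivariant filtration with rank-one graded pieces of weights $\sum_\sigma w_\sigma\cdot 0$), checks these weights are nonzero, and then the hypothesis $2d(h-1)<p^f-1$ (or $F=\Qp$) guarantees some coroot pairs with the weight to an integer not divisible by $p^f-1$, so the cyclic subgroup $\mu(k^*)\subset\mathrm T(\OO)$ acts through a nontrivial unit and all its cohomology with those coefficients vanishes. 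Your proposal misreads the role of the numerical hypotheses: $2d(h-1)<p^f-1$ is a non-degeneracy condition on weights modulo $q-1$ (note the appearance of $p^f-1$, the order of $k^*$), not a bound ensuring Campbell--Hausdorff convergence or uniformity of $\mathrm U(\OO)$; indeed no uniformity of $\mathrm U(\OO)$ is used or needed in the paper.

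A secondary overreach: you claim the all-degree extension for an arbitrary standard Levi $\mathrm M$ and then specialize to $\mathrm T$. The theorem is stated only for $\mathrm M=\mathrm T$, and the paper explicitly notes that the analogous statement for general $\mathrm M$ would require computing cohomology of subgroups of $\mathrm M(\OO)$ with coefficients in certain algebraic representations, which is currently out of reach; nothing in your sketch supplies that input either.
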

Notice that the target of this homomorphism is the derived spherical Hecke algebra of the torus, not of a general Levi subgroup. Adapting the strategy of the proof of theorem \ref{satakealldegrees} for a general Levi subgroup $\mathrm M$ would require to computate  the cohomology of some subgroups of $\mathrm M(\OO)$ with coefficients in certain algebraic representations. This seems currently out of reach.

We also record a different generalization of theorem \ref{mainthm}, since the proof is immediate from our work and it may be of use for global applications.
\begin{cor} \label{casepinvertible} Let $S$ be a ring where $p$ is invertible, and let $G$ and $M$ be as in theorem \ref{mainthm}. Then we can extend the homomorphism in theorem \ref{mainthm} to all degrees, and obtain a graded-algebra homomorphism \[ \mathcal S^G_M: \mathcal H_G \lra \mathcal H_M. \]
Transitivity for inclusion among standard Levi subgroups also holds.
\end{cor}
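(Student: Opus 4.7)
The plan is to revisit the construction of the Satake homomorphism $\mathcal S^G_M$ underlying Theorem \ref{mainthm} and to identify precisely where the degree $\le 1$ restriction arises. The introduction indicates that this restriction comes from one technical step in the proof of Proposition \ref{modisom}, and the remark placed inside that proof will pinpoint it as a $p$-torsion phenomenon in the cohomology of a certain compact open subgroup. Once $p$ is invertible in the coefficient ring $S$, this obstruction disappears and the entire construction extends verbatim to all degrees, with no arithmetic hypothesis on $F$ needed.

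Concretely, I would first unwind the definition of $\mathcal S^G_M$ used in the proof of Theorem \ref{mainthm} and re-express it, degree by degree, as a composition involving the $G$-equivariant cohomology of certain bi-invariant function spaces on $G/K \times G/K$ and an analogous construction for $M$. Second, I would replay the proof of Proposition \ref{modisom}: the single place where the degree $\le 1$ hypothesis enters is an isomorphism statement between two cohomology groups of compact opens, and the kernel/cokernel obstructing that statement in higher degrees is (as explained in the remark of the proof) controlled by the cohomology of a pro-$p$ normal subgroup. With $p \in S^\times$ these groups vanish in positive degrees by the standard averaging argument, so the comparison map is an isomorphism in every degree and the construction of $\mathcal S^G_M$ extends to all of $\mathcal H_G$.

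Third, I would check that the remaining parts of the proof of Theorem \ref{mainthm} — compatibility with the convolution product on $\mathcal H_G$ and $\mathcal H_M$, agreement in degree zero with the Herzig/Henniart--Vign\'eras Satake (which is readily defined over any coefficient ring where the group-theoretic volumes are invertible), and transitivity $\mathcal S^G_{M_2} = \mathcal S^{M_1}_{M_2} \circ \mathcal S^G_{M_1}$ for an inclusion $M_1 \supset M_2$ — are all established by formal, degree-insensitive manipulations and hence carry through without change.

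The main obstacle, and really the only new content, is the verification that the higher-degree obstruction in Proposition \ref{modisom} is genuinely $p$-primary, so that inverting $p$ kills it. Given the structure of the pro-$p$ radicals of $K$ and $K \cap M$ and the associated Hochschild--Serre spectral sequence that the proof of Proposition \ref{modisom} will set up, this verification should be immediate, which is why the authors record this strengthening only as a corollary.
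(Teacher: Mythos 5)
Your proposal is correct in approach and matches the paper's reasoning. The paper records the proof of this corollary not as a standalone argument but as the conjunction of two remarks: the one after Lemma \ref{dhaisom}, showing that $H^q(\mathrm V(F),S)=0$ for $q\ge 1$ once $p\in S^\times$ (the inductive Hochschild--Serre reduction lands on $H^q(F,S)$, which for $p$-torsion $S$ required Emerton's result but for $p$ invertible is immediate, since $H^q(F,S)$ is an inverse limit of cohomology groups of the pro-$p$ groups $p^k\mathcal O$, all of which vanish by averaging), and the one after Proposition \ref{modisom}, showing by the same mechanism that $H^q(\mathrm V(\mathcal O),S)=0$ so that the comparison map $\mathcal H_P\to\mathcal H_P(P_{\mathcal O},P^\circ)$ is an isomorphism in every degree. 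You correctly identify Proposition \ref{modisom} as the source of the degree restriction and the vanishing of pro-$p$ group cohomology as the reason it disappears, and you correctly observe that transitivity and multiplicativity are degree-insensitive. One small correction of fact: the relevant normal subgroup whose cohomology must vanish is the unipotent radical $\mathrm V(\mathcal O)$ of the parabolic (a compact pro-$p$ group inside $K$), together with the non-compact $\mathrm V(F)$ in Lemma \ref{dhaisom}; these are not the pro-$p$ radicals of $K$ or $K\cap M$. Also note that the check for Lemma \ref{dhaisom} is genuinely needed too — since its proof for $p$-torsion coefficients explicitly invoked Emerton's calculation of $H^*(\mathbb Q_p,S)$, one must observe that the required vanishing still holds when $p\in S^\times$ — but, as you anticipate, both obstructions die for the same reason, so this costs nothing additional.
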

Notice that Venkatesh in \cite{akshay} (section 3) defines already a Satake homomorphism in the case where $S = \Z / l^n \Z$ for a special family of primes $l$ depending on $p^f$, but his approach is quite different and it does not immediately generalize to the case of general $S$.

Our final result gives a first description of the image of the degree $1$ Satake homomorphism $\mathcal S^G_T$, generalizing the explicit description of Herzig's degree $0$ morphism from \cite{herzig}.
\begin{thm} \label{satakeimagerevisited} Let $S$ and $k$ be as in theorems \ref{mainthm} and \ref{satakealldegrees}. Let $X_*(\mathrm T)$ be the cocharacter group of the torus.
With reference to the isomorphism $\mathcal H_T \cong S [ X_*(\mathrm T)] \otimes_S H^*(\mathrm T(\OO), S)$ (see proposition \ref{torusdha} for details) the image of $\mathcal S^G_T: \mathcal H_G^{\le 1} \lra \mathcal H_T^{\le 1}$ is supported on the following thickening of the antidominant cone: \[ \left\{ \lambda \in X_*(T) \, | \, \langle \lambda, \alpha \rangle \cdot f \le a \, \forall \alpha \in \Delta \right\} \] where $\Delta$ is the basis of the root system $\Phi(\mathrm G,\mathrm T)$ corresponding to the Borel subgroup chosen in the definition of the Satake homomorphism $\mathcal S^G_T$. \\
More precisely, if $d = \langle \lambda, \alpha \rangle \cdot f$ for some simple root $\alpha$, then $\mathcal S^G_T(F) (\lambda)$ is divisible by $p^d$ for all $F \in \mathcal H_G^{\le 1}$.
\end{thm}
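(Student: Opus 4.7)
The plan is as follows. The degree $0$ part of the claim is already encoded in Herzig's characterization of the classical Satake image (in the $S$-coefficient version due to Henniart--Vign\'eras): the image in $\mathcal H_T^{\le 0}$ is supported on the strict antidominant cone $\{\lambda \in X_*(\mathrm T) : \langle \lambda, \alpha\rangle \le 0 \text{ for all } \alpha \in \Delta\}$, so $\mathcal S^G_T(F)(\lambda) = 0$ whenever some $\langle \lambda, \alpha\rangle > 0$ and $F \in \mathcal H_G^{\le 0}$. The divisibility assertion is then vacuous in degree $0$, and the new content lies entirely in degree $1$.

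For $F$ a degree-$1$ generator, I would unpack the construction of $\mathcal S^G_T$ developed in the proof of Theorem \ref{mainthm}: an element takes the form $T_{\mu,\omega} = [K \mu(\pi) K, \omega]$ with $\omega$ a cohomology class on $K \cap \mu(\pi) K \mu(\pi)^{-1}$, and its Satake image is produced by decomposing the double coset via the Iwasawa decomposition into $N(F) \lambda(\pi) T(\OO)$-pieces and then corestricting $\omega$ along the chain of inclusions of compact subgroups induced by the Iwahori factorization. The result $\mathcal S^G_T(F)(\lambda) \in H^1(T(\OO), S)$ is thus an alternating sum of corestrictions of restricted classes, so that proving the divisibility reduces to a statement about indices of subgroups in group cohomology.

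The crucial step is the divisibility computation. When $\lambda \in X_*(\mathrm T)$ satisfies $\langle \lambda, \alpha\rangle > 0$ for a simple root $\alpha$, the unipotent root subgroup $U_\alpha(\OO) \subset K$ interacts with $\lambda$ via $\lambda(\pi) U_\alpha(\OO) \lambda(\pi)^{-1} = U_\alpha(\pi^{\langle \lambda, \alpha\rangle} \OO)$, which is an index-$q^{\langle \lambda, \alpha\rangle} = p^{f \langle \lambda, \alpha\rangle} = p^d$ subgroup. After tracking the corestrictions in the definition, $\mathcal S^G_T(F)(\lambda)$ arises as the corestriction along this index-$p^d$ inclusion of a class which, crucially, is already defined on the larger group $U_\alpha(\OO)$ (being restricted from it). Applying the identity $\mathrm{cor} \circ \mathrm{res} = [G:H]$ for index-$[G:H]$ subgroups then extracts the desired factor of $p^d$, establishing the divisibility.

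The support statement is an immediate corollary: when $\langle \lambda, \alpha\rangle \cdot f > a$, the factor $p^d$ annihilates $S = \Z/p^a$, so $\mathcal S^G_T(F)(\lambda) = 0$. The main technical obstacle will be the third step --- specifically, verifying that the class being corestricted is genuinely the restriction of a class on the full $U_\alpha(\OO)$ rather than some twisted or new class on the smaller subgroup. This requires exploiting the Iwahori factorization of $K \cap \mu(\pi) K \mu(\pi)^{-1}$ to ensure that $U_\alpha$ appears as a clean factor, so that $\omega$ pulls back compatibly and the $\mathrm{cor} \circ \mathrm{res}$ identity applies.
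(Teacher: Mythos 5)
Your proposal captures only the easy half of the argument, and the mechanism you single out cannot produce the divisibility for all the terms in the Satake sum. After reducing (via transitivity, a step you omit but which is needed to make the analysis tractable) to the rank-one Levi attached to $\alpha$, the image $\mathcal S^G_T F(\mu)$ is a sum over coset representatives $\varpi^{\mu}u$ with $u \in \mathrm U_{\alpha}(F)/\mathrm U_{\alpha}(\varpi^{-\langle \mu,\alpha\rangle}\OO)$, and each summand is a corestriction from the subgroup $\mathrm B(\OO)\cap \Ad(\varpi^{\mu}u)K$ (equivalently, after restricting to the torus, from $\mathrm T(\OO)\cap K_{\varpi^{\mu}u}$). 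Your mechanism --- corestriction along the index-$p^{d}$ inclusion $\mathrm U_{\alpha}(\varpi^{\langle\mu,\alpha\rangle}\OO)\subset \mathrm U_{\alpha}(\OO)$ applied to a class restricted from the bigger group --- only describes the summands with $u \in \mathrm U_{\alpha}(\varpi^{-\langle\mu,\alpha\rangle}\OO)$, where the stabilizer is exactly $\mathrm T(\OO)\ltimes \mathrm U_{\alpha}(\varpi^{\langle\mu,\alpha\rangle}\OO)$ and the restriction map on $H^1$ is an isomorphism, so corestriction is multiplication by $p^{h}$ (this is Lemma \ref{easycorzero} in the paper). The genuinely hard case is $u=i_{\alpha}(x)$ with $n=-\val(x)>\langle\mu,\alpha\rangle$: there one shows (using surjectivity of $\alpha$ onto $1+\varpi\OO$) that the unipotent projection of $\mathrm B(\OO)_{\varpi^{\mu}u}$ is \emph{all} of $\mathrm U_{\alpha}(\OO)$, so no unipotent index is available at all; instead the torus part of the stabilizer is the congruence subgroup $T_{\alpha,n}=\{t\,|\,\alpha(t)\in 1+\varpi^{n}\OO\}$, and the factor of $p^{h}$ must be extracted from the index $[\mathrm T(\OO):T_{\alpha,n}]$, which is divisible by $p^{f(n-1)}\ge p^{h}$. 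For the identity $\cores{}{}\circ\res{}{}=[\mathrm T(\OO):T_{\alpha,n}]$ to apply, one must verify that the class being corestricted, namely $F(K,\varpi^{\mu}uK)$ conjugated by the element $k=\Ad(\varpi^{\mu})i_{-\alpha}(x^{-1})\in\mathrm U_{-\alpha}(\OO)$ that moves $\varpi^{\mu}u$ into $K\varpi^{\lambda}K$ and then restricted to $T_{\alpha,n}$, is in fact the restriction of a homomorphism on $\mathrm T(\OO)$; this is an explicit $\mathrm{SL}_2$ matrix computation using that $\Ad(k^{-1})\alpha^{\vee}(z)$ differs from $\alpha^{\vee}(z)$ by an element of $\mathrm U_{-\alpha}(\varpi^{-\langle\lambda,\alpha\rangle}\OO)$ when $z^{2}\in 1+\varpi^{n}\OO$, together with the fact that $\alpha^{\vee}(\OO^{*})\cdot(\ker\alpha)(\OO)$ has prime-to-$p$ index in $\mathrm T(\OO)$. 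None of this is addressed by your appeal to an Iwahori factorization of $K\cap\mu(\varpi)K\mu(\varpi)^{-1}$; the stabilizers that matter are those of the translated cosets $\varpi^{\mu}uK$, and for large $n$ they are graph-like subgroups that do not contain $\mathrm U_{\alpha}$ as a clean factor.

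Two smaller points. First, your dismissal of degree $0$ is inaccurate for $a>1$: with $S=\Z/p^{a}$ the classical image is \emph{not} supported on the antidominant cone, and the degree-$0$ divisibility is precisely the content of the $S$-coefficient Satake bound (Corollary \ref{degzerogeneral}, following Herzig and Henniart--Vign\'eras), not a vacuous statement. Second, without the transitivity reduction to the rank-one Levi $Z_{\mathrm G}(\ker\alpha)^{0}$, your sum runs over the full unipotent radical $\mathrm U(F)/\mathrm U(\OO)$ and the stabilizer analysis involves all root subgroups simultaneously, which is substantially harder than the single-root analysis sketched above; the paper's proof is genuinely a rank-one argument bootstrapped through $\mathcal S^G_T=\mathcal S^M_T\circ\mathcal S^G_M$.
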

The description of a `lower bound' for the image $\mathcal S^G_T \left( \mathcal H^{\le 1}_G \right)$ - that is to say, a $\mathcal H^0_G$-submodule of $\mathcal S^G_T \left( \mathcal H^{\le 1}_G \right)$ - has been obtained in the $\PGl{2}$ case (see also example \ref{PGL2example} below), and will be object of future work for general $\mathrm G$. Roughly speaking, we expect this lower bound to be a `translate' of the upper bound obtained in the theorem by a generic and `very antidominant' cocharacter.

We describe explicitly our derived Satake homomorphism in a simple case.
\begin{exmp} \label{PGL2example} Let $\mathrm G = \PGl{2}$ over $\Qp$, where we suppose $p \ge 3$. Let $\mathrm T$ be the diagonal torus, and $\mathrm B$ be the Borel subgroup of upper unipotent matrices, so that the only positive root $\alpha$ has root space the unipotent upper triangular matrices.
Let $K = \PGL{2}{\Zp}$ be a fixed maximal compact: notice that by proposition 3.10 in \cite{conrad2}, we have $\PGL{2}{\Zp} = \GL{2}{\Zp} / \Zp^*$, and the matrix $\smat{a}{b}{c}{d} \in \PGL{2}{\Zp}$ denotes in fact its scaling class.
Let $S = \Z / p \Z$ be our ring of coefficients. We will write $\lambda$ for the function $\PGL{2}{\Zp} \lra \Z / p \Z$ \[ \lambda: \smat{a}{b}{c}{d} \lra \log(ad^{-1}) \bmod p, \]
where $\log ( \cdot ) \bmod p$ is the composition $\Zp^* \twoheadrightarrow 1 + p \Zp \stackrel{\log}{\lra}\Zp \twoheadrightarrow \Zp / p \Zp \cong \Z / p \Z$.

Let $T_n \in \mathcal H_G^0$ be the indicator function of $K\smat{p^{-n}}{0}{0}{1}K$ and $\tau_n \in \mathcal H_T^0$ be the indicator function of $\smat{p^{-n}}{0}{0}{1} \mathrm T(\OO)$. On these bases, the degree $0$ Satake homomorphism is \[ \mathcal S(T_n) = \left\{ \begin{array}{ccc} \tau_n & \textnormal{ if } & n =0,1 \\ \tau_n - \tau_{n-2} & \textnormal{ if } & n \ge 2 \end{array}. \right. \]
For each $n \ge 0$, let $K_n = \left\{ \smat{a}{b}{c}{d} \in \PGL{2}{\Zp} \, | \, c \in p^n \Zp \right\}$ be the common stabilizer of the identity coset $K$ and $\smat{p^{-n}}{0}{0}{1} K$. 
Define elements $f_n \in \mathcal H_G^1$ (for $n \ge 2$) and $c_n \in \mathcal H_T^1$ (for $n \in \Z$) so that 
 \[ f_n: (K,  \smat{\varpi^{-n}}{0}{0}{1}  K) \mapsto \mbox{ class of $\lambda$ in $H^1(K_n, \Z/p\Z)$}, \]
\[ c_n: (\mathrm T(\OO),  \smat{\varpi^{-n}}{0}{0}{1} \mathrm T(\OO)) \mapsto \mbox{ class of $\lambda$ in $H^1(\mathrm T(\OO),  \Z/p\Z)$}, \] and $f_n$, $c_n$ vanish off the $G$- and $T$-orbits of the left-hand elements.
It is readily verified that the restriction of $\lambda$ to $K_n$ (for $n \ge 2$) and a fortiori to $T(\OO)$ is indeed a homomorphism, and that the $f_n$ and $c_n$ give bases for the $S$-modules $\mathcal H_G^1$ and $\mathcal H_T^1$. \\
With these notations, the degree $1$ Satake homomorphism is given by \[ \mathcal S^G_T (f_n) = c_n - c_{n-2} \qquad \forall n \ge 2. \]
Given the multiplication rule on $\mathcal H_G$: \[ T_n \circ f_m =  \left\{ \begin{array}{ccc} f_{m+n} & \textnormal{ if } & n =0,1 \\ f_{m+n} - f_{m+n-2} & \textnormal{ if } & n \ge 2 \\ \end{array} \right. \] it is immediate to check that this is an extension of the degree $0$ Satake homomorphism to a map of algebras $\mathcal H_G^{\le 1} \lra \mathcal H_T^{\le 1}$.

We notice that in this situation, both Herzig's and Henniart and Vigneras' degree $0$ Satake homomorphism and our degree $1$ extension have imaged supported on the antidominant cone, just as theorem \ref{satakeimagerevisited} predicts. Indeed, since $\alpha \left( \smat{t}{0}{0}{s} \right) = ts^{-1}$, the antidominant cone consists of characters $t \mapsto \smat{t^{-n}}{0}{0}{1}$ having $n \ge 0$ so that they pair non-positively with $\alpha$.
%
%
\end{exmp}
\begin{rem} A simple generalization of our results holds for more general coefficient rings of the form $S = \OO_E / \varpi_E^m$, where $E / \Qp$ is a degree $n$ extension. In particular, the proof of theorem \ref{satakeimagerevisited} goes through in the same way to show that if $d = \langle \lambda, \alpha \rangle \cdot f >0$ for some simple root $\alpha$, then $\mathcal S^G_T F (\lambda)$ is divisible by $p^d$ for all $F \in \mathcal H_G^1$.
Let now $k = \floor{\frac{m}{n}}$, so that $kn \le m < (k+1)n$, then $\OO_E \cong \Zp^n$ as a free $\Zp$-module and for an appropriate choice of basis we have $\varpi_E^m \OO_E \cong \left( p^k \Zp \right)^{n-(m-kn)} \oplus \left( p^{k+1} \Zp \right)^{m - kn}$ so that \[ S = \OO_E / \varpi^m \OO_E \cong \left( \Z / p^k \right)^{n- (m-kn)} \oplus \left( \Z / p^{k+1} \right)^{m-kn}. \]
In particular, as soon as $d \ge k+1 = \floor{\frac{m}{n}} +1$ we obtain that $\mathcal S^G_T F (\lambda) = 0$ for all $F \in \mathcal H_G^1$.
\end{rem}

\subsection{Outline of the paper}
We now explain the structure of the paper.

In section \ref{secnotation} we set up some notation that we will use in the course of the paper.

Section \ref{secHerzig} recalls the results of Herzig in \cite{herzig} and of Henniart and Vigneras in \cite{HV} where they study the classical Hecke algebras with coefficients in, respectively, an algebraically closed field of characteristic $p$ and any coefficient ring. We extend Herzig's results to any coefficient ring of the form $S = \Z / p^a \Z$, the proofs go through almost verbatim and we get a Satake homomorphism in degree $0$. This has been proven already by Henniart and Vigneras (see section 7.13 of \cite{HV}), but we describe an explicit formula mirroring our bound from theorem \ref{satakeimagerevisited} in the degree $1$ case.

In section \ref{secgroupoids} we set up some machinery related to groupoids, which will greatly clarify and simplify some subsequent proofs. In particular, we are able to re-interpret the derived Hecke algebra of $G$ as compactly supported cohomology of the groupoid where $G$ acts diagonally on $G/K \times G/K$ - see propositions \ref{convgroupoid} and \ref{convassociative}.

Section \ref{secSatakegroupoids} is devoted to defining the Satake map via a push-pull diagram in groupoid cohomology. Using this setup, we also prove transitivity of the Satake map for inclusion among Levi subgroups: that is to say, if $\mathrm M$ is a standard Levi subgroup with $\mathrm G \supset \mathrm M \supset \mathrm T$, then $\mathcal S^G_T = \mathcal S^M_T \circ \mathcal S^G_M$.

We give in section \ref{secSatakeUPS} another description of the Satake homomorphism by using the Universal Principal Series as a bi-module for two derived Hecke algebras. We also complete the proofs of theorems \ref{mainthm} and \ref{satakealldegrees}.

Section \ref{secSatakeimage} is dedicated to proving theorem \ref{satakeimagerevisited} by studying the image of the Satake homomorphism $\mathcal S^G_T : \mathcal H_G^1 \lra \mathcal H_T^1$. To study the support of the image, we use the transitivity result of section \ref{secSatakegroupoids} to put us in the setting where $G$ has semisimple rank $1$, where we compute the image as explicitly as needed.

In appendix \ref{appendice} we collect the proofs of some results about groupoid cohomology, which we postponed until here to improve the flow of the reading.

Appendix \ref{dhadiscussion} describes the relationship between our definition of the derived Hecke algebra and a categorical definition, both in characteristic $p$ and different from $p$.

\subsection{Acknowledgments}
I am deeply indebted to my advisor Akshay Venkatesh, for introducing me to the concept of derived Hecke algebra and for his continuous and enthusiastic guidance. I also want to thank Daniel Bump, for reading a preliminary version of this manuscript and suggesting improvements and new directions. I am extremely grateful to Florian Herzig for his many suggestions and for explaining to me a few arguments that improved the results of this paper. Finally, I want to thank Karol Koziol, Rachel Ollivier, Peter Schneider, Claus Sorensen and Marie-France Vign\'eras for some very interesting conversations and comments on this paper.

\section{Notation} \label{secnotation}
Let $F$ be a $p$-adic field, with ring of integers $\OO$, residue field $k$ of cardinality $q=p^f$ and fix a uniformizer $\varpi \in \OO$. We assume throughout the paper that $|k| \ge 5$. The valuation $\val_F$ is normalized to be $1$ on the uniformizer $\varpi$.

Let $\mathrm G$ be a reductive, split group scheme\footnote{We follow \cite{conrad1} for group schemes over rings. In particular, $\mathrm G$ is fiberwise connected by definition of reductivity.} over $\OO$. Thus, $\mathrm G_F$ is a reductive, connected, split group over $F$, and we denote $G = \mathrm G_F(F)$ its group of $F$-points. We also fix $K = \mathrm G(\OO)$, a hyperspecial maximal compact. 

Thanks to the splitness condition we can fix a maximal split torus $\OO$-scheme $\mathrm T$, and we have $\mathrm T(\OO)$ being the maximal compact subgroup of $\mathrm T_F(F)$. We also fix a Borel $\OO$-subgroup scheme $\mathrm B$ of $\mathrm G$, with Levi decomposition $\mathrm B = \mathrm T \ltimes \mathrm U$ defined over $\OO$, so that $\mathrm U$ is a unipotent $\OO$-group scheme. We assume throuhgout the paper that $K$ and $T$ are in good position, so that $K  \cap  T = \mathrm T(\OO)$ is the maximal compact subgroup of $T$.

We will often suppress the base change notation and just denote the subgroups above by $B = \mathrm B_F(F)$ and similarly for the torus and the unipotent subgroups. Other short-hand notations that we will use are $H_F$ or $H$ for $\mathrm H_F(F)$, and $H_{\OO}$ for $\mathrm H(\OO)$ where $\mathrm H$ is one of $\mathrm G, \mathrm B, \mathrm T, \ldots$.

The character lattice $X^*(\mathrm T)$ is dual to the cocharacter lattice $X_*(\mathrm T)$, and the latter is identified with $\mathrm T(F) / \mathrm T(\OO)$ (once we fixed the uniformizer $\varpi$) via \[ X_*(\mathrm T) \lra \mathrm T(F) / \mathrm T(\OO) \qquad \mu \mapsto \mu(\varpi). \]
Upon choosing the Borel $\mathrm B$, we obtain a set of positive roots $\Phi^+(\mathrm G,\mathrm T) \subset \Phi(\mathrm G,\mathrm T)$ inside the root system of $(\mathrm G,\mathrm T)$, and a unique basis $\Delta \subset \Phi^+(\mathrm G, \mathrm T)$.
We define the antidominant coweights \[ X_*(\mathrm T)_- = \left\{ \mu \in X_*(\mathrm T) \, | \, \langle \mu, \alpha \rangle \le 0 \quad \forall \alpha \in \Phi^+(\mathrm G,\mathrm T) \right\}; \] geometrically these form a cone inside the lattice $X_*(\mathrm T)$. The identification of the cocharacter lattice with $\mathrm T(F) / \mathrm T(\OO)$ carries the antidominant cone into $T^- / \mathrm T(\OO)$, where \[ T^- =\left\{ t \in \mathrm T(F) \, | \, \val_F \left( \alpha(t) \right) \le 0 \quad \forall \alpha \in \Phi^+(\mathrm G,\mathrm T) \right\}. \]

Given two $p$-adic groups $H \supset L$ and a smooth representation $V$ of $L$, we denote by $\cInd{H}{L} V$ the compactly induced representation from $L$ to $H$. This consists of the smooth part of the space of functions $f: H \lra V$ such that $f(lh) = l.f(h)$ for each $l \in L$, $h \in H$, with $H$ acting by right translation.

We set up our notation for the derived Hecke algebras (see also definition \ref{primadefdha}): whenever $\mathrm H$ is a reductive $\OO$-group scheme, so that we have the $p$-adic group $\mathrm H_F(F)$ and a maximal compact $\mathrm H(\OO)$, we denote by $\mathcal H_H = \mathcal H \left( \mathrm H(F), \mathrm H(\OO) \right)$ the derived Hecke algebra of $\mathrm H(F)$. More generally, we denote by $\mathcal H_H^k$ (resp. $\mathcal H_H^{\le k}$) the $k$-th graded submodule of the DHA (resp. the submodule supported on degrees at most $k$ with the induced subalgebra structure).
%

For each standard parabolic $\mathrm P = \mathrm M \ltimes \mathrm V$ - where $\mathrm M$ is the standard Levi factor containing the maximal torus $\mathrm T$ - we consider the subgroup $P^{\circ} = \mathrm M(\OO) \ltimes \mathrm V(F)$. We denote by $\mathcal H_P$ the algebra of $\mathrm P(F)$-invariant cohomology classes on $\mathrm P(F) / P^{\circ} \times \mathrm P(F) / P^{\circ}$ supported on finitely many orbits - with convolution as multiplication.
\begin{rem}
Notice that there is no possible ambiguity with the previous notation for the derived Hecke algebra, since we only defined DHA for a reductive group, hence whenever the subscript $P$ is a parabolic we mean indeed the algebra defined in the previous paragraph.
\end{rem}
We will often consider stabilizers and isotropy groups. If $G$ is a groupoid and $x \in \Ob(G)$, we use $G_x$ as a shorthand for $\Stab_G(x) = \Hom_G (x,x)$. In case we consider common stabilizers we use multiple subscripts, for instance if $x, y \in G/K$ then $G_{x,y} = \Stab_G(xK, yK) = \Ad(x) K \cap \Ad(y) K$. In case one of the cosets is $K$, we write $K_y = G_{1,y} = K \cap \Ad(y) K$.

We will often define sums over orbits, in the following sense: let $G$ be a group acting on a set $X$, the notation \[ \sum_{G \backslash \backslash X} \] means that we sum over a choice of representatives for the $G$-orbits in $X$. Each time we do that, we will check that the sum does not depend on the chosen representatives, and it has finitely many nonzero summands.
\section{Satake homomorphism in characteristic $p$} \label{secHerzig}
In this section we recall Herzig's construction of a Satake homomorphism with $\overline k = \FFp$-coefficients from \cite{herzig} and \cite{herzig2} and generalize it (in the case of the trivial representation) to the case of $p$-torsion coefficients $\Z / p^a \Z$. This generalization (see corollary \ref{degzerogeneral} below) is quite simple, and its proof basically consists of rephrasing Herzig's arguments in the special case of the trivial representation and keeping track of the powers of $p$ appearing as coefficients.

We remark that Henniart and Vigneras in \cite{HV} work out a wide generalization of Herzig's results as well, allowing for instance any ring of coefficients (such as $\Z / p^a \Z$), see sections 7.13-7.15 of loc. cit.. Their work includes the results on this section, but we prefer to explain in full detail corollary \ref{degzerogeneral} as the upper bound (on the image of the degree $0$ Satake homomorphism) that we obtain is extremely similar to the one obtained for our degree $1$ Satake homomorphism in theorem \ref{satakeimagerevisited}.

We build upon the following result:
%
%
\begin{restatable}{thm}{herzigsatake}[Herzig, theorem 1.2 of \cite{herzig}] Let $H_{\overline k}(G,K)$ be the Hecke algebra of $\overline k$-valued, bi-$K$-invariant functions on $G$, and similarly by $H_{\overline k}(\mathrm T(F), \mathrm T(\OO))$ the Hecke algebra of $\overline k$-valued, $\mathrm T(\OO)$-invariant functions on $\mathrm T(F)$. The Satake map \[ \mathcal S: H_{\overline k}(G,K) \lra H_{\overline k}(\mathrm T(F), \mathrm T(\OO)) \] defined by \[ f \mapsto \left( t \mapsto \sum_{u \in \mathrm U(F) / \mathrm U(\OO)} f(tu) \right) \] is an injective $\overline k$-algebra homomorphisms with image the subalgebra $H^-(\mathrm T(F), \mathrm T(\OO))$ of functions supported on antidominant coweights.
\end{restatable}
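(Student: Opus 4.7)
The first step is to verify that $\mathcal S$ is well-defined, i.e.\ that the sum $\sum_{u \in U(F)/U(\OO)} f(tu)$ has finite support for each $t$. By the Cartan decomposition $G = \bigsqcup_{\lambda \in X_*(\mathrm T)_-} K \lambda(\varpi) K$, any $f \in H_{\overline k}(G,K)$ is supported on finitely many double cosets $K\lambda(\varpi) K$, so it suffices to show that for fixed $t \in T(F)$ and antidominant $\lambda$, the set $tU(F) \cap K\lambda(\varpi) K$ meets only finitely many cosets of $U(\OO)$. This follows from a standard compactness argument: the intersection is closed in $tU(F)$ and contained in the compact-mod-$K$ set $K\lambda(\varpi) K$, and its image in $tU(F)/U(\OO)$ is both discrete and relatively compact. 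The $\mathrm T(\OO)$-invariance of $\mathcal S(f)$ is then automatic from left $K$-invariance of $f$ and the fact that $\mathrm T(\OO) \subset K$ normalizes $\mathrm U(\OO)$.

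Next I would establish the algebra homomorphism property $\mathcal S(f \ast g) = \mathcal S(f) \ast \mathcal S(g)$. Unlike the classical Satake transform over $\C$, no modulus character twist $\delta_B^{1/2}$ is required here: since $\delta_B(t)$ is a power of $q = p^f$, any such factor would vanish modulo $p$ anyway, and one accepts in exchange that the image is not $W$-invariant but is only supported on the antidominant cone. The computation is then a formal Fubini-type rearrangement, unwinding both sides along the Iwasawa decomposition $G = KTU$ (equivalently $G = UTK$) and exploiting that $\mathrm T(\OO)$-translation identifies $\mathrm U(\OO)$-cosets in $U$ in a compatible way. This step is routine once the finiteness of all sums has been verified.

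The heart of the argument, and the main obstacle, is the explicit description of the image. For antidominant $\lambda$ I would compute $\mathcal S(\mathbf{1}_{K\lambda(\varpi) K})$ by decomposing the double coset via the Iwahori factorization, writing it as a disjoint union of $U(\OO)$-cosets of the form $\mu(\varpi) u \,U(\OO)$ and then regrouping by the $\mathrm T(F)/\mathrm T(\OO)$-coset of the ``diagonal part'' $\mu$. The crucial input is a geometric/combinatorial counting lemma: the number of cosets contributing to each fixed $\mu$ is a polynomial in $q$, and modulo $p$ this polynomial vanishes \emph{unless} $\mu$ lies in the segment of the antidominant cone bounded by $\lambda$ and $0$, while its value at $\mu = \lambda$ equals $1$. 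This produces a unipotent-triangular expression
\[ \mathcal S\bigl(\mathbf{1}_{K\lambda(\varpi) K}\bigr) = \mathbf{1}_{\lambda(\varpi) \mathrm T(\OO)} + \sum_{\substack{\mu \in X_*(\mathrm T)_- \\ \mu \succ \lambda}} c_\mu \,\mathbf{1}_{\mu(\varpi) \mathrm T(\OO)} \]
with respect to an appropriate partial order on $X_*(\mathrm T)_-$, from which both injectivity of $\mathcal S$ and the identification of its image with $H^-$ follow immediately. The delicate point is the vanishing of the contributions from non-antidominant $\mu$: this is precisely where positive characteristic intervenes in an essential way, replacing the usual Weyl-invariance of the classical Satake image by support on a single Weyl chamber. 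Making this vanishing explicit, via a careful reduction to rank-one computations in the affine flag variety and a $p$-adic analysis of the resulting Gauss sums, is the technically hardest ingredient in the proof.
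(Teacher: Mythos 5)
The paper does not prove this statement; it is quoted verbatim as Theorem 1.2 of Herzig \cite{herzig}, and the paper's own contribution is the adaptation to $\Z/p^a\Z$ coefficients in Corollary \ref{degzerogeneral}. What the paper \emph{does} record about Herzig's proof is instructive for assessing your proposal: it proceeds in steps 1--4, and the step that establishes the support condition on the image hinges entirely on Herzig's Lemma 2.7(iii), an elementary orbit-size divisibility statement.

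Your overall architecture is correct and matches Herzig's: well-definedness via compactness of $t^{-1}K\lambda(\varpi)K \cap \mathrm U(F)$, the algebra homomorphism property by a Fubini rearrangement along the Iwasawa decomposition, and a unipotent-triangular expansion of $\mathcal S\bigl(\mathbf 1_{K\lambda(\varpi)K}\bigr)$ with leading coefficient $1$ on $\mathbf 1_{\lambda(\varpi)\mathrm T(\OO)}$ and lower-order terms supported on antidominant $\mu$ (more precisely on $\{\mu \text{ antidominant}:\mu \ge \lambda\}$, not quite "bounded by $\lambda$ and $0$" as you write). Once this triangularity is established, injectivity and the identification of the image as $H^-$ both follow.

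However, your description of the hardest step is misdirected. You propose "a careful reduction to rank-one computations in the affine flag variety and a $p$-adic analysis of the resulting Gauss sums." There are no Gauss sums and no affine flag variety geometry in Herzig's argument; the split hypothesis makes the key vanishing completely elementary. The actual mechanism, which the paper spells out when adapting the argument, is: for $\mu$ non-antidominant choose a simple root $\alpha$ with $\langle\mu,\alpha\rangle > 0$ and set $t = \mu(\varpi)$; then $t\mathrm U_\alpha(\OO)t^{-1}$ is a subgroup of $\mathrm U_\alpha(\OO)$ of index $p^{f\langle\mu,\alpha\rangle}$, and the partial sum over $\mathrm U_\alpha(F)/t\mathrm U_\alpha(\OO)t^{-1}$ of the left-$\mathrm U_\alpha(\OO)$-invariant integrand breaks into $\mathrm U_\alpha(\OO)$-orbits, each of which contributes $p^{f\langle\mu,\alpha\rangle}$ identical summands, hence is $\equiv 0 \bmod p$. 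Your phrase "polynomial in $q$ that vanishes modulo $p$" describes the outcome but not the mechanism; the argument is an orbit-counting divisibility statement at the level of a single root subgroup, which is considerably simpler than what you describe. This simplification is exactly what lets the paper refine the bound quantitatively to obtain divisibility by $p^{f\langle\mu,\alpha\rangle}$ over $\Z/p^a\Z$.

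One smaller point: your justification for omitting $\delta_B^{1/2}$ (that it "would vanish modulo $p$ anyway") isn't quite the right reason. Since $\delta_B(t)$ is a power of $q \equiv 0 \bmod p$, including it would make the transform identically zero on an open region and undefined (as $q^{-1}$) elsewhere; the untwisted version of Satake works over $\Z$ already, as Gross observes, and this integral version is what reduces mod $p$ to give Herzig's map.
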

\begin{rem}In fact, Herzig's statement is much more general (as in, he considers Hecke algebras not just for the trivial representations but for more general representations), but for our purpose the above result is sufficient.
\end{rem}

Following Herzig, we remark that the formula for $\mathcal S$ is extremely similar to the classical one, for Hecke algebras with $\C$-coefficients. The main difference is that the classical formula has a twist by $\delta_B(t)^{1/2}$ (where $\delta_B$ is the modulus character of $B$), whose purpose is mainly to force the Satake homomorphism to land in the Weyl-invariant subalgebra of $H(\mathrm T(F), \mathrm T(\OO))$.
As Gross remarks in \cite{gross}, one can get rid of this twist and obtain a homomorphism defined over $\Z$, at the cost of losing the remarkable property of Weyl-invariance of the image (or having to twist the natural Weyl-invariant action on the image). This formula is then compatible with Herzig's Satake map $\mathcal S$ under reduction $\bmod p$ for the coefficients.

Herzig's result has an immediate extension to our setup with general finite $p$-torsion coefficients. More precisely, we have the following corollary:

\begin{restatable}{cor}{satakedegzerogeneral} \label{degzerogeneral} Let $S = \Z / p^a \Z$ be the ring of coefficients for the Hecke algebras $H_S(G,K)$ and $H_S(\mathrm T(F), \mathrm T(\OO))$. Consider the Satake map \[ \mathcal S: H_S(G,K) \lra H_S(\mathrm T(F), \mathrm T(\OO)) \] defined by \[ F \mapsto \left( t \mapsto \sum_{u \in \mathrm U(F) / \mathrm U(\OO)} F(tu) \right). \]
This is an injective $S$-algebra homomorphism with image supported on the thickening of the antidominant cone: \[ \left\{ \lambda \in X_*(\mathrm T) \, | \, \langle \lambda, \alpha \rangle f < a \, \forall \alpha \in \Delta \right\}. \]
In fact, we have that if $0 < h = \langle \lambda, \alpha \rangle f < a$ for some simple root $\alpha$, then for each $F \in H_S(G,K)$ we have \[ (\mathcal S F)(\lambda(\varpi)) \in p^h \Z / p^a \Z \subset \Z / p^a \Z. \]
\end{restatable}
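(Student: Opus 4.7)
The plan is to rerun Herzig's proof with $S = \Z/p^a\Z$ coefficients while tracking powers of $p$ throughout. The formula for $\mathcal{S}$ is well-defined over any commutative ring, and the formal pieces carry over verbatim: the sum $\sum_u F(tu)$ is finite because $F$ has compact support; multiplicativity $\mathcal{S}(F_1 * F_2) = \mathcal{S}F_1 * \mathcal{S}F_2$ follows from the standard double-sum manipulation using the Iwasawa decomposition $G = UTK$, which works over any commutative ring (compare Gross's $\Z$-coefficient formulation discussed above); and injectivity follows from the triangularity of $\mathcal{S}$ on the basis of indicator functions $1_{K\mu(\varpi)K}$ (indexed by antidominant $\mu$), with unit diagonal coefficients. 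The new content, and the main step, is the quantitative divisibility bound.

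For the divisibility, I fix $\lambda \in X_*(\mathrm T)$ and a simple root $\alpha \in \Delta$ with $m := \langle \lambda, \alpha \rangle > 0$, and set $t = \lambda(\varpi)$. The key structural input I would use is that $\mathrm V := \prod_{\beta \in \Phi^+ \setminus \{\alpha\}} \mathrm U_\beta$ is a normal $\OO$-subgroup scheme of $\mathrm U$, so $\mathrm U \cong \mathrm U_\alpha \ltimes \mathrm V$ and on $\OO$-points $U(\OO) = U_\alpha(\OO) \cdot V(\OO)$ with unique factorization. Simplicity of $\alpha$ is essential here: no positive root is a nontrivial $\Z_{\ge 1}$-combination of roots in $\Phi^+ \setminus \{\alpha\}$ equal to $\alpha$, and Chevalley's commutator formula then gives $[U_\gamma, V] \subset V$ for every $\gamma \in \Phi^+$.

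Setting $\tilde F(u) := F(tu)$ as a function on $U/U(\OO)$, the conjugation identity $u_\alpha(y_0)\, t = t\, u_\alpha(\varpi^{-m} y_0)$ for $y_0 \in \OO$, together with $u_\alpha(y_0) \in K$ and left $K$-invariance of $F$, shows that $\tilde F$ is invariant under left translation by the larger subgroup $U_\alpha(\varpi^{-m}\OO) \subset U$. The key claim is then that every orbit of this left action on $U/U(\OO)$ has size divisible by $q^m = p^{mf} = p^h$. To see this, write $u = u_\alpha(y) v$ with $v \in V$; then for $y_0' \in \varpi^{-m}\OO$, the normality of $V$ gives
\[
u^{-1}\, u_\alpha(y_0')\, u \;=\; u_\alpha(y_0') \cdot v', \qquad v' \in V,
\]
and membership of this element in $U(\OO) = U_\alpha(\OO) \cdot V(\OO)$ forces its $U_\alpha$-part $u_\alpha(y_0')$ to lie in $U_\alpha(\OO)$, i.e.\ $y_0' \in \OO$. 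Hence the stabilizer $H_u$ of $u$ in $U_\alpha(\varpi^{-m}\OO)$ sits inside $U_\alpha(\OO)$, so the orbit size $[U_\alpha(\varpi^{-m}\OO) : H_u] = q^m \cdot [U_\alpha(\OO) : H_u]$ is an integer multiple of $q^m = p^h$. Summing $\tilde F$ over orbits then yields $(\mathcal{S}F)(\lambda(\varpi)) \in p^h \cdot S$: for $h \ge a$ this vanishes, giving the support claim, and for $0 < h < a$ it is the stated refined divisibility. The main subtle point in the argument is isolating precisely the action of $U_\alpha(\varpi^{-m}\OO)$ (which exists because simplicity of $\alpha$ makes $V$ normal and gives the clean $\OO$-point factorization), ensuring that the stabilizer-in-$U_\alpha(\OO)$ comparison produces the divisibility cleanly.
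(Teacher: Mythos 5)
Your proposal is correct, and the underlying mechanism is the same as in the paper (and in Herzig): the twist of $\mathrm U_\alpha(\OO)$ by $t = \lambda(\varpi)$ produces an index-$q^m$ pair of subgroups of $\mathrm U_\alpha(F)$, and that index is the source of the divisibility. You verify this cleanly: left $K$-invariance of $F$ translates into invariance of $\tilde F$ under the \emph{larger} group $\mathrm U_\alpha(\varpi^{-m}\OO) = t^{-1}\mathrm U_\alpha(\OO) t$, and the stabilizer computation using the unique factorization $\mathrm U = \mathrm U_\alpha \ltimes \mathrm V$ (for $\alpha$ simple) forces the stabilizer of any point of $U(F)/U(\OO)$ into $\mathrm U_\alpha(\OO)$, whence every orbit has cardinality a multiple of $[\mathrm U_\alpha(\varpi^{-m}\OO):\mathrm U_\alpha(\OO)] = q^m = p^h$. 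The one point worth flagging, which you implicitly use, is that each stabilizer $H_u$ is open (preimage of the open $U(\OO)$ under a continuous map), hence of finite index in the compact group $\mathrm U_\alpha(\varpi^{-m}\OO)$, so the orbit sizes are genuinely finite integers; this is fine but deserves a line.

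The difference from the paper's write-up is organizational rather than substantive: the paper follows Herzig's original structure, first adapting his Lemma 2.7(iii) (a divisibility statement for a one-variable function on $\mathrm U_\alpha(F)/t\mathrm U_\alpha(\OO)t^{-1}$ that is $\mathrm U_\alpha(\OO)$-invariant), and then invoking Herzig's Step~4 to reduce the full Satake sum over $U(F)/U(\OO)$ to that lemma. You instead run the orbit--stabilizer count directly on the full unipotent radical, which is more self-contained and makes the role of $\mathrm V$'s normality completely explicit, at the cost of having to argue the unique-factorization step yourself (the paper simply cites Herzig). Both approaches rely identically on the simplicity of $\alpha$ to guarantee $\mathrm V = \prod_{\beta \in \Phi^+\setminus\{\alpha\}}\mathrm U_\beta$ is a normal complement to $\mathrm U_\alpha$. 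Your treatment of multiplicativity and injectivity (triangularity in the dominance order with unit diagonal) also matches the paper's appeal to Herzig's Steps 1--3 carrying over verbatim.
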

\begin{proof}
This is a simple adaptation of the proof in \cite{herzig}: one starts with the following adaptation of lemma 2.7(iii) of loc.cit.:
\begin{claim}
Let $\lambda \in X_*(\mathrm T)$ and $\alpha \in \Phi$. Let $t = \lambda(\varpi)$. Suppose that $A$ is an abelian group of exponent $p^a$, and that $\psi: \mathrm U_{\alpha}(F) / t \mathrm U_{\alpha} (\OO) t^{-1}$ is a function with finite support such that $\psi$ is left-invariant by $\mathrm U_{\alpha}(\OO)$. Then \[ \sum_{u_{\alpha} \in \mathrm U_{\alpha}(F) / t \mathrm U_{\alpha}(\OO) t^{-1}} \psi(u_{\alpha}) \equiv 0 \bmod p^{f \langle \lambda, \alpha \rangle}. \]
In particular, if $\langle \lambda, \alpha \rangle \cdot f \ge a$, then \[ \sum_{u_{\alpha} \in \mathrm U_{\alpha}(F) / t \mathrm U_{\alpha}(\OO) t^{-1}} \psi(u_{\alpha}) = 0. \]
\end{claim}
The proof of this claim is immediate since we are in the split case, so the argument given by Herzig adapts as it is: $t \mathrm U_{\alpha}(\OO) t^{-1}$ is a subgroup of $\mathrm U_{\alpha}(\OO)$ of index $[ \OO : \varpi^{\langle \lambda, \alpha \rangle} \OO ] = p^{f \langle \lambda, \alpha \rangle}$. \\
Therefore if $u_{\alpha} \in \mathrm U_{\alpha}(F) / t \mathrm U_{\alpha} (\OO) t^{-1}$ is in the support of $\psi$, we have $\psi(g_i u_{\alpha}) = \psi(u_{\alpha})$ for each of the $p^{f \langle \lambda, \alpha \rangle}$ coset representatives $g_i$ of $\mathrm U_{\alpha}(\OO) / t \mathrm U_{\alpha}(\OO) t^{-1}$, and hence the contribution of those summands is a multiple of $p^{f \langle \lambda, \alpha \rangle }$.

Following through with Herzig's proof of his theorem 1.2, step 1,2 and 3 go through word by word, while step 4 - whose proof relies entirely on lemma 2.7, in particular lemma 2.7(iii) - is replaced by
\begin{claim}[Step 4'] Suppose $0 < h = \langle \mu, \alpha \rangle f$ for some simple root $\alpha$. Then $(\mathcal SF)(\mu(\varpi)) \equiv 0 \mod p^h$. \\
In particular, if $f \langle \lambda, \alpha \rangle \ge a$, then $(\mathcal S F)(\mu(\varpi)) = 0$ in $\Z / p^a \Z$.
\end{claim}
The proof of this claim follows the same argument used by Herzig, with the new input from our modified version of lemma 2.7(iii).
\end{proof}

\section{Interlude with groupoids} \label{secgroupoids}
In this section we introduce some notions related to groupoids that allow for a more formal and conceptual understanding of the derived Hecke algebra and the Satake map that we will define later. These notions include groupoid cohomology, pullback and pushforward maps and their properties, and we give an interpretation of the derived Hecke algebra in this setup. Some proofs of formal results, e.g. push-pull squares in groupoid cohomology, are postponed to appendix \ref{appendice}, to not disrupt the flow of the paper.

\begin{defn}[Topological groupoid] A topological groupoid $X$ is a groupoid together with the data of a topology on the morphism sets $\Hom_X(x,y)$ for all $x,y \in \Ob(X)$. We require that the composition maps $\Hom_X(x,y) \times \Hom_X(y,z) \lra \Hom_X(x,z)$ are continuous for all $x,y,z \in \Ob(X)$.

A morphism of topological groupoids $f: X \lra Y$ is a morphism of groupoids such that the induced maps on morphism sets $\Hom_X(x_1, x_2) \lra \Hom_Y(f(x_1),f(x_2))$ are continuous.
\end{defn}
\begin{exmp} \label{topgroupoidexmp}
Let $G$ be a topological group acting on a set $X$. Consider the groupoid they generate: $X$ is the set of objects, and $\Hom(x,y) = \{ g \in G \, | \, gx=y \}$. Then each morphism set has a natural topology as a subspace of $G$, and the composition map are clearly continuous, so this is a topological groupoid. We often denote this groupoid by $\left( G \curvearrowright X \right)$.
\end{exmp}
Every groupoid in this paper will be of the type discussed in this example, thus from now on we will often simply say `groupoid', dropping the adjective `topological'.
\begin{prop} Given a $G$-set $X$ and a $H$-set $Y$, the pair of a group homomorphism $\rho: G \lra H$ and a map of sets $f: X \lra Y$ which is $\rho$-equivariant induces a morphism of groupoids. \\
Explicitly, the morphism of groupoids $(f, \rho) : \left( G \curvearrowright X \right) \lra \left( H \curvearrowright Y \right) $ is $f$ on the objects and $\rho$ on the morphisms. \\
Moreover, if $G$ and $H$ are topological groups and $\rho$ is continuous, then $(f, \rho)$ is a morphism of topological groupoids.
\end{prop}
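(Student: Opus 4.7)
The proposition is a routine verification, so the plan is to just check the three axioms in sequence: (i) the stated assignment is a well-defined functor between the underlying groupoids, (ii) it respects identities and composition, and (iii) under the topological hypothesis the induced maps on morphism sets are continuous. I do not expect any genuine obstacle; the only point that requires any input beyond unwinding definitions is that $\rho$-equivariance of $f$ is exactly what makes the putative functor land in the right morphism sets.

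First I would define $(f,\rho)$ on objects as $x \mapsto f(x)$, and on morphisms as follows: a morphism $x \to y$ in $(G \curvearrowright X)$ is an element $g \in G$ with $gx = y$, and we declare $(f,\rho)(g) := \rho(g) \in H$. To check well-definedness we need $\rho(g)$ to define a morphism $f(x) \to f(y)$ in $(H \curvearrowright Y)$, i.e.\ $\rho(g)\, f(x) = f(y)$. This is exactly the hypothesis of $\rho$-equivariance: $\rho(g)\, f(x) = f(g x) = f(y)$.

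Next I would verify functoriality. The identity morphism of $x$ in $(G \curvearrowright X)$ is $e_G$, which maps to $\rho(e_G) = e_H$, the identity of $f(x)$ in $(H \curvearrowright Y)$, because $\rho$ is a group homomorphism. For composition, given $g_1 \colon x \to y$ and $g_2 \colon y \to z$, their composite in $(G \curvearrowright X)$ is $g_2 g_1 \colon x \to z$; then $\rho(g_2 g_1) = \rho(g_2)\rho(g_1)$, which is the composition of the images in $(H \curvearrowright Y)$. So $(f,\rho)$ is a functor, hence a morphism of groupoids.

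Finally, for the topological refinement, recall from Example \ref{topgroupoidexmp} that $\Hom_{(G \curvearrowright X)}(x,y)$ carries the subspace topology from $G$, and similarly for $H$. The map induced by $(f,\rho)$ on these morphism sets is the restriction of $\rho \colon G \to H$ to the subspace $\{g \in G : gx = y\}$, with codomain corestricted to $\{h \in H : h f(x) = f(y)\}$; continuity of $\rho$ together with the subspace topologies immediately gives continuity of this restriction. Hence $(f,\rho)$ is a morphism of topological groupoids, completing the proof.
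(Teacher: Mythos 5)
Your proof is correct and takes essentially the same approach as the paper: the paper also treats the algebraic part (well-definedness via $\rho$-equivariance and functoriality) as a routine unwinding, and then focuses on continuity. The one small difference is in the topological step: the paper reduces to checking continuity on isotropy groups $\rho_{x,x}\colon \Hom(x,x)\to\Hom(f(x),f(x))$ and factors this as $\rho|_{G_x}$ followed by the inclusion $H_{f(x)}\cap\rho(G)\hookrightarrow H_{f(x)}$, whereas you argue directly on each $\Hom(x,y)$ that the map is the restriction/corestriction of the continuous $\rho$ to subspaces carrying the subspace topology. Your version avoids the (unstated in the paper) reduction that general morphism sets are translates of stabilizers, and is if anything slightly more self-contained; both are perfectly valid and amount to the same observation.
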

\begin{proof} The first part of the proposition is clear.
The only thing to check is that the map induced by $\rho$ on each morphism set is continuous. It suffices to check this for $\rho_{x,x}: \Hom(x,x) \lra \Hom(y,y)$ where $f(x)=y$. But then $\rho_{x,x}$ is the composition of $\rho|_{G_x}: G_x \lra H_y \cap \rho(G)$ and the inclusion of subgroups $H_y \cap \rho(G) \hookrightarrow H_y$. The former is continuous because $\rho$ is, and the latter is continuous since $H_y \cap \rho(G)$ has the subspace topology of $H$, and so does $H_y$. 
\end{proof}
\begin{defn}[Pullback and homotopy pullback] \label{defpullback} Let \begin{displaymath} \xymatrix{ & E \ar[d]^p \\ A \ar[r]_f & C } \end{displaymath} be a diagram of groupoids. The pullback is defined to be the subgroupoid $D$ of $A \times E$ whose objects are $\{ (a,e) \in A \times E \, | \, f(a) = p(e) \}$. The morphisms are \[ \Hom_D ( (a_1, e_1) , (a_2, e_2) ) = \] \[ = \left\{ ( \alpha, \varepsilon) \in \Hom_A(a_1, a_2) \times \Hom_E (e_1, e_2) \, | \, f(\alpha) = p(\varepsilon) \in \Hom_C \left( f(a_1) = p(e_1), f(a_2) = p(e_2) \right) \right\} \] This has projection maps onto $A$ and $E$ and satisfies the evident universal property.

The homotopy pullback is defined to be the groupoid $Z$ whose objects are triples $(a, \gamma, e) \in A \times \Hom_C \left( f(a), p(e) \right) \times E$ and where the morphisms are \[ \Hom_Z ( (a_1, \gamma_1, e_1) , (a_2, \gamma_2, e_2) ) = \] \[ = \left\{ ( \alpha, \varepsilon) \in \Hom_A(a_1, a_2) \times \Hom_E (e_1, e_2) \, | \, \gamma_2 \circ f(\alpha) = p(\varepsilon) \circ \gamma_1 \in \Hom_C \left( f(a_1) , p(e_2) \right) \right\}. \]
Again, this has obvious projection maps onto $A$ and $E$ and satisfies a `homotopy' universal property.

If $E, A$ and $C$ are topological groupoids and $f, p$ are morphisms of topological groupoids, then the pullback and the homotopy pullback are also topological groupoids, where the topology on each morphism sets is the subspace topology of $\Hom_A(a_1, a_2) \times \Hom_E(e_1,e_2)$. Moreover, the projection maps to $A$ and $E$ are continuous.
\end{defn}
\begin{rem}
These definitions above are clearly inspired by analogies with homotopy-theoretic constructions - like the following one.
\end{rem}
\begin{defn}[Covering morphism] Let $q: X' \lra X$ be a morphism of groupoids. We say that $q$ is a covering morphism if for each object $x' \in X'$ and each morphism $m: q(x') \lra x$ in $X$, there exists a \emph{unique} morphism $\widetilde m: x' \lra \widetilde x$ in $X'$ such that $q(\widetilde m) = m$.
\end{defn}
Notice that by example 3.7(i) and remark 3.9(ii) of \cite{bhk}, we obtain that if in a diagram \begin{displaymath} \xymatrix{ & E \ar[d]^p \\ A \ar[r]_f & C } \end{displaymath} one of the map is a covering morphism of groupoids, then the homotopy pullback and the pullback are homotopy equivalent. Since any homotopy equivalence (defined before proposition 3.5 in \cite{bhk}) is the identity on morphism sets, it is immediate that in the setup of topological groupoids the homotopy equivalence preserves the topological structure as well.
\begin{defn}[Connected components] Given a groupoid $X$, we denote by $\pi_0 X$ the set of its connected components - that is to say, the equivalence classes for the relation $x \sim y \iff \Hom_X(x,y) \neq \emptyset$ for any $x, y \in \Ob(X)$.
\end{defn}
\begin{defn}[Homotopy equivalence] \label{homotopyequiv} Let $f: X \lra Y$ be a morphism of (topological) groupoids. This is called a homotopy equivalence if it induces a bijection $\pi_0 f: \pi_0 X \lra \pi_0 Y$ between connected components and for each $a \in \Ob(X)$ we have that $f_a : \Stab_X(a) \lra \Stab_Y(f(a))$ is an isomorphism of (topological) groups.
\end{defn}
\subsection{Cohomology of topological groupoids}
We give an ad-hoc definition for the (compactly supported) cohomology of a topological groupoid, which will correspond to our derived Hecke algebra as a module, and then define pullback and pushforward maps on groupoid cohomology.

We will consider cohomology groups for a general locally profinite topological group $L$ with trivial coefficient in the ring $S$: $H^*(L,S)$. By this, we mean the cohomology in the category of discrete $S$-modules with a continuous action of $L$, i.e. the derived functors of $\Hom_{S[L]}(S, \cdot)$ computed at $S$. In all our applications, $L$ will be a $p$-adic analytic group and $S$ a finite $p$-torsion ring, so section 2 of \cite{emerton2} provide details for the definition of these cohomology groups. See also \cite{flach} for a more general perspective on topological groups (and in particular section 9.2 for totally disconnected groups) and \cite{sym} for the special case of profinite groups (e.g. compact $p$-adic analytic groups).
\begin{defn}[Groupoid cohomology] \label{groupoidcoh}
Let $X$ be a topological groupoid and $S$ be a coefficient ring. We define the cohomology of $X$ to be the $S$-module $\mathbb H^*(X)$ of maps \[ F:  \Ob(X) \lra \bigoplus_{x \in \Ob(X)} H^* \left( \Stab_X(x) , S \right) \] such that
\begin{enumerate}
\item $F(x) \in H^* \left( \Stab_X(x), S \right)$ for each object $x$ of $X$.
\item Each $\phi \in \Hom_X(x,y)$ induces an isomorphism of groups $\Stab_X(x) \lra \Stab_X(y)$ given by $g \mapsto \phi \circ g \circ \phi^{-1}$, and hence a map in cohomology $H^*\left( \Stab_X(y) \right) \stackrel{\phi^*}{\lra} H^* \left( \Stab_X(x) \right)$. We require that $F(x) = \phi^* (F(y))$.
\end{enumerate}
When $x=y$ this latter condition means that $F(x) \in H^*(\Stab_X(x))$ is invariant under the action of any element $\phi \in \Stab_X(x)$ - a well-known fact from group cohomology. Thus we can think of this second condition as a generalization of the above invariance and we will informally call it `the $X$-invariance condition'.

We also define the finitely supported cohomology $\mathbb H^*_c(X)$ to be the subspace of $\mathbb H^*(X)$ satisfying the additional condition
\begin{enumerate}
\item [3.] $F$ is supported on finitely many connected components of $X$.
\end{enumerate}
\end{defn}
\begin{rem}
We notice that the second condition in the definition (the `invariance' condition) means that on each connected component of $X$ either $F$ is zero everywhere or nonzero everywhere - thus making sense of condition 3.
\end{rem}
Finally, we have a natural `cup product' operation on $\mathbb H^*(X)$ defined as pointwise cup product: \begin{equation} \label{cupproduct} (F_1 \circ F_2)(x) = F_1(x) \cup F_2(x) \in H^* \left( \Stab_X(x) \right) \qquad \forall F_1 , F_2 \in \mathbb H(X). \end{equation}
It is immediate that this operation is associative (since cup product in group cohomology is) and it has a unit element, namely the map \[ \underline 1: \Ob(X) \lra \bigoplus_{x \in \Ob(X)} H^* \left( \Stab_X(x) , S \right) \textnormal{ defined as } \underline 1(x) = 1 \in H^0(\Stab_X(x), S) \] for all objects $x \in \Ob(X)$.
Since obviously $\supp (F_1 \circ F_2) \subset \supp F_1 \cap \supp F_2$, we obtain that the compactly supported cohomology $\mathbb H^*_c(X)$ is an ideal of $\mathbb H^*(X)$ under this operation, and in particular cup product preserves $\mathbb H^*_c(X)$. \\

Let's set up some notation. We denote $[G] = \mathrm G(F) / \mathrm G(\OO) = G/K$, and for any standard parabolic $\mathrm P = \mathrm M \ltimes \mathrm V$ with standard Levi $\mathrm M$, we similarly denote $[M] = \mathrm M(F) / \mathrm M(\OO) = \mathrm P(F) / P^{\circ}$. \\
We have the left-multiplication action of $\mathrm G(F)$ on $[G]^2$, and we interpret this action as giving us a groupoid $\underline G = \left( G_F \curvearrowright [G]^2 \right)$. Similarly we have the groupoid $\underline M$. \index{$ \underline G$} \index{$ \underline M$} 
%
Explicitly, $\underline G$ has underlying set $[G]^2$ and \[ \Hom \left( (g_1, g_2) , (g_1', g_2') \right) = \{ g \in \mathrm G(F) \, | \, gg_1 = g_1' \textnormal{ and } gg_2 = g_2' \textnormal{ in } [G] \}. \]
\begin{defn}[derived Hecke algebra] \label{primadefdha}
Let $\underline G = \left( G_F \curvearrowright [G]^2 \right)$ be our favorite groupoid. We define the derived Hecke algebra to be the compactly supported groupoid cohomology $\mathbb H^*_c(\underline G)$. The multiplication operation is described in proposition \ref{convgroupoid}.
We will also denote the derived Hecke algebra by $\mathcal H_G$.
\end{defn}
To describe what the convolution operation looks like on $\mathbb H^*_c(\underline G)$, we need the notions of pullback and pushforward in cohomology.
\begin{defn}[Pullback in cohomology]
Let $i:X \lra Y$ be a continuous morphism of topological groupoids, and let $F \in \mathbb H^*(X)$.
Since $i$ is a natural transformation, for each object $x \in \Ob(X)$ we have an induced continuous map \[ \Stab_X(x) = \Hom_X(x,x) \stackrel{i_x}{\lra} \Hom_Y(i(x), i(x)) = \Stab_Y(i(x)) \] and hence a map in cohomology in the other direction: $i^x: H^* (\Stab_Y(i(x)),S) \lra H^* ( \Stab_X(x), S)$. 
We define \[ (i^*F)(x) : = i^x F(i(x)) \]
\end{defn}
\begin{rem} In general, pullback does not preserve the `compact support' condition 3 of definition \ref{groupoidcoh}. But it does if we require the additional assumption that the map induced by $i$ on connected components $\pi_0 i: \pi_0 X \lra \pi_0 Y$ has finite fibers.

Notice in particular that an homotopy equivalence $i: X \lra Y$ as in definition \ref{homotopyequiv} induces via pullback an isomorphism of groupoid cohomology, preserving the compactly supported cohomology.
\end{rem}
In appendix \ref{appendice} we check that the definition is well-posed (proposition \ref{pullbackwellposed}). \\

We now define pushforward maps in cohomology for a special class of covering morphisms $i: X \lra Y$; notice that for each covering morphism $i: X \lra Y$ we have a natural injection between isotropy groups $X_x \hookrightarrow Y_{i(x)}$ for each $x \in \Ob(X)$.
\begin{defn}[Finite covering morphism] Let $i: X  \lra Y$ be a continuous morphism of topological groupoids. We call it a \emph{finite covering morphism} if it is a covering morphism such that for each $x \in \Ob(X)$ the inclusion $X_x \hookrightarrow Y_{i(x)}$ is finite index and open.
\end{defn}
\begin{rem}
By a theorem of Nikolov and Segal (see \cite{NS}) in case we have a finite index inclusion of profinite groups $G \hookrightarrow H$ with $H$ topologically finitely generated, then $G$ is automatically open. Moreover, a compact $p$-adic analytic group is necessarily finitely generated (this follows from theorem 8.1 in \cite{DDMS}), so the `open' condition is redundant in many cases of interest for us, for instance whenever $H \subset K = \mathrm G (\OO)$.
\end{rem}
\begin{defn}[Pushforward map in cohomology] \label{pushforwarddef} Let $i: X \lra Y$ be a finite covering morphism. We define the pushforward as follows: let $F \in \mathbb H^*_c(X)$, and $y \in \Ob(Y)$, we set \[ (i_*F)(y) = \sum_{Y_y \backslash \backslash x \in i^{-1}(y)} \cores{\Stab_Y(y)}{\Stab_X(x)} F(x). \]
Then $i_*F \in \mathbb H^*_c(Y)$.

The action of the stabilizer $Y_y = \Stab_Y(y) = \Hom_Y(y,y)$ on the fiber $i^{-1}(y)$ is given as follows: let $h \in Y_y$ and $x \in i^{-1}(y)$, then the definition of covering morphism says that there exists a \emph{unique} morphism $\widetilde h$ in $X$ lifting $h: i(x)=y \mapsto y$. So $\widetilde h: x \mapsto x'$, and since it lifts $h$ we must have $i(x') = i(\widetilde h (x)) = h(i(x)) = h(y)=y$, so that $x' \in i^{-1}(y)$. The action is then defined as $h.x = x'$.
\end{defn}
In the appendix, we check that this definition is well-posed (proposition \ref{pushforwardwellposed}).
\begin{rem}
In fact, the definition of pushforward via the same formula applies to a subspace of $\mathbb H^*(X)$ slightly larger than $\mathbb H^*_c(X)$.

For the formula $(i_*F)(y) = \sum_{Y_y \backslash \backslash x \in i^{-1}(y)} \cores{\Stab_Y(y)}{\Stab_X(x)} F(x)$ to be well-defined, we will only use that $F$ is compactly supported when checking that the summation has only finitely many summands - so in fact it is enough to require that for each $y \in \Ob(Y)$, $F$ is supported on finitely many connected components of $X$ above $y$, a condition which we can sum up as \emph{$i$-fiberwise compactly supported}.

If $F \in \mathbb H^*(X)$ is $i$-fiberwise compactly supported, $i_* F \in \mathbb H^*(Y)$ may be not compactly supported. Notice however that the submodule of $i$-fiberwise compactly supported cohomology classes is an ideal of $\mathbb H^*(X)$ under pointwise cup product.
\end{rem}
We describe now the convolution operation on the derived Hecke algebra.
\begin{prop}[Convolution on the derived Hecke algebra] \label{convgroupoid}
Consider the groupoid $\underline G = \left( G_F \curvearrowright [G]^2 \right)$ whose compactly supported cohomology $\mathbb H^*_c ( \underline G) $ is the derived Hecke algebra $\mathcal H_G$. Consider the following diagram: \begin{displaymath} \xymatrix{ \underline G & & \underline G \\ & \left( G_F \curvearrowright [G]^3 \right) \ar[lu]_{i_{1,2}} \ar[ru]^{i_{2,3}} \ar[d]_{i_{1,3}} \\ & \underline G } \end{displaymath}
where the map $i_{1,2}: \left( G_F \curvearrowright [G]^3 \right) \lra \underline G$ is projection on the first and second factors on object, and the obvious inclusion of morphisms - similarly for $i_{1,3}$ and $i_{2,3}$.

Letting $F_1, F_2 \in \mathbb H^*_c(\underline G)$, we describe their convolution in the following way: we pullback them to $\left( G_F \curvearrowright [G]^3 \right)$ via the maps $i_{1,2}$ and $i_{2,3}$, we cup them as in equation \ref{cupproduct}, and then push them forward to $\underline G$ via $i_{1,3}$.

This convolution operation puts an algebra stucture on the derived Hecke algebra $\mathcal H_G$.
\end{prop}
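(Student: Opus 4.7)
The claim has three parts to verify: that $F_1 \circ F_2$ indeed lies in $\mathbb H^*_c(\underline G)$, that the operation is associative, and that there is a unit. I will sketch them in that order, relying on the general groupoid cohomology apparatus whose formal details are deferred to appendix \ref{appendice}.

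\emph{Well-definedness.} First I would check that $i_{1,3}: \bigl(G_F \curvearrowright [G]^3\bigr) \to \underline G$ is a finite covering morphism. Covering is clear: a morphism $h:(g_1,g_3)\to(g_1',g_3')$ in $\underline G$ is an element $h\in G_F$ with $hg_i=g_i'$ in $[G]$, and it lifts uniquely from any source $(g_1,g_2,g_3)$ to $(g_1',hg_2,g_3')$ by $h$ itself. Finiteness of the isotropy inclusion $G_{g_1,g_2,g_3}\hookrightarrow G_{g_1,g_3}$ holds because both groups are compact-open subgroups of $G_F$ (intersections of $G_F$-conjugates of $K$), so the smaller one is automatically open and of finite index in the larger. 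Next, one checks that $i_{1,2}^*F_1\circ i_{2,3}^*F_2$ is $i_{1,3}$-fiberwise compactly supported, which ensures its pushforward lands in $\mathbb H^*_c(\underline G)$. For fixed $(g_1,g_3)$, the set of $g_2\in[G]$ with $(g_1,g_2)\in\supp F_1$ and $(g_2,g_3)\in\supp F_2$ is the intersection of two compact subsets of $[G]$ — each support is a finite union of $G_F$-orbits, whose fibers over a fixed coordinate are compact — hence the intersection is compact, and therefore a finite union of orbits under the compact-open group $G_{g_1,g_3}$. Unwinding the definitions of pullback and pushforward yields the formula
\[
(F_1\circ F_2)(g_1,g_3)=\sum_{G_{g_1,g_3}\backslash\backslash\, g_2\in[G]}\cores{G_{g_1,g_3}}{G_{g_1,g_2,g_3}}\bigl(F_1(g_1,g_2)\cup F_2(g_2,g_3)\bigr),
\]
with all cohomology classes implicitly restricted to $G_{g_1,g_2,g_3}$ inside the cup; this recovers the classical double-coset convolution in degree zero.

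\emph{Associativity and unit.} The elegant approach is to introduce the 4-fold groupoid $\bigl(G_F\curvearrowright[G]^4\bigr)$ with projections $q_{ij}$ to pairs and $q_{ijk}$ to triples, and to prove that both $(F_1\circ F_2)\circ F_3$ and $F_1\circ(F_2\circ F_3)$ equal $(q_{1,4})_*\bigl(q_{1,2}^*F_1\circ q_{2,3}^*F_2\circ q_{3,4}^*F_3\bigr)$. Each of the two equalities reduces to a base-change identity for a homotopy-cartesian square such as
\[
\xymatrix{\bigl(G_F\curvearrowright[G]^4\bigr)\ar[r]^{q_{1,2,4}}\ar[d]_{q_{1,2,3}}&\bigl(G_F\curvearrowright[G]^3\bigr)\ar[d]^{p_{1,2}}\\ \bigl(G_F\curvearrowright[G]^3\bigr)\ar[r]_{p_{1,3}}&\underline G}
\]
asserting $p_{1,2}^*(p_{1,3})_*=(q_{1,2,3})_*(q_{1,2,4})^*$ on fiberwise compactly supported classes, combined with the projection formula $f_*\bigl(f^*\alpha\circ\beta\bigr)=\alpha\circ f_*\beta$ for finite covering morphisms. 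These two compatibilities are standard features of the push-pull calculus for groupoid cohomology and are established in appendix \ref{appendice}. The unit is the class $\underline 1_\Delta\in\mathbb H^0_c(\underline G)$ supported on the diagonal $G_F$-orbit of $(1,1)\in[G]^2$ with value $1\in H^0(K,S)$; a direct check using the explicit formula above confirms that cupping with it returns the original class on either side.

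\emph{Main obstacle.} The conceptual work is in associativity: the long explicit manipulation of nested restrictions and corestrictions (visible in the commented-out source at this point of the paper) is precisely what the base-change identity and the projection formula for finite covering morphisms are designed to package cleanly. This is why the proposition ultimately rests on the formal groupoid cohomology machinery developed in the appendix rather than on a direct cocycle-level calculation.
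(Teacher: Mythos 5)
Your route is essentially the paper's: the proposition is a specialisation of the general fact \ref{generaldha} in the appendix, and the associativity indeed rests on the base-change lemma \ref{cohomswitch} and the projection formula \ref{corescupres}, applied to the $4$-fold groupoid exactly as you outline. However, two points need repair before this reads as a complete proof.

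First, and more importantly, the sentence ``\ldots is $i_{1,3}$-fiberwise compactly supported, which ensures its pushforward lands in $\mathbb H^*_c(\underline G)$'' is wrong as stated: $i$-fiberwise compact support of a class guarantees only that the pushforward is \emph{well-defined} in $\mathbb H^*(\underline G)$, not that it is compactly supported -- this is precisely the caveat recorded in the remark after definition \ref{pushforwarddef}. You need the separate verification that the result has finite support: by linearity take $F_1$ supported on the connected component $KxK$ and $F_2$ on $KzK$, observe that the convolution is supported on $KxKzK$, and then invoke that $KxKzK$ is a \emph{finite} union of $(K,K)$-double cosets because $K$ is compact open (condition 2 of fact \ref{generaldha}). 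Without this step your argument does not show closure of $\mathbb H^*_c(\underline G)$ under the operation. Second, the displayed base-change square does not commute: $p_{1,2}\circ q_{1,2,4}$ is the projection onto coordinates $(1,2)$ of $[G]^4$, while $p_{1,3}\circ q_{1,2,3}$ is the projection onto $(1,3)$, so the two composites disagree, and the stated identity $p_{1,2}^*(p_{1,3})_*=(q_{1,2,3})_*(q_{1,2,4})^*$ also has the pushforward and pullback on the wrong legs. The correct cartesian square (used in the proof of fact \ref{generaldha}) pairs $i_{1,3}$ on one side with $i_{1,2}$ on the other and has top maps $i_{1,2,3}$ and $i_{1,3,4}$, giving $i_{1,2}^*(i_{1,3})_*=(i_{1,3,4})_*\,i_{1,2,3}^*$. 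The strategy is right; the indices as written would derail the computation.
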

\begin{proof}
This is a particular instance of our general setup described in the appendix \ref{appendice}, fact \ref{generaldha}. The well-definedness of this operation follows from the fact that $K$ is open and compact in $G$, as explained in the remark following fact \ref{generaldha}. We will record in proposition \ref{convassociative} that this operation is indeed associative, but we can already notice that this too follows from fact \ref{generaldha}.

We record for the sake of future computation an explicit formula for convolution in this algebra: fix $x,y \in [G]$, we have \[ (i_{1,3})_* \left( i_{1,2}^*F_1 \cup i_{2,3}^* F_2 \right) (x,y) = \sum_{G_{x,y} \backslash \backslash (x,z,y) \in (i_{1,3})^{-1}(x,y)} \cores{G_{x,y}}{G_{x,y,z}} \left( i_{1,2}^*F_1 \cup i_{2,3}^* F_2 \right) (x,z,y) = \] \[ = \sum_{G_{x,y} \backslash \backslash z \in [G]} \cores{G_{x,y}}{G_{x,y,z}} \left( i_{1,2}^*F_1 (x,z,y) \cup i_{2,3}^* F_2 (x,z,y) \right) = \] \begin{equation} \label{convexplicit} = \sum_{G_{x,y} \backslash \backslash z \in [G]} \cores{G_{x,y}}{G_{x,y,z}} \left( \res{G_{x,z}}{G_{x,z,y}} F_1 (x,z) \cup \res{G_{z,y}}{G_{x,z,y}} F_2 (z,y) \right). \end{equation}
Notice that this is exactly the convolution formula for the derived Hecke algebra in its interpretation as equivariant cohomology classes as described, for instance, in \cite{akshay}.
\end{proof}
We make use of the newly computed formula for convolution to show that the derived Hecke algebra of the torus has the simple description mentioned in the introduction:
\begin{prop} \label{torusdha} The map \[ \mathcal H_T \lra S[X_*(\mathrm T)] \otimes_S H^*(\mathrm T(\OO), S) \qquad F \mapsto \widetilde F \] where \[ \tilde F( \mu) = F(\mathrm T(\OO), \mu(\varpi) \mathrm T(\OO)) \in H^*(\mathrm T(\OO), S) \textnormal{ for all }\mu \in X_*(\mathrm T) \] is an isomorphism of graded algebras.
The target  $S[X_*(\mathrm T)] \otimes_S H^*(\mathrm T(\OO), S)$ is the tensor product of $S$-algebras and the grading is on the cohomology factor of this tensor product.
\end{prop}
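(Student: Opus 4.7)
The plan is to use the abelian structure of $\mathrm T$ to drastically simplify the groupoid $\underline T = \left( T_F \curvearrowright [T]^2 \right)$, and then to read off both the $S$-module isomorphism and the compatibility with products directly from the explicit convolution formula \eqref{convexplicit}.

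First I would analyze the groupoid $\underline T$ itself. Since $T = \mathrm T(F)$ is abelian and acts on $[T]^2 = (T/\mathrm T(\OO))^2$ by simultaneous left translation, the stabilizer of any pair $(t_1 \mathrm T(\OO), t_2 \mathrm T(\OO))$ is simply $\mathrm T(\OO)$ (one needs $tt_i \in t_i \mathrm T(\OO)$ for $i=1,2$, which forces $t \in \mathrm T(\OO)$). A single orbit representative for each connected component is given by $(\mathrm T(\OO), \mu(\varpi)\mathrm T(\OO))$ as $\mu$ ranges over $X_*(\mathrm T) \cong T/\mathrm T(\OO)$; hence $\pi_0 \underline T \cong X_*(\mathrm T)$. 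Moreover, since $\mathrm T(\OO)$ is abelian, the $\underline T$-invariance condition of definition \ref{groupoidcoh} is vacuous on each stabilizer group. Thus an element $F \in \mathcal H_T = \mathbb H^*_c(\underline T)$ is exactly a finitely-supported function on $X_*(\mathrm T)$ with values in $H^*(\mathrm T(\OO), S)$, and the map $F \mapsto \widetilde F$ is a grading-preserving $S$-module isomorphism onto $S[X_*(\mathrm T)] \otimes_S H^*(\mathrm T(\OO), S)$.

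Next I would verify that convolution on $\mathcal H_T$ translates to the standard group-algebra tensor-product multiplication on $S[X_*(\mathrm T)] \otimes_S H^*(\mathrm T(\OO), S)$. Apply the explicit formula \eqref{convexplicit} at a pair $(x,y) = (\mathrm T(\OO), \mu(\varpi)\mathrm T(\OO))$. Every stabilizer appearing there --- $T_{x,y}$, $T_{x,z}$, $T_{z,y}$, $T_{x,z,y}$ --- equals $\mathrm T(\OO)$, so the restriction and corestriction maps are all identities. The sum over $T_{x,y}$-orbits of $z \in [T]$ is just a sum over $\nu \in X_*(\mathrm T)$, finite by compact support of $F_1$ and $F_2$. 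We obtain
\[
(F_1 \circ F_2)(\mathrm T(\OO), \mu(\varpi)\mathrm T(\OO)) = \sum_{\nu \in X_*(\mathrm T)} F_1(\mathrm T(\OO), \nu(\varpi)\mathrm T(\OO)) \cup F_2(\nu(\varpi)\mathrm T(\OO), \mu(\varpi)\mathrm T(\OO)).
\]
Using $T$-invariance of $F_2$ via the morphism $\nu(\varpi)^{-1} \in T_F$, which acts trivially on cohomology because $\mathrm T(\OO)$ is abelian, the second factor equals $\widetilde F_2(\mu - \nu)$. Therefore $\widetilde{F_1 \circ F_2}(\mu) = \sum_\nu \widetilde F_1(\nu) \cup \widetilde F_2(\mu - \nu)$, which is precisely the product in $S[X_*(\mathrm T)] \otimes_S H^*(\mathrm T(\OO), S)$.

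The only subtle point --- and the one I would double-check carefully --- is the trivial action of conjugation on cohomology: strictly speaking, when we use a morphism $\nu(\varpi)^{-1}$ to move the pair $(\nu(\varpi)\mathrm T(\OO), \mu(\varpi)\mathrm T(\OO))$ to $(\mathrm T(\OO), (\mu-\nu)(\varpi)\mathrm T(\OO))$, the induced map on the stabilizer $\mathrm T(\OO)$ is conjugation by $\nu(\varpi)^{-1}$, which is the identity because $T$ is abelian, so the pullback on $H^*(\mathrm T(\OO), S)$ is the identity. With that in hand, the two verifications above combine to show that $F \mapsto \widetilde F$ is an isomorphism of graded $S$-algebras.
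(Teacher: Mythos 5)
Your proof is correct and follows essentially the same route as the paper: reduce to the explicit convolution formula \eqref{convexplicit}, observe that all stabilizers equal $\mathrm T(\OO)$ so restriction and corestriction are identities, and use that conjugation acts trivially to match the result with the group-algebra product. One small wording point: in two places you attribute the triviality of the conjugation action to $\mathrm T(\OO)$ being abelian, but the relevant fact (which you do state correctly in your final paragraph) is that $\mathrm T(F)$ is abelian, since the morphism $\nu(\varpi)^{-1}$ used to identify $F_2(\nu(\varpi)\mathrm T(\OO), \mu(\varpi)\mathrm T(\OO))$ with $\widetilde F_2(\mu - \nu)$ lies in $\mathrm T(F)$, not $\mathrm T(\OO)$.
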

\begin{proof}
Since $F$ is compactly supported, $\widetilde F$ is well-defined as an element of the tensor product. Moreover, the map $F \mapsto \widetilde F$ is obviously linear.
We notice that $X_*(\mathrm T)$ and $\mathrm T(F) / \mathrm T(\OO)$ are in bijection as mentioned in the \ref{secnotation} section, via $\mu \mapsto \mu(\varpi)$, and thus the map $F \mapsto \widetilde F$ gives a bijection between the `canonical bases' of $\mathcal H_T \cong \bigoplus_{t \in \mathrm T(F) / \mathrm T(\OO)} H^*(\Stab_{T(\OO)}(t T(\OO)), S) = \bigoplus_{t \in \mathrm T(F) / \mathrm T(\OO)} H^*(\mathrm T(\OO), S)$ and of $S[ X^*(\mathrm T)] \otimes_S H^*(\mathrm T(\OO),S)$.

It remains to check that the map respects the algebra structure, i.e. that given $F_1, F_2 \in \mathcal H_T$ we have $\widetilde F_1 \cdot \widetilde F_2 = \widetilde{F_1 \circ F_2}$ in $S[ X^*(\mathrm T)] \otimes_S H^*(\mathrm T(\OO),S)$.
Notice that when we specialize equation \ref{convexplicit} to the case $G = T$, we obtain that for any $x,y,z \in [T]$ the stabilizers become $G_{x,z} = G_{x,y} = G_{y,z} = G_{x,y,z} = \mathrm T(\OO)$ and hence the formula becomes \[ (F_1 \circ F_2) (x \mathrm T(\OO), y \mathrm T(\OO)) = \sum_{\mathrm T(\OO) \backslash \backslash z \in [T]}  F_1 (x,z) \cup F_2 (z,y). \]
Since $\mathrm T(F)$ is abelian, the $\mathrm T(\OO)$-action on $[T]$ is trivial and hence we are not summing across orbits, but simply across points $z \in [T]$. In particular we get \[ \widetilde{F_1 \circ F_2}(\mu) = (F_1 \circ F_2)( \mathrm T(\OO), \mu(\varpi) \mathrm T(\OO)) = \sum_{ \lambda \in X_*(\mathrm T) \equiv [T]}  F_1 (\mathrm T(\OO), \lambda(\varpi) \mathrm T(\OO)) \cup F_2 (\lambda(\varpi) \mathrm T(\OO), \mu(\varpi) \mathrm T(\OO)). \]
On the other hand, the algebra tensor product structure gives \[ \widetilde F_1 \cdot \widetilde F_2 (\mu) = \sum_{\lambda \in X_*(\mathrm T)} \widetilde F_1(\lambda) \cdot \widetilde F_2(\lambda^{-1} \mu) = \sum_{\lambda \in X_*(\mathrm T)} F_1(\mathrm T(\OO), \lambda(\varpi) \mathrm T(\OO)) \cup F_2(\mathrm T(\OO), \lambda(\varpi)^{-1} \mu(\varpi) \mathrm T(\OO)), \] so it remains to show that $F_2(\mathrm T(\OO), \lambda(\varpi)^{-1} \mu(\varpi) \mathrm T(\OO)) = F_2(\lambda(\varpi)\mathrm T(\OO), \mu(\varpi) \mathrm T(\OO))$ for every $\lambda, \mu \in X_*(\mathrm T)$. This follows immediately from $T$-invariance of $F_2$ and the fact that the conjugation action of $\lambda(\varpi)$ on $H^*(\mathrm T(\OO), S)$  is trivial.
\end{proof}
In a way similar to proposition \ref{convgroupoid}, we can re-interpret convolution on the algebra $\mathcal H_P$ whose definition was outlined in section \ref{secnotation}.
\begin{prop} \label{convgroupoidP}
Let $[M] = P_F / P^{\circ} = M_F / M_{\OO}$. Consider the groupoid $\left( P_F \curvearrowright [M]^2 \right)$ whose compactly supported cohomology $\mathbb H^*_c ( \left( P_F \curvearrowright [M]^2 \right) ) $ is the derived Hecke algebra $\mathcal H_P$. Consider the following diagram: \begin{displaymath} \xymatrix{ \left( P_F \curvearrowright [M]^2 \right) & & \left( P_F \curvearrowright [M]^2 \right) \\ & \left( P_F \curvearrowright [M]^3 \right) \ar[lu]_{i_{1,2}} \ar[ru]^{i_{2,3}} \ar[d]_{i_{1,3}} \\ & \left( P_F \curvearrowright [M]^2 \right) } \end{displaymath}
where the map $i_{1,2}: \left( P_F \curvearrowright [M]^3 \right) \lra \left( P_F \curvearrowright [M]^2 \right)$ is projection on the first and second factors on object, and the obvious inclusion of morphisms - similarly for $i_{1,3}$ and $i_{2,3}$.

Letting $F_1, F_2 \in \mathbb H^*_c(\left( P_F \curvearrowright [M]^2 \right))$, we describe their convolution in the following way: we pullback them to $\left( P_F \curvearrowright [M]^3 \right)$ via the maps $i_{1,2}$ and $i_{2,3}$, we cup them as in equation \ref{cupproduct}, and then push them forward to $\left( P_F \curvearrowright [M]^2 \right)$ via $i_{1,3}$.
\end{prop}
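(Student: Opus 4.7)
The plan is to note that this proposition is a direct analog of proposition \ref{convgroupoid}, obtained by replacing the action of $G_F$ on $[G]$ by that of $P_F$ on $[M]$, and then to verify that the hypotheses of the general convolution machinery (fact \ref{generaldha} in appendix \ref{appendice}) hold in this new setting. The key preliminary observation is that although $P^\circ = \mathrm M(\OO) \ltimes \mathrm V(F)$ is not compact, it is open in $P_F$, and the action of $P_F$ on $[M] = P_F / P^\circ$ factors through the Levi projection $P_F \twoheadrightarrow M_F$ (since $\mathrm V(F) \subset P^\circ$ acts trivially on the quotient). Consequently, stabilizers in $P_F$ of tuples $(m_1 M_\OO, \ldots, m_r M_\OO)$ take the form $\bigl( \bigcap_i m_i M_\OO m_i^{-1} \bigr) \ltimes \mathrm V(F)$, so that passing from a pair to a triple refines the $M$-part by intersection with one additional compact-open subgroup of $M_F$.

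First, I would verify that the three projection maps $i_{1,2}, i_{2,3}, i_{1,3}$ are covering morphisms: they all use the same acting group $P_F$, so covering-morphism status is immediate from the definition. Next, I would check that each is a \emph{finite} covering morphism, i.e.\ that the induced inclusion of stabilizers is of finite index and open. By the stabilizer description above, the relevant index equals the index of an intersection of compact-open subgroups inside a compact-open subgroup of $M_F$, which is finite; openness likewise follows.

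Third, I would verify fiberwise compact support of the cup product, by the same argument used in proposition \ref{convgroupoid}: by linearity we may assume $F_1$ is supported on a single $P_F$-orbit $\mathcal C \subset [M]^2$, and then for fixed $(x,y)$ the locus $\{z : F_1(x,z) \neq 0\}$ is contained in a single $P_x$-orbit of $[M]$, hence decomposes into finitely many $P_{x,y}$-orbits because $[P_x : P_{x,y}]$ is finite on Levi parts (which is all that matters, since $\mathrm V(F)$ is contained in both stabilizers). This guarantees that $i_{1,3}^* \bigl( i_{1,2}^* F_1 \cup i_{2,3}^* F_2 \bigr)$ is $i_{1,3}$-fiberwise compactly supported, so that the pushforward in definition \ref{pushforwarddef} is well-posed.

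The main obstacle I expect is checking that the resulting class $(i_{1,3})_*\bigl(i_{1,2}^* F_1 \cup i_{2,3}^* F_2\bigr)$ is globally compactly supported, not merely fiberwise so. This is handled by the same finiteness argument familiar from the classical spherical Hecke algebra: if $F_1, F_2$ are each supported on a single $P_F$-orbit, then $(F_1 \circ F_2)(x,y)$ can be nonzero only for $(x,y)$ in finitely many $P_F$-orbits, since such a pair must admit some $z$ with $(x,z) \in \supp F_1$ and $(z,y) \in \supp F_2$. Having dispatched well-definedness, associativity, existence of the unit element $\underline 1$, and the explicit formula analogous to equation \ref{convexplicit} all follow mechanically from the general framework of fact \ref{generaldha}.
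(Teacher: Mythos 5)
Your overall route is the paper's route: reduce to fact \ref{generaldha} and verify its three conditions, using that stabilizers in $P_F$ of tuples in $[M]^r$ have the form $\mathrm M(F)_{m_1,\ldots,m_r}\mathrm V(F)$. Your verifications of condition 3 ($i_{1,3}$ a finite covering, via the index $\left[\mathrm M(F)_{m_1,m_2} : \mathrm M(F)_{m_1,m_2,m_3}\right]$ being finite because $\mathrm M(\OO)$ is compact open in $\mathrm M(F)$) and of condition 1 (fiberwise compact support, via finiteness of $\left[\Stab_{P_F}(m_1) : \Stab_{P_F}(m_1,m_2)\right]$ on Levi parts) match the paper and are fine.

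The gap is in your treatment of condition 2, the global support statement. What you write — that $(F_1\circ F_2)(x,y)\neq 0$ forces the existence of $z$ with $(x,z)\in\supp F_1$ and $(z,y)\in\supp F_2$ — only shows that the support of the convolution is contained in the product set $P^{\circ}mP^{\circ}nP^{\circ}$; it does not show that this set is a \emph{finite} union of $(P^{\circ},P^{\circ})$-double cosets, which is exactly what condition 2 of fact \ref{generaldha} demands. The "classical spherical Hecke algebra" argument you appeal to is a compactness argument ($KmKnK$ is compact and double cosets are open), and it is precisely unavailable here: $P^{\circ}=\mathrm M(\OO)\ltimes\mathrm V(F)$ is open but not compact, which is the whole reason the remark after fact \ref{generaldha} does not apply and this proposition needs a proof. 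The missing step is the paper's computation: since $\mathrm V(F)$ is normal in $P_F$, one has $P^{\circ}mP^{\circ}nP^{\circ} = \mathrm V(F)\left(M_{\OO}mM_{\OO}nM_{\OO}\right)\mathrm V(F)$, and $M_{\OO}mM_{\OO}nM_{\OO}=\bigsqcup_{i=1}^{N}M_{\OO}m_iM_{\OO}$ is a finite union by compactness of $M_{\OO}$, whence $P^{\circ}mP^{\circ}nP^{\circ}=\bigcup_{i=1}^{N}P^{\circ}m_iP^{\circ}$. Equivalently, your own preliminary remark that the $P_F$-action on $[M]$ factors through the Levi projection would deliver this reduction to $M_{\OO}\backslash M_F/M_{\OO}$ — but you need to actually invoke it at this step rather than gesture at the compact case.
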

\begin{proof}
This follows from fact \ref{generaldha} once we check that the three conditions of the fact are satisfied. \\
Obviously $i_{1,3}$ is a covering morphism. Given $(m_1P^{\circ}, m_2P^{\circ}, m_3P^{\circ}) \in [M]^3$ its stabilizer is $\mathrm M(F)_{m_1, m_2, m_3} \mathrm V(F)$ while the stabilizer of $(m_1P^{\circ}, m_2P^{\circ})$ is $\mathrm M(F)_{m_1, m_2} \mathrm V(F)$. The index is then $ \left[ \mathrm M(F)_{m_1, m_2} : \mathrm M(F)_{m_1, m_2, m_3} \right]$ which is finite since $\mathrm M(\OO)$ is open compact in $\mathrm M(F)$. This proves condition 3. \\
Condition 1 says that $i_{1,2}^* F_1 \cup i_{2,3}^* F_2$ is $i_{1,3}$-fiberwise compactly supported for each $F_1, F_2 \in \mathcal H_P$ and as explained in the appendix, it suffices to check that for all $m_1, m_2 \in [M]$ we have finite index of $\Stab_{P_F}(m_1, m_2) \subset \Stab_{P_F}(m_1)$ which holds exactly as in the previous paragraph. \\
Finally, to check condition two we notice that since $ V_F$ is normal in $P_F$ we have \[ P^{\circ} m P^{\circ} n P^{\circ} = P^{\circ} m M_{\OO} n P^{\circ} = V_F \left( M_{\OO} m M_{\OO} n M_{\OO} \right) V_F. \]
Since $M_{\OO} \subset M_F$ is open and compact, we have a finite disjoint union $M_{\OO} m M_{\OO} n M_{\OO} = \bigsqcup_{i=1}^N M_{\OO} m_i M_{\OO}$ and thus \[ P^{\circ} m P^{\circ} n P^{\circ} = V_F \left( \bigsqcup_{i=1}^N M_{\OO} m_i M_{\OO} \right) V_F = \bigcup_{i=1}^N P^{\circ} m_i P^{\circ}. \]
\end{proof}
We mention here a few relevant general results concerning puhsforward and pullback of cohomology classes, whose proofs we delay until appendix \ref{appendice}.
\begin{restatable}{lem}{primaformula} \label{corescupres}
Let \begin{displaymath} \xymatrix{ X \ar[r]^{k} \ar[d]_{i} & Z \\ Y \ar[ru]_{j} & } \end{displaymath} be a commutative triangle of topological groupoid morphisms. Suppose that $i$ is a finite covering morphism and that $j$ induces inclusions between isotropy groups.

Let then $F \in \mathbb H^*(X)$ be $i$-fiberwise compactly supported and $G \in \mathbb H^*(Z)$, we have \[ i_* F \cup j^* G = i_* \left( F \cup k^* G \right) \textnormal{ in } \mathbb H^*(Y). \]
Moreover, if $j$ and $k$ induce finite-fibers maps on connected components, then each operation on cohomology preserves the compact support and thus if $F \in \mathbb H^*_c(X)$ and $G \in \mathbb H^*_c(Z)$, the formula above holds in $\mathbb H^*_c(Y)$.
\end{restatable}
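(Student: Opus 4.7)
The plan is to verify the identity pointwise on objects of $Y$, reducing everything to the classical projection formula in the cohomology of a finite-index pair of groups.

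Fix $y \in \Ob(Y)$ and unwind definitions. By definition of cup product and pullback,
\[ \left( i_*F \cup j^*G \right)(y) \;=\; \Bigl( \sum_{Y_y \backslash\backslash \, x \in i^{-1}(y)} \cores{Y_y}{X_x} F(x) \Bigr) \;\cup\; j^y\bigl( G(j(y)) \bigr), \]
where $j^y : H^*(Z_{j(y)}) \lra H^*(Y_y)$ is the restriction map induced by the inclusion $Y_y \hookrightarrow Z_{j(y)}$ coming from $j$. On the other hand,
\[ i_*( F \cup k^*G)(y) \;=\; \sum_{Y_y \backslash\backslash \, x \in i^{-1}(y)} \cores{Y_y}{X_x}\!\bigl( F(x) \cup k^x G(k(x)) \bigr), \]
where $k^x : H^*(Z_{k(x)}) \lra H^*(X_x)$ is induced by the inclusion $X_x \hookrightarrow Z_{k(x)}$. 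The first step is to observe that since $k = j \circ i$, the inclusion $X_x \hookrightarrow Z_{k(x)} = Z_{j(y)}$ factors as $X_x \hookrightarrow Y_y \hookrightarrow Z_{j(y)}$; therefore $k^x = \res{Y_y}{X_x} \circ j^y$ on cohomology, so that $(k^*G)(x) = \res{Y_y}{X_x} j^y(G(j(y)))$.

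Next, I invoke the standard projection formula for the corestriction--restriction pair associated to the finite-index open inclusion $X_x \hookrightarrow Y_y$ (which holds since $i$ is a finite covering morphism): for any $\alpha \in H^*(X_x)$ and $\beta \in H^*(Y_y)$,
\[ \cores{Y_y}{X_x}(\alpha) \cup \beta \;=\; \cores{Y_y}{X_x}\bigl( \alpha \cup \res{Y_y}{X_x}\beta \bigr). \]
Applying this with $\alpha = F(x)$ and $\beta = j^y(G(j(y)))$, and summing over orbit representatives $Y_y \backslash\backslash \, x \in i^{-1}(y)$, matches the two expressions above termwise and hence establishes the identity at $y$.

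Two routine checks finish the argument. First, I should verify that each term on the left-hand side is well-defined in the sense of definition \ref{groupoidcoh}, i.e.\ that the representative-independent summation over $Y_y$-orbits really produces a $Y$-invariant class — this is standard and follows from the same argument used to prove that $i_*F$ lies in $\mathbb{H}^*(Y)$ (proposition \ref{pushforwardwellposed}). Second, if both $F$ and $G$ are compactly supported and $j, k$ induce finite-fibers maps on $\pi_0$, then $j^*G$ and $k^*G$ are compactly supported, $F \cup k^*G$ is $i$-fiberwise compactly supported, and $i_*(F \cup k^*G)$ is compactly supported — so the identity lifts from $\mathbb{H}^*(Y)$ to $\mathbb{H}^*_c(Y)$. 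The only conceptual content is the classical projection formula; the rest is careful bookkeeping of restriction and corestriction maps, which is the one step where sign and direction errors are easy to make, so I would take care to write out the commutative diagram of inclusions $X_x \hookrightarrow Y_y \hookrightarrow Z_{j(y)}$ explicitly before invoking functoriality of $\res{}{}$.
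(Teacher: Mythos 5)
Your proposal is correct and follows essentially the same route as the paper's proof: fix $y\in \Ob(Y)$, use the factorization of the stabilizer inclusions $X_x \hookrightarrow Y_y \hookrightarrow Z_{j(y)}$ (so that $k^x = \res{Y_y}{X_x}\circ j^y$), and reduce the identity termwise to the classical projection formula $(\cores{}{}\alpha)\cup\beta = \cores{}{}(\alpha\cup\res{}{}\beta)$, with the same routine checks on $i$-fiberwise compact support and on preservation of compact support under the finite-fibers hypothesis on $\pi_0$.
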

\begin{restatable}{lem}{secondaformula} \label{cohomswitch}
Let \begin{displaymath} \xymatrix{ Z \ar[r]_{\widetilde \pi} \ar[d]_{\widetilde i} & X \ar[d]^i \\ Y \ar[r]_{\pi} & A } \end{displaymath} be a pullback square of topological groupoids, where $i$ is a finite covering morphism. Suppose also that $\pi$ is continuous and induces open injections of isotropy groups at all objects (e.g. $\pi$ is also a finite covering morphism, but in fact it suffices that every morphisms in $A$ has \emph{at most} a unique lift under $\pi$).

Let $F \in \mathbb H^*(X)$ be $i$-fiberwise compactly supported, then $\widetilde \pi^* F$ is $\widetilde i$-fiberwise compactly supported and we have \[ (\widetilde i)_* \left( (\widetilde \pi)^* F \right) = \pi^* ( i_* F). \]
If moreover $\pi$ induces a finite-fibers map on connected components, then given $F \in \mathbb H^*_c(X)$ the same formula above holds in $\mathbb H^*_c(Y)$.
\end{restatable}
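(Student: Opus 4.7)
The plan is to verify the identity pointwise at each $y \in \Ob(Y)$, expanding both sides via the definitions of pushforward and pullback given in definitions \ref{groupoidcoh} and \ref{pushforwarddef}, and then matching them through the Mackey double coset formula in group cohomology. To set up the bookkeeping, I first analyze the pullback groupoid $Z$ from definition \ref{defpullback}: objects are pairs $(y,x)$ with $\pi(y) = i(x)$, and at $z = (y,x)$ the stabilizer is $Z_z = Y_y \times_{A_{\pi(y)}} X_x$, which under the assumed open injections $\pi^y \colon Y_y \hookrightarrow A_{\pi(y)}$ and $i^x \colon X_x \hookrightarrow A_{\pi(y)}$ identifies with the intersection $Y_y \cap X_x$. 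Standard pullback properties show $\widetilde i$ is itself a finite covering morphism: its fiber over $y$ is naturally identified with $i^{-1}(\pi(y))$, and the $Y_y$-action on this fiber factors through $\pi^y$. The map on stabilizers induced by $\widetilde \pi$ at $z$ is the inclusion $Y_y \cap X_x \hookrightarrow X_x$.

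The claim that $\widetilde \pi^* F$ is $\widetilde i$-fiberwise compactly supported follows from this analysis: each $A_{\pi(y)}$-orbit on $i^{-1}(\pi(y))$ is $A_{\pi(y)}/X_x$, and since $X_x$ has finite index in $A_{\pi(y)}$ (this is where we use that $i$ is \emph{finite} covering), it decomposes into $|Y_y \backslash A_{\pi(y)} / X_x|$ many $Y_y$-orbits, a finite number. Hence finite support in $A_{\pi(y)}$-orbits pulls back to finite support in $Y_y$-orbits.

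Next I expand the two sides. Writing $A = A_{\pi(y)}$ for brevity, the left-hand side becomes
\[
(\widetilde i)_* (\widetilde \pi^* F)(y) = \sum_{Y_y \backslash\backslash\, x \in i^{-1}(\pi(y))} \cores{Y_y}{Y_y \cap X_x} \res{X_x}{Y_y \cap X_x} F(x),
\]
while the right-hand side is
\[
\pi^*(i_* F)(y) = \sum_{A \backslash\backslash\, x \in i^{-1}(\pi(y))} \res{A}{Y_y} \cores{A}{X_x} F(x).
\]
Applying the Mackey double coset formula to each summand on the right gives
\[
\res{A}{Y_y} \cores{A}{X_x} F(x) = \sum_{g \in Y_y \backslash A / X_x} \cores{Y_y}{Y_y \cap gX_xg^{-1}} \res{gX_xg^{-1}}{Y_y \cap gX_xg^{-1}} c_g^* F(x).
\]
Using that $i$ is a covering morphism, each $g \in A$ lifts uniquely to $\widetilde g \colon x \to gx$ in $X$; this forces $gX_xg^{-1} = X_{gx}$ as subgroups of $A$, and the $X$-invariance condition on $F$ yields $c_g^* F(x) = F(gx)$.

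The proof concludes by a reindexing argument: the standard orbit-stabilizer bijection identifies $Y_y$-orbits on the $A$-set $i^{-1}(\pi(y))$ with pairs $(x,g)$, where $x$ represents an $A$-orbit and $g$ a double coset in $Y_y \backslash A / X_x$, via $(x,g) \mapsto gx$. After this reindexing, the Mackey-expanded right-hand side becomes exactly the left-hand side. The final statement about $\mathbb{H}^*_c$ follows immediately, since the finite-fibers hypothesis on $\pi_0\pi$ guarantees that $\pi^*$ preserves finite support on connected components, and consequently so does $\widetilde\pi^*$ by compatibility of the projections $\pi_0 \widetilde \pi$ and $\pi_0 \pi$ with pullback. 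The main obstacle is not any single hard calculation but rather the careful bookkeeping needed to match the groupoid-theoretic stabilizer identifications (for $Z_z$, for $gX_xg^{-1}$, and for the fiber of $\widetilde i$) against the group-cohomological Mackey formula; the underlying identity is the classical base-change compatibility of corestriction with restriction.
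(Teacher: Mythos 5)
Your proof is correct and follows essentially the same route as the paper's: pointwise expansion of both sides, the Mackey restriction--corestriction formula applied to $\res{A_{\pi(y)}}{Y_y}\circ\cores{A_{\pi(y)}}{X_x}$, and the reindexing of double cosets into $Y_y$-orbits on $i^{-1}(\pi(y))$ using the unique-lifting property of the covering $i$, the invariance of $F$, and the identification of $\widetilde i^{-1}(y)$ with $i^{-1}(\pi(y))$ compatibly with the $Y_y$-actions. The only cosmetic difference is that you justify the $\widetilde i$-fiberwise compact support of $\widetilde\pi^*F$ via finiteness of $Y_y\backslash A_{\pi(y)}/X_x$, coming from $[A_{\pi(y)}:X_x]<\infty$, which is if anything a cleaner argument than the one given in the paper.
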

As a first application of these lemmas, we can use fact \ref{generaldha} again which shows immediately that convolution on our derived Hecke algebra as defined in proposition \ref{convgroupoid} is associative.
\begin{prop} \label{convassociative}
Let $A, B, C \in \mathbb H^*_c( \underline G)$. Then $(A \circ  B) \circ C = A \circ ( B \circ C)$.
\end{prop}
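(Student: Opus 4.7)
The plan is to show that both $(A \circ B) \circ C$ and $A \circ (B \circ C)$ equal a single ``triple convolution'' class on a fourfold groupoid. Introduce $\underline G^{(4)} = \left( G_F \curvearrowright [G]^4 \right)$ and, for every nonempty $S \subset \{1,2,3,4\}$, the projection $p_S : \underline G^{(4)} \lra \left( G_F \curvearrowright [G]^{|S|} \right)$ onto the coordinates indexed by $S$; analogously write $q_S$ for the corresponding projections from $\underline G^{(3)}$. Each of these is a covering morphism inducing finite-index open inclusions of isotropy groups, because $K \subset G_F$ is compact open. The claim is that both sides equal
\[
\Theta := (p_{1,4})_* \bigl( p_{1,2}^* A \cup p_{2,3}^* B \cup p_{3,4}^* C \bigr) \in \mathbb H^*_c(\underline G).
\]

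For $(A \circ B) \circ C = \Theta$, expand $A \circ B = (q_{1,3})_*(q_{1,2}^* A \cup q_{2,3}^* B)$ by definition, so the outer convolution requires $q_{1,2}^*(A \circ B)$ on $\underline G^{(3)}$. This pullback is computed via the pullback square
\[
\begin{array}{ccc}
\underline G^{(4)} & \xrightarrow{\;p_{1,2,3}\;} & \underline G^{(3)} \\
\scriptstyle p_{1,3,4}\, \downarrow & & \downarrow \,\scriptstyle q_{1,3} \\
\underline G^{(3)} & \xrightarrow{\;q_{1,2}\;} & \underline G,
\end{array}
\]
to which lemma \ref{cohomswitch} applies and yields $q_{1,2}^*(A \circ B) = (p_{1,3,4})_*(p_{1,2}^* A \cup p_{2,3}^* B)$, using $q_{1,2} \circ p_{1,2,3} = p_{1,2}$ and $q_{2,3} \circ p_{1,2,3} = p_{2,3}$. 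Cupping with $q_{2,3}^* C$ and invoking the projection formula of lemma \ref{corescupres} for the commuting triangle $p_{1,3,4}, q_{2,3}, p_{3,4}$ (noting $q_{2,3} \circ p_{1,3,4} = p_{3,4}$) moves $C$ inside the pushforward as $p_{3,4}^* C$. Functoriality of pushforward together with $q_{1,3} \circ p_{1,3,4} = p_{1,4}$ finishes the identification with $\Theta$.

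The equality $A \circ (B \circ C) = \Theta$ is entirely symmetric: starting from $B \circ C = (q_{1,3})_*(q_{1,2}^* B \cup q_{2,3}^* C)$ and using the pullback square
\[
\begin{array}{ccc}
\underline G^{(4)} & \xrightarrow{\;p_{2,3,4}\;} & \underline G^{(3)} \\
\scriptstyle p_{1,2,4}\, \downarrow & & \downarrow \,\scriptstyle q_{1,3} \\
\underline G^{(3)} & \xrightarrow{\;q_{2,3}\;} & \underline G
\end{array}
\]
to evaluate $q_{2,3}^*(B \circ C) = (p_{1,2,4})_*(p_{2,3}^* B \cup p_{3,4}^* C)$, followed by the projection formula for the triangle with composition $p_{1,2}$ (since $q_{1,2} \circ p_{1,2,4} = p_{1,2}$), and concluding by $q_{1,3} \circ p_{1,2,4} = p_{1,4}$.

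The main obstacle is not conceptual but bookkeeping: at each invocation of lemmas \ref{corescupres} and \ref{cohomswitch} one must verify the relevant fiberwise compact support hypothesis. All such verifications are strictly analogous to the well-definedness check for convolution in proposition \ref{convgroupoid}: compactness and openness of $K$ in $G_F$ guarantee that every $p_S$ and $q_S$ induces finite-index open inclusions of isotropy groups, while the compact support of $A$, $B$, $C$ combined with finiteness of $K$-double cosets ensures that the intermediate pullbacks are fiberwise compactly supported, so each pushforward step stays within the desired cohomology groups.
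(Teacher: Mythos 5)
Your argument is correct and is essentially the paper's own: the paper disposes of the proposition by citing fact \ref{generaldha}, whose proof is exactly your reduction of both triple products to $(i_{1,4})_*\bigl(i_{1,2}^*A\cup i_{2,3}^*B\cup i_{3,4}^*C\bigr)$ on $\left(G_F\curvearrowright [G]^4\right)$ via the same two pullback squares, lemma \ref{cohomswitch}, and the projection formula of lemma \ref{corescupres}. The only cosmetic difference is that you run the second reduction explicitly (implicitly using graded-commutativity of the cup product to apply lemma \ref{corescupres} with the pushforward in the right-hand factor), whereas the paper performs one reduction and appeals to the symmetry of the resulting diagram in $A$, $B$, $C$.
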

\begin{proof} We already checked that fact \ref{generaldha} applies to this setup, and hence we obtain an algebra structure.
\end{proof}
\section{Satake map via groupoids} \label{secSatakegroupoids}
In this section we construct our Satake map as a combination of pushforwards and pullbacks in groupoid cohomology. Recall that the classical Satake homomorphism, both over $\C$ (see for instance \cite{gross}) and over $\FFp$ (as in \cite{herzig}), can be described as `integration over the unipotent radical'.
More precisely, in \cite{herzig2}, sections 2.2 and 2.3, Herzig defines the Satake transform between the $\bmod p$ classical Hecke algebra of a reductive $p$-adic group $G$ and that of a standard Levi subgroup $M$. Letting $P_F= M_F \ltimes N_F$ be the Levi decomposition of the standard parabolic $P$ containing $M$, he defines an algebra homomorphism $\mathcal S^M_G: \mathcal H^0_G \lra \mathcal H^0_M$ as \[ f \mapsto \left( m \mapsto \sum_{n \in N_F/ N_{\OO}} f(mn) \right). \]
The notation above takes $f \in \mathcal H^0_G$ to be a bi-$K$-invariant function: if we want to re-interpret this map in terms of the description as $G$-invariant functions on $G / K \times G/K$, we obtain \[ \mathcal S^M_G: f \mapsto \left( \left( M_{\OO}, mM_{\OO} \right) \mapsto \sum_{n \in N_F / N_{\OO}} f( K, mn K ) \right). \]
This is more suited for a generalization to our groupoid cohomology language: noticing that pushforward as in definition \ref{pushforwarddef} corresponds to `integration along the fiber', we define our Satake map to be the result of the pushforward and pullbacks as follows.
Recall that we denote $[M] = M_F / M_{\OO} = P_F / P^{\circ}$. Consider the diagram: \begin{displaymath} \xymatrix{ & \left( G_F \curvearrowright [G] \times G_F / P_{\OO} \right) \ar[r]^i \ar[d]_{\pi} & \underline G = \left( G_F \curvearrowright [G]^2 \right) \\ \underline M = \left( M_F \curvearrowright [M]^2 \right) \ar[r]_{i^G_M} & \left( G_F \curvearrowright [G] \times G_F / P^{\circ} \right) & } \end{displaymath}
The map $i^G_M$ is an inclusion, both at the level of groups and at the level of sets. $i$ is the identity at the group level, and the surjection $G_F / P_{\OO} \twoheadrightarrow G_F / K$ at the level of sets. $\pi$ is a quotient map at the level of sets, and it is the map encoding `integration over the unipotent radical' as discussed above. It is clear that all the morphisms are continuous with respect to the topologies induced on each groupoid as in example \ref{topgroupoidexmp}.
\begin{defn}[Satake map in groupoid cohomology] \label{satakemapdef}
We define the Satake map as the function $\mathcal \delta^G_M: \mathbb H^*_c(\underline G) \lra \mathbb H^*_c( \underline M)$ induced by the sequence of pullbacks and pushforwards in the diagram above.
\end{defn}
\begin{rem}
To prove that this is well-defined, we need to check that $\pi$ is a finite covering morphism and that each inclusion map induces a map on connected components which has finite fibers.

The fact that $\pi$ is a covering is immediate, since in both groupoids the same group $G_F$ is acting.
To check the finiteness assumption, it suffices to check finite index inclusion of isotropy groups on a representative for each connected component of the source groupoid: let then $(K, xP_{\OO})$ be one such representative with $x \in P_F$ by the Iwasawa decomposition.

Then the isotropy group of $(K, x P_{\OO})$ is $K \cap \Ad(x) P_{\OO}$. We also have $\pi \left( (K, x P_{\OO}) \right) = (K, x P^{\circ})$ whose isotropy group is $K \cap \Ad(x) P^{\circ}$.

Since $x \in P_F$, we have $\Ad(x) P^{\circ} \subset P_F$ so that $K \cap \Ad(x) P^{\circ} = K \cap \Ad(x) P^{\circ} \cap P_F = P_{\OO} \cap \Ad(x) P^{\circ}$. Similarly, $K \cap \Ad(x) P_{\OO} = P_{\OO} \cap \Ad(x) P_{\OO}$, which is finite index in $P_{\OO}$ since $P_{\OO} \subset P_F$ is open and closed. A fortiori, $P_{\OO} \cap \Ad(x) P_{\OO}$ will be finite index in $P_{\OO} \cap \Ad(x) P^{\circ}$, and in particular open since $P_{\OO} \cap \Ad(x) P^{\circ}$ is compact. This shows that we can pushforward along $\pi$ while preserving compact support.

It remains to check that each inclusion map has finite fibers on connected components.
For $i^G_M$ we have $\pi_0 \underline M = M_{\OO} \backslash M_F / M_{\OO}$ but also $(G_F \curvearrowright [G] \times G_F / P^{\circ})$ has connected components indexed by $K \backslash G_F / P^{\circ} \equiv P_{\OO} \backslash P_F / P^{\circ} \equiv M_{\OO} \backslash M_F / M_{\OO}$ where the first equivalence follows by the Iwasawa decomposition and the second by the Levi decomposition of $P_F$ and the fact that $P^{\circ} = M_{\OO} V(F)$. Hence in fact $i^G_M$ induces a bijection on connected components.

Finally, we need to show that $i$ induces a finite-fibers map on connected components, which is to say we need to show that each double coset $KgK$ decomposes into finitely many double cosets in $K \backslash G / P_{\OO}$. But by compactness of $K$ we already know that $KgK$ decomposes in finitely many left $K$-cosets, so a fortiori it will split into finitely many double $(K, P_{\OO})$-cosets. This proves that the Satake map is well-defined as a function $\mathbb H^*_c ( \underline G ) \lra \mathbb H^*_c ( \underline M)$.
\end{rem}

We check that this definition coincides with the one to be obtained in section \ref{secSatakeUPS} via an explicit use of the derived universal principal series. Let then $F \in \mathbb H^*_c(\underline G)$, we have that \[ \pi_* (i^*F)(x,y) = \sum_{\Stab_G(x,y) \backslash \backslash \pi^{-1}((x,y)) \ni (x, \widetilde y)} \cores{\Stab_G(x,y)}{\Stab_G(x, \widetilde y)} (i^*F)(x, \widetilde y) = \] \[ = \sum_{\Stab_G(x,y) \backslash \backslash \pi^{-1}((x,y)) \ni (x, \widetilde y)} \cores{\Stab_G(x,y)}{\Stab_G(x, \widetilde y)} \res{\Stab_G(i(x, \widetilde y))}{\Stab_G(x, \widetilde y)} F(i(x, \widetilde y)). \]
To check it coincide with our formula \ref{intermediatesatake} in the next section, we can pick one representative for each orbit, so let's fix $x = K$ and $y = m M_{\OO}V_F$. Then $\Stab_G(x,y) = \mathrm M(\OO)_m \mathrm V(\OO)$, while for each $(K, mv P_{\OO}) \in \pi^{-1}(K,mM_{\OO}V_F)$ we have $\Stab_G(K, mv P_{\OO}) = \mathrm P(\OO)_{mv}$.
The above sum becomes \[ \sum_{\mathrm M(\OO)_m \mathrm V(\OO) \backslash \backslash \left\{ mv \in P_F / P_{\OO} \right\} } \cores{\mathrm M(\OO)_m \mathrm V(\OO)}{\mathrm P(\OO)_{mv}} \res{K_{mv}}{\mathrm P(\OO)_{mv}} F(K,mvK) \] which coincides with formula \ref{intermediatesatake} for $(F.1)$ after the definition \ref{satakedefn} of the Satake homomorphism in section \ref{secSatakeUPS} once we prove that $\mathrm P(\OO)_{mv} = \mathrm M(\OO)_m \mathrm V(\OO) \cap K_{mv}$. \\
Clearly, $\mathrm P(\OO)_{mv} \subset K_{mv}$, and letting $\eta \nu \in \mathrm M(\OO) \ltimes \mathrm V(\OO) = \mathrm P(\OO)$ we have that $\Ad(mv) (\eta \nu) \in \mathrm P(\OO)$ forces $\Ad(m) \eta \in \mathrm M(\OO)$ and hence the Levi component of each element of $\mathrm P(\OO)_{mv}$ is in $\mathrm M(\OO)_m$, which proves that $P(\OO) \subset M(\OO)_m V(\OO)$. \\
For the opposite inclusion, we notice that $K_{mv} \cap \mathrm M(\OO)_m \mathrm V(\OO) = K \cap \Ad(mv) K \cap \mathrm P(\OO) \cap \mathrm M(\OO)_m \mathrm V(\OO) = \Ad(mv) \mathrm P(\OO) \cap \mathrm P(\OO) \cap \mathrm M(\OO)_m \mathrm V(\OO) \subset \mathrm P(\OO)_{mv}$.

The map $i^G_M$ induces a pullback in cohomology which corresponds simply to restriction, using the canonical inclusion of isotropy groups. Hence the map $\mathbb H^*_c(\underline G) \stackrel{\mathcal S}{\lra} \mathbb H^*_c (\underline M)$ results to be \[ \mathcal S^G_M(F) \left( M_{\OO}, m M_{\OO} \right) = \res{\mathrm M(\OO)_m \mathrm V(\OO)}{\mathrm M(\OO)_m} \sum_{\mathrm M(\OO)_m \mathrm V(\OO) \backslash \backslash \left\{ mv \in P_F / P_{\OO} \right\} } \cores{\mathrm M(\OO)_m \mathrm V(\OO)}{\mathrm P(\OO)_{mv}} \res{K_{mv}}{\mathrm P(\OO)_{mv}} F(K,mvK) \] which coincides with formula \ref{onemoresatakeformula} in section \ref{secSatakeUPS}.

\subsection{Transitivity of the Satake map} \label{sectiontransitivesatake}
In the subsection we show that the Satake map is transitive for inclusion among Levi factors, using the groupoid setup described above: this shows the transitivity claims of theorem \ref{mainthm} and corollary \ref{casepinvertible}.

Given an inclusion between standard parabolic subgroups $\mathrm P = \mathrm M \ltimes \mathrm V \supset \mathrm M' \ltimes \mathrm V' = \mathrm P'$, we obtain three groupoid diagrams for $\mathcal S^G_M$, $\mathcal S^M_{M'}$ and $\mathcal S^G_{M'}$ and we show that the composition of the first two induces on cohomology the same map as the third one, via repeated applications of lemmas \ref{corescupres} and \ref{cohomswitch}.
Roughly speaking, the equivalence of the diagrams boils down to the fact that integrating along the unipotent radical $V'_F$ of $P'_F$ is equivalent to integrating along the unipotent radical $V_F$ of $P_F$ first, then along the unipotent radical $(V'_F \cap M_F)$ of $(P'_F \cap M_F)$, since $V'_F = (V'_F \cap M_F) \ltimes V_F$.

Denote $\mathrm P'_M = \mathrm P \cap \mathrm M$, a parabolic subgroup of $\mathrm M$. With the groups defined as above, we have the diagram \begin{displaymath} \xymatrix{ & \left( G_F \curvearrowright [G] \times G_F / P'_{\OO} \right) \ar[r]^i \ar[d]_{\pi} & \underline G = \left( G_F \curvearrowright [G]^2 \right) \\ \underline M' = \left( M'_F \curvearrowright [M']^2 \right) \ar[r]_{i^G_{M'}} & \left( G_F \curvearrowright [G] \times G_F / (P')^{\circ} \right) & } \end{displaymath} which describes the Satake map $\mathcal S^G_{M'}$.
We want to prove that $\mathcal S^G_{M'} = \mathcal S^M_{M'} \circ \mathcal S^G_M$ which is to say that the same function between compactly supported cohomology algebras is obtained by composing the following two diagrams:
\begin{displaymath} \xymatrix{ & \left( G_F \curvearrowright [G] \times G_F / P_{\OO} \right) \ar[r]^i \ar[d]_{\pi} & \underline G = \left( G_F \curvearrowright [G]^2 \right) \\ \underline M = \left( M_F \curvearrowright [M]^2 \right) \ar[r]_{i^G_M} & \left( G_F \curvearrowright [G] \times G_F / P^{\circ} \right) & } \end{displaymath} and then
\begin{displaymath} \xymatrix{ & \left( M_F \curvearrowright [M] \times M_F / (P'_M)_{\OO} \right) \ar[r]^i \ar[d]_{\pi} & \underline M = \left( M_F \curvearrowright [M]^2 \right) \\ \underline M' = \left( M'_F \curvearrowright [M']^2 \right) \ar[r]_{i^M_{M'}} & \left( M_F \curvearrowright [M] \times M_F / (P'_M)^{\circ} \right) & } \end{displaymath}

To prove the statement we will repeatedly apply lemma \ref{cohomswitch}, and replace the diagrams with ones that have the same effect on cohomology, until the two diagrams will coincide.

Let's start with the second one.
\begin{claim} \label{pullback1}
The diagram \begin{displaymath} \xymatrix{ \left( M_F \curvearrowright [M] \times P_F / P_{\OO} \right) \ar[d]_{\pi'} \ar[r]^{i'} & \left( G_F \curvearrowright [G] \times G_F / P_{\OO} \right)  \ar[d]_{\pi}  \\ \underline M = \left( M_F \curvearrowright [M]^2 \right) \ar[r]_i  & \left( G_F \curvearrowright [G] \times G_F / P^{\circ} \right) } \end{displaymath} is a pullback square of groupoids. Moreover, the maps involved satisfy the assumptions of lemma \ref{cohomswitch}: that is to say, $\pi$ is a finite covering morphism and $i$ induces inclusion on isotropy groups, and a finite-fibers map on connected components.
\end{claim}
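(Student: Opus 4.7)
The plan is to verify both assertions of the claim by direct inspection, using the description of pullbacks of groupoids-from-group-actions given in proposition 4.4 of \cite{bhk}, and then reducing the additional conditions to verifications already carried out in the remark after definition \ref{satakemapdef}.

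First, to check that the square is a pullback of groupoids, I will apply the cited proposition from \cite{bhk}: when both source groupoids are of the form `group acting on a set', the pullback groupoid comes from the pullback of the groups acting on the pullback of the underlying object sets. The pullback of the group homomorphisms $M_F \hookrightarrow G_F$ and $G_F = G_F$ is visibly $M_F$. For the object sets, one has to describe the pullback of
\[ [M] \times M_F/M_{\OO} \;\xrightarrow{i}\; [G] \times G_F/P^{\circ} \;\xleftarrow{\pi}\; [G]\times G_F/P_{\OO}. \]
Here $i$ is induced by the inclusion $M_{\OO} \hookrightarrow K$ on the first factor, and on the second factor by the composition $M_F/M_{\OO} \cong P_F/P^{\circ} \hookrightarrow G_F/P^{\circ}$ coming from the Levi decomposition $P^{\circ}= M_{\OO} \ltimes V_F$. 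A point of the pullback is a pair $\bigl((m_1 M_{\OO}, m_2 M_{\OO}), (g_1 K, g_2 P_{\OO})\bigr)$ with $m_1 K=g_1 K$ and $m_2 P^{\circ} = g_2 P^{\circ}$ in $G_F/P^{\circ}$. The first equation forces $g_1 K$ to be determined by $m_1\in[M]$; the second equation, combined with $m_2 \in M_F \subset P_F$, forces $g_2 P_{\OO}$ to lie in $P_F/P_{\OO}$, and then $m_2$ is determined by $g_2 P^{\circ}$ via the Levi identification. Hence the pullback of sets is canonically $[M] \times P_F/P_{\OO}$, with the maps $\pi'$ and $i'$ agreeing with the projections described in the diagram. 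This identifies the pullback groupoid with $\left(M_F \curvearrowright [M] \times P_F/P_{\OO}\right)$, as desired.

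Next, I check the additional hypotheses needed to later invoke lemma \ref{cohomswitch}. That $\pi$ is a finite covering morphism was verified in the remark following definition \ref{satakemapdef}: since the same group $G_F$ acts on source and target, $\pi$ is a covering, and the isotropy inclusion $K \cap \Ad(g)P_{\OO} \hookrightarrow K \cap \Ad(g)P^{\circ}$ is open of finite index by compactness of $P_{\OO}\cap \Ad(g)P^{\circ}$ and openness of $P_{\OO}\cap \Ad(g)P_{\OO}$ inside $P_{\OO}$. For $i$, the induced homomorphism on isotropy at a point $(m_1 M_{\OO}, m_2 M_{\OO})$ is the inclusion $\Ad(m_1)M_{\OO} \cap \Ad(m_2)M_{\OO} \hookrightarrow \Ad(m_1)K \cap \Ad(m_2)P^{\circ}$, which is visibly injective since $M_F \hookrightarrow G_F$ is.

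Finally, I verify that $i$ induces a finite-fibers — in fact bijective — map on connected components. By standard Iwasawa and Levi manipulations, both $\pi_0 \underline M$ and $\pi_0(G_F \curvearrowright [G] \times G_F/P^{\circ})$ are canonically identified with $M_{\OO} \backslash M_F /M_{\OO}$; the first via the obvious parametrization of $M_F$-orbits on $[M]^2$, the second via $K \backslash G_F/P^{\circ} = P_{\OO}\backslash P_F/P^{\circ} = M_{\OO}\backslash M_F / M_{\OO}$, and one checks directly that $i$ respects these identifications. The main subtlety — and the only place where some care is needed — is to keep straight the various decompositions and to confirm that the natural map from the pullback of object sets into $[M]\times P_F/P_{\OO}$ is genuinely a bijection; once that is in hand, the additional properties follow immediately from what was already shown in the remark following definition \ref{satakemapdef}.
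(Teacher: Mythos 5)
Your proposal is correct and follows essentially the same route as the paper's proof: it verifies the additional hypotheses ($\pi$ a finite covering via the remark after definition \ref{satakemapdef}, $i$ inducing inclusions of isotropy groups and a bijection $\pi_0 \underline M \cong M_{\OO}\backslash M_F/M_{\OO} \cong \pi_0(G_F \curvearrowright [G]\times G_F/P^{\circ})$), and identifies the pullback via proposition 4.4 of \cite{bhk} as $M_F$ acting on the pullback of object sets, computing the second factor to be $P_F/P_{\OO}$ exactly as in the paper.
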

\begin{proof}
While checking that the Satake map was well-defined we already checked all conditions to make sure that the pushforward $\pi_*$ and the pullback $i^*$ were well-defined on compactly supported groupoid cohomology: the only thing that remains to check is that $i$ induces inclusions of isotropy groups. \\
Letting then $(m_1 M_{\OO} , m_2 M_{\OO}) \in [M]^2$, we have that $i \left( (m_1 M_{\OO} , m_2 M_{\OO}) \right) = (m_1 K , m_2 P^{\circ})$. We have \[ \Stab_{M_F} (m_1 M_{\OO} , m_2 M_{\OO}) = \Ad(m_1) M_{\OO} \cap \Ad(m_2) M_{\OO} \subset \Ad(m_1) K \cap \Ad(m_2) P^{\circ} = \Stab_{G_F} (m_1 K , m_2 P^{\circ}) \] and hence $i$ induces inclusions on isotropy groups.

In the case of groupoids consisting of groups acting on sets with compatible maps, the pullback square is obtained by taking the pullback of the groups acting on the pullback of the sets (proposition 4.4 (ii) in \cite{bhk}).
It's clear that the pullback of the groups is $M_F$ and that the first factor of the pullback of the sets is $[M]$.
For the second factor, by definition of pullback of sets we obtain \[ G_F / P_{\OO} \times_{G_F / P^{\circ}} [M] =  \{ (gP_{\OO}, m M_{\OO} ) \in G_F / P_{\OO} \times [M] \, | \, \pi(gP_{\OO}) = i( m M_{\OO}) \} = \] \[ = \{ (gP_{\OO}, m M_{\OO} ) \in G_F / P_{\OO} \times [M] \, | \, gP^{\circ} = m P^{\circ} \}. \] As $m \in M_F$, we have $m P^{\circ} \subset P_F$ and thus the above condition forces $g \in P_F$ too.
Let then $g = nv$ be its Levi decomposition and notice than that $gP_{\OO} = nv P_{\OO} \stackrel{\pi}{\mapsto} nv P^{\circ} = n P^{\circ}$.
In particular, $m P^{\circ} = n P^{\circ}$ with $m, n \in M_F$ forces then $m M_{\OO} = n M_{\OO}$, i.e. $m = n$ as cosets in $[M]$. We obtain \[ G_F / P_{\OO} \times_{G_F / P^{\circ}} [M] \cong P_F / P_{\OO} \] with maps \[ P_F / P_{\OO} \stackrel{i'}{\lra} G_F / P_{\OO} \textnormal{ and } P_F / P_{\OO} \stackrel{\pi'}{\lra} [M] \] being respectively inclusion and projection onto the $M_F$-component of the Levi factorization $P_F = M_F \ltimes V_F$.
\end{proof}
Applying lemma \ref{cohomswitch} shows that for the purpose of moving around cohomology classes, we can replace the second diagram with the following one: \begin{displaymath} \xymatrix{ \left( M_F \curvearrowright [M] \times P_F / P_{\OO} \right) \ar[d]_{\pi'} \ar[r]^{i'} & \left( G_F \curvearrowright [G] \times G_F / P_{\OO} \right) \ar[r]^i & \underline G = \left( G_F \curvearrowright [G]^2 \right) \\ \underline M = \left( M_F \curvearrowright [M]^2 \right)  } \end{displaymath}
Using again claim \ref{pullback1} but replacing $\mathrm M$ with $\mathrm M'$, we obtain that \begin{displaymath} \xymatrix{ \left( M'_F \curvearrowright [M'] \times P'_F / P'_{\OO} \right) \ar[d]_{\pi'} \ar[r]^{i'} & \left( G_F \curvearrowright [G] \times G_F / P'_{\OO} \right)  \ar[d]_{\pi}  \\ \underline M' = \left( M'_F \curvearrowright [M']^2 \right) \ar[r]_i  & \left( G_F \curvearrowright [G] \times G_F / (P')^{\circ} \right) } \end{displaymath} is also a pullback square of groupoids, with maps satisfying the additional assumptions of lemma \ref{cohomswitch}.
Hence we can replace the first diagram by \begin{displaymath} \xymatrix{ \left( M'_F \curvearrowright [M'] \times P'_F / P'_{\OO} \right) \ar[d]_{\pi'} \ar[r]^{i'} & \left( G_F \curvearrowright [G] \times G_F / P'_{\OO} \right) \ar[r]^i & \underline G = \left( G_F \curvearrowright [G]^2 \right) \\ \underline M' = \left( M'_F \curvearrowright [M']^2 \right)  } \end{displaymath}
Finally claim \ref{pullback1} with $\mathrm G$ replaced by $\mathrm M$ and $\mathrm M$ by $\mathrm M'$ yields the pullback square of groupoids \begin{displaymath} \xymatrix{ \left( M'_F \curvearrowright [M'] \times (P'_M)_F / (P'_M)_{\OO} \right) \ar[d]_{\pi'} \ar[r]^{i'} & \left( M_F \curvearrowright [M] \times M_F / (P'_M)_{\OO} \right)  \ar[d]_{\pi}  \\ \underline M' = \left( M'_F \curvearrowright [M']^2 \right) \ar[r]_i  & \left( M_F \curvearrowright [M] \times M_F / (P'_M)^{\circ} \right) } \end{displaymath} with maps again satisfying the additional assumption of lemma \ref{cohomswitch}.
This allows us to replace the third diagram with \begin{displaymath} \xymatrix{ \left( M'_F \curvearrowright [M'] \times (P'_M)_F / (P'_M)_{\OO} \right) \ar[d]_{\pi'} \ar[r]^{i'} & \left( M_F \curvearrowright [M] \times M_F / (P'_M)_{\OO} \right) \ar[r]^i & \underline M = \left( M_F \curvearrowright [M]^2 \right) \\ \underline M' = \left( M'_F \curvearrowright [M']^2 \right)  } \end{displaymath}
Notice that the map $i: \left( G_F \curvearrowright [G] \times G_F / P'_{\OO} \right)\lra \underline G = \left( G_F \curvearrowright [G]^2 \right)$ involved in the first diagram factors through the similar map $i: \left( G_F \curvearrowright [G] \times G_F / P_{\OO} \right)\lra \underline G = \left( G_F \curvearrowright [G]^2 \right)$ of the second diagram, hence it remains to show that the diagrams \begin{displaymath} \xymatrix{ \left( M'_F \curvearrowright [M'] \times P'_F / P'_{\OO} \right) \ar[d]_{\pi} \ar[r]^{i'} & \left( G_F \curvearrowright [G] \times G_F / P'_{\OO} \right) \ar[r]^i & \left( G_F \curvearrowright [G] \times G_F / P_{\OO} \right) \\ \underline M' = \left( M'_F \curvearrowright [M']^2 \right)  } \end{displaymath} and \begin{displaymath} \xymatrix{ & \left( M_F \curvearrowright [M] \times P_F / P_{\OO} \right) \ar[d]_{\pi} \ar[r]^i & \left( G_F \curvearrowright [G] \times G_F / P_{\OO} \right) \\ \left( M'_F \curvearrowright [M'] \times (P'_M)_F / (P'_M)_{\OO} \right) \ar[d]_{\pi} \ar[r]^i & \underline M = \left( M_F \curvearrowright [M]^2 \right) & \\ \underline M' = \left( M'_F \curvearrowright [M']^2 \right) & & } \end{displaymath} induce the same map in cohomology. This is a consequence of the following
\begin{claim}
The diagram \begin{displaymath} \xymatrix{ \left( M'_F \curvearrowright [M'] \times P'_F / P'_{\OO} \right) \ar[d]_{\pi'} \ar[r]^{i'} & \left( M_F \curvearrowright [M] \times P_F / P_{\OO} \right)  \ar[d]_{\pi}  \\ \left( M'_F \curvearrowright [M'] \times (P'_M)_F / (P'_M)_{\OO} \right) \ar[r]_i  & \underline M = \left( M_F \curvearrowright [M]^2 \right) } \end{displaymath} is a pullback square of groupoids. Moreover, it satisfies the assumption of lemma \ref{cohomswitch}.
\end{claim}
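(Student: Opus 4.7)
The plan is to apply proposition 4.4 of \cite{bhk}, which reduces the groupoid pullback condition to two separate pullback conditions: one for the groups and one for the object sets. The group pullback $M'_F \times_{M_F} M_F = M'_F$ is immediate, as is the first factor $[M'] \times_{[M]} [M] = [M']$ on the set side; the content lies in identifying the second factor with $P'_F / P'_{\OO}$.

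The key structural input is the decomposition $P'_F = (P'_M)_F \ltimes V_F$, which I would derive from the inclusion $P' \subset P$. Since a larger parabolic has smaller unipotent radical one has $V \subset V'$, and since $V$ is normal in $P$ hence in $V'$, comparing root subgroups gives $V' = (V' \cap M) \ltimes V$. Associativity of semidirect products then yields $P' = M' \ltimes V' = P'_M \ltimes V$, and passing to $F$- and $\OO$-points gives $P'_F = (P'_M)_F \ltimes V_F$ together with $P'_{\OO} = (P'_M)_{\OO} \ltimes V_{\OO}$. This justifies defining the second-factor component of $\pi'$ as the projection $P'_F / P'_{\OO} \twoheadrightarrow (P'_M)_F / (P'_M)_{\OO}$ with fibers isomorphic to $V_F / V_{\OO}$.

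Next I would check that $\Psi : xP'_{\OO} \mapsto (xP_{\OO}, \pi'(xP'_{\OO}))$ is a bijection from $P'_F / P'_{\OO}$ onto the set pullback. Injectivity reduces to $P'_F \cap P_{\OO} = P'_{\OO}$, which I would verify by decomposing an element of the intersection via the two Levi decompositions and using $M_{\OO} \cap (P'_M)_F = (P'_M)_{\OO}$ (since $P'_M$ is a closed $\OO$-subscheme of $M$). The main obstacle I anticipate is surjectivity: given a compatible pair $(xP_{\OO}, y(P'_M)_{\OO})$, the condition $m_x M_{\OO} = yM_{\OO}$ (with $m_x$ the $M$-Levi of $x$) forces $m_x \in (P'_M)_F M_{\OO}$; then, using that $M_{\OO}$ normalizes $V_F$, I rewrite $x = m_x v_x \in (P'_M)_F M_{\OO} V_F = P'_F M_{\OO} \subset P'_F P_{\OO}$, so $x = x' q$ for some $x' \in P'_F$, $q \in P_{\OO}$. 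A short check using the commutativity of the square then confirms that $\Psi(x' P'_{\OO})$ equals the given pullback element.

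Finally, I would verify the two conditions of lemma \ref{cohomswitch}. The right vertical $\pi$ is a finite covering morphism by essentially the argument of claim \ref{pullback1}: at each object the quotient of the target isotropy by the source isotropy is identifiable with an orbit of $\Ad(m)M_{\OO}$ acting on the discrete space $V_F/V_{\OO}$ by conjugation, which is finite by compactness of $M_{\OO}$, so the inclusion is finite-index and open. The bottom horizontal $i$ is group-theoretically the injection $M'_F \hookrightarrow M_F$ and is injective on objects as well, so every morphism in $\underline M$ admits \emph{at most} a unique lift along $i$; this is the alternative hypothesis allowed by lemma \ref{cohomswitch}, and it sidesteps the openness of isotropy inclusions, which in fact fails here since $M'_F$ is not open in $M_F$.
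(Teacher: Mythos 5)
Your argument is correct and is essentially the paper's: both reduce via proposition 4.4 of \cite{bhk} to separate pullbacks of the groups and of the object sets, identify the second factor of the set pullback with $P'_F/P'_{\OO}$ through the decomposition $P'_F = (P'_M)_F \ltimes V_F$ (your surjectivity computation $x = x'q$ with $x' = y\widetilde v \in P'_F$, $q \in M_{\OO}$ is exactly the paper's manipulation $pP_{\OO} = p'\widetilde v P_{\OO}$), and then verify the hypotheses of lemma \ref{cohomswitch} --- the paper by citing claim \ref{pullback1} and the remark after definition \ref{satakemapdef}, you by direct arguments (finite orbits of a compact group acting on the discrete set $V_F/V_{\OO}$ for the right vertical, and at-most-unique lifts coming from injectivity of $M'_F \hookrightarrow M_F$ for the bottom map), both of which are valid, and your explicit injectivity check via $P'_F \cap P_{\OO} = P'_{\OO}$ fills in a step the paper leaves implicit. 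The one item you omit, and which the paper records because it is what keeps the transitivity argument inside compactly supported cohomology, is that the bottom map also induces a finite-fibers map on connected components; this follows as in the paper by factoring it through $\left( M_F \curvearrowright [M] \times M_F/(P'_M)_{\OO} \right)$, whose two constituent maps have this property by the proof of lemma \ref{cohomswitch} and by the remark following definition \ref{satakemapdef} respectively.
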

\begin{proof}
First of all, $\pi$ comes from the pullback square of claim \ref{pullback1}, hence it is a finite covering of groupoids as shown in the proof of lemma \ref{cohomswitch}.

Also, $i$ is the composition of two maps $ \left( M'_F \curvearrowright [M'] \times (P'_M)_F / (P'_M)_{\OO} \right) \stackrel{i'}{\lra} \left( M_F \curvearrowright [M] \times M_F / (P'_M)_{\OO} \right) \stackrel{i}{\lra} \underline M$, with $i'$ coming from the pullback square of claim \ref{pullback1} and thus inducing injections on isotropy groups and a finite-fibers map on connected components by the proof of lemma \ref{cohomswitch}. On the other hand, $\left( M_F \curvearrowright [M] \times M_F / (P'_M)_{\OO} \right) \lra \underline M$ induces injections on isotropy groups and a finite-fibers map on connected components by the remark following definition \ref{satakemapdef}.

It remains to check that $ \left( M'_F \curvearrowright [M'] \times P'_F / P'_{\OO} \right)$ with the given maps $\pi'$ and $i'$ is indeed the pullback square.
Just as before, by proposition 4.4(ii) in \cite{bhk} we can compute the pullback of the groups and the sets separately. It is clear that the pullback of the groups is $M'_F$, and that the pullback of the sets - computed separately on each factor - has first factor equal to $[M']$.
Finally, For the second factor by definition of pullback of sets we obtain \[ P_F / P_{\OO} \times_{M_F / M_{\OO}} (P'_M)_F / (P'_M)_{\OO} =  \{ (pP_{\OO}, p' (P'_M)_{\OO} ) \in P_F / P_{\OO} \times (P'_M)_F / (P'_M)_{\OO} \, | \, \pi(pP_{\OO}) = i( p' (P'_M)_{\OO}) \} = \] \[ = \{ (pP_{\OO}, p' (P'_M)_{\OO} ) \in P_F / P_{\OO} \times (P'_M)_F / (P'_M)_{\OO} \, | \, mM_{\OO} = p' M_{\OO} \textnormal{ using the Levi decomposition } p = mv \}. \]
In other words, there exists $m_0 \in M_{\OO}$ such that $m = p' m_0$. Notice that we have then $p P_{\OO} = mv P_{\OO} = p' m_0 v m_0^{-1} P_{\OO} = p' \widetilde v P_{\OO}$, since $M_F$ normalizes $V_F$.
We obtain then \[ \{ (pP_{\OO}, p' (P'_M)_{\OO} ) \in P_F / P_{\OO} \times (P'_M)_F / (P'_M)_{\OO} \, | \, mM_{\OO} = p' M_{\OO} \textnormal{ with } p = mv \} = \] \[ = \{ (p' \widetilde v P_{\OO}, p' (P'_M)_{\OO} ) \in P_F / P_{\OO} \times (P'_M)_F / (P'_M)_{\OO}  \} \equiv P'_F / P'_{\OO} \] with maps \[ P'_F / P'_{\OO} \stackrel{i'}{\lra} P_F / P_{\OO} \textnormal{ and } P'_F / P'_{\OO} \stackrel{\pi'}{\lra} (P'_M)_F / (P'_M)_{\OO} \] the first one being inclusion and the second one induced by the semidirect product decomposition $P'_F = (P'_M)_F \ltimes V_F$, which is just the restriction to $P'_F$ of the Levi decomposition $P_F = M_F \ltimes V_F$.
\end{proof}

\section{Satake homomorphism via the Universal Principal Series} \label{secSatakeUPS}
In this section we prove that the map defined in the previous one is a morphism of graded algebras (at least in degrees $0$ and $1$): notice indeed that the construction of section \ref{secSatakegroupoids} holds in any degree, and provides us with a Satake map in all degrees - but at the present time the only thing we can show in full generality is that $\mathcal S^G_M$ is an algebra homomorphism in degrees $0$ and $1$. We also describe in this section under what conditions $\mathcal S^G_M$ is an algebra homomorphism in all degrees, and we make them explicit for $\mathrm M = \mathrm T$. In particular, we complete in this section the proofs of theorems \ref{mainthm} and \ref{satakealldegrees}.

To show that our Satake map is a morphism, we re-interpret it in a different way, via a generalization to the derived setting of the classical universal principal series.

We start by recalling the classical definition of the Satake homomorphism via the universal principal series (see for instance \cite{HKP}, section 4).
Consider the space of compactly supported, locally constant functions on $G$ which are left-invariant by $K$ and right-invariant by $\mathrm T(\OO) \mathrm U(F)$ - let us temporarily denote this by $C^{\infty}_c \left( K \backslash G / B^{\circ} \right)$.
This module admits a left action of $\mathcal H^0(G,K)$ by convolution on the left, and a right action of the group algebra of $\mathrm T(F) / \mathrm T(\OO) \cong X_*(\mathrm T)$. Moreover, using the Iwasawa and then the Levi decomposition one shows that $C^{\infty}_c \left( K \backslash G / B^{\circ} \right)$ is a free module of rank 1 over this group algebra, with the isomorphism given by $f \mapsto 1.f$ where $1 =1_{K \mathrm T(\OO) \mathrm U(F)}$ is the `spherical vector'.
Therefore, the rule $F.1 =1. \mathcal S^G_T(F)$ yields a well-defined algebra homomorphism from $\mathcal H^0(G)$ to the group algebra of $X_*(\mathrm T)$, which coincide with the classical spherical Hecke algebra of the torus.

In this section we generalize the above construction to our derived setting, at least for the degree $1$ part of the derived Hecke algebra. The degree 1 obstacle is of technical nature, and it does not seem easy to bypass for a general Levi subgroup $\mathrm M$ (even though it disappears if we take $\bmod l$-torsion coefficients, rather than $\bmod p$-torsion). 

Our construction (with groupoids generalizing the spaces involved in the universal principal series description above) gives rise to a homomorphism which we show coincide with the map defined in section \ref{secSatakegroupoids} by comparing explicit formulas.\footnote{This comparison has in fact already been done in section \ref{secSatakegroupoids}.}

Let $\mathrm P = \mathrm M \ltimes \mathrm V$ be a standard parabolic, with $\mathrm M$ the standard Levi factor.
\begin{lem} \label{dhaisom}
Consider the groupoid $\underline P = \left( P_F \curvearrowright [M]^2 \right)$ (where the use that $[M] =M_F / M_{\OO} = P_F / P^{\circ}$) and the continuous morphism $i: \underline M \lra \underline P$ defined as the identity on the spaces, and the inclusion $M_F \hookrightarrow P_F$ at the level of groups.

This induces a pullback map in compactly supported cohomology $i^*: \mathbb H^*_c(\underline P) \lra \mathbb H^*_c(\underline M)$, and each compactly supported cohomology is a derived Hecke algebra (respectively $\mathcal H_P$ and $\mathcal H_M)$ as per propositions \ref{convgroupoidP} and \ref{convgroupoid}.

The pullback $i^*: \mathcal H_P \lra \mathcal H_M$ is an isomorphism of graded algebras.
\end{lem}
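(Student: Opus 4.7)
The plan is first to analyze the morphism $i$ at the level of groupoids. Both $\underline M$ and $\underline P$ share the underlying object set $[M]^2$, and since $V_F \subset P^\circ$ acts trivially on $[M] = P_F/P^\circ$, the $P_F$-action on $[M]^2$ factors through $M_F$. Hence $i$ induces a bijection on connected components. At each object $x = (m_1 M_\OO, m_2 M_\OO)$, the Levi decomposition $P_F = M_F \ltimes V_F$ together with the fact that $M_F$ normalizes $V_F$ gives
\[
\Stab_{P_F}(x) \;=\; \Stab_{M_F}(x) \ltimes V_F.
\]
In particular, a morphism in $\underline P$ admits a (necessarily unique) lift under $i$ precisely when its $V_F$-component is trivial, so $i$ satisfies the weak lifting hypothesis appearing in Lemma \ref{cohomswitch}.

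Second, I would prove that $i^*$ is an isomorphism of graded $S$-modules. At each object $x$, the pullback $i^x: H^*(\Stab_{P_F}(x), S) \to H^*(\Stab_{M_F}(x), S)$ is identified with the edge map of the Lyndon--Hochschild--Serre spectral sequence for the split extension above, so the claim reduces to showing the vanishing $H^q(V_F, S) = 0$ for $q > 0$ (with $H^0 = S$). A d\'evissage along the lower central series of $\mathrm V$, whose graded pieces are isomorphic to some $\mathbb G_a^{n_i}$ since we are in characteristic zero, reduces the vanishing to $\mathrm V = \mathbb G_a$. In that case smooth $1$-cocycles $F \to S$ are continuous group homomorphisms, and any such factors through a quotient $F/U$ that is uniquely $p$-divisible, forcing its image in the $p^a$-torsion ring $S$ to be zero. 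Higher vanishing follows from Pontryagin duality applied to the classification of central extensions of $F$ by the finite $p$-group $S$, again using the $p$-divisibility of $F$, together with an iterated K\"unneth argument. With this vanishing the spectral sequence collapses onto its bottom row and exhibits $i^x$ as the edge isomorphism; combined with the bijection of connected components, this yields that $i^*$ is an $S$-module isomorphism of $\mathbb H^*_c$.

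Third, I would verify that $i^*$ respects convolution, using the pushforward-cup-pullback descriptions from propositions \ref{convgroupoid} and \ref{convgroupoidP}. Introducing the change-of-group morphism $j: (M_F \curvearrowright [M]^3) \to (P_F \curvearrowright [M]^3)$, the obvious square involving either face map is a pullback of topological groupoids (both rows share the same underlying spaces, and on groups one has $M_F = M_F \times_{P_F} P_F$); applying Lemma \ref{cohomswitch} yields $i^* \circ (i^P_{1,3})_* = (i^M_{1,3})_* \circ j^*$. Combining this with the obvious commutation of pullbacks with cup products and with the remaining face pullbacks gives $i^*(F_1 \circ F_2) = (i^* F_1) \circ (i^* F_2)$. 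The principal obstacle will be establishing the vanishing $H^{>0}(V_F, S) = 0$ in smooth group cohomology: this relies crucially on $F$ having characteristic zero, so that $F$ is uniquely $p$-divisible, and the analogous statement genuinely fails in equal characteristic. All other steps are essentially formal consequences of the groupoid machinery already developed.
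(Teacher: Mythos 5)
Your overall route is the same as the paper's: you reduce the module statement to showing that restriction $H^*(\Stab_{P_F}(x),S)\to H^*(\Stab_{M_F}(x),S)$ is an isomorphism using the decomposition $\Stab_{P_F}(x)=\Stab_{M_F}(x)\ltimes \mathrm V(F)$ and the Hochschild--Serre spectral sequence, you reduce the needed vanishing $H^{q}(\mathrm V(F),S)=0$ ($q\ge 1$) by d\'evissage through the unipotent filtration to the case $\mathbb G_a$, and you prove multiplicativity by the same pullback square $\left(M_F\curvearrowright[M]^3\right)\to\left(P_F\curvearrowright[M]^3\right)$ over $i_{1,3}$ together with lemma \ref{cohomswitch} (your observation that $i$ only satisfies the weak ``at most one lift'' hypothesis, not openness of the stabilizer inclusions, is exactly the situation the paper is in as well). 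The structural parts of your argument are fine.

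The genuine gap is in the base case of the vanishing: you need $H^q(F,S)=0$ for \emph{all} $q\ge 1$ in smooth/continuous cohomology, $S=\Z/p^a$, but your proposed justification only covers low degrees. The $p$-divisibility of $(F,+)$ does kill $H^1=\Hom_{\mathrm{cont}}(F,S)$, and a central-extension argument can plausibly be made to handle $H^2$, but ``Pontryagin duality applied to the classification of central extensions'' says nothing about $H^q$ for $q\ge 3$, and the K\"unneth step only reduces $F\cong\Qp^{\oplus d}$ (or $\mathbb G_a^{n}$) to a single copy of $\Qp$ -- it cannot produce the vanishing of the higher cohomology of that single factor. This vanishing is the one nontrivial analytic input of the proof: the paper imports it from Emerton (the proof of lemma 4.3.7 in \cite{emerton2}), whose argument writes $\Qp=\bigcup_k p^{-k}\Zp$ and controls $H^q(\Qp,S)$ by the tower of restriction maps $H^{q}(p^{-k-1}\Zp,S)\to H^{q}(p^{-k}\Zp,S)$ (these are eventually multiplication by positive powers of $p$ on finite groups, so both the inverse limit and the ${\varprojlim}^1$ term die). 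Without this, or some equivalent argument, your claim that the spectral sequence collapses onto its bottom row in all degrees -- and hence that $i^*$ is an isomorphism in all degrees, as the lemma asserts -- is not established; you should either cite Emerton's lemma as the paper does or carry out the limit argument over the compact open subgroups $p^{-k}\OO$ explicitly.
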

\begin{proof}
We start by remarking that the pullback along $i$ indeed preserves compact support, because $\pi_0 \underline P = P^{\circ} \backslash P_F / P^{\circ} \equiv M_{\OO} \backslash M_F / M_{\OO} = \pi_0 \underline M$ by the Levi decomposition. The map induced by $i$ on connected components is thus the identity, and in particular it has finite fibers.

As usual, we interpret the algebra $\mathcal H_P$ as $\mathrm P(F)$-equivariant cohomology classes \[  A: \mathrm P(F) / P^{\circ} \times \mathrm P(F) / P^{\circ} \lra \bigoplus_{x,y \in \mathrm P(F) / P^{\circ}} H^* \left(\mathrm P(F)_{x,y} \right) \] with support on finitely many orbits and such that $A \left( m P^{\circ}, n P^{\circ} \right) \in H^* \left( \mathrm P(F)_{m,n} \right)$.

Denoting by $\mathrm P(F)_{m,n} = \Stab_{\mathrm P(F)}(mP^{\circ}, n P^{\circ})$ and $\mathrm M(F)_{m,n} = \Stab_{\mathrm M(F)}(mM_{\OO}, nM_{\OO})$, we have an explicit formula for the pullback map $i^*: \mathcal H_P \lra \mathcal H_M$, $A \mapsto \widetilde A$ as \begin{equation} \label{pullbackPtoM} \widetilde A \left( mM_{\OO}, nM_{\OO} \right) = \res{\mathrm P(F)_{m,n}}{\mathrm M(F)_{m,n}} A \left( m P^{\circ}, n P^{\circ} \right). \end{equation}

To check that this is an algebra homomorphism, we need to show that given $A, B \in \mathcal H_P$ we have $i^*A \circ i^*B = i^*(A \circ B)$ as elements of $\mathbb H^*_c (\underline M)$. We interpret both sides in terms of the usual groupoids diagrams: \begin{displaymath} \xymatrix{ A \in \underline P & & \underline P \ni B \\ & \left( P_F \curvearrowright [M]^3 \right) \ar[lu]_{i_{1,2}} \ar[ru]^{i_{2,3}} \ar[d]_{i_{1,3}} \\ & \underline P \\ & \underline M  \ar[u]_i } \end{displaymath} is the diagram representing the right hand side, while \begin{displaymath} \xymatrix{ A \in \underline P & & \underline P \ni B \\ i^*A \in \underline M \ar[u]^i & & \underline M \ni i^*B \ar[u]_i \\ & \left( M_F \curvearrowright [M]^3 \right) \ar[lu]_{i_{1,2}} \ar[ru]^{i_{2,3}} \ar[d]_{i_{1,3}} \\ & \underline M   } \end{displaymath} is the diagram representing the left hand side. By composing the arrows on top, this latter diagram simplifies to \begin{displaymath} \xymatrix{ A \in \underline P & & \underline P \ni B \\ & \left( M_F \curvearrowright [M]^3 \right) \ar[lu]_{i \circ i_{1,2}} \ar[ru]^{i \circ i_{2,3}} \ar[d]_{i_{1,3}} \\ & \underline M   } \end{displaymath}
As for the other diagram, we use the following fact:
\begin{claim}
The diagram \begin{displaymath} \xymatrix{ \left( M_F \curvearrowright [M]^3 \right) \ar[d]_i \ar[rr]^{i_{1,3}} & & \underline M \ar[d]^i \\ \left( P_F \curvearrowright [M]^3 \right) \ar[rr]_{i_{1,3}} & & \underline P } \end{displaymath} is a pullback square of groupoids. Moreover, it satisfies the assumption of lemma \ref{cohomswitch}: that is to say, $i_{1,3}$ is a finite covering morphism and $i$ induces inclusion on isotropy groups, and a finite-fibers map on connected components.
\end{claim}
\begin{proof}[Proof of claim] Notice that $i_{1,3}$ is clearly a continuous covering morphism, since we have the same group $P_F$ acting on both the source and the target groupoid.
Moreover, given $(xP^{\circ}, y P^{\circ}, z P^{\circ}) \in \Ob ( [M]^3)$ we have that \[ \Stab_{\underline P} \left( (xP^{\circ}, y P^{\circ}, z P^{\circ}) \right) = \Ad(x) P^{\circ} \cap  \Ad(y) P^{\circ} \cap  \Ad(z) P^{\circ} = \mathrm M(F)_{x,y,z} \mathrm V(F) \] while \[ \Stab_{\underline P} \left( (xP^{\circ}, z P^{\circ}) \right) = \Ad(x) P^{\circ} \cap  \Ad(z) P^{\circ} = \mathrm M(F)_{x,z} \mathrm V(F) \] so that the index of the inclusion of isotropy groups is the index $\left[ \mathrm M(F)_{x,z} : \mathrm M(F)_{x,y,z} \right]$, which is finite since $\mathrm M(\OO)$ is open and compact in $\mathrm M(F)$. This proves that $i_{1,3}$ is a finite covering morphism, since it also implies that $\mathrm M(F)_{x,y,z}$ is open in $\mathrm M(F)_{x,z}$. \\
That $i$ induces an inclusion of isotropy groups is immediate, and as for the connected components we have that $\pi_0 \underline M = M_{\OO} \backslash M_F / M_{\OO} \equiv P^{\circ} \backslash P_F / P^{\circ} = \pi_0 \underline P$, so that the map induced by $i$ is in fact a bijection.

It remains to show that the pullback square is indeed $\left( M_F \curvearrowright [M]^3 \right)$, with the maps as given in the diagram. As mentioned before, the pullback square of a diagram of groupoids given by groups acting on sets with compatible maps is the pullback of the groups acting on the pullback of the sets. \\
The pullback groups is obviously $M_F$, while for the sets we get $[M]^3$. The maps are the ones given.
\end{proof}
The usual application of lemma \ref{cohomswitch} shows then that the diagram for the right hand side is replaced by \begin{displaymath} \xymatrix{ A \in \underline P & & \underline P \ni B \\ & \left( P_F \curvearrowright [M]^3 \right) \ar[lu]_{i_{1,2}} \ar[ru]^{i_{2,3}}  \\ & \left( M_F \curvearrowright [M]^3 \right) \ar[u]_i \ar[d]_{i_{1,3}} \\ & \underline M  } \end{displaymath} and after composing the upper arrows the latter diagram becomes identical to the one for $i^*A \circ i^*B$.

This proves that the pullback is an algebra homomorphism. It remains to show that it is bijective: this amounts to showing that the two spaces of compactly supported cohomology classes are identified as $S$-modules under pullback.

Given that we have an equality of double coset spaces $P^{\circ} \backslash P_F / P^{\circ} \equiv M_{\OO} \backslash M_F / M_{\OO}$ and in light of the explicit formula \ref{pullbackPtoM}, it suffices to check that \[ H^i \left( \Stab_{\mathrm P(F)} \left( P^{\circ}, x P^{\circ} \right) , S \right) \stackrel{\res{}{}}{\lra} H^i \left( \Stab_{\mathrm M(F)} \left( M_{\OO}, x M_{\OO}  \right) , S \right) \] is an isomorphism for all $i \ge 0$. We find \[ \Stab_{\mathrm P(F)} \left( P^{\circ}, x P^{\circ} \right) = P^{\circ} \cap x P^{\circ} x^{-1} = \] \[ =  \left( M_{\OO} \cap x M_{\OO} x^{-1} \right) V_F = \mathrm M(\OO)_x \mathrm V(F) \] and \[ \Stab_{\mathrm M(F)} \left( M_{\OO}, x M_{\OO}  \right) = M_{\OO} \cap x M_{\OO} x^{-1} = \mathrm M(\OO)_x . \]
Hence we need to show that the restriction map \[ \res{}{} :H^i \left( \mathrm M(\OO)_x \mathrm V(F) , S \right) \lra H^i \left( \mathrm M(\OO)_x , S \right) \] is an isomorphism in all degrees $0$ and $1$. In degree $0$ this is obvious, as both cohomology groups are copies of $S$ and the map is the identity map.

In degree $1$, both cohomology groups are $\Hom$-groups, so it suffices to show that $\mathrm V(F) \subset \left[ \mathrm M(\OO)_x \mathrm V(F), \mathrm M(\OO)_x \mathrm V(F) \right] $.
The equality $\pi_0 \underline P \equiv \pi_0 \underline M$ implies that $x$ can be chosen among a set of coset representatives for $M_{\OO} \backslash M_F / M_{\OO}$, and the Cartan decomposition for $M$ says that we can then choose $x \in T_F$.
Therefore, $\mathrm T(\OO) \subset \mathrm M(\OO)_x$ and hence we can show that $\mathrm V(F)$ is in the commutator subgroup of $\mathrm T(\OO) \mathrm V(F) \subset \mathrm M(\OO)_x \mathrm V(F)$ by working on each root space $\mathrm U_{\alpha}(F) \subset \mathrm V(F)$ - here we use crucially that the cardinality of the residue field of $F$ is at least $5$.

The following argument that finishes the proof for all degrees $i > 1$ (and in fact works for $i=1$ as well) was suggested to us by Florian Herzig, who we are very thankful to. \\
Consider the short exact sequence $1 \lra \mathrm V(F) \lra \mathrm M(\OO)_x \mathrm V(F) \lra \mathrm M(\OO)_x \lra 1$ where we identify the quotient with the Levi factor. The Hochschild-Serre spectral sequence for continuous cohomology with trivial $S$-coefficients gives \[ E_2^{p,q} = H^p \left( \mathrm M(\OO)_x, H^q(\mathrm V(F), S) \right) \Rightarrow H^{p+q} \left( \mathrm M(\OO)_x \mathrm V(F), S \right). \]
It suffices to prove that $H^q( \mathrm V(F), S) = 0$ for all $q \ge 1$. Indeed, it will follow that the spectral sequence collapses on the second page, and the inflation map $H^p(\mathrm M(\OO)_x, S) \stackrel{\mathrm{infl}}{\lra} H^p( \mathrm M(\OO)_x \mathrm V(F), S)$ is an isomorphism in all degrees $p \ge 0$.
A consideration on the cochain complex shows easily that inflation is a right inverse to restriction, and thus we will obtain that the restriction map $H^p( \mathrm M(\OO)_x \mathrm V(F), S ) \lra H^p(\mathrm M(\OO)_x , S)$ is an isomorphism as well.

We prove that $H^q(\mathrm V(F), S)=0$ for any $q \ge 1$ by induction on $\dim \mathrm V$. The inductive step is immediate: as $\mathrm V$ is nilpotent, if $\dim \mathrm V \ge 2$ we can choose a short exact sequence $1 \lra \mathrm V' \lra \mathrm V \lra \mathrm V'' \lra 1$ having $\dim \mathrm V > \dim \mathrm V', \dim \mathrm V \ge 1$ and then Hochschild-Serre yields \[ E_2^{p,q} = H^p \left( \mathrm V''(F), H^q(\mathrm V'(F), S) \right) \Rightarrow H^{p+q} \left( \mathrm \mathrm V(F), S \right). \]
By induction, $H^q(\mathrm V'(F),S) = 0$ for all $q \ge 1$ and thus the Hochschild-Serre spectral sequence collapses on the second page, to yield that the inflation map $H^p(\mathrm V''(F), S) \lra H^p(\mathrm V(F),S)$ is an isomorphism in all degrees $p \ge 0$. By repeating the argument replacing $\mathrm V$ with $\mathrm V''$, we can assume that $\dim \mathrm V''=1$, and then the claim will follow from the base case of the induction.

It remains thus to show that $H^q(F, S) = 0$ for all $q \ge 1$. For $F = \Qp$ this has been proved by Emerton in the course of the proof of lemma 4.3.7 in \cite{emerton2}.
By induction on $d = \dim_{\Qp} F$ (the base case being Emerton's result), the general case follows by using again the Hochschild-Serre spectral sequence for $1 \lra \Qp^{\oplus d-1} \lra F \cong \Qp^{\oplus d} \lra \Qp \lra 1$ - or alternatively the K\"unneth formula for continuous cohomology.
\end{proof}
\begin{rem}
Notice that the last part of the proof holds verbatim even for coefficient rings $S$ where $p$ is invertible. Indeed, the only place where we used that $S = \Z / p^a \Z$ is when invoking Emerton's result but his proof describes $H^i (\Qp, S)$ as an inverse limit of copies of $H^i (p^k \Zp, S)$ - all these are trivially zero if $p$ is invertible in $S$, for any $i \ge 1$.
\end{rem}
We now give a brief description of the content of the rest of this section. Consider the topological groupoid $\left( P_F \curvearrowright [P] \times [M] \right)$ whose compactly supported cohomology $\mathbb H^*_c \left( P_F \curvearrowright [P] \times [M] \right)$ is the space $\mathcal H_P ( P_{\OO} , P^{\circ} )$ of $\mathrm P(F)$-invariant cohomology classes $A$ on $P_F / P_{\OO} \times P_F / P^{\circ}$, supported on finitely many orbits and such that \[ A \left( x P_{\OO}, y P^{\circ} \right) \in H^* \left( \Stab_{\mathrm P(F)} \left( x P_{\OO}, y P^{\circ} \right) , S \right). \]
We have a continuous morphism of topological groupoids \[ \left( P_F \curvearrowright [P] \times [M] \right) \stackrel{i}{\lra} \left( G_F \curvearrowright [G] \times G/ P^{\circ} \right) \] given by inclusion $P_F \hookrightarrow G_F$ at the level of groups, `identity' on the first component $[P] = P_F / P_{\OO} \equiv G / K = [G]$ and injection on the second component $P_F / P^{\circ} \hookrightarrow G_F / P^{\circ}$. This turns out to be a homotopy equivalence (see fact \ref{homequivPtoG}) and thus gives an isomorphism on cohomology via pullback.

We also have a convolution action of $\mathcal H_G$ on the left of $\mathbb H^*_c \left( G_F \curvearrowright [G] \times G/ P^{\circ} \right)$ (denoted $ \mathcal H_G(K, P^{\circ})$ from now on) as well as a convolution action of $\mathcal H_P$ on the right of $\mathcal H_P(P_{\OO}, P^{\circ})$.

The diagram \begin{displaymath} \xymatrix{ \mathcal H_G = \mathcal H_G(K,K) \ar@/^2.0pc/[r] & \mathcal H_G(K, P^{\circ}) \ar[d]^{i^*}_{\cong} \\ & \mathcal H_P(P_{\OO}, P^{\circ}) & \mathcal H_P(P^{\circ}, P^{\circ}) = \mathcal H_P \ar@/^2.0pc/[l] } \end{displaymath} \\ where the curved arrows denote the aforementioned actions, allows us to view the middle column $\mathcal H_G(K, P^{\circ}) \cong \mathcal H_P(P_{\OO}, P^{\circ})$ as a $(\mathcal H_G, \mathcal H_P)$-bimodule: this is proved in fact \ref{bimodaction}.

It turns out that by choosing a specific\footnote{This is the spherical vector.} element $1 \in \mathcal H_P(P_{\OO}, P^{\circ})$, the convolution action $\mathcal H_P(P_{\OO}, P^{\circ}) \curvearrowleft \mathcal H_P$ provides a morphism of (right) $\mathcal H_P$-modules via $A \mapsto 1.A$ which is an isomorphism in degrees $0$ and $1$ and in all degrees if $\mathrm P = \mathrm B$ and $p$ is large enough - see proposition \ref{modisom} for explicit bounds.

This allows us to define an algebra morphism $\mathcal S: \mathcal H_G^{\le 1} \lra \mathcal H_P^{\le 1}$ (respectively, $\mathcal S: \mathcal H_G \lra \mathcal H_P$ if the last part of proposition \ref{modisom} holds) intrinsically via the rule $F.1 = 1. \mathcal S(F)$, as in definition \ref{preliminarysatake}. Composing with the isomorphism given by lemma \ref{dhaisom} yields an algebra morphism $\mathcal H_G^{\le 1} \lra \mathcal H_M^{\le 1}$ (respectively, $\mathcal S: \mathcal H_G \lra \mathcal H_M$ if the last part of proposition \ref{modisom} holds) as in definition \ref{satakedefn} and the final part of this section proves - by obtaining explicit formulas - that this morphism coincides with the Satake map defined in section \ref{secSatakegroupoids}.
\begin{prop} \label{modisom}
Consider the following push-pull diagram of groupoids \begin{displaymath} \xymatrix{ \left( P_F \curvearrowright [P] \times P_F / P^{\circ} \right) & & \left( P_F \curvearrowright P_F / P^{\circ} \times P_F / P^{\circ} \right) = \underline P \\ & \left( P_F \curvearrowright [P] \times P_F / P^{\circ} \times P_F / P^{\circ} \right) \ar[lu]_{i_{1,2}} \ar[ru]^{ i_{2,3}} \ar[d]_{i_{1,3}} \\ & \left( P_F \curvearrowright [P] \times P_F / P^{\circ} \right) } \end{displaymath}

The above diagram yields a well-posed action by convolution of $\mathcal H_P$ on $\mathcal H_P \left( P_{\OO}, P^{\circ} \right)$, giving the latter the structure of a graded right $\mathcal H_P$-module.

In particular, let $1$ be the element of $\mathcal H_P \left( P_{\OO}, P^{\circ} \right)$ supported on the $P_F$-orbit of $\left( P_{\OO}, P^{\circ} \right)$ and such that $1 \left( P_{\OO}, P^{\circ} \right) \in H^0 \left( P_{\OO}, S \right)$ is the element $1$ of $S$.
Then the map \[ \mathcal H_P \lra \mathcal H_P \left( P_{\OO}, P^{\circ} \right) \quad F \mapsto 1.F \] is a degree-preserving morphism of graded $\mathcal H_P$-modules, and when restricted to $\mathcal H^{\le 1}_P$ it is an isomorphism onto its image $\mathcal H_P^{\le 1} (P_{\OO}, P^{\circ})$.

If $\mathrm P = \mathrm B = \mathrm T \ltimes \mathrm U$, $ p \ge h$ and\footnote{Recall that $h$ is the Coxeter number of $\mathrm G$, defined as the maximum of $\langle \rho, \alpha^{\vee} \rangle +1$, where $\rho$ is the half-sum of positive roots and $\alpha$ varies among the positive roots $\Phi^+(\mathrm G, \mathrm T)$.}  either \begin{itemize}
    \item $F = \Qp$ or
    \item $F / \Qp$ is a Galois extension of degree $d = ef$ and\footnote{Recall that $f$ denotes the residue field degree.} $p^f - 1 > 2d(h-1)$,
\end{itemize} then the map \[ \mathcal H_B \lra \mathcal H_B \left( B_{\OO}, B^{\circ} \right) \quad F \mapsto 1.F \] is an isomorphism in every degree.
\end{prop}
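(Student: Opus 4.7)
The plan has three ingredients: set up the convolution action and write $1.F$ explicitly; treat degrees $\le 1$ by a commutator computation analogous to Lemma \ref{dhaisom}; then handle the all-degrees case under the Coxeter-number hypothesis via Hochschild-Serre.

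The push-pull diagram yields a well-defined right action of $\mathcal H_P$ on $\mathcal H_P(P_\OO, P^\circ)$ by the general groupoid machinery used in Propositions \ref{convgroupoid} and \ref{convgroupoidP}: one checks that $i_{1,3}$ is a finite covering morphism, that cup-products of pullbacks are $i_{1,3}$-fiberwise compactly supported, and that convolution preserves compact support, all of which reduce to openness and finite-index properties of $P_\OO \subset P^\circ \subset P_F$. The analogous triple push-pull is associative by the same machinery, so $F \mapsto 1.F$ is automatically a right $\mathcal H_P$-module map taking the identity to $1$. Computing the pushforward directly, the support condition on $1$ forces the only surviving term in the convolution to be the one with $zP^\circ = P^\circ$, and because $P_\OO \subset P^\circ$ the two stabilizers $\mathrm{Stab}_{P_F}(P_\OO, P^\circ, yP^\circ)$ and $\mathrm{Stab}_{P_F}(P_\OO, yP^\circ)$ coincide, so the corestriction is trivial. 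Choosing representatives $y \in \mathrm M(F)$ via Iwasawa,
\[ (1.F)(P_\OO, y P^\circ) = \res{\mathrm M(\OO)_y \mathrm V(F)}{\mathrm M(\OO)_y \mathrm V(\OO)} F(P^\circ, y P^\circ). \]
Thus $F \mapsto 1.F$ is diagonal in the common double-coset decomposition $P^\circ \backslash P_F / P^\circ \cong P_\OO \backslash P_F / P^\circ \cong \mathrm M(\OO) \backslash \mathrm M(F) / \mathrm M(\OO)$, and on each summand it is the ordinary restriction $H^i(\mathrm M(\OO)_y \mathrm V(F), S) \to H^i(\mathrm M(\OO)_y \mathrm V(\OO), S)$.

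In degree $0$ this restriction is the identity on $S$, and in degree $1$ (both sides being $\mathrm{Hom}(-,S)$ of an abelianization) it suffices to show that $\mathrm V(F) \subset [\mathrm M(\OO)_y \mathrm V(F), \mathrm M(\OO)_y \mathrm V(F)]$ and $\mathrm V(\OO) \subset [\mathrm M(\OO)_y \mathrm V(\OO), \mathrm M(\OO)_y \mathrm V(\OO)]$. The first is done in the proof of Lemma \ref{dhaisom}; for the second I would take $y \in \mathrm T(F)$ via Cartan so that $\mathrm T(\OO) \subset \mathrm M(\OO)_y$, then work root-space by root-space: on $\mathrm U_\alpha(\OO)$, the commutator $[t, u_\alpha(x)] = u_\alpha((\alpha(t)-1)x)$ generates everything as soon as some $t \in \mathrm T(\OO)$ has $\alpha(t) - 1 \in \OO^\times$, which the $|k| \ge 5$ hypothesis secures simultaneously for all simple roots; induction along the lower central series of $\mathrm V$ handles the non-simple root spaces. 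A remark should be inserted here explaining the degree restriction: $H^q(\mathrm V(\OO), S)$ is nonzero for $q \ge 1$ (being built from $\bigwedge^q$ of a free $\Zp$-module of rank $d \cdot \dim \mathrm V$), so the Hochschild-Serre spectral sequence for $1 \to \mathrm V(\OO) \to \mathrm M(\OO)_y \mathrm V(\OO) \to \mathrm M(\OO)_y \to 1$ does not collapse on $E_2$ -- this is the obstruction alluded to in Theorem \ref{mainthm}.

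For the final statement, where $\mathrm P = \mathrm B$, I would compare the two Hochschild-Serre spectral sequences for $1 \to \mathrm U(\OO) \to \mathrm T(\OO) \mathrm U(\OO) \to \mathrm T(\OO) \to 1$ and $1 \to \mathrm U(F) \to \mathrm T(\OO) \mathrm U(F) \to \mathrm T(\OO) \to 1$. Since $H^q(\mathrm U(F), S) = 0$ for $q \ge 1$ (Lemma \ref{dhaisom}), the second collapses at $E_2$ and yields $H^n(\mathrm T(\OO) \mathrm U(F), S) \cong H^n(\mathrm T(\OO), S)$. It therefore suffices to prove that
\[ H^p(\mathrm T(\OO), H^q(\mathrm U(\OO), S)) = 0 \quad \textnormal{for all } p \ge 0,\ q \ge 1. \]
My strategy is: (a) apply Lazard's theorem (valid because $p \ge h$) to identify $H^q(\mathrm U(\OO), S)$ with the Lie algebra cohomology of $\mathrm U(\OO)$ as a $\mathrm T(\OO)$-module; (b) decompose this into $\mathrm T$-weight spaces via a Kostant-style computation, noting that every occurring weight is a nonzero sum of at most $h-1$ positive roots; and (c) for each such nonzero weight $\chi$, prove $H^p(\mathrm T(\OO), S_\chi) = 0$ in every degree $p$ by averaging over a cyclic subgroup of $\mathrm T(k)$ of order prime to $p$ on which $\chi$ restricts nontrivially. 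The hard part will be step (c): the numerical condition $p^f - 1 > 2d(h-1)$ is precisely what ensures the reduction of each such $\chi$ modulo $\varpi$ lands in a subgroup of $k^\times$ large enough to admit a prime-to-$p$ cyclic quotient on which it is nontrivial, while the Galois assumption on $F/\Qp$ ensures this holds uniformly across all weights. Granted this vanishing, the restriction map is an isomorphism in every degree and the final claim follows.
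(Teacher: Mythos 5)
Most of your plan coincides with the paper's proof: the well-posedness of the right action and the module property come from the same general groupoid facts, the computation $(1.F)(P_{\OO},mP^{\circ})=\res{\mathrm M(\OO)_m \mathrm V(F)}{\mathrm M(\OO)_m \mathrm V(\OO)}F(P^{\circ},mP^{\circ})$ is exactly the paper's formula, and the degree $\le 1$ case is settled there, as in your sketch, by the torus-commutator argument on root subgroups (choosing antidominant representatives so that $\mathrm T(\OO)\subset \mathrm M(\OO)_m$, using $|k|\ge 5$), together with the observation that restriction factors through $H^i(\mathrm M(\OO)_m,S)$. The reduction of the $\mathrm P=\mathrm B$ case to the vanishing of $H^p(\mathrm T(\OO),H^q(\mathrm U(\OO),S))$ for $q\ge 1$, and the final vanishing via a prime-to-$p$ cyclic subgroup $\mu(k^*)$ on which each nonzero weight acts nontrivially, are also the paper's steps.

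The genuine gap is in how you propose to get the weight decomposition of $H^q(\mathrm U(\OO),S)$ and in how the numerics are then used. First, Lazard's theorem does not identify $H^q(\mathrm U(\OO),\Z/p^a)$ with Lie algebra cohomology as a $\mathrm T(\OO)$-module; the comparison is only available with $\Qp$-coefficients (or under saturation hypotheses that are not being verified here), and for torsion coefficients this identification is precisely the nontrivial input. The paper instead invokes its companion result (Fact \ref{kostantcomputations}, from \cite{ronchetti}, where $p\ge h$ enters): $H^*(\mathrm U(\OO_F),\Zp)$ is torsion-free and carries a $\mathrm T(\OO_F)$-equivariant filtration with graded pieces $V_{\OO_F}\bigl(\sum_{\sigma\in\Gal(F/\Qp)}w_{\sigma}\cdot 0\bigr)$, and then passes to $\Z/p^n$-coefficients via the exact sequence $0\to\Zp\to\Zp\to\Z/p^n\to 0$; note that for $F\neq\Qp$ the weights are sums over Galois embeddings, which is where the factor $d$ in the hypothesis comes from and which your sketch does not capture. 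Second, your description of the weights as ``nonzero sums of at most $h-1$ positive roots'' is incorrect ($\ell(w)$ can be as large as $|\Phi^+|$); what is true and what the argument needs is the pairing bound $|\langle w\cdot 0,\alpha^{\vee}\rangle|\le 2(h-1)$, hence $\le 2d(h-1)$ after summing over embeddings. Third, your step (c) does not cover the case $F=\Qp$: the theorem assumes only $p\ge h$ there, and $p-1>2(h-1)$ can fail (e.g.\ $p=h$), so the crude bound gives nothing; the paper handles this case separately by observing that $w^{-1}-\id$ is invertible on $X_*(\mathrm T)$, so one can choose a cocharacter $\mu$ with $\langle\mu,w\rho-\rho\rangle=1$, which is automatically prime to $p^f-1$. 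Without these three repairs the all-degrees statement is not proved by your plan.
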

\begin{proof}
The first part of the proposition is an application of fact \ref{generalaction}, so we need to check the three conditions in the statement of that result to obtain a well-posed algebra action of $\mathcal H_P$ on $\mathcal H_P(P_{\OO}, P^{\circ})$.

To show condition 3, i.e. that $i_{1,3}$ is a finite covering morphism, it suffices to check that the injection of isotropy groups is an open inclusion of finite index for one object in each connected component of the source groupoid. Let then $(P_{\OO}, m P^{\circ}, n P^{\circ}) \in [P] \times [M]^2$, whose stabilizer is $\mathrm P(\OO) \cap \mathrm M(F)_{m,n} \mathrm V(F) = \mathrm M(\OO)_{m,n} \mathrm V(\OO)$. We have $i_{1,3} \left( (P_{\OO}, m P^{\circ}, n P^{\circ}) \right) = (P_{\OO}, n P^{\circ})$ whose stabilizer is $\mathrm P(\OO) \cap \mathrm M(F)_{n} \mathrm V(F) = \mathrm M(\OO)_n \mathrm V(\OO)$ and hence the index is $[\mathrm M(\OO)_n : \mathrm M(\OO)_{m,n}]$ which is finite as $\mathrm M(\OO) \subset \mathrm M(F)$ is open and compact; moreover $\mathrm M(\OO)_n \mathrm V(\OO)$ is compact, so the inclusion is open.

Next we check condition 1: given $A \in \mathbb H^*_c \left( P_F \curvearrowright [P] \times [M] \right)$ and $B \in \mathcal H_P$, we show that the cohomology class $i_{1,2}^* A \cup i_{2,3}^* B$ is $i_{1,3}$-fiberwise compactly supported. This boils down to the index $[\Stab_{P_F} (xP_{\OO}) : \Stab_{P_F} (x P_{\OO}, m P^{\circ})]$ being finite.
Conjugating by $x^{-1}$ yields that this index is the same as $ [\Stab_{P_F} (P_{\OO}) : \Stab_{P_F} ( P_{\OO}, x^{-1}m P^{\circ})]$, and since $x^{-1} m P^{\circ} = m' P^{\circ}$ for some $m' \in M_F$, we obtain that the index is $[ \mathrm P(\OO) : \mathrm P(\OO) \cap \mathrm M(F)_{m'} \mathrm V(F) ] = [ \mathrm P(\OO) : \mathrm M(\OO)_{m'} \mathrm V(\OO)] = [\mathrm M(\OO) : \mathrm M(\OO)_{m'}]$ which is finite.

Finally, we need to check condition 2: since $V_F$ is normal in $P_F$, we have \[ P_{\OO} m P^{\circ} n P^{\circ} = P_{\OO} m M_{\OO} n P^{\circ} = V_{\OO} \left( M_{\OO} m M_{\OO} n M_{\OO} \right) V_F. \]
Since $M_{\OO} \subset M_F$ is open and compact, we have a finite disjoint union $M_{\OO} m M_{\OO} n M_{\OO} = \bigsqcup_{i=1}^N M_{\OO} m_i M_{\OO}$ and thus \[ P_{\OO} m P^{\circ} n P^{\circ} = V_{\OO} \left( \bigsqcup_{i=1}^N M_{\OO} m_i M_{\OO} \right) V_F = \bigcup_{i=1}^N P_{\OO} m_i P^{\circ}. \]
This completes the proof of the first part of the proposition.
It is then an immediate consequence of the structure of right $\mathcal H_P$-module on $\mathcal H_P ( P_{\OO} , P^{\circ})$ that the map $F \mapsto 1.F$ is a morphism of right $\mathcal H_P$-modules. It is also clearly degree preserving, since $1 \in \mathcal H_P ( P_{\OO}, P^{\circ} )$ is supported in degree 0.

It remains to show that $F \mapsto 1.F$ is bijective in degree at most $1$. Both $\mathcal H_P$ and $\mathcal H_P \left( P_{\OO}, P^{\circ} \right)$ can be broken down into direct sums of cohomology algebras of groups indexed by the same double coset set $M_{\OO} \backslash M_F / M_{\OO}$: more precisely, $ \mathcal H_P \cong \bigoplus_{m \in M_{\OO} \backslash M_F / M_{\OO}} H^* \left( \mathrm M(\OO)_m \mathrm V(F), S \right)$ and $ \mathcal H_P \left( P_{\OO}, P^{\circ} \right) \cong \bigoplus_{m \in M_{\OO} \backslash M_F / M_{\OO}} H^* \left( \mathrm M(\OO)_m \mathrm V(\OO), S \right)$.

By definition of the convolution action, we have \[ (1.F) \left( P_{\OO}, m P^{\circ} \right) = \sum_{xP^{\circ} \in P_F / P^{\circ}} 1 \left( P_{\OO}, x P^{\circ} \right) \cup F \left( x P^{\circ}, m P^{\circ} \right), \] but since $1$ is supported on the $P_F$-orbit of $(P_{\OO}, P^{\circ})$, for a summand to be nonzero we need $x \in P_{\OO} \subset P^{\circ}$ and hence we can pick $x P^{\circ} = P^{\circ}$ and get \[ (1.F) \left( P_{\OO}, m P^{\circ} \right) = 1 \left( P_{\OO}, P^{\circ} \right) \cup F \left( P^{\circ}, m P^{\circ} \right). \]
More precisely, since $\Stab \left( P_{\OO}, P^{\circ} \right) = P_{\OO}$, $\Stab \left( P^{\circ}, mP^{\circ} \right) = \left( M_{\OO} \cap m M_{\OO} m^{-1} \right) V_F$ and $\Stab \left( P_{\OO}, m P^{\circ} \right) = \left( M_{\OO} \cap m M_{\OO} m^{-1} \right) V_{\OO}$ we obtain \[ (1.F) \left( P_{\OO}, m P^{\circ} \right) = \cores{\mathrm M(\OO)_m \mathrm V(\OO)}{\mathrm M(\OO)_m \mathrm V(\OO)} \left( \res{\mathrm P(\OO)}{\mathrm M(\OO)_m \mathrm V(\OO)} 1 \left( P_{\OO}, P^{\circ} \right) \cup \res{\mathrm M(\OO)_m \mathrm V(F)}{\mathrm M(\OO)_m \mathrm V(\OO)} F \left( P^{\circ}, m P^{\circ} \right) \right) \] and since cupping with $1$ does not do anything, even after restriction, \begin{equation} \label{Pisom} (1.F) \left( P_{\OO}, m P^{\circ} \right) = \res{\mathrm M(\OO)_m \mathrm V(F)}{\mathrm M(\OO)_m \mathrm V(\OO)} F \left( P^{\circ}, m P^{\circ} \right). \end{equation}
This explicit formula shows that bijectivity amounts to the restriction map \[ \res{\mathrm M(\OO)_m \mathrm V(F)}{\mathrm M(\OO)_m \mathrm V(\OO)} :H^i \left( \mathrm M(\OO)_m \mathrm V(F) , S \right) \lra H^i \left( \mathrm M(\OO)_m \mathrm V(\OO) , S \right) \] being an isomorphism for all $i=0,1$ and for all $m$ in a choice of representatives for $P_{\OO} \backslash P_F / P^{\circ} \equiv M_{\OO} \backslash M_F / M_{\OO}$. Then choosing $m \in X^M_*(\mathrm T)_-$, the antidominant cocharacters for $\mathrm M$, allows us to show that $\mathrm T(\OO) \subset \mathrm M(\OO)_m$ and then run the same argument as in lemma \ref{dhaisom}, using again that $|k| \ge 5$.

\begin{rem}
As in the lemma \ref{dhaisom}, it is immediate from the proof that the map $F \mapsto 1.F$ is an isomorphism in degree less than $k$, where $k$ is the greatest integer such that the restriction map $H^i \left( \mathrm M(\OO)_m \mathrm V(F) , S \right) \lra H^i \left( \mathrm  M(\OO)_m \mathrm V(\OO) , S \right)$ is an isomorphism for all $m \in M_{\OO} \backslash M_F / M_{\OO}$ and for all $i \le k$.

In the course of the proof of lemma \ref{dhaisom}, we showed that the restriction map \[ \res{\mathrm M(\OO)_m \mathrm V(F)}{\mathrm M(\OO)_m} :H^i \left( \mathrm M(\OO)_m \mathrm V(F) , S \right) \lra H^i \left( \mathrm M(\OO)_m, S \right) \] is an isomorphism for all $m \in M_F$ and for all $i \ge 0$.
Since the restriction maps factor for multiple inclusions, showing that $H^i \left( \mathrm M(\OO)_m \mathrm V(F) , S \right) \stackrel{\res{}{}} {\lra} H^i \left( \mathrm M(\OO)_m \mathrm V(\OO) , S \right)$ is an isomorphism is equivalent to showing that \[ \res{\mathrm M(\OO)_m \mathrm V(\OO)}{\mathrm M(\OO)_m} :H^i \left( \mathrm M(\OO)_m \mathrm V(\OO) , S \right) \lra H^i \left( \mathrm M(\OO)_m, S \right) \] is an isomorphism.

If $S$ is a ring where $p$ is invertible, using the Hochschild-Serre spectral sequence as in the proof of lemma \ref{dhaisom} allows one to show that $H^q \left( V(\OO), S \right) =0$ for all $q \ge 1$, and hence this restriction map is an isomorphism in all degrees, proving that the map $\mathcal H_P \lra \mathcal H_P \left( P_{\OO}, P^{\circ} \right)$ is an isomorphism of right $\mathcal H_P$-modules in all degrees. This observation and the remark after the proof of lemma \ref{dhaisom} are the crucial steps to prove corollary \ref{casepinvertible}.
\end{rem}

Assume now that $\mathrm P = \mathrm B = \mathrm T \ltimes \mathrm U$ and that $p \ge h$. Thanks to the remark, to finish the proof of the proposition it remains to show that the restriction map \[ \res{\mathrm T(\OO) \mathrm U(\OO)}{\mathrm T(\OO)} :H^i \left( \mathrm T(\OO) \mathrm U(\OO) , S \right) \lra H^i \left( \mathrm T(\OO), S \right) \] is an isomorphism in all degrees as long as one of the additional conditions in the statement holds.

To show that the restriction map is an isomorphism, it suffices to check that the inflation map induced by $\mathrm B(\OO) \twoheadrightarrow \mathrm B(\OO) / \mathrm U(\OO) \cong \mathrm T(\OO)$ is also an isomorphism (one can check that the two isomorphism statements are equivalent by working on cochain representatives).

Now, the inflation map is the edge map in the Hochschild-Serre spectral sequence induced by $1 \lra \mathrm U(\OO) \lra \mathrm B(\OO) \lra \mathrm T(\OO) \lra 1$; this spectral sequence reads \[ E_2^{ij} = H^i \left( \mathrm T(\OO), H^j \left( \mathrm U(\OO), S \right) \right) \Rightarrow H^{i+j} \left( \mathrm B(\OO), S \right), \] thus to show that the inflation map is an isomorphism it suffices to prove that $E_2^{ij} = 0$ for all $i \ge 0, j \ge 1$.

Consider the short exact sequence of trivial $\mathrm B(\OO)$-representations: \[ 0 \lra \Zp \stackrel{\cdot p^n} {\lra} \Zp \lra \Z / p^n \lra 0 \] which induces a long exact sequence in $\mathrm U(\OO)$-cohomology from which for every $j \ge 1$ we extract the following short exact sequence: \[ 0 \lra H^j \left( \mathrm U(\OO), \Zp \right) / p^n H^j \left( \mathrm U(\OO), \Zp \right) \lra H^j \left( \mathrm U(\OO), \Z / p^n \right) \lra H^{j+1} \left( \mathrm U(\OO), \Zp \right) [ p^n ] \lra 1. \]
An immediate consequence of fact \ref{kostantcomputations} below is that $H^* \left ( \mathrm U(\OO), \Zp \right)$ is torsion-free.
This torsion-freeness implies that the third term of the last short exact sequence is zero, and we conclude that $H^j \left( \mathrm U(\OO), \Z / p^n \right)$ is the cokernel of the multiplication by $p^n$ on $H^j \left( \mathrm U(\OO), \Zp \right)$: \[ 1 \lra H^j \left( \mathrm U(\OO), \Zp \right) \stackrel{\cdot p^n}{\lra} H^j \left( \mathrm U(\OO), \Zp \right) \lra H^j \left( \mathrm U(\OO), \Z / p^n \right) \lra 1. \]
The above sequence is in fact a short exact sequence of $\mathrm T(\OO)$-representations, so we obtain a long exact sequence in $T(\OO)$-cohomology: \[ \ldots \lra H^i \left( \mathrm T(\OO), H^j \left( \mathrm U(\OO), \Zp \right) \right) \stackrel{\cdot p^n}{\lra} H^i \left( \mathrm T(\OO), H^j \left( \mathrm U(\OO), \Zp \right) \right) \lra \] \[ \lra H^i \left( \mathrm T(\OO), H^j \left( \mathrm U(\OO), \Z / p^n \right) \right)  \lra H^{i+1} \left( \mathrm T(\OO), H^j \left( \mathrm U(\OO), \Zp \right) \right) \stackrel{\cdot p^n} {\lra} \ldots \]
and hence it suffices to show that $H^i \left( \mathrm T(\OO), H^j \left( \mathrm U(\OO), \Zp \right) \right)$ is zero for all $i \ge 0$ and all $j \ge 1$.

We now need the following result, which is an immediate consequence of corollary 2 and theorem 4 from \cite{ronchetti}.
\begin{fact} \label{kostantcomputations} Let $F / \Qp$ be a finite Galois extension. Let $\mathrm G$ be a connected, reductive, split group over $F$. Fix a maximal split torus $\mathrm T$ and a Borel subgroup containing it $\mathrm B = \mathrm T \ltimes \mathrm U$. Assume also that compatible, smooth $\OO_F$-models of all these have been fixed. Let $W$ be the Weyl group.

Then there exists an exhaustive and discrete $\mathrm T(\OO_F)$-equivariant filtration on $ H^j \left( \mathrm U(\OO_F), \Zp \right) $ whose associated graded is a direct sum of modules\footnote{We denote by $V_{\OO_F} (\lambda)$ a free, rank $1$ $\OO_F$-module where $\mathrm T(\OO_F)$ acts via the character $\lambda \in X^*(\mathrm T)$.} $ V_{\OO_F} \left( \sum_{\sigma \in \Gal(F/\Qp)} w_{\sigma} \cdot 0 \right)$ as we vary $w_{\sigma} \in W$ subject to the condition $\sum_{\sigma \in \Gal(F/\Qp)} l(w_{\sigma})=j$.\footnote{These modules come with multiplicity, but for our current purposes that's not important.}
\end{fact}
The $\mathrm T(\OO_F)$-filtration on $H^j \left( \mathrm U(\OO_F), \Zp \right)$ being exhaustive and discrete means that there are integers $N, M$ such that $\mathrm{Fil}^N H^j \left( \mathrm U(\OO_F), \Zp \right) = H^j \left( \mathrm U(\OO_F), \Zp \right)$ and $\mathrm{Fil}^M H^j \left( \mathrm U(\OO_F), \Zp \right) = 0$.
For every $N \le r \le M$, the short exact sequence \[ 0 \lra \mathrm{FIl}^{r+1} H^j \left( \mathrm U(\OO_F), \Zp \right) \lra \mathrm{Fil}^r H^j \left( \mathrm U(\OO_F), \Zp \right) \lra \mathrm{gr}^r H^j \left( \mathrm U(\OO_F), \Zp \right) \lra 0 \] induces a long exact sequence in $\mathrm T(\OO_F)$-cohomology.

By induction on $r$, to prove that $H^i \left( \mathrm T(\OO_F), H^j \left( \mathrm U(\OO_F), \Zp \right) \right) = 0$ it suffices thus to prove that $H^i \left( \mathrm T(\OO_F), \mathrm{gr}^r H^j \left( \mathrm U(\OO_F), \Zp \right) \right) = 0$ for all $i \ge 0$, $j \ge 1$ and $r$.
In light of fact \ref{kostantcomputations}, it suffices thus to show that $H^i \left( \mathrm T(\OO_F), V_{\OO} \left( \sum_{\sigma \in \Gal(F/ \Qp)} w_{\sigma} \cdot 0 \right) \right) = 0$ for fixed choices of $w_{\sigma} \in W$ such that $\sum_{\sigma \in \Gal(F/\Qp)} l(w_{\sigma})=j$.

Fix one such module $V_{\OO} \left( \sum_{\sigma \in \Gal(F/ \Qp)} w_{\sigma} \cdot 0 \right)$. Notice that $w_{\sigma} \cdot 0 = w_{\sigma} \left( 0 + \rho \right) - \rho = w_{\sigma}(\rho) - \rho$.
In particular, \[ \sum_{\sigma \in \Gal(F/ \Qp)} w_{\sigma} \cdot 0 = \sum_{\sigma \in \Gal(F/ \Qp)} \left( w_{\sigma} \rho - \rho \right) = \sum_{\sigma \in \Gal(F/ \Qp)} w_{\sigma}(\rho) - d \rho. \]
Since the Weyl group action on the character group preserves the canonical inner product (see Bourbaki \cite{bourbaki}, chapter VII, section 1, subsection 1, proposition 3), the only way we could have $\sum_{\sigma \in \Gal(F/ \Qp)} w_{\sigma} \cdot 0$ equal to zero is if each $w_{\sigma} (\rho) = \rho$.
The Weyl group acts with trivial stabilizer on the half-sum of positive roots $\rho$, so that would impliy that $w_{\sigma} = \id $ for each $\sigma \in \Gal(F/\Qp)$, but then $\sum l(w_{\sigma}) = 0$, which is not the case.

This shows that $\sum_{\sigma \in \Gal(F/ \Qp)} w_{\sigma} \cdot 0 \in X^* ( \mathrm T )$ is not the zero character. 
\begin{itemize}
    \item Suppose first that $F = \Qp$, so there's only one $w = w_{\sigma}$. For each cocharacter $\mu$, notice that we have \[ \langle \mu, w \rho - \rho \rangle = \langle \mu, w \rho \rangle - \langle \mu, \rho \rangle = \langle w^{-1} \mu, \rho \rangle - \langle \mu, \rho \rangle = \langle (w^{-1} - \id) \mu , \rho \rangle. \]
Since $w$ has finite order $c$, the transformation $(w^{-1} - \id)$ on the lattice of cocharacters $X_* \left( \mathrm T \right)$ is invertible, with inverse $- \sum_{i=0}^{c-1} (w^{-1})^i$. In particular, we pick a simple coroot $\alpha^{\vee}$ and we can find $\mu \in X_*\left( \mathrm T \right)$ such that $w^{-1} \mu - \mu = \alpha^{\vee}$. Fix such a $\mu$.

We obtain then \[ \langle \mu, w \rho - \rho \rangle = \langle (w^{-1} - \id) \mu, \rho \rangle = \langle \alpha^{\vee}, \rho \rangle = 1 \] where the last equality is due to the fact that $\rho$ is the sum of the fundamental weights, and hence pairs to $1$ with any simple coroot.
    \item Suppose instead that $F / \Qp$ is a degree $d$ Galois extension with residue field degree $d$, and assume $p^f-1 > 2d(h-1)$.

By definition of the Coxeter number, we have $\langle \rho, \alpha^{\vee} \rangle \le h-1$ for each positive root $\alpha \in \Phi^+ (\mathrm G ,\mathrm T )$ and also $\langle \rho, \alpha^{\vee} \rangle \ge 0$ for each positive root $\alpha$, since $\rho$ pairs to $1$ with each simple coroot.
On the other hand, if $\beta \in \Phi^- (\mathrm G, \mathrm T)$ is a negative root, then we have $- \left( \beta^{\vee} \right) = \left( - \beta \right)^{\vee}$ (as one can verify by pairing against all roots).
In particular, $ - \langle \rho, \beta^{\vee} \rangle = \langle \rho, - \left( \beta^{\vee} \right) \rangle = \langle \rho, \left( - \beta \right)^{\vee} \rangle $ and thus $0 \ge \langle \rho, \beta^{\vee} \rangle \ge - (h-1)$ for all $\beta \in \Phi^- \left( \mathrm G, \mathrm T \right)$.

We conclude that \begin{equation} \label{estimaterhopairing} \left| \langle \rho, \alpha^{\vee} \rangle \right| \le h-1 \qquad \forall \alpha \in \Phi(\mathrm G, \mathrm T). \end{equation}
Since the Weyl group permutes the coroots, we obtain that \[ \left| \langle w \cdot 0, \alpha^{\vee} \rangle \right| = \left| \langle w\rho - \rho, \alpha^{\vee} \rangle \right| = \left| \langle w \rho, \alpha^{\vee} \rangle - \langle \rho, \alpha^{\vee} \rangle \right| \le \] \[ \le  \left| \langle w \rho, \alpha^{\vee} \rangle \right| + \left| \langle \rho, \alpha^{\vee} \rangle \right| = \left| \langle \rho, w^{-1} \alpha^{\vee} \rangle \right| + \left| \langle \rho, \alpha^{\vee} \rangle \right| \le (h-1) + (h-1) = 2(h-1). \]
We thus obtain the (rough) estimate that \[ \left| \langle \sum_{\sigma} w_{\sigma} \cdot 0, \alpha^{\vee} \rangle \right| \le \sum_{\sigma} \left| \langle w_{\sigma} \cdot 0 , \alpha^{\vee} \rangle \right| \le d \cdot 2 (h-1). \]
So if \begin{equation} \label{boundtrivialcohomology} p^f-1 > 2d(h-1) \end{equation} we obtain that there exists a cocharacter $\mu \in X_* (\mathrm T)$ (in fact, a coroot) such that through the perfect pairing $X^*(\mathrm T) \times X_*(\mathrm T) \lra \Z$ we have $\langle \mu, \sum_{j=1}^d w_j \cdot 0 \rangle \not\in (p^f-1) \Z$. Fix one such $\mu$.
\end{itemize}
The rest of the proof is identical for both cases.

Let $k$ be the residue field of $F$. We fix a splitting $\OO_F^* \cong k^* \times (1 + \varpi \OO_F)$ by using the Teichmuller lift.
Let then $C = \mu (k) \subset T(\OO_F)$; this is a quotient of $k^*$, and hence is cyclic. Fixing a generator $\varepsilon$ of $k^*$ we have that $\mu(\varepsilon)$ generates $C$.

The usual trick with the Hochschild-Serre spectral sequence allows us to reduce to showing that $H^a \left( C, V_{\OO_F} (\sum_{\sigma} w_{\sigma} \cdot 0 ) \right) =0$ for all $a \ge 0$, where now $x \in C$ acts on $\OO_F$ by multiplication by $(\sum_{\sigma} w_{\sigma} \cdot 0 ) (x)$. Since $C$ is cyclic these cohomology groups can be easily computed.

The cohomology groups in even degrees are zero: indeed they are a quotient of the $C$-invariants, which are zero since the generator $\mu(\varepsilon)$ of $C$ acts by multiplication by $\varepsilon^{\langle \mu, \sum_{\sigma} w_{\sigma} \cdot 0 \rangle } \neq 1$, thanks to our assumptions on $\mu$.

To show that the cohomology groups in odd degrees are $0$, it suffices to show that multiplication by $\mu(\varepsilon) - \id$ is invertible on $V_{\OO_F} \left( \sum_{\sigma} w_{\sigma} \cdot 0 \right)$. Since $\mu(\varepsilon)- \id$ acts as multiplication by $\varepsilon^{\langle \mu, \sum_{\sigma} w_{\sigma} \cdot 0 \rangle } -1$, it's enough to prove that $\varepsilon^{\langle \mu, \sum_{\sigma} w_{\sigma} \cdot 0 \rangle } \neq 1 \bmod \varpi \OO_F$, which is again clear thanks to our assumption that ${\langle \mu, \sum_{\sigma} w_{\sigma} \cdot 0 \rangle }$ is not divisible by $p^f-1$. This finishes the proof of proposition \ref{modisom}.
\end{proof}
Recall that the compactly supported cohomology of the groupoid $\left( G_F \curvearrowright [G] \times G/P^{\circ} \right)$ is the space $\mathcal H_G ( K, P^{\circ} )$ of $G$-invariant cohomology classes \[ f: G/K \times G/P^{\circ} \lra \bigoplus H^* \left( \Stab_G (xK, y P^{\circ}) , S \right) \] supported on finitely many orbits and such that $f(xK, yP^{\circ}) \in H^* \left( \Stab_G (xK, y P^{\circ}) , S \right)$.
\begin{fact} \label{homequivPtoG} We have a degree-preserving isomorphism $\mathcal H_G(K, P^{\circ}) \lra \mathcal H_P(P_{\OO}, P^{\circ})$ induced as pullback on cohomology by the homotopy equivalence of topological groupoids \[ \left( P_F \curvearrowright [P] \times [M] \right) \stackrel{i}{\lra} \left( G_F \curvearrowright [G] \times G/ P^{\circ} \right) \] given by inclusion $P_F \hookrightarrow G_F$ at the level of groups, `identity' on the first component $[P] = P_F / P_{\OO} \equiv G / K = [G]$ and injection on the second component $P_F / P^{\circ} \hookrightarrow G_F / P^{\circ}$. \\
Explicitly, the map is realized as \[ \mathcal H_G ( K, P^{\circ} ) \lra \mathcal H_P (P_{\OO}, P^{\circ}) \quad F \mapsto \widetilde F \textnormal{ with } \widetilde F(P_{\OO}, mP^{\circ}) = F (K, m P^{\circ}) \] for all $m \in M_{\OO} \backslash M_F / M_{\OO}$, which are representatives for the $G$-orbits on $G/K \times G/ P^{\circ}$ as well as for the $P_F$-orbits on $P_F / P_{\OO} \times P_F / P^{\circ}$.
\end{fact}
Notice that the map $F \mapsto \widetilde F$ indeed coincide with the pullback $i^* : \mathbb H^*_c \left( G_F \curvearrowright [G] \times G/ P^{\circ} \right) \lra \mathbb H^*_c \left( P_F \curvearrowright [P] \times P_F / P^{\circ} \right)$. In particular, $i$ induces injections on isotropy groups which are in fact bijections, so that no restriction map is needed: we have \[ \Stab_G(K, m P^{\circ}) = K \cap m P^{\circ} m^{-1} = K \cap P_F \cap m P^{\circ} m^{-1} = P_{\OO} \cap m P^{\circ} m^{-1} = \Stab_{\mathrm P(F)}(P_{\OO}, m P^{\circ}), \] showing that $i$ is indeed a homotopy equivalence.
\begin{fact} \label{bimodaction} The above isomorphism allows us then to view $\mathcal H_G(K, P^{\circ}) \cong \mathcal H_P(P_{\OO}, P^{\circ})$ as a bimodule for the algebras $\mathcal H_G$ and $\mathcal H_P$, the former acting by convolution on the left factor of $\mathcal H_G(K, P^{\circ})$ and the latter acting by convolution on the right factor of $\mathcal H_P(P_{\OO}, P^{\circ})$.
\end{fact}
We show that we have indeed a bi-module structure - that is to say, the two convolution actions commute with each other. This resembles strongly fact \ref{dhabimodule} from appendix \ref{appendice}, but is not quite the same since we have the map $i$ inducing an isomorphism in cohomology `in the middle'. We give then all details, even though the proof boils down to a diagram chasing very similar to that of fact \ref{dhabimodule}.
\begin{proof} Let $A \in \mathcal H_G$, $B \in \mathcal H_P$ and $F \in \mathbb H^* \left( G_F \curvearrowright [G] \times G / P^{\circ} \right)$. The convolution $(A.F).B$ is represented by the following diagram: \begin{displaymath} \xymatrix{ A \in \underline G & & \left( G_F \curvearrowright [G] \times G_F / P^{\circ} \right) \ni F   \\ & \left( G_F \curvearrowright [G]^2 \times G_F / P^{\circ} \right) \ar[lu]_{i_{1,2}} \ar[ru]^{ i_{2,3}} \ar[d]_{i_{1,3}} & \\ & \left( G_F \curvearrowright[G] \times G_F/ P^{\circ} \right) & \left( P_F \curvearrowright [P] \times [M] \right) \ar[l]^i &  \left( P_F \curvearrowright [M]^2 \right) \ni B \\ & &  \left( P_F \curvearrowright [P] \times [M]^2 \right) \ar[u]_{i_{1,2}} \ar[ru]^{ i_{2,3}} \ar[d]_{i_{1,3}} &  \\ & & ((A.F).B) \in \left( P_F \curvearrowright [P] \times [M] \right) & } \end{displaymath}
\begin{claim}
The diagram \begin{displaymath} \xymatrix{ \left( G_F \curvearrowright [G]^2 \times G_F/ P^{\circ} \right) \ar[d]_{i_{1,3}}  & & \left( P_F \curvearrowright [P]^2 \times [M]^2 \right) \ar[d]_{i_{1,3,4}} \ar[ll]^{i \circ i_{1,2,3}} \\ \left( G_F \curvearrowright [G] \times G_F / P^{\circ} \right) & & \left( P_F \curvearrowright [P] \times [M]^2 \right) \ar[ll]_{i \circ i_{1,2}} } \end{displaymath} is a pullback square of groupoids. Moreover, it satisfies the assumption of lemma \ref{cohomswitch}: that is to say, $i_{1,3}$ is a finite covering morphism and $i \circ i_{1,2}$ induces inclusion on isotropy groups.
\end{claim}
\begin{proof}[Proof of claim] We have shown above that $i$ induces a bijection between isotropy groups, and since $i_{1,2}: \left( P_F \curvearrowright [P] \times [M]^2 \right) \lra \left( P_F \curvearrowright [P] \times [M] \right)$ is clearly a covering morphism, the composition $i \circ i_{1,2}$ also induces inclusion of isotropy groups. \\
Clearly $i_{1,3}$ is a covering morphism, since $G_F$ acts on both groupoids. To prove that the inclusion of isotropy groups is open and finite index, we take a representative $(p_1 K, p_2 K, P^{\circ})$ for a fixed connected component of $\left( G_F \curvearrowright [G]^2 \times G_F/ P^{\circ} \right)$.
Then $\Stab_{P_F} \left(p_1 K, p_2 K, P^{\circ} \right) = \Ad(p_1) K \cap \Ad(p_2)K \cap P^{\circ} = \Ad(p_1)P_{\OO} \cap \Ad(p_2) P_{\OO} \cap P^{\circ}$, while $\Stab_{P_F} \left( i_{1,3} \left(p_1 K, p_2 K, P^{\circ} \right) \right) = \Stab_{P_F} \left( p_1 K, P^{\circ} \right) = \Ad(p_1) K \cap P^{\circ} = \Ad(p_1) P_{\OO} \cap P^{\circ}$.
Since $P^{\circ}$ is closed inside $P_F$, we obtain that $\Ad(p_1)P_{\OO} \cap P^{\circ}$ is closed inside the compact $\Ad(p_1)P_{\OO}$, hence compact.
But $\Ad(p_2)P_{\OO}$ is open in $P_F$, thus the intersection $\Ad(p_2) P_{\OO} \cap \left( \Ad(p_1)P_{\OO} \cap P^{\circ} \right)$ is open in the compact $\Ad(p_1)P_{\OO} \cap P^{\circ}$, and thus finite index.

It remains to show that the pullback is the given one. The group acting is obviously $P_F$, as for the set we have that \[ \left( [G]^2 \times G_F / P^{\circ} \right) \times_{[G] \times G_F / P^{\circ}} \left( [P] \times [M]^2 \right) = \] \[ = \left\{ \left( (p_1 K, p_2K, g_3 P^{\circ}), (p_1' K, p_2'K, p_3' P^{\circ} ) \right) \, | \, (p_1K, g_3 P^{\circ}) = (p_1'K, p_2' P^{\circ}) \right\} \] which forces $g_3 \in P_F$, and completes the proof.
\end{proof}
Thus lemma \ref{cohomswitch} allows us to replace the original diagram with the following: \begin{displaymath} \xymatrix{ \underline G & & \left( G_F \curvearrowright [G] \times G / P^{\circ} \right)   \\ & \left( G_F \curvearrowright [G]^2 \times G / P^{\circ} \right) \ar[lu]_{i_{1,2}} \ar[ru]^{ i_{2,3}} & \left( P_F \curvearrowright [P]^2 \times  [M]^2 \right) \ar[l]^{i \circ i_{1,2,3}} \ar[d]_{i_{1,2,4}} &  \left( P_F \curvearrowright [M]^2 \right) \\ & &  \left( P_F \curvearrowright [P] \times [M]^2 \right) \ar[ru]^{ i_{2,3}} \ar[d]_{i_{1,3}} &  \\ & & \left( P_F \curvearrowright [P] \times [M] \right) & } \end{displaymath}
Applying lemma \ref{corescupres} to the subdiagram \begin{displaymath} \xymatrix{ \left( P_F \curvearrowright [P]^2 \times [M]^2 \right)  \ar[dr]_{i_{1,2,4}} & & \left( P_F \curvearrowright [M]^2 \right) \\ &   \left( P_F \curvearrowright [P] \times [M]^2 \right) \ar[ru]^{ i_{2,3}} \ar[d]_{i_{1,3}} & \\ & \left( P_F \curvearrowright [P] \times  [M]  \right)  } \end{displaymath} makes it equivalent to \begin{displaymath} \xymatrix{ \left( P_F \curvearrowright [P]^2 \times [M]^2 \right)  \ar[dr]_{i_{1,2,4}} \ar[rr]^{i_{3,4}} & & \left( P_F \curvearrowright [M]^2 \right) \\ &   \left( P_F \curvearrowright [P] \times [M]^2 \right) \ar[d]_{i_{1,3}} & \\ & \left( P_F \curvearrowright [P] \times  [M]  \right)  } \end{displaymath} and replacing this into the last diagram and composing pullbacks we obtain that $(A.F).B$ is given by \begin{displaymath} \xymatrix{ A \in \underline G & F \in \left( G_F \curvearrowright [G] \times G / P^{\circ} \right) & \left( P_F \curvearrowright [M]^2 \right) \ni B \\ & \left( P_F \curvearrowright [P]^2 \times  [M]^2 \right) \ar[ul]^{i_{1,2}} \ar[u]^{i \circ i_{2,3}} \ar[ur]_{i_{3,4}} \ar[d]_{i_{1,4}} &   \\ & ((A.F).B) \in \left( P_F \curvearrowright [P] \times [M]  \right)  &  } \end{displaymath}

Consider now $A.(F.B)$: the relevant diagram is \begin{displaymath} \xymatrix{ F \in \left( G_F \curvearrowright [G] \times G / P^{\circ} \right) & \left( P_F \curvearrowright [P] \times [M] \right) \ar[l]_i & & \left( P_F \curvearrowright [M]^2 \right) \ni B \\ & & \left( P_F \curvearrowright [P] \times [M]^2 \right) \ar[ur]_{i_{2,3}} \ar[ul]_{i_{1,2}} \ar[d]^{i_{1,3}} \\ A \in \underline G & \left( G_F \curvearrowright [G] \times G / P^{\circ} \right) & \left( P_F \curvearrowright [P] \times [M] \right) \ar[l]_i^{\spadesuit} &  \\ & \left( G_F \curvearrowright [G]^2 \times G / P^{\circ} \right) \ar[ul]^{i_{1,2}} \ar[u]_{i_{2,3}} \ar[d]_{i_{1,3}} \\ & \left( G_F \curvearrowright [G] \times G / P^{\circ} \right) & \left( P_F \curvearrowright [P] \times [M] \right) \ar[l]_i } \end{displaymath} where the symbol $\spadesuit$ is to remember that although the pullback in cohomology goes as $i^* : \mathbb H^* \left( G_F \curvearrowright [G] \times G / P^{\circ} \right) \lra  \mathbb H^* \left( P_F \curvearrowright [P] \times [M] \right)$, we use the fact that it is an isomorphism and instead move the cohomology class $F.B$ in the opposite direction.
\begin{claim}
The diagram \begin{displaymath} \xymatrix{ \left( G_F \curvearrowright [G]^2 \times G/ P^{\circ} \right) \ar[d]_{i_{1,3}}  & & \left( P_F \curvearrowright [P]^2 \times [M]  \right) \ar[d]_{i_{1,3}} \ar[ll]^i \\ \left( G_F \curvearrowright [G] \times G / P^{\circ} \right) & & \left( P_F \curvearrowright [P] \times [M]  \right) \ar[ll]_i } \end{displaymath} is a pullback square of groupoids. Moreover, it satisfies the assumption of lemma \ref{cohomswitch}: that is to say, $i_{1,3}$ is a finite covering morphism and $i$ induces inclusion on isotropy groups.
\end{claim}
\begin{proof}[Proof of claim] We have already shown that $i_{1,3}$ is a finite covering morphism, and that $i$ induces bijection on isotropy groups. It remains to show that the pullback is the given one: it is clear that the group acting is $P_F$, and as for the sets we have \[ \left( [G]^2 \times G_F / P^{\circ} \right) \times_{[G] \times G_F / P^{\circ}} \left( [P] \times [M] \right) = \] \[ = \left\{ \left( (p_1 K, p_2 K, gP^{\circ}) , (p_1' P_{\OO}, p_2' P^{\circ}) \right) \, | \, (p_1 K, g P^{\circ}) = (p_1' P_{\OO}, p_2' P^{\circ}) \right\} \] which forces $g \in P_F$ and concludes the proof.
\end{proof}
We apply lemma \ref{cohomswitch} and then distribute and compose pullbacks, to obtain that the diagram for $A.(F.B)$ is equivalent to \begin{displaymath} \xymatrix{ \left( G_F \curvearrowright [G] \times G / P^{\circ} \right) & \left( P_F \curvearrowright [P] \times [M] \right) \ar[l]_i & & \left( P_F \curvearrowright [M]^2 \right) \\ & & \left( P_F \curvearrowright [P] \times [M]^2 \right) \ar[ur]_{i_{2,3}} \ar[ul]_{i_{1,2}} \ar[d]^{i_{1,3}} \\ \underline G & \left( G_F \curvearrowright [G] \times G / P^{\circ} \right) & \left( P_F \curvearrowright [P] \times [M] \right) \ar[l]_i^{\spadesuit} &  \\ & \left( P_F \curvearrowright [P]^2 \times [M] \right) \ar[ul]^{i_{1,2}} \ar[u]_{i \circ i_{2,3}} \ar[d]_{i_{1,3}} \\ & \left( P_F \curvearrowright [P] \times [M] \right) &  } \end{displaymath}
The two occurences of $i$ cancel out since $i^*$ is an isomorphism, and we obtain then \begin{displaymath} \xymatrix{ \left( G_F \curvearrowright [G] \times G / P^{\circ} \right) & \left( P_F \curvearrowright [P] \times [M] \right) \ar[l]_i & & \left( P_F \curvearrowright [M]^2 \right) \\ & & \left( P_F \curvearrowright [P] \times [M]^2 \right) \ar[ur]_{i_{2,3}} \ar[ul]_{i_{1,2}} \ar[d]^{i_{1,3}} \\ \underline G & & \left( P_F \curvearrowright [P] \times [M] \right)  &  \\ & \left( P_F \curvearrowright [P]^2 \times [M] \right) \ar[ul]^{i_{1,2}} \ar[ur]_{i_{2,3}} \ar[d]_{i_{1,3}} \\ & \left( P_F \curvearrowright [P] \times [M] \right) &  } \end{displaymath}
\begin{claim}
The diagram \begin{displaymath} \xymatrix{ \left( P_F \curvearrowright [P]^2 \times [M]^2 \right) \ar[d]_{i_{1,2,4}} \ar[rr]^{i_{2,3,4}}  & & \left( P_F \curvearrowright [P] \times [M]^2  \right) \ar[d]_{i_{1,3}} \\ \left( P_F \curvearrowright [P]^2 \times [M] \right) \ar[rr]_{i_{2,3}} & & \left( P_F \curvearrowright [P] \times [M]  \right) } \end{displaymath} is a pullback square of groupoids. Moreover, it satisfies the assumption of lemma \ref{cohomswitch}: that is to say, $i_{1,3}$ is a finite covering morphism and $i_{2,3}$ induces inclusion on isotropy groups.
\end{claim}
\begin{proof}[Proof of claim] Both $i_{1,3}$ and $i_{2,3}$ are covering morphisms, so in particular $i_{2,3}$ induces inclusion on isotropy groups. We have already seen that $i_{1,3}$ is a finite covering morphism, so it remains to prove that the pullback is the given one. It is obvious that the group acting is $P_F$, and that the set is the given one.
\end{proof}
Applying lemma \ref{cohomswitch} yields then \begin{displaymath} \xymatrix{ \left( G_F \curvearrowright [G] \times G / P^{\circ} \right) & \left( P_F \curvearrowright [P] \times [M] \right) \ar[l]_i & & \left( P_F \curvearrowright [M]^2 \right) \\ \underline G & \left( P_F \curvearrowright [P]^2 \times [M]^2  \right) \ar[d]_{1,2,4} \ar[r]^{i_{2,3,4}} & \left( P_F \curvearrowright [P] \times [M]^2 \right) \ar[ur]_{i_{2,3}} \ar[ul]_{i_{1,2}}  &  \\ & \left( P_F \curvearrowright [P]^2 \times [M] \right) \ar[ul]^{i_{1,2}}  \ar[d]_{i_{1,3}} \\ & \left( P_F \curvearrowright [P] \times [M] \right) &  } \end{displaymath}
Applying now lemma \ref{corescupres} to the subdiagram \begin{displaymath} \xymatrix{ \underline G & & \left( P_F \curvearrowright [P]^2 \times [M]^2  \right) \ar[dl]_{1,2,4} \\ & \left( P_F \curvearrowright [P]^2 \times [M] \right) \ar[ul]^{i_{1,2}}  \ar[d]_{i_{1,3}} \\ & \left( P_F \curvearrowright [P] \times [M] \right)  } \end{displaymath} and composing pullbacks yields the same diagram that $(A.F).B$ was equivalent to, and completes the proof.
\end{proof}
\begin{defn} \label{preliminarysatake}
We define the homomorphism: $\mathcal S^G_P: \mathcal H_G^{\le 1} \lra \mathcal H_P^{\le 1} $ implicitly by the following formula: \[ F.1 = 1. \mathcal S^G_P(F) \quad \forall F \in \mathcal H_G^{\le 1} \] where recall that $1 \in \mathcal H_G(K, P^{\circ}) \cong \mathcal H_P(P_{\OO}, P^{\circ})$ is supported on the orbit of $(K, P^{\circ})$ and takes value $1 \in H^0 (P_{\OO}, S)$ there.

If $p \ge h$ and either $F = \Qp$ or $F / \Qp$ is a Galois extension of degree $d = ef$ with $2d(h-1) < p^f-1$, we define the homomorphism $\mathcal S^G_B: \mathcal H_G \lra \mathcal H_B $ by the same formula.
\end{defn}
This is a degree-preserving morphism of algebras: indeed letting $F_1, F_2 \in \mathcal H_G$ be such that $\deg(F_1 \circ F_2) \le 1$ we have \[ (F_1 \circ F_2).1 = 1. \mathcal S^G_P (F_1 \circ F_2) \] but also \[ (F_1 \circ F_2).1 = F_1. (F_2.1) = F_1. \left( 1. \mathcal S^G_P(F_2) \right) = \left( F_1. 1 \right) . \mathcal S^G_P(F_2) = \left( 1. \mathcal S^G_P(F_1) \right). \mathcal S^G_P(F_2) =1 . \left( \mathcal S^G_P(F_1) \circ \mathcal S^G_P(F_2) \right) \] and finally since by proposition \ref{modisom} the map $G \mapsto 1.G$ is an isomorphism of $\mathcal H_P$-modules between $\mathcal H_P^{\le 1}$ and $\mathcal H_P^{\le 1} \left( P_{\OO}, P^{\circ} \right)$ (or $\mathcal H_B$ and $\mathcal H_B \left( B_{\OO}, B^{\circ} \right)$, for the second part of the definition), we must have \[ \mathcal S^G_P (F_1 \circ F_2) = \mathcal S^G_P(F_1) \circ \mathcal S^G_P(F_2). \]
\begin{defn}[Satake homomorphism] \label{satakedefn}
We define the Satake homomorphism: $\mathcal S^G_M: \mathcal H_G^{\le 1} \lra \mathcal H_M^{\le 1}$ by composing the previous homomorphism and the isomorphism $\mathcal H_P \cong \mathcal H_M$ from lemma \ref{dhaisom}. Explicitly, \[ \mathcal S^G_M F \left( mM_{\OO}, n M_{\OO} \right) = \res{\mathrm P(F)_{m,n}}{\mathrm M(F)_{m,n}} \mathcal S^G_P F \left( m P^{\circ}, n P^{\circ} \right). \]
Again, if $p \ge h$ and either $F = \Qp$ or $F / \Qp$ is a Galois extension of degree $d = ef$ with $2d(h-1) < p^f-1$, we can define $\mathcal S^G_T: \mathcal H_G \lra \mathcal H_T$ in all degrees by the same formula.
\end{defn}
\begin{rem}
Even for a general parabolic subgroup $\mathrm P = \mathrm M \ltimes \mathrm V$, following the remarks after lemma \ref{dhaisom} and proposition \ref{modisom}, it follows that the same maps $\mathcal S^G_P$ and $\mathcal S^G_M$ are degree-preserving algebra homomorphisms on $\mathcal H_G^{\le k}$, where $k$ is the largest integer such that the restriction maps \[ \res{}{}:H^i \left( \mathrm M(\OO)_m \mathrm V(F) , S \right) \lra H^i \left( \mathrm M(\OO)_m \mathrm V(\OO) , S \right) \textnormal{ and } \res{}{}:H^i \left( \mathrm M(\OO)_m \mathrm V(F) , S \right) \lra H^i \left( \mathrm M(\OO)_m , S \right) \] are isomorphisms for all $m \in M_{\OO} \backslash M_F / M_{\OO}$ and for all $i \le k$.
\end{rem}
We want then to get a very explicit formula for $\mathcal S^G_M$. In formula \ref{Pisom} we obtained that $(1.G)(P_{\OO}, m P^{\circ}) = \res{\mathrm M(\OO)_m \mathrm V(F)}{\mathrm M(\OO)_m \mathrm V(\OO)} G(P^{\circ}, m P^{\circ})$ for all $G \in \mathcal H_P$.

Let then $F \in \mathcal H_G$ and consider \[ (F.1) (K, m P^{\circ}) = \sum_{x \in G/K \cong P_F / P_{\OO}} F(K, xK) \cup 1(xK, mP^{\circ}). \]
We now restrict the choice of possible nonzero summands. We can write $x=nv \in M_F \ltimes V_F$, and since $1$ is supported on the $P_F$-orbit of $(P_{\OO}, P^{\circ})$, we need $(nv P_{\OO}, m P^{\circ}) \sim (P_{\OO}, P^{\circ})$. The left pair is in the same orbit of $(P_{\OO}, v^{-1}n^{-1}mP^{\circ}) = (P_{\OO}, n^{-1}mP^{\circ})$ where the last equality holds since $P^{\circ} \supset V_F$.
For the summand to be nonzero we need then $n^{-1}m \in P^{\circ}$ which amounts to saying that $n M_{\OO} = m M_{\OO}$.
We conclude that \[ (F.1) (K, m P^{\circ}) = \sum_{mv \in P_F / P_{\OO}} F(K, mvK) \cup 1(mvK, mP^{\circ}) \] with the usual understanding about taking $\Stab_G(K, mP^{\circ}) = \mathrm M(\OO)_m \mathrm V(\OO)$-orbits in the sum, and using the appropriate restriction and corestriction maps: explicitly \begin{equation} \label{intermediatesatake} (F.1) (K, mP^{\circ}) = \sum_{\mathrm M(\OO)_m \mathrm V(\OO) \backslash \backslash \left\{ mv \in P_F / P_{\OO} \right\} } \cores{\mathrm M(\OO)_m \mathrm V(\OO)}{\mathrm M(\OO)_m \mathrm V(\OO) \cap K_{mv}} \res{K_{mv}}{\mathrm M(\OO)_m \mathrm V(\OO) \cap K_{mv}} F(K,mvK) \end{equation} where as usual we have noted that restricting the element $1$ and then cupping with it has no effect.

The last term is then equal to \[ (1. \mathcal S^G_P F)(P_{\OO}, m P^{\circ}) = \res{\mathrm M(\OO)_m \mathrm V(F)} {\mathrm M(\OO)_m \mathrm V(\OO)} \mathcal S^G_P F (P^{\circ} , m P^{\circ} ). \]
In particular, \[ \mathcal S^G_M F(M_{\OO}, m M_{\OO} ) = \res{\mathrm M(\OO)_m \mathrm V(F)}{\mathrm M(\OO)_m} \mathcal S^G_P F (P^{\circ}, m P^{\circ}) = \res{\mathrm M(\OO)_m \mathrm V(\OO)}{\mathrm M(\OO)_m} \circ \res{\mathrm M(\OO)_m \mathrm V(F)} {\mathrm M(\OO)_m \mathrm V(\OO)} S^G_P F (P^{\circ}, m P^{\circ}) = \] \[ = \res{\mathrm M(\OO)_m \mathrm V(\OO)} {\mathrm M(\OO)_m} (1. \mathcal S^G_P F)(P_{\OO}, m P^{\circ}) = \res{\mathrm M(\OO)_m \mathrm V(\OO)}{\mathrm M(\OO)_m} (F.1) (K,mP^{\circ}) = \] \begin{equation} \label{onemoresatakeformula} = \res{\mathrm M(\OO)_m \mathrm V(\OO)} {\mathrm M(\OO)_m} \sum_{\mathrm M(\OO)_m \mathrm V(\OO) \backslash \backslash \left\{ mv \in P_F / P_{\OO} \right\} } \cores{\mathrm M(\OO)_m \mathrm V(\OO)}{\mathrm M(\OO)_m \mathrm V(\OO) \cap K_{mv}} \res{K_{mv}}{\mathrm M(\OO)_m \mathrm V(\OO) \cap K_{mv}} F(K,mvK). \end{equation}
We want to apply the double coset formula: notice that for each $x \in \mathrm M(\OO)_m \backslash \mathrm M(\OO)_m \mathrm V(\OO) / \mathrm M(\OO)_m \mathrm V(\OO) \cap K_{mv}$ we have that $x \left( \mathrm M(\OO)_m \mathrm V(\OO) \cap K_{mv} \right) x^{-1} = \mathrm M(\OO)_m \mathrm V(\OO) \cap K_{xmv}$.
We obtain that \[ \res{\mathrm M(\OO)_m \mathrm V(\OO)}{\mathrm M(\OO)_m}  \cores{\mathrm M(\OO)_m \mathrm V(\OO)} {\mathrm M(\OO)_m \mathrm V(\OO) \cap K_{mv}} = \sum_{x \in \mathrm M(\OO)_m \backslash \mathrm M(\OO)_m \mathrm V(\OO) / \mathrm M(\OO)_m \mathrm V(\OO) \cap K_{mv}} \cores{\mathrm M(\OO)_m}{\mathrm M(\OO)_m \cap K_{xmv}}  \res{\mathrm M(\OO)_m \mathrm V(\OO) \cap K_{xmv}}{\mathrm M(\OO)_m \cap K_{xmv}} c_x^*. \]
Now clearly each $x$ can be taken in $\mathrm V_{\OO}$, in particular $xm = m \hat x$ for another $\hat x \in V_F$, then we also have $\hat x v = \hat v $: this is saying that $xmv K = m \hat v K$ for a change of variables $v \mapsto \hat v$ in $V_F / V_{\OO}$.

Also, notice that in the formula for $\mathcal S^G_M F$ we are summing over $\mathrm M(\OO)_m \mathrm V(\OO)$-orbits on classes $mv \in P_F / P_{\OO}$: the new inner sum $\sum_{x \in \mathrm M(\OO)_m \backslash \mathrm M(\OO)_m \mathrm V(\OO) / \mathrm M(\OO)_m \mathrm V(\OO) \cap K_{mv}}$ correspond to splitting each $\mathrm M(\OO)_m \mathrm V(\OO)$-orbit into $\mathrm M(\OO)_m$-orbits, and then considering one contribution for each of the latter.
It is clear that this amounts to considering $\mathrm M(\OO)_m$-orbits on $mv \in P_F / P_{\OO}$ to start with, so we get \[ \mathcal S^G_M F(M_{\OO}, m M_{\OO} ) = \sum_{\mathrm M(\OO)_m \backslash \backslash \left\{ m \hat v \in P_F / P_{\OO} \right\} } \cores{\mathrm M(\OO)_m}{\mathrm M(\OO)_m \cap K_{m \hat v}}  \res{\mathrm M(\OO)_m \mathrm V(\OO) \cap K_{m \hat v}}{\mathrm M(\OO)_m \cap K_{m\hat v}} \res{K_{m \hat v}}{\mathrm M(\OO)_m \mathrm V(\OO) \cap K_{m \hat v}} F(K,m \hat vK) \] \begin{equation}\label{satakeform} = \sum_{\mathrm M(\OO)_m \backslash \backslash \left\{ m v \in P_F / P_{\OO} \right\} } \cores{\mathrm M(\OO)_m}{\mathrm M(\OO)_m \cap K_{mv}}  \res{K_{mv}}{\mathrm M(\OO)_m \cap K_{m v}} F(K,mvK) \end{equation}
This will be our formula for the derived Satake homomorphism.

\section{The image of the Satake homomorphism} \label{secSatakeimage}
We finally prove theorem \ref{satakeimagerevisited}: we first show injectivity of the Satake homomorphism by an adaptation of the standard `unipotence' argument, and then we describe the upper bound on the image.

The description of the image is where our work on the Satake homomorphism in the previous sections comes in handy: recall that we want to show that a certain power of $p$ divides $(\mathcal  S^G_T F)(T_{\OO}, \lambda(\varpi) T_{\OO})$ if $\lambda \in X_*(\mathrm T)$ pairs positively with some simple root $\alpha$.
We briefly summarize the argument: first of all, we restrict ourselves to the semisimple rank 1 case by factoring $\mathcal S^G_T$ through the standard Levi factor having root system $\Phi(\mathrm M, \mathrm T) = \left\{ \pm \alpha \right\}$. Then, a careful analysis of the possible summands involved in the explicit formula for $(\mathcal S^M_T F) (T_{\OO}, \lambda T_{\OO})$ and some $\mathrm{SL}_2$-computations yield that each such summand is divisible by the required power of $p$.

From now on, we will sometimes use the notation $\mathcal S^G_T F (\mu) = \mathcal S^G_T F ( T_{\OO}, \mu (\varpi) T_{\OO} )$ whenever $\mu \in X_*(\mathrm T)$ is a cocharacter. We also denote $\varpi^{\mu} = \mu(\varpi) \in \mathrm T(F)$.
\begin{prop}
Let $\lambda \in X_*(\mathrm T)_-$ be an antidominant cocharacter and $f_{\lambda} \in \mathcal H_G$ be supported on $H^*(K_{\lambda}, S)$. Then $ \mathcal S^G_T (f_{\lambda}) ( \mu ) = 0$ unless $\mu \ge \lambda$, and $\mathcal S^G_T(f_{\lambda})(\lambda) = \res{K_{\lambda}}{\mathrm T(\OO)} f_{\lambda}$.
\end{prop}
\begin{proof} By formula \ref{satakeform}, in order for $\mathcal S^G_T(f_{\lambda})(\mu)$ to be nonzero we must have $ \varpi^{\mu} u \in K \varpi^{\lambda}K$ for some $u \in \mathrm U(F)$. Then lemma 3.6(i) in \cite{herzig} shows that we must have $\mu \ge \lambda$. \\
Similarly, lemma 3.6(ii) in \cite{herzig} says that $K \varpi^{\lambda} K \cap \varpi^{\lambda} \mathrm U(F) = \varpi^{\lambda} \mathrm U(\OO)$, so that in particular there is only one summand contributing to $\mathcal S^G_T(f_{\lambda})(\lambda)$, and we can take $u = \id$ to obtain \[ \mathcal S^G_T(f_{\lambda})(\lambda) = \cores{\mathrm T(\OO)}{\mathrm T(\OO) \cap K_{\lambda}} \res{K_{\lambda}}{\mathrm T(\OO) \cap K_{\lambda}} f(K, \varpi^{\lambda} K ) = \res{K_{\lambda}}{\mathrm T(\OO)} f_{\lambda} \] since $K_{\lambda} \supset \mathrm T(\OO)$.
\end{proof}
\begin{cor} \label{satakeinj}
The derived Satake homomorphism is injective, as a map from $\mathcal H^1_G$ to $\mathcal H^1_T$.
\end{cor}
\begin{proof}
The previous proposition is the main ingredient of the usual `unipotence' argument for the injectivity of the Satake homomorphism. Since $\res{\mathrm B(\OO)}{\mathrm T(\OO)}:H^1(\mathrm B(\OO), S) \lra H^1(\mathrm T(\OO), S)$ is an isomorphism, what remains to check is that $\res{K_{\lambda}}{\mathrm B(\OO)}$ is injective: we will prove this by showing that $\mathrm B(\OO) \left[ K_{\lambda}, K_{\lambda} \right] = K_{\lambda}$.\footnote{Here $[K_{\lambda}, K_{\lambda}]$ is the commutator subgroup of $K_{\lambda}$.} Notice that since $[K_{\lambda}, K_{\lambda}] $ is normal in $K_{\lambda}$, this is equivalent to showing that the subgroup $\langle \mathrm B(\OO), [K_{\lambda}, K_{\lambda}] \rangle$ of $K_{\lambda}$ generated by $\mathrm B(\OO)$ and $[K_{\lambda}, K_{\lambda}]$ is the whole $K_{\lambda}$.

Bruhat-Tits theory provides a factorization of $K_{\lambda}$: we follow \cite{SS} (section 1.1) in explaining it. Let $x_0 \in A(T,F) \subset \mathcal B(G,F)$ be the hyperspecial vertex corresponding to the maximal compact $K$: we turn this into the origin of the apartment $A(T,F)$ for $T$ (so that $x_0 = 1 \in X_*(\mathrm T)$ is the trivial cocharacter) and set $\Omega = \{ x_0, \lambda(\varpi).x_0 \} \subset X_*(\mathrm T) \subset X_*(\mathrm T) \otimes \R = A(T,F) \subset \mathcal B(G,F)$, so that we have \[ K_{\lambda} = K \cap \lambda(\varpi) K \lambda(\varpi)^{-1} = \Stab_G(x_0) \cap \lambda(\varpi) \Stab_G(x_0) \lambda(\varpi)^{-1} = \] \[ = \Stab_G(x_0) \cap \Stab_G \left( \lambda(\varpi).x_0 \right) = \Stab_G ( \Omega ). \] 
Define now $f_{\Omega}: \Phi(\mathrm G, \mathrm T) \lra \R$ as \[ f(\alpha) = -  \inf_{x \in \Omega} \alpha(x), \] where $\alpha(x)$ is the action of the root $\alpha$ as an affine function on $A(T,F)$.
In the setup of \cite{SS}, the translation action of $\mathrm T(F)$ on $A(T,F)$ is defined as $g.x = x + \nu(g)$, where $\nu: \mathrm T(F) \lra X_*(\mathrm T) \otimes \R$ is defined implicitly via the perfect pairing $\langle, \rangle: X_*(\mathrm T)_{\R} \times X^*(\mathrm T)_{\R} \lra \R$ as \[ \langle \nu(g), \chi \rangle = - \val_F \left( \chi(g) \right) \qquad \forall \chi \in X^*(\mathrm T). \]
In particular, we obtain that $\nu(\lambda(\varpi)) = - \lambda \in X_*(\mathrm T)_+$ is a dominant cocharacter.
We also have that $\lambda(\varpi).x_0 = x_0 - \lambda$, and hence $\alpha(\lambda(\varpi).x_0) = \alpha(x_0 - \lambda) = \langle - \lambda, \alpha \rangle$, since $\alpha(x_0) = 0$ for all $\alpha \in \Phi(\mathrm G, \mathrm T)$.
By dominance of $-\lambda$, this yields $\alpha(\lambda(\varpi).x_0) \ge 0$ for all $\alpha \in \Phi^+$ and $\alpha(\lambda(\varpi).x_0) \le 0$ for all $\alpha \in \Phi^-$.
We conclude \[ f(\alpha) = - \inf_{x \in \Omega} \alpha(x) = - \inf \left\{ 0, \langle - \lambda, \alpha \rangle \right\} = \left\{ \begin{array}{lc} -0 & \textnormal{ if } \alpha \in \Phi^+  \\ - \langle - \lambda, \alpha \rangle & \textnormal{ if } \alpha \in \Phi^- \end{array} \right. = \left\{ \begin{array}{lc} 0 & \textnormal{ if } \alpha \in \Phi^+  \\ \langle \lambda, \alpha \rangle & \textnormal{ if } \alpha \in \Phi^- \end{array}. \right.  \]
Following through with section 1.1 in \cite{SS}, we obtain that \[ U_{\alpha, f_{\Omega}(\alpha)} = \left\{ \begin{array}{lc} \mathrm U_{\alpha}(\OO) & \textnormal{ if } \alpha \in \Phi^+  \\ \mathrm U_{\alpha} \left( \varpi^{ \langle \lambda, \alpha \rangle} \OO \right) & \textnormal{ if } \alpha \in \Phi^- \end{array} \right. \]
and denote by $U_{\Omega}$ the subgroup of $G$ generated by the $U_{\alpha, f_{\Omega}(\alpha)}$'s as $\alpha$ varies across the roots $\Phi(\mathrm G, \mathrm T)$. \\
The subgroup $N_{\Omega} = \left\{ n \in N_G(T) \, | \, nx=x \, \forall x \in \Omega \right\}$ normalizes $U_{\Omega}$, and so we can consider the subgroup $P_{\Omega} = N_{\Omega}.U_{\Omega}$, which has $U_{\Omega}$ as a normal subgroup. A crucial fact is that $P_{\Omega} = \Stab_G(\Omega)$ - see \cite{SS} page 104.

In particular, each $U_{\alpha, f_{\Omega}(\alpha)}$ is contained in $\Stab_G(\Omega) = K_{\lambda}$, so using the $\mathrm T(\OO)$-action by conjugation (and the fact that clearly $\mathrm T(\OO) \subset K_{\lambda}$) we obtain that $U_{\alpha, f_{\Omega}(\alpha)} \subset [K_{\lambda}, K_{\lambda}]$ for all $\alpha \in \Phi(\mathrm G, \mathrm T)$, so that $U_{\Omega} \subset [K_{\lambda}, K_{\lambda}]$.

It remains to show that $N_{\Omega} \subset \mathrm B(\OO) [K_{\lambda}, K_{\lambda}]$. Notice that this is immediate whenever $\lambda$ is regular, because then $N_{\Omega} = \left\{ n \in K \cap N_G(T) \, | \, n. \lambda = \lambda \right\}$ is simply $\mathrm T(\OO)$, so this concludes the proof of corollary \ref{satakeinj} in the case of regular cocharacter.

Recall that the action of $N_G(T) / T_{\OO}$ on the apartment $A(T,F)$ via affine transformations is defined by matching up the translation action of $X_*(\mathrm T)$ and the Weyl-action of $N_G(T) / T_F$ on $A(T,F)$ under the short exact sequence \[ 1 \lra T_F / T_{\OO} = X_*(\mathrm T) \lra N_G(T) / T_{\OO} \stackrel{\pr}{\lra} W = N_G(T) / T_F \lra 1, \] as explained for example in \cite{landvogt}, section 1 of chapter 1.

Since every $n \in N_{\Omega} \subset N_G(T)$ fixes $x_0 = 1$ and $\lambda$, we obtain that $\pr(n) = w_n \in W$ must fix $\lambda$ under the Weyl group action, that is to say that $w_n \in W(M) = N_G(T) \cap \mathrm M(F)$ where $\mathrm M = Z_{\mathrm G}(\lambda)$ is a standard Levi subgroup (see for instance lemma 2.17 of \cite{steinberg} for this well-known fact). \\
This implies that $N_{\Omega} \subset \mathrm M(F)$: now lemma 2.14(a) and (b) in \cite{steinberg} say that since $\mathrm M = Z_{\mathrm G}(\lambda)$ is connected, it is generated by $\mathrm T$ and the $\mathrm U_{\alpha}$'s such that $\langle \lambda, \alpha \rangle =0$. Upon intersecting its $F$-points with $K$, we get that \[ N_{\Omega} \subset K \cap M_F \cap N_G(T) \subset \Big\langle \mathrm T(\OO), \mathrm U_{\alpha}(\OO) \, | \, \langle \lambda, \alpha \rangle = 0 \Big\rangle. \]
We have $\mathrm T(\OO) \subset \mathrm B(\OO)$ and for each root $\alpha$ such that $\langle \lambda, \alpha \rangle =0$, we have already shown that $\mathrm U_{\alpha}(\OO) \subset [ K_{\lambda}, K_{\lambda} ]$, so this concludes the proof.
\end{proof}

We are finally ready to prove theorem \ref{satakeimagerevisited}, which follows from the next result - we will spend the rest of the section proving it.
\begin{thm} \label{imagesatake} Suppose that the residue field of $F$ has size $|k_F| = p^f \ge 5$. Let $\alpha$ be a simple root, and suppose that $\mu \in X_*(\mathrm T)$ is such that $h= \langle \mu, \alpha \rangle \cdot f \ge 1$. Then $\mathcal S^G_T F (\mu) \equiv 0 \bmod p^h$ for all $F \in \mathcal H_G^1$.

In particular, if our ring of coefficients $S$ is $p^a$-torsion and $\langle \mu, \alpha \rangle \cdot f \ge a$, then $\mathcal S^G_T F (\mu) = 0$ for all $F \in \mathcal H_G^1$.
\end{thm}
\begin{proof}
Let $\mathrm M = Z_{\mathrm G} \left( (\ker \alpha)^0 \right)$ be a standard Levi for $\mathrm G$, with maximal torus $\mathrm T$ and root system $\Phi_M = \left\{ \alpha, - \alpha \right\}$. In particular $\mathrm M$ has semisimple rank $1$ and we want to adapt the $\mathrm{PGL}_2$-computations outlined in the introduction to this setup. Notice that $\mathrm M = \mathrm M^0 = Z_{\mathrm G}((\ker \alpha)^0 )^0 = Z_{\mathrm G}(\ker \alpha)^0$ where the last equality holds since both groups are smooth, connected, and have the same Lie algebra - and one is contained in the other. From now on, we assume $\mathrm M = Z_{\mathrm G}(\ker \alpha)^0$, so that $\ker \alpha$ is central in $\mathrm M$.

By the transitivity results in subsection \ref{sectiontransitivesatake}, we have that \[ \mathcal S^G_T f = \mathcal S^M_T \circ \mathcal S^G_M f, \] hence it suffices to show that $\mathcal S^M_T F (\mu) \equiv 0 \bmod p^h$ for all $F \in \mathcal H^1_M$. \\
For ease of notation, for the rest of the proof we put ourselves in the setting where $\mathrm G = \mathrm M$, with maximal torus $\mathrm T$ and Borel subgroup $\mathrm B \cap \mathrm M$ which we denote $\mathrm B$. The unipotent radical of $\mathrm B$ is $\mathrm U \cap \mathrm M = \mathrm U_{\alpha}$. Similar adjustment to the notation are made for $F$- and $\OO$-points, so that for instance $K = M(\OO)$ in the following.

Notice then that $\mathrm B = \mathrm T \ltimes \mathrm U_{\alpha}$ as $\OO$-group schemes. We also fix a pinning: isomorphism of $\OO$-group schemes $i_{\alpha} : \mathbf G_a \lra \mathrm U_{\alpha}$ and $i_{- \alpha} : \mathbf G_a \lra \mathrm U_{ - \alpha}$ which forms an $\mathrm{SL}_2$-triple with the coroot $\alpha^{\vee}: \mathbf G_m \lra \mathrm T$, in the sense of \cite{BT2}, section 3.2.1.

By linearity of the Satake map, it suffices to show that $\mathcal S^G_T F (\mu) \equiv 0 \bmod p^h$ for $F$ in a basis of $\mathcal H^1_G$: assume then that $F$ is supported on the $G$-orbit of $(K, \lambda K)$ for $\lambda$ an antidominant cocharacter for $\mathrm G$. 

By lemma \ref{dhaisom}, we have that $\mathcal H_T^{\le 1} \cong \mathcal H_B^{\le 1}$, hence it suffices to show that $\mathcal S^G_B F (\mu) \equiv 0 \bmod p^h$. As $\res{B^{\circ}}{\mathrm B(\OO)} :H^1 (B^{\circ}, S) \lra H^1 (\mathrm B(\OO), S)$ is an isomorphism, by formula \ref{intermediatesatake} it suffices to show that $\res{B^{\circ}}{\mathrm B(\OO)} \mathcal S^G_B F(\mu) = 0$.
We obtain \begin{equation} \label{borelsatakeformula} \res{B^{\circ}}{\mathrm B(\OO)}  \mathcal S^G_B F (B^{\circ}, \mu(\varpi) B^{\circ}) = \sum_{\mathrm B(\OO) \backslash \backslash \left\{ \varpi^{\mu} u \in B_F / B_{\OO} \right\} } \cores{\mathrm B(\OO)}{\mathrm B(\OO) \cap K_{\varpi^{\mu}u}} \res{K_{\varpi^{\mu}u}}{\mathrm B(\OO)\cap K_{\varpi^{\mu}u}} F(K, \varpi^{\mu}u K ), \end{equation} so it suffices to show that the right hand side is zero $\bmod p^h$. Notice that $\mathrm B(\OO) \cap K_{\varpi^{\mu}u} = \mathrm B(\OO) \cap \Ad(\varpi^{\mu}u) K = \mathrm B(\OO) \cap \Ad(\varpi^{\mu}u) \mathrm B(\OO)$, and from now on we will denote this by $\mathrm B(\OO)_{\varpi^{\mu}u}$.

We describe more explicitly the possible $u \in U_F= \mathrm U_{\alpha}(F)$ appearing in the sum: since $\varpi^{\mu} u$ is a coset representative for $B_F / B_{\OO}$, multiplying by $\mathrm U_{\alpha}(\OO)$ on the right shows that we can assume that $u \in \mathrm U_{\alpha}(F) / \mathrm U_{\alpha}(\OO)$.
On the other hand, we consider $\mathrm B(\OO)$-orbits (under left multiplication) on the cosets $\varpi^{\mu} u \mathrm B(\OO)$, hence multiplying on the left by $\mathrm U_{\alpha}(\OO)$, and using that $\mathrm U_{\alpha}(\OO) \varpi^{\mu} = \varpi^{\mu} \mathrm U_{\alpha}(\varpi^{- \langle \mu, \alpha \rangle} \OO)$, we can assume that $u \in \mathrm U_{\alpha}(\varpi^{- \langle \mu, \alpha \rangle} \OO) \backslash \mathrm U_{\alpha}(F)$.

By assumption $\langle \mu, \alpha \rangle \ge 1$, and since $\mathrm U_{\alpha}$ is abelian, this second condition means we can forget the first one we found, and we end up with $u \in \mathrm U_{\alpha}(F) / \mathrm U_{\alpha}(\varpi^{- \langle \mu, \alpha \rangle} \OO)$.

Fix now one $u \in \mathrm U_{\alpha}(F)$ appearing in the convolution sum in formula \ref{borelsatakeformula}, we aim to show that its contribution to the sum is divisible by $p^h$.

We want to obtain some necessary conditions for an element $y = t v \in \mathrm T(\OO) \ltimes \mathrm U_{\alpha} (\OO) = \mathrm B(\OO)$ to belong to $\mathrm B(\OO)_{\varpi^{\mu}u}$. We have that $y \in \mathrm B(\OO)_{\varpi^{\mu}u}$ if and only if $\Ad\left( \varpi^{\mu} u \right)^{-1} y \in \mathrm B(\OO)$: explicitly \[ u^{-1} \varpi^{ - \mu} t v \varpi^{\mu} u = t \underbrace{\left( t^{-1} u^{-1} t \right)}_{\mathrm U_{\alpha}(F)} \underbrace{\left( \varpi^{- \mu} v \varpi^{\mu} \right)}_{\mathrm U_{\alpha}(F)}  u  \]
As conjugation by $\mathrm T$ preserves $\mathrm U_{\alpha}$, the above element is in $\mathrm B(\OO) = \mathrm T(\OO) \ltimes \mathrm U_{\alpha}(\OO)$ if and only if \[ \left( t^{-1} u^{-1} t \right) \left( \varpi^{- \mu} v \varpi^{\mu} \right) u \in \mathrm U_{\alpha}(\OO). \]
Using implicitly the isomorphism $i_{\alpha}(\OO): \OO \cong \mathrm U_{\alpha}(\OO)$ and passing to additive notation, the above is equivalent to \begin{equation} \label{neccond} u \left( 1 - \alpha(t^{-1}) \right) + \varpi^{ - \langle \mu, \alpha \rangle } v \in \OO \end{equation} which is equivalent to \begin{equation} \label{neccond2} \varpi^{\langle \mu, \alpha \rangle} u \left( 1 - \alpha(t^{-1}) \right) + v \in \varpi^{\langle \mu, \alpha \rangle} \OO. \end{equation}
Since $v \in \OO \cong \mathrm U_{\alpha}(\OO)$, we must have $\varpi^{\langle \mu, \alpha \rangle} u \left( 1 - \alpha(t^{-1}) \right) \in \OO$ as well.

Denote now $B_{\alpha, \mu} = \mathrm T(\OO) \ltimes \mathrm U_{\alpha} (\varpi^{\langle \mu, \alpha \rangle} \OO) $.
\begin{lem} \label{easycorzero} $\cores{\mathrm B(\OO)}{ B_{\alpha, \mu}} : H^1 \left( B_{\alpha, \mu} , S \right) \lra H^1 \left( \mathrm B(\OO), S \right)$ is multiplication by $p^h$.
\end{lem}
\begin{proof}
The index is \[ [ \mathrm B(\OO) : B_{\alpha, \mu}] = [\mathrm U (\OO) : \mathrm U(\varpi^{\langle \mu, \alpha \rangle} \OO)] = [\OO : \varpi^{\langle \mu, \alpha \rangle} \OO ] = |k_F|^{\langle \mu, \alpha \rangle} = p^{f \cdot \langle \mu , \alpha \rangle} = p^h \] and on the other hand the restriction map $\res{}{}: H^1 \left( \mathrm B(\OO), S \right) \lra H^1 \left( B_{\alpha, \mu} , S \right)$ is an isomorphism in degree 1, since the derived subgroup of $B_{\alpha, \mu}$ contains the unipotent part $\mathrm U_{\alpha}(\varpi^{\langle \mu, \alpha \rangle} \OO)$, by using the action of the torus as usual. \\
Therefore, the corestriction map $\cores{\mathrm B(\OO)}{ B_{\alpha, \mu}} : H^1 \left( B_{\alpha, \mu} , S \right) \lra H^1 \left( \mathrm B(\OO), S \right)$ is simply multiplication by the index $p^h$.
\end{proof}
We now consider a special case for the possible summand $\varpi^{\mu}u$: suppose first that $u \in \mathrm U(F)$ is in fact in $ \mathrm U(\varpi^{- \langle \mu, \alpha \rangle} \OO)$. Then as a representative for the $\mathrm B(\OO)$-orbit of $\varpi^{\mu} u \mathrm B(\OO)$ we can take $u = \id$, which corresponds via $i_{\alpha}$ to $0$.

Equation \ref{neccond2} yields then $v \in \mathrm U_{\alpha}(\varpi^{ \langle \mu, \alpha \rangle} \OO)$, and since this holds for all $y \in \mathrm B(\OO)_{\varpi^{\mu}u}$, we have $\mathrm B(\OO)_{\varpi^{\mu}u} = \mathrm T(\OO) \ltimes \mathrm U_{\alpha} ( \varpi^{\langle \mu, \alpha \rangle} \OO) = B_{\alpha, \mu}$.

The lemma above showed that $\cores{\mathrm B(\OO)}{B_{\alpha, \mu}}$ is multiplication by $p^h$ - which proves that this summand yields a contribution to $\mathcal S^G_T F (\mu)$ that is zero $\bmod p^h$.

From now on, we assume that $u = u_{\alpha} \not\in \mathrm U_{\alpha} (\varpi^{- \langle \mu, \alpha \rangle} \OO)$ - or equivalently in additive notation that $\varpi^{\langle \mu, \alpha \rangle} u_{\alpha} \not\in \OO$. Fix then $-n = \val \left( i_{\alpha}^{-1}(u) \right) < - \langle \mu, \alpha \rangle$ for some $n > \langle \mu , \alpha \rangle \ge 1$, i.e. $n \ge 2$.

\begin{claim}\label{littleclaim} Suppose that $ - n = \val(i_{\alpha}^{-1}(u)) < - \langle \mu, \alpha \rangle$, then for each $v = i_{\alpha}(z) \in \mathrm U_{\alpha}(\OO)$ we can find $t \in \mathrm T(\OO)$ such that $tv = y$ satisfies equation \ref{neccond2} and hence $tv \in \mathrm B(\OO)_{\varpi^{\mu}u}$.
\end{claim}
\begin{proof}[Proof of claim] Let $u = i_{\alpha}(x)$. It suffices to find a solution $t \in \mathrm T(\OO)$ to the equation \[ \varpi^{\langle \mu, \alpha \rangle} x \left( 1 - \alpha(t^{-1}) \right) + z =0 \] which is equivalent to \[ \alpha(t^{-1}) = 1 + z x^{-1} \varpi^{ - \langle \mu, \alpha \rangle}. \]
As $z \in \OO$ and $\val(x^{-1}) = n > \langle \mu, \alpha \rangle$ by assumption, it suffices to show that $\alpha: \mathrm T(\OO) \lra \OO^*$ surjects onto $1 +\varpi \OO$.

For the coroot $\alpha^{\vee}: \mathbf G_m \lra \mathrm T$ we have $\alpha^{\vee}(\OO): \OO^* \lra \mathrm T(\OO)$ and moreover the composition $\OO^* \stackrel{\alpha^{\vee}}{\lra} \mathrm T(\OO) \stackrel{\alpha}{\lra} \OO^*$ is just squaring, so it suffices to show that the squaring homomorphism $\OO^* \lra \OO^*$, $a \mapsto a^2$ surjects onto $1 + \varpi \OO$.

Notice that the squaring morphism respects the canonical filtration $1 + \varpi^k \OO$ of $\OO^*$, and that for $k \ge 1$ on each quotient $ \left( 1 + \varpi^k \OO \right) / \left( 1 + \varpi^{k+1} \OO \right)$ it induces an isomorphism, because each such quotient is a finite $p$-group with $p \neq 2$. Then lemma 2 in chapter V of \cite{serre} proves that the squaring morphism is an isomorphism from $1 + \varpi \OO$ to itself, and hence finishes the proof of this claim.
\end{proof}
We can then assume that for our fixed $u = i_{\alpha}(x)$, the mapping $\mathrm B(\OO)_{\varpi^{\mu}u} \lra \mathrm U_{\alpha}(\OO)$ sending $tu \mapsto u$ is surjective.
In particular, we have that $\mathrm T(\OO) \mathrm B(\OO)_{\varpi^{\mu}u} = \mathrm B(\OO)$, hence applying the isomorphism $\res{\mathrm B(\OO)}{\mathrm T(\OO)}: H^1(\mathrm B(\OO), S) \lra H^1(\mathrm T(\OO), S)$ followed by the restriction/corestriction formula we obtain that \[ \res{\mathrm B(\OO)}{\mathrm T(\OO)} \cores{\mathrm B(\OO)}{\mathrm B(\OO)_{\varpi^{\mu} u}} \res{K_{\varpi^{\mu}u}}{\mathrm B(\OO)_{\varpi^{\mu}u}} F(K, \varpi^{\mu}u K ) = \cores{\mathrm T(\OO)}{\mathrm T(\OO) \cap \mathrm B(\OO)_{\varpi^{\mu}u}} \res{K_{\varpi^{\mu}u}}{\mathrm T(\OO) \cap \mathrm B(\OO)_{\varpi^{\mu}u}} F(K, \varpi^{\mu} u K ) \]
We notice that \[ \mathrm T(\OO) \cap \mathrm B(\OO)_{\varpi^{\mu}u} = \left\{ t \cdot 1 \, | \, u (1 - \alpha(t^{-1})) + \varpi^{ - \langle \mu, \alpha \rangle} \cdot 0 \in \OO \right\} = \left\{ \mathrm t \in T(\OO) \, | \, \alpha(t) -1 \in u^{-1} \OO = \varpi^n \OO \right\} \]
Denote this last subgroup by $T_{\alpha, n}$ and notice that $\mathrm T(\OO) \supset T_{\alpha, n} \supset Z(\mathrm G) \subseteq \ker \alpha$.

Recall that $F$ is supported on the double coset $K \lambda(\varpi) K$. Now lemma 3.6i) in \cite{herzig} says that $F(K, \varpi^{\mu} u K )$ is only nonzero if $\mu \ge_{\R} \lambda$, which means that $(\mu - \lambda) \in \R_{\ge 0} \alpha^{\vee}$ is a non-negative real linear combination of simple coroots. 
\begin{claim} Recall that $u  = i_{\alpha}(x)$ so that $n = - \val(x)$. Letting \[ k = \Ad(\varpi^{\mu}) i_{-\alpha}(x^{-1}) = \mu(\varpi) i_{-\alpha} \left( x^{-1} \right) \mu(\varpi)^{-1}, \] we have $k \in \mathrm U_{- \alpha} (\OO)$ and $\varpi^{\mu} u \in k \varpi^{\lambda} K$. In particular we also obtain $\mu - \lambda = n \alpha^{\vee}$.
\end{claim}
\begin{proof}
First of all, notice that by assumption $\val(x^{-1}) = - \val(x) = n > \langle \mu, \alpha \rangle$ so that $\val (x^{-1}) + \langle \mu, - \alpha \rangle > 0$, and hence \[ k= \Ad (\varpi^{\mu}) i_{- \alpha}(x^{-1}) = i_{-\alpha} \left( \varpi^{\langle \mu, - \alpha \rangle} x^{-1} \right) \in i_{-\alpha} (\OO) = \mathrm U_{- \alpha}(\OO) \subset K. \]
To show the rest of the claim, we prove that $\varpi^{\mu} u \in k \left( \mu - n \alpha^{\vee} \right)(\varpi) K$. 
Notice that $\langle \mu - n \alpha^{\vee} , \alpha \rangle = \langle \mu, \alpha \rangle - 2n < n - 2n = -n \le -2$; in particular $\mu - n \alpha^{\vee}$ is an antidominant weight, and hence by uniqueness of the Cartan decomposition for $\varpi^{\mu}u$ it must coincide with $\lambda$, proving the last statement of the claim.

We only need to prove that $\left( \varpi^{\mu} u \right)^{-1} \cdot k \left(\mu - n \alpha^{\vee} \right) (\varpi) \in K$. That is the same as \[ u^{-1} i_{ - \alpha}(x^{-1}) \varpi^{- \mu} \left(\mu - n \alpha^{\vee} \right) (\varpi) = i_{\alpha}(-x) i_{- \alpha}(x^{-1}) \left( n \alpha^{\vee} \right) (\varpi^{-1}) = i_{\alpha}(-x) i_{- \alpha}(x^{-1}) \alpha^{\vee} (\varpi^{-n}). \]
Recall that the coroot $\alpha^{\vee}$ is defined `in the same way' both for $\mathrm{SL}_2$ and for $\mathrm{PGL}_2$ as $t \mapsto \smat{t}{0}{0}{t^{-1}}$. We can compute the above element (using the previously fixed pinning) inside our $\mathrm{SL}_2$-triple, and we obtain \[ i_{\alpha}(-x) i_{- \alpha}(x^{-1}) \alpha^{\vee} (\varpi^{-n}) = \smat{1}{-x}{0}{1} \smat{1}{0}{x^{-1}}{1} \smat{\varpi^{-n}}{0}{0}{\varpi^n} = \smat{0}{-x}{x^{-1}}{1} \smat{\varpi^{-n}}{0}{0}{\varpi^n} = \smat{0}{-x \varpi^n}{x^{-1}\varpi^{-n}}{\varpi^n} \] and as $n = \val(x^{-1}) \ge 2 $, the last element is indeed in $\mathrm{SL}_2(\OO)$ or $\mathrm{PGL}_2(\OO)$ depending on the case.
\end{proof}
We have then $F(K, \varpi^{\mu}u K) = F(K, k \varpi^{\lambda} K) = (c_k^*)^{-1} F(K, \lambda K)$. We now analyze the effect of conjugation by $k$ and restriction to $T_{\alpha,n}$, to finish the proof of the theorem.

Recall that we are studying the following summand:
\[ \cores{\mathrm T(\OO)}{T_{\alpha,n}} \res{K_{\varpi^{\mu}u}}{T_{\alpha,n}} F(K, \varpi^{\mu} u K ) = \cores{\mathrm T(\OO)}{T_{\alpha,n}} \res{K_{\varpi^{\mu}u}}{T_{\alpha,n}} F(K, k \varpi^{\lambda} K ) = \cores{\mathrm T(\OO)} {T_{\alpha,n}} \res{K_{\varpi^{\mu}u}}{T_{\alpha,n}} c_{k^{-1}}^* F(K, \varpi^{\lambda} K ). \]
We focus on $ \res{K_{\varpi^{\mu}u}}{T_{\alpha,n}} c_{k^{-1}}^* F(K, \varpi^{\lambda} K ) \in H^1 \left( T_{\alpha, n } , S \right)$. Denote $f = F(K , \lambda K) \in H^1( K_{\lambda}, S)$.
\begin{claim} The subgroup $\alpha^{\vee}(\OO^*) \cdot (\ker \alpha) (\OO)$ of $\mathrm T(\OO)$ has index coprime to $p$.
\end{claim}
\begin{proof}[Proof of claim]
Consider the exact sequence \[ 1 \lra (\ker \alpha)(\OO) \hookrightarrow \mathrm T(\OO) \stackrel{\alpha}{\lra} \OO^* \lra 1. \] Then we have for the quotient \[ \mathrm T(\OO) / \alpha^{\vee}(\OO) \cdot (\ker \alpha)(\OO) \cong \im (\alpha) / \alpha(\alpha^{\vee}(\OO)) \subset \OO^* / (\OO^*)^2. \] So it suffices to show that the cokernel of the squaring morphism $\OO^* \lra \OO^*$, $a \mapsto a^2$ has size coprime to $p$. \\
In the proof of claim \ref{littleclaim} we have shown that the squaring morphism is an isomorphism when restricted to the pro-$p$ Sylow $1+ \varpi \OO$ of $\OO^*$, so the size of the cokernel is coprime to $p$.
\end{proof}
We show how this claim finishes the proof of theorem \ref{imagesatake}. As the index is coprime to $p$, the behavior of a morphism $f : \mathrm T(\OO) \lra S$ is completely determined by its restriction to $\alpha^{\vee}(\OO^*) \cdot (\ker \alpha) (\OO)$.
Let then $t = \alpha^{\vee}(z) t_0 \in \alpha^{\vee}(\OO^*) \cdot (\ker \alpha) (\OO)$. Since $(\ker \alpha)$ is central in $\mathrm G$ by construction, conjugation by $k^{-1}$ acts trivially on it while on the other hand \[ \Ad(k^{-1}) \left( \alpha^{\vee} (z) \right) = \smat{1}{0}{- \varpi^{- \langle \mu, \alpha \rangle} x^{-1}}{1} \smat{z}{0}{0}{z^{-1}} \smat{1}{0}{ \varpi^{- \langle \mu, \alpha \rangle} x^{-1}}{1} = \] \[ = \smat{z}{0}{- \varpi^{- \langle \mu, \alpha \rangle} x^{-1} z}{z^{-1}} \smat{1}{0}{ \varpi^{- \langle \mu, \alpha \rangle} x^{-1}}{1} = \smat{z}{0}{ \varpi^{- \langle \mu, \alpha \rangle} x^{-1} \left( z^{-1} - z \right)}{z^{-1}} \] so that we conclude \[ \Ad(k^{-1}) t = \Ad(k^{-1}) (\alpha^{\vee} (z) t_0) = \Ad(k^{-1}) \left( \alpha^{\vee}(z) \right) \cdot t_0 = \smat{z}{0}{\varpi^{ - \langle \mu, \alpha \rangle} x^{-1} (z^{-1} - z)}{z^{-1}} \cdot t_0, \]
We notice that this latter element is not just on $K_{\lambda}$, but also in $K_{\lambda} \cap \mathrm B^-(\OO) = \mathrm U_{-\alpha}(\varpi^{ - \langle \lambda, \alpha \rangle} \OO) \rtimes \mathrm T(\OO) $.
Therefore, when computing \[ \res{K_{\varpi^{\mu}u}}{T_{\alpha,n}} c_{k^{-1}}^* F(K, \varpi^{\lambda} K ) (t) = \res{K_{\varpi^{\mu}u}}{T_{\alpha,n}} (c_{k^{-1}}^* f)(t) = (c_{k^{-1}}^* f)|_{T_{\alpha, n}} (t) = f (k^{-1} t k) \] we can replace $f$ by its image under the restriction map $\res{K_{\lambda}}{K_{\lambda} \cap \mathrm B^-(\OO)} : H^1 ( K_{\lambda}, S) \lra H^1(K_{\lambda} \cap \mathrm B^-(\OO), S)$. 
Now since $\res{\mathrm U_{-\alpha}(\varpi^{ - \langle \lambda, \alpha \rangle} \OO) \rtimes \mathrm T(\OO)}{\mathrm T(\OO)} : H^1 (\mathrm U_{-\alpha}(\varpi^{ - \langle \lambda, \alpha \rangle} \OO) \rtimes \mathrm T(\OO) , S ) \lra H^1(\mathrm T(\OO), S)$ is also an isomorphism, we can assume $f$ is just the \emph{unique} extension to $\mathrm U_{-\alpha}(\varpi^{ - \langle \lambda, \alpha \rangle} \OO) \rtimes \mathrm T(\OO)$ of a homomorphism $\mathrm T(\OO) \lra S$.
This unique extension is obtained via the quotient map $\mathrm U_{-\alpha}(\varpi^{ - \langle \lambda, \alpha \rangle} \OO) \rtimes \mathrm T(\OO) \twoheadrightarrow \mathrm T(\OO)$ - in other words, $f(ut) = f(t)$ if $ut \in \mathrm U_{- \alpha}(\varpi^{- \langle \lambda, \alpha \rangle} \OO) \rtimes \mathrm T(\OO)$.

Notice that \[ \smat{z}{0}{ \varpi^{- \langle \mu, \alpha \rangle} x^{-1} \left( z^{-1} - z \right)}{z^{-1}} = \underbrace{\smat{1}{0}{ \varpi^{- \langle \mu, \alpha \rangle} x^{-1} \left( z^{-2} - 1 \right)}{1}}_{\mathrm U_{- \alpha}(\varpi^{- \langle \lambda, \alpha \rangle} \OO)} \cdot \smat{z}{0}{0}{z^{-1}} \] where the unipotent element belongs to the particular filtration subgroup $\mathrm U_{- \alpha}(\varpi^{- \langle \lambda, \alpha \rangle} \OO)$ since $t = \alpha^{\vee}(z) t_0 \in T_{\alpha,n}$ implies that $z^2 = \alpha(t) \in 1 + \varpi^n \OO$ and hence $z^{-2} -1 \in \varpi^n \OO$.

We conclude that \[ f(k^{-1}tk) = f (\Ad k^{-1}(\alpha^{\vee}(z)) \cdot t_0) = f \left( \smat{z}{0}{\varpi^{ - \langle \mu, \alpha \rangle} x^{-1} (z^{-1} - z)}{z^{-1}} \cdot t_0 \right) = \] \[ = f \left( \smat{1}{0}{ \varpi^{- \langle \mu, \alpha \rangle} x^{-1} \left( z^{-2} - 1 \right)}{1} \cdot \alpha^{\vee}(z) \cdot t_0 \right) = f (\alpha^{\vee}(z) \cdot t_0 ) = \left( \res{T(\OO)}{T_{\alpha,n}} f \right) (t) \] since $\alpha^{\vee}(z) \cdot t_0 = t \in T_{\alpha ,n}$.

Applying the corestriction map $\cores{\mathrm T(\OO)}{T_{\alpha,n}}$ to $\res{\mathrm T(\OO)}{T_{\alpha,n}} f$ yields then $[ \mathrm T(\OO) : T_{\alpha,n}] \cdot f$. Recall that $T_{\alpha,n} = \left\{ t \in \mathrm T(\OO) \, | \, \alpha(t) \in 1 + \varpi^n \OO \right\}$ is the kernel of the composition $\alpha_n: \mathrm T(\OO) \stackrel{\alpha}{\lra} \OO^* \twoheadrightarrow \OO^* / (1 + \varpi^n \OO)$, hence the index $[\mathrm T(\OO): T_{\alpha,n}]$ is equal to $ | \im (\alpha_n) |$. We proved before that $\alpha$ surjects onto $1+ \varpi \OO$, hence the size of the image is divisible by $ \left| (1 + \varpi \OO) / (1 + \varpi^n \OO) \right| = |k_F|^{n-1} = p^{f \cdot (n-1)}$.

Our standing assumption that $n > \langle \mu, \alpha \rangle$ means that $f \cdot (n-1) \ge f \cdot \langle \mu, \alpha \rangle =h$, where the last equality is the assumption on $\mu$ in the statement of the theorem. Therefore, multiplication by $[\mathrm T(\OO) : T_{\alpha,n}]$ lands into the $p^h$-divisible subgroup of the cohomology with $S$-coefficients and this completes the proof of the theorem.
\end{proof}

\appendix
\section{Appendix on groupoids} \label{appendice}
We collect here the proofs of several facts on groupoids and groupoid cohomology used in the main body of the paper.
\begin{prop}[Well-posedness of pullback] \label{pullbackwellposed} Let $i: G \lra H$ be a continuous morphism of topological groupoids. Then $i^*F$ satisfies conditions 1 and 2 of definition \ref{groupoidcoh}. Moreover, if $\pi_0 i: \pi_0 G \lra \pi_0 H$ has finite fibers and $F \in \mathbb H^*_c(H)$, then $i^*F \in \mathbb H^*_c(G)$.
\end{prop}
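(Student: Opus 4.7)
The plan is to verify the three conditions in turn, with condition~2 (the $G$-invariance) being the only one that requires genuine naturality of~$i$.

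Condition~1 holds by construction: by definition $(i^*F)(x) := i^x F(i(x))$ where $i^x: H^*(\Stab_H(i(x)),S) \lra H^*(\Stab_G(x),S)$ is the map induced in cohomology by the continuous group homomorphism $i_x: \Stab_G(x) \lra \Stab_H(i(x))$, so $(i^*F)(x) \in H^*(\Stab_G(x),S)$ as required.

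For condition~2, fix $\phi \in \Hom_G(x,y)$ and let $\psi = i(\phi) \in \Hom_H(i(x),i(y))$. The key observation is that since $i$ is a functor, for every $g \in \Stab_G(x)$ one has $i_y(\phi \circ g \circ \phi^{-1}) = \psi \circ i_x(g) \circ \psi^{-1}$, which is to say that the square \begin{displaymath} \xymatrix{ \Stab_G(x) \ar[r]^{i_x} \ar[d]_{c_\phi} & \Stab_H(i(x)) \ar[d]^{c_\psi} \\ \Stab_G(y) \ar[r]_{i_y} & \Stab_H(i(y)) } \end{displaymath} commutes. Passing to cohomology, this yields $\phi^* \circ i^y = i^x \circ \psi^*$. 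Since $F$ satisfies condition~2 in $H$, we have $F(i(x)) = \psi^* F(i(y))$, so \[ (i^*F)(x) = i^x F(i(x)) = i^x \psi^* F(i(y)) = \phi^* i^y F(i(y)) = \phi^* (i^*F)(y), \] which is precisely condition~2 for $i^*F$ on $G$.

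Finally, assume $F \in \mathbb H^*_c(H)$ and that $\pi_0 i$ has finite fibers. Since $i$ maps each connected component of $G$ into a connected component of $H$, the support of $i^*F$ on $\pi_0 G$ is contained in $(\pi_0 i)^{-1}(\supp F) \subset \pi_0 G$. As $\supp F$ is finite in $\pi_0 H$ and each fiber of $\pi_0 i$ is finite, this preimage is a finite union of finite sets, hence $i^*F$ is supported on finitely many connected components of $G$, proving $i^*F \in \mathbb H^*_c(G)$. The main (and essentially only nontrivial) point was the functoriality computation establishing condition~2; the rest is bookkeeping.
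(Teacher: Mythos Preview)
Your proof is correct and follows essentially the same approach as the paper's own proof: condition~1 is immediate, condition~2 is reduced to the commutativity of the square of stabilizer maps via functoriality of $i$, and the compact-support claim is handled by observing that the support of $i^*F$ lies in the $\pi_0 i$-preimage of $\supp F$. The only cosmetic difference is that the paper spells out the diagram chase in slightly more detail, but the argument is the same.
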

\begin{proof}
That $i^*F$ satisfies condition 1 of definition \ref{groupoidcoh} is obvious, so we focus on condition 2.
Fix then $x ,y \in \Ob(G)$ and $\phi \in \Hom_G(x,y)$. As $i$ is a natural transformation, this induces $i (\phi) \in \Hom_H (i(x), i(y))$ and thus an isomorphism $\Stab_H(i(x)) \stackrel{i(\phi)_x}{\lra} \Stab_H(i(y))$ mapping $h \mapsto i(\phi) \circ h \circ i(\phi)^{-1}$. This yields a map in cohomology in the opposite direction: \[ (i(\phi))^* : H^* \left( \Stab_H(i(y)) ,S \right) \lra H^* \left( \Stab_H(i(x)) , S \right) \] and since $F \in \mathbb H^*(H)$ we have \[ F(i(x)) = (i(\phi))^* F(i(y)). \]
Moreover, the functorial properties of $i$ give group homomorphisms \[ \Stab_G(x) = \Hom_G(x,x) \stackrel{i_x}{\lra} \Hom_H(i(x), i(x)) = \Stab_H(i(x)) \] and \[ \Stab_G(y) = \Hom_G(y,y) \stackrel{i_y}{\lra} \Hom_H(i(y), i(y)) = \Stab_H(i(y)) \] and via the maps in cohomology in the opposite directions \[ i^x  : H^* \left( \Stab_H(i(x)) \right) \lra H^* \left( \Stab_G(x) \right) \] and \[  i^y  : H^* \left( \Stab_H(i(y)) \right) \lra H^* \left( \Stab_G(y) \right) \] we defined $i^*F(x) = i^x F(i(x))$ and $i^*F(y) = i^y F(i(y))$.

Condition 2 of definition \ref{groupoidcoh} for $i^*F$ amounts to showing that the map induced by the isomorphism $ \Stab_G(x) \stackrel{\phi_*}{\lra} \Stab_G(y)$ in cohomology, $\phi^*: H^* \left( \Stab_G(y) ,S \right) \lra H^* \left( \Stab_G(x) ,S \right)$, gives $i^*F(x) = \phi^* \left( i^*F(y) \right)$.

Using the identities above, the last equality boils down to showing that \[ i^x \left( i(\phi)^* F(i(y)) \right) = \phi^* \left( i^y F(i(y)) \right) \] i.e. that the following diagram commutes \begin{displaymath} \xymatrix{ H^* \left( \Stab_H (i(y)) ,S \right) \ar[r]^{i^y} \ar[d]_{i(\phi)^*} & H^* \left( \Stab_G (y) ,S \right) \ar[d]^{\phi^*} \\ H^* \left( \Stab_H (i(x)) ,S \right) \ar[r]_{i^x} &H^* \left( \Stab_G (x) ,S \right)} \end{displaymath}

Since all the maps are induced from group homomorphism, it suffices (by functoriality of cohomology w.r.t. group homomorphisms) to show that the following diagram commutes: \begin{displaymath} \xymatrix{  \Stab_H (i(y))  &  \Stab_G (y) \ar[l]_{i^y}  \\\Stab_H (i(x)) \ar[u]^{i(\phi)_*}  & \Stab_G (x) \ar[l]^{i^x} \ar[u]_{\phi_*} } \end{displaymath}

Let's check this latter condition. Given $g \in \Stab_G(x) = \Hom_G(x,x)$, applying $\phi_*$ yields $\phi \circ g \circ \phi^{-1} \in \Hom_G(y,y) = \Stab_G(y)$, then applying $i^y$ gives $i \left( \phi \circ g \circ \phi^{-1} \right) = i(\phi) \circ i(g) \circ i(\phi)^{-1}$ by functoriality of $i$.

On the other hand, we can start by applying $i^x$ to $g$, getting $i(g) \in \Hom_H(i(x), i(x)) = \Stab_H(i(x))$. Then applying $i(\phi)_*$ yields $i(\phi) \circ i(g) \circ i(\phi)^{-1}$, which proves the claim.

Finally, it remains to check that if $F$ is supported on finitely many connected components of $H$ and $\pi_0 i$ has finite fibers, then also $i^*F$ is supported on finitely many connected components of $G$. We can obviously assume that $F$ is supported on a unique connected component $\mathcal C$ of $H$; then $i^*F(x) \neq 0$ implies that $F(i(x)) \neq 0$ and thus $i(x) \in \mathcal C$. Hence, the connected components $\mathcal C_x$ of $x$ varies among the finite set $(\pi_o i )^{-1} ( \mathcal C)$, which concludes the proof.
%
\end{proof}
\begin{prop}[Well-posedness of pushforward] \label{pushforwardwellposed} Let $i: G \lra H$ be a finite covering morphism of groupoids. Then pushforward is well-defined as a map $\mathbb H^*_c(G) \lra \mathbb H^*_c(H)$.
\end{prop}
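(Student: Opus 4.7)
The plan is to verify that the defining sum
\[
(i_*F)(y) = \sum_{H_y \backslash\backslash \, x \in i^{-1}(y)} \cores{\Stab_H(y)}{\Stab_G(x)} F(x)
\]
is well-posed, and then to check the three conditions of Definition \ref{groupoidcoh} for $i_*F$. First I would address the finiteness and representative-independence of the sum itself. Finiteness is immediate from $F \in \mathbb H^*_c(G)$: the $x$ contributing nonzero summands must lie in one of the finitely many connected components of $G$ on which $F$ is supported, and each such component meets $i^{-1}(y)$ in at most finitely many $H_y$-orbits since $\Stab_G(x)$ has finite index in $\Stab_H(y)$ by hypothesis. Independence from the chosen representative reduces, via the covering property of $i$, to the following observation: for $h \in H_y$, the unique lift $\widetilde h : x \to h.x$ satisfies $\Stab_G(h.x) = \widetilde h \, \Stab_G(x) \, \widetilde h^{-1}$, and the groupoid invariance of $F$ applied to $\widetilde h$ combined with the fact that corestriction is insensitive to conjugation of the subgroup by an element of the ambient group forces the two summands to agree.

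Condition 1 of Definition \ref{groupoidcoh} is then automatic, since each summand already lies in $H^*(\Stab_H(y), S)$. Condition 3 is also straightforward: every $y$ in $\supp(i_*F)$ admits some $x \in i^{-1}(y)$ with $F(x) \neq 0$, so the connected component of $y$ in $H$ lies in the image under $\pi_0 i$ of the finite set of components of $G$ on which $F$ is supported; in particular only finitely many components of $H$ can occur.

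The main technical step — the one I expect to require the most care — is verifying Condition 2, the groupoid invariance of $i_*F$. Given $\phi \in \Hom_H(y, y')$, the covering property produces, for each $x \in i^{-1}(y)$, a unique lift $\widetilde\phi : x \to x'$ with $x' \in i^{-1}(y')$. This assignment yields a bijection between $H_y$-orbits on $i^{-1}(y)$ and $H_{y'}$-orbits on $i^{-1}(y')$, compatible with stabilizers in the sense that $\widetilde\phi \, \Stab_G(x) \, \widetilde\phi^{-1} = \Stab_G(x')$ and that conjugation by $\phi$ identifies $\Stab_H(y)$ with $\Stab_H(y')$ extending this identification. Using the naturality of corestriction with respect to the group isomorphisms induced by $\phi$ and $\widetilde\phi$, together with the invariance of $F$ itself (so $F(x) = (\widetilde\phi)^* F(x')$), I can match the summand at $x'$ in $(i_*F)(y')$ with the summand at $x$ in $(i_*F)(y)$ after applying $\phi^*$. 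Summing over the bijectively matched orbits then gives $\phi^*((i_*F)(y')) = (i_*F)(y)$. The bookkeeping to ensure that the identifications of stabilizers commute correctly with the corestriction maps is the essential point, and is essentially the same diagram-chase as in the proof of Proposition \ref{pullbackwellposed}.
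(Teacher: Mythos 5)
Your proposal follows the same route as the paper's own proof: well-posedness of the defining sum (independence of the orbit representative via the unique lifts, the conjugation compatibility of corestriction, and the invariance of $F$), then conditions 1--3 of definition \ref{groupoidcoh}, with condition 2 handled exactly as in the paper by the orbit bijection induced by the unique lifts of $\phi$ and the commutation of the stabilizer identifications with corestriction; condition 3 is also argued the same way (a nonzero value of $i_*F$ at $y$ forces $F\neq 0$ at some $x$ over $y$, and morphisms of groupoids preserve connectedness).

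One justification, however, is not correct as stated, even though the claim it is meant to support is true. For finiteness of the sum you say that each connected component of $G$ in the support of $F$ meets $i^{-1}(y)$ in at most finitely many $\Stab_H(y)$-orbits ``since $\Stab_G(x)$ has finite index in $\Stab_H(y)$''. Finite index controls the \emph{size} of a single orbit (the orbit of $x$ is in bijection with the cosets of the image of $\Stab_G(x)$ in $\Stab_H(y)$), not the \emph{number} of orbits inside a component, so this does not by itself bound the number of summands. The fact you actually need, which the paper quotes as proposition 2.1(b) of \cite{bhk}, is that two points of the fiber $i^{-1}(y)$ lie in the same connected component of $G$ if and only if they lie in the same $\Stab_H(y)$-orbit: if $g \in \Hom_G(x,x')$ with $i(x)=i(x')=y$, then $i(g) \in \Stab_H(y)$ and $g$ is its unique lift at $x$, so $x'$ is in the orbit of $x$. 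Hence each component meets the fiber in exactly one orbit, and the finite support of $F$ (indeed, even $i$-fiberwise finite support) immediately gives finitely many nonzero summands. With this one-line repair the rest of your argument goes through as in the paper.
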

\begin{proof}
Recall that for $F \in \mathbb H^*_c(G)$ and $y \in \Ob(H)$ we defined \[ (i_*F)(y) = \sum_{H_y \backslash \backslash x \in i^{-1}(y)} \cores{\Stab_H(y)}{\Stab_G(x)} F(x). \]
We notice that corestriction is well-defined because $i$ is a finite covering, and also that the sum is well-defined, i.e. that the summand corresponding to a particular $H_y$-orbit on the fiber $i^{-1}(y)$ does not depend on the particular representative $x$ chosen.

Indeed, replacing $x$ by $hx$ for $h \in H_y$ (we denote $h \in \Hom_G(x,hx)$ the unique lift given by the finite covering property) we obtain \[ \cores{\Stab_H(y)}{\Stab_G(hx)} F(hx) = \cores{\Stab_H(y)}{h \Stab_G(x)h^{-1}} F(hx) = \cores{h^{-1} \Stab_H(y)h}{\Stab_G(x)} c_{h^{-1}}^* F(hx) = \cores{\Stab_H(y)}{\Stab_G(x)} c_{h^{-1}}^* F(hx)\] where $c_{h^{-1}}^* : H^* (h \Stab_G(x) h^{-1}, S) \lra H^* (\Stab_G(x), S )$ is the isomorphism in cohomology induced by the morphism $h \in \Hom_G(x, hx)$. By invariance of $F$, we obtain $c_{h^{-1}}^* F(hx) = F(x)$, which shows that the summand is independent of the representative.

We also remark that by proposition 2.1(b) in \cite{bhk}, two objects $x, x'$ of the fiber above $y$ are in the same connected component of $G$ if and only if they are in the same $H_y$-orbit: this shows that the sum has only finitely many nonzero summands, since $F$ is supported on finitely many connected components of $G$ (above $y$, if we only assume that $F$ is $i$-fiberwise compactly supported).

To prove that $i_*F$ satisfies the invariance condition of the definition of groupoid cohomology, fix $\phi \in \Hom_H(y,y')$, and notice that this induces a bijection between the $H_y$-orbits on $i^{-1}(y)$ and the $H_{y'}$-orbits on $i^{-1}(y')$ via $i^{-1}(y) \ni x \mapsto \widetilde \phi(x)$ where $\widetilde \phi$ is the \emph{unique} morphism of $G$ having source $x$ and mapping to $\phi$ under $i$.
Therefore, we have \[ (i_*F)(y' = \phi(y)) = \sum_{H_y \backslash \backslash x \in i^{-1}(y)} \cores{\Stab_H(\phi y)}{\Stab_G(\widetilde \phi x)} F(\widetilde \phi x) = \sum_{H_y \backslash \backslash x \in i^{-1}(y)} \cores{\Stab_H(\phi y)}{\Stab_G(\widetilde \phi x)} \widetilde \phi^* F(x) \] where the last equality is due to invariance of $F$. To prove that $(i_*F)(\phi y) = \phi^* (i_*F)(y)$ it suffices then to show that the diagram \begin{displaymath} \xymatrix{ H^*(G_x) \ar[r]^{\widetilde \phi^*} \ar[d]_{\cores{}{}} & H^*(G_{\widetilde \phi x}) \ar[d]^{\cores{}{}} \\ H^*(H_y) \ar[r]_{\phi^*} & H^*(H_{\phi y}) } \end{displaymath} commutes.
This follows from the fact that the group isomorphisms induced by $\phi$ and $\widetilde \phi$ via `conjugation' respect the inclusions $G_x \hookrightarrow H_y$ and $G_{\widetilde \phi x} \hookrightarrow H_{\phi y}$, in the sense that the diagram \begin{displaymath} \xymatrix{ G_x \ar[r]^{\widetilde \phi^*} \ar@{^{(}->}[d] & G_{\widetilde \phi x} \ar@{^{(}->}[d] \\ H_y \ar[r]_{\phi^*} & H_{\phi y} } \end{displaymath} is commutative.

Finally, the fact that $i_*F$ is supported on finitely many connected components of $H$ is a consequence of the same fact for $F$ and the fact that morphisms of groupoids preserve connectedness: more in details, if $i_*F$ is nonzero at $y$ and $y'$ it means that there exists objects $x \in i^{-1}(y)$ and $x' \in i^{-1}(y')$ where $F$ is nonzero. Assuming by linearity that $F$ is supported on only one connected component yields that there exists an element $g: x \lra x'$, but then $i(g): y \lra y'$.
\end{proof}
\primaformula*
\begin{proof}
This is obviously an equivariant version of the well-known formula for group cohomology \begin{equation} \label{coresintocup} (\cores{G}{H} \alpha) \cup \beta = \cores{G}{H} \left( \alpha \cup \res{G}{H} \beta \right). \end{equation}
We give all details since we use this result repeatedly in the paper.

First of all, notice that since $i$ is a covering morphism, it induces continuous inclusions on isotropy groups. Letting $x \in \Ob(X)$, $y  = i(x) \in \Ob(Y)$ and $z = j(y) = k(x) \in \Ob(Z)$ we have thus the commutative triangle of group homomorphisms \begin{displaymath} \xymatrix{ \Stab_X(x) \ar[r]^{k_x} \ar[d]_{i_x} & \Stab_Z(z) \\ \Stab_Y(y) \ar[ru]_{j_y} & } \end{displaymath} and since $i_x$ and $j_y$ are inclusions by assumption, so is $k_x$: this shows that $k$ also induces inclusions between isotropy groups. We also denote $j_y^*: H^*(Z_{j(y)}) \lra H^*(Y_y)$ and $k_x^*: H^*(Z_{k(x)}) \lra H^*(X_x)$ the induced maps on group cohomology.

Notice also that since $F \in \mathbb H^*(X)$ is $i$-fiberwise compactly supported - and the space of such cohomology classes is an ideal of $\mathbb H^*(X)$ under pointwise cup product - we obtain that $F \cup k^*G$ is also $i$-fiberwise compactly supported, thus making sense of the right hand side of the formula we aim to prove.

Let $y \in \Ob(Y)$, then we have \[ (i_* F \cup j^* G)(y) = i_*F(y) \cup j^*G (y) = \left( \sum_{Y_y \backslash \backslash i^{-1}(y) \ni x} \cores{Y_y}{X_x} F(x)  \right) \cup \left( j^*_y G(j(y)) \right) = \] \[ = \sum_{Y_y \backslash \backslash i^{-1}(y) \ni x} \left( \cores{Y_y}{X_x} F(x) \cup j^*_y G(j(y)) \right). \]
The assumptions on the maps guarantees that for each $y \in \Ob(Y)$ and each $x \in i^{-1}(x)$ we have $X_x \subset Y_y \subset Z_{j(y)}$. We apply the abovementioned formula \ref{coresintocup} for group cohomology for each single summand, where $G= Y_y \supset X_x = H$, $\alpha = F(x)$ and $\beta = j^*_y G(j(y))$.
We obtain \[ \sum_{Y_y \backslash \backslash i^{-1}(y) \ni x} \cores{Y_y}{X_x} \left( F(x) \cup \res{Y_y}{X_x} j^*_y G(j(y)) \right) = \sum_{Y_y \backslash \backslash i^{-1}(y) \ni x} \cores{Y_y}{X_x} \left( F(x) \cup k_x^* G(k(x)) \right) = \] \[ = \sum_{Y_y \backslash \backslash i^{-1}(y) \ni x} \cores{Y_y}{X_x} \left( (F \cup k^*G)(x)) \right) = \left( i_* (F \cup k^*G ) \right) (y). \]
The final assertion of the lemma is immediate, since pushforward along $i$ always preserves the compactly supported cohomology, and under the additional assumption that $j$ and $k$ induce finite-fibers map in cohomology, the pullbacks along $j$ and $k$ preserve that too.
\end{proof}
\secondaformula*
\begin{proof}
This is clearly an equivariant version of the restriction-corestriction formula for group cohomology. We give all details, since we use this lemma several times.

Firstly notice that by proposition 2.8 in \cite{brown}, $\widetilde i$ is also a covering morphism, and is continuous as seen in the definition \ref{defpullback} of pullback and homotopy pullback.

We start by proving that $\widetilde \pi$ also induces an injection on isotropy groups. Let $z \in \Ob(Z)$, and let $x = \widetilde \pi(z)$, $y = \widetilde i(z)$ and $\pi(\widetilde i(z)) = a = i(\widetilde \pi(z))$.
By theorem 2.2(v) in \cite{bhk2}, the isotropy group $\Stab_Z(z)$ is the pullback of the diagram \begin{displaymath} \xymatrix{ & \Stab_X(x) \ar[d]^{i_x} \\ \Stab_Y(y) \ar[r]_{\pi_y} & \Stab_A(a) } \end{displaymath} and on the other hand $i_x: \Stab_X(x) \lra \Stab_A(a)$ is an injection since $i$ is a covering morphism, while $\pi_y: \Stab_Y(y) \lra \Stab_A(a)$ is an injection by assumption on $\pi$.

This proves that the pullback of the diagram above is simply the intersection $\Stab_Y(y) \cap \Stab_X(x) \subset \Stab_A(a)$, where we identify $\Stab_Y(Y)$ with its image under $\pi_y$ inside $\Stab_A(a)$, and same for $\Stab_X(x)$.
In particular, $\Stab_Z(z) \cong \Stab_Y(y) \cap \Stab_X(x) \subset \Stab_X(x)$ inside $\Stab_A(a)$, so that $\widetilde \pi$ also induces injection of isotropy groups.
Moreover, $\Stab_Z(z) = \Stab_Y(y) \cap \Stab_X(x)$ is open in $\Stab_Y(y)$, since $\Stab_X(x) \subset \Stab_A(a)$ is open by assumption on $i$ being a finite covering morphism, and the property of being an open map is preserved under pullback by the continuous map $\pi_y: \Stab_Y(y) \lra \Stab_A(a)$.

The above reasoning also shows that $\widetilde i$ is a \emph{finite} covering morphism: indeed we have shown that $\Stab_Z(z) \cong \Stab_Y(y) \cap \Stab_X(x)$, and on the other hand the map of coset spaces \[ \Stab_Y(y) / \left( \Stab_Y(y) \cap \Stab_X(x) \right) \lra \Stab_A(a) / \Stab_X(x) \quad g \left( \Stab_Y(y) \cap \Stab_X(x) \right) \mapsto g \Stab_X(x) \] is easily seen to be injective, so the target $\Stab_A(a) / \Stab_X(x)$ being finite by assumption on $i$ proves the same finiteness result for $\widetilde i$.

The same argument when we switch the roles of $\pi$ and $i$ shows that, as $\pi$ is assumed to inducing open injections on stabilizers, so does $\widetilde \pi$.

We also show that under the additional assumption that $\pi$ induces a finite fibers map on connected components so does $\widetilde \pi$, proving that the pullback $\widetilde \pi^*$ preserves compactly supported cohomology. The pullback square induces a commutative diagram on connected components: \begin{displaymath} \xymatrix{ \pi_0 Z \ar[r]_{\widetilde \pi_0} \ar[d]_{\widetilde i_0} & \pi_0 X \ar[d]^{i_0} \\ \pi_0 Y \ar[r]_{\pi_0} & \pi_0 A } \end{displaymath}
Fix a component $\mathcal X \in \pi_0 X$. Then $i(\mathcal X) \in \pi_0 A$ has finitely many connected components of $Y$ above it, by the assumption on $\pi$: denote them by $\{ \mathcal Y_1, \ldots, \mathcal Y_m \}$. Then clearly \[ \widetilde \pi_0^{-1} ( \mathcal X ) = \coprod_{i=1}^m \widetilde \pi_0^{-1} (\mathcal X) \cap \widetilde i_0 ^{-1} (\mathcal Y_i) \] and hence it suffices to prove that each of $\widetilde \pi_0^{-1} (\mathcal X) \cap \widetilde i_0 ^{-1} (\mathcal Y_i)$ is finite.

We fix then one such $\mathcal Y_i$ and denote it by $\mathcal Y$ from now on. Let $z \in \Ob(Z)$ be in one of the components of $\widetilde \pi_0^{-1} (\mathcal X) \cap \widetilde i_0 ^{-1} (\mathcal Y)$, and denote $x = \widetilde \pi(z) \in \mathcal X$, $y = \widetilde i(z) \in \mathcal Y$ and $i(x) = a = \pi(y)$.
The `Mayer-Vietoris' sequence of theorem 2.2 in \cite{brown} is \begin{displaymath} \xymatrix{ & \Stab_X(x) \ar[rd]^i & & & \pi_0 X \ar[rd]^{i_0} & \\ \Stab_Z(z) \ar[rd]_{\widetilde i} \ar[ru]^{\widetilde \pi} & & \Stab_A(a) \ar[r]^{\Delta} & \pi_0 Z \ar[rd]_{\widetilde i_0} \ar[ru]^{\widetilde \pi_0} & & \pi_0 A \\ & \Stab_Y(y) \ar[ru]_{\pi} & & & \pi_0 Y \ar[ru]_{\pi_0} & \\ } \end{displaymath} and part (ii) of the statement of the theorem is that the image of $\Delta$ inside $\pi_0 Z$ is the intersection $\widetilde \pi_0^{-1} (\mathcal X) \cap \widetilde i_0^{-1} (\mathcal Y)$.

On the other hand, part (iv) of the same theorem says that the image of $\Delta$ is in bijection with the double coset space $\pi \left( \Stab_Y(y) \right) \backslash \Stab_A(a) / i \left( \Stab_X(x) \right)$. The assumption on $i$ inducing finite-index inclusions on isotropy groups implies that this double coset space is finite, and this concludes the proof that $\widetilde \pi_0$ also has finite fibers.

Before proving the main formula, it remains to show that if $F$ is $i$-fiberwise compactly supported, then $\widetilde \pi^* F$ is $\widetilde i$-fiberwise compactly supported, so that the left hand side of the formula is well-defined. Fix then $y \in \Ob(Y)$: we want to show that $\widetilde \pi^*F$ is supported on finitely many connected components of $Z$ above $y$.

Proposition 2.1 in \cite{bhk} says that we have a bijection between $\Stab_Y(y)$-orbits on $\widetilde i^{-1}(y)$ and connected components of $Z$ intersecting the fiber $\widetilde i^{-1}(y)$, so it suffices to show that $\widetilde \pi^* F$ is only nonzero on finitely many $\Stab_Y(y)$-orbits on $\widetilde i^{-1}(y)$.

Let now $a = \pi(y) \in \Ob(A)$. An explicit model for the pullback groupoid $Z$ gives that the objects $\Ob(Z)$ are pairs in $\Ob(Y) \times \Ob(X)$ mapping to the same object of $A$.

In particular, the objects of the fiber $\widetilde i^{-1}(y)$ consists of pairs $\{ (y,x), \, | \, a = \pi(y) = i(x) \}$ - this is obviously in bijection with the fiber $i^{-1}(a)$ via the map $(y, x) \mapsto x$.
\begin{claim} \label{sameaction}
This bijection respects the $\Stab_Y(y)$-actions on each side: the action on $\widetilde i^{-1}(y)$ is the natural one, while the action on $i^{-1}(a)$ is the restriction to $\Stab_Y(y)$ of the natural action of $\Stab_A(a)$ (recall that by assumption $\pi$ induces an injection $\Stab_Y(y) \hookrightarrow \Stab_A(a)$). 
\end{claim}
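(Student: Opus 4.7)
The plan is to unwind the definitions of the two actions by using the explicit model for the pullback groupoid $Z$ described in definition \ref{defpullback}, and then verify that the naive bijection $(y,x) \mapsto x$ is $\Stab_Y(y)$-equivariant. The argument is essentially a diagram chase, with the covering-morphism property of $i$ doing all the heavy lifting.

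First I would set up the model: objects of $Z$ are pairs $(y',x')$ with $\pi(y') = i(x')$, and a morphism $(y_1,x_1) \to (y_2,x_2)$ is a pair $(\alpha, \varepsilon) \in \Hom_Y(y_1,y_2) \times \Hom_X(x_1,x_2)$ satisfying $\pi(\alpha) = i(\varepsilon)$ in $\Hom_A(a, a)$ (since both sides equal $a$). In this model the fiber $\widetilde{i}^{-1}(y)$ consists of pairs $(y, x')$ where $x' \in i^{-1}(a)$, which gives the claimed bijection $\widetilde{i}^{-1}(y) \longleftrightarrow i^{-1}(a)$, $(y, x') \mapsto x'$.

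Next I would compute the natural $\Stab_Y(y)$-action on $\widetilde{i}^{-1}(y)$ using the covering-morphism structure of $\widetilde{i}$. Given $h \in \Stab_Y(y)$ and $(y,x') \in \widetilde{i}^{-1}(y)$, the action sends $(y, x')$ to the target of the unique morphism $\widetilde{h}$ of $Z$ with source $(y, x')$ lifting $h$. Writing $\widetilde{h} = (h, \varepsilon)$ with $\varepsilon \in \Hom_X(x', x'')$, the pullback compatibility condition forces $i(\varepsilon) = \pi(h) \in \Stab_A(a)$. Since $i$ is a covering morphism, the morphism $\varepsilon$ exists and is uniquely determined by its source $x'$ and its image $\pi(h)$ under $i$; in other words $\varepsilon$ is exactly the lift provided by $\pi(h).x'$ under the $\Stab_A(a)$-action on $i^{-1}(a)$. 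Thus $h.(y,x') = (y, \pi(h).x')$.

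Finally, I would compare this with the restricted action on $i^{-1}(a)$. By definition, $\Stab_A(a)$ acts on $i^{-1}(a)$ via the covering property of $i$, so restricting through the injection $\pi_y: \Stab_Y(y) \hookrightarrow \Stab_A(a)$ gives precisely $h.x' = \pi(h).x'$. Hence the bijection $(y,x') \mapsto x'$ intertwines the two actions, proving the claim. The only subtle point is making sure the two uses of the covering property (for $\widetilde{i}$ and for $i$) produce compatible lifts, but this is automatic from the uniqueness built into the definition of a covering morphism, together with the fact that in the pullback model the second component of a morphism in $Z$ is literally an $X$-morphism lifting the image in $A$ of the first component.
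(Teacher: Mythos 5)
Your argument is correct and is essentially the paper's own proof: in the explicit pullback model the second component $\varepsilon$ of the lifted morphism is exactly $\widetilde \pi(\hat g)$, and your appeal to uniqueness of lifts under the covering $i$ to identify $\varepsilon$ with the canonical lift of $\pi(h)$ with source $x'$ is precisely the step the paper carries out via $\pi(g) = \pi(\widetilde i(\hat g)) = i(\widetilde \pi(\hat g))$. No gaps.
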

\begin{proof}[Proof of claim] Fix $(y, x) \in \Ob(\widetilde i^{-1}(y))$ and let $g \in \Stab_Y(y)$. As $\widetilde i$ is a covering, $g$ lifts to a unique morphism in $Z$: denote it $\hat g:(y, x) \mapsto (y, x')$. We thus want to show that the $\Stab_A(a)$-action of $\pi(g)$ on $x \in i^{-1}(a)$ maps it to $x'$.

Under the map $\pi$, we have that $\pi(g) \in \Stab_A(a)$ and as such it acts on the fiber $i^{-1}(a)$: its unique lift under the covering $i$ having source $x$ is the element $\widehat{\pi(g)}: x \mapsto \widehat x'$, and thus the $\Stab_Y(y)$-action of $g$ sends $x$ to $\widehat x'$.

Now notice that \[ \pi(g) = \pi( \widetilde i(\hat g) ) = i \left( \widetilde \pi (\hat g) \right) \] so that $\widetilde \pi(\hat g)$ is another lift under $i$ of $\pi(g)$: the uniqueness statement of the covering means that we must have $\widetilde \pi(\hat g) = \widehat{\pi(g)}$. But $\widetilde \pi(\hat g): x \mapsto x'$, so that $ x' = \widehat x'$. This proves that the two $\Stab_Y(y)$-actions coincide under the given bijection, and completes the proof of the claim.
\end{proof}
Finally, notice that as $F$ is $i$-fiberwise compactly supported, $F(x) \neq 0$ only on finitely many connected components of $X$ in the fiber $i^{-1}(a)$. By proposition 2.1 of \cite{bhk}, these are in bijection with $\Stab_A(a)$-orbits on the fiber $i^{-1}(a)$, and since $\Stab_Y(y) \hookrightarrow \Stab_A(a)$ is a finite index inclusion by assumption on $\pi$, we obtain that $F(x) \neq 0$ only on finitely many $\Stab_Y(y)$-orbits.

Since $F(x) \neq 0$ is a necessary condition for $\widetilde \pi^* F ((y,x)) \neq 0$, the statement of the claim completes the proof that $\widetilde \pi^* F$ is $\widetilde i$-fiberwise compactly supported.

We are finally ready to prove the main formula of the lemma. Notice that the assumption on $\pi$ and $\widetilde \pi$ inducing open injections of isotropy groups at all objects means that the formula for the pullback becomes \[ (\widetilde \pi)^* F (z) = \left( \widetilde \pi_*^{(z)} \right)^* F(\widetilde \pi (z)) = \res{\Stab_X(\widetilde \pi(z))}{\Stab_Z(z)} F(\widetilde \pi (z)) \] and similarly for $\pi$.

Fix $y \in \Ob(Y)$, then by definitions we have \[ (\widetilde i)_* \left( (\widetilde \pi)^* F \right) (y) = \sum_{\Stab_Y(y) \backslash \backslash z \in \widetilde i^{-1}(y)} \cores{\Stab_Y(y)}{\Stab_Z(z)} (\widetilde \pi^* F) (z) = \] \[ = \sum_{\Stab_Y(y) \backslash \backslash z \in \widetilde i^{-1}(y)} \cores{\Stab_Y(y)}{\Stab_Z(z)} \res{\Stab_X(\widetilde \pi(z))}{\Stab_Z(z)} F(\widetilde \pi(z)). \]
On the other hand, we have \[ \pi^* ( i_* F) (y) = \res{\Stab_A(\pi (y))}{\Stab_Y(y)} (i_*F)(\pi(y)) = \res{\Stab_A(\pi (y))}{\Stab_Y(y)} \sum_{\Stab_A(\pi(y)) \backslash \backslash x \in i^{-1}(\pi(y))} \cores{\Stab_A(\pi(y))}{\Stab_X(x)} F(x). \]
Moving the restriction map inside the summation and applying the restriction-corestriction formula for group cohomology yields \[ \sum_{\Stab_A(\pi(y)) \backslash \backslash x \in i^{-1}(\pi(y))} \sum_{\Stab_Y(y) \backslash \Stab_A(\pi(y)) / \Stab_X(x) \ni g} \cores{\Stab_Y(y)}{\Stab_Y(y) \cap g \Stab_X(x)g^{-1}} \res{g \Stab_X(x) g^{-1}}{\Stab_Y(y) \cap g \Stab_X(x)g^{-1}} c_g^* F(x). \]
Since $\pi$ is a covering morphism, each double coset representative $g$, which is an element of $\Stab_A(\pi(y))$, has a well-defined lift to a morphism in $X$ with source $x$; in particular $gx \in \Ob(X)$ is well-defined, and its stabilizer is obviously $\Stab_X(gx) = g \Stab_X(x) g^{-1}$.
Invariance of $F$ also gives that the single summand is $\cores{\Stab_Y(y)}{\Stab_Y(y) \cap \Stab_X(gx)} \res{\Stab_X(gx)}{\Stab_Y(y) \cap \Stab_X(gx)} F(gx)$.
Notice that the intersection $\Stab_Y(y) \cap \Stab_X(gx)$ is taken inside $\Stab_A(\pi(y))$, and since both maps between isotropy groups are inclusions (by assumption on $\pi$ and $i$), theorem 2.2(v) in \cite{bhk2} yields that this intersection is precisely $\Stab_Z(z)$.

We can wrap up the two summation in a single one by defining $\widetilde x = gx$ to be representatives for the $\Stab_Y(y)$-orbits on $i^{-1}(\pi(y))$, where the action is the restriction to $\Stab_Y(y)$ of the natural action of $\Stab_A(\pi(y))$: \[ \pi^* ( i_* F) (y) = \sum_{\Stab_Y(y) \backslash \backslash i^{-1}(\pi(y)) \ni \widetilde x} \cores{\Stab_Y(y)}{\Stab_Z((y,\widetilde x))} \res{\Stab_X(\widetilde x)}{\Stab_Z((y,\widetilde x))} F( \widetilde x)  \] where we notice that an explicit model of the objects of $Z$ is given by pairs in $\Ob (Y) \times \Ob(X)$ which map to the same object in $A$ - and each choice of $(y, \widetilde x)$ in the sum satisfies this requirement.

To finish the proof of the lemma, it remains to show that the last summation coincides with$ \sum_{\Stab_Y(y) \backslash \backslash z \in \widetilde i^{-1}(y)} \cores{\Stab_Y(y)}{\Stab_Z(z)} \res{\Stab_X(\widetilde \pi(z))}{\Stab_Z(z)} F(\widetilde \pi(z))$.

It is immediate - again by the description of the objects of the pullback $Z$ as pairs $\Ob(X) \times \Ob(Y)$ mapping to the same object of $A$ - that we have $\Ob \left( \widetilde i^{-1}(y) \right) = \left\{ (y,\widetilde x), \, | \, \pi(y) = i(\widetilde x) \right\}$ in bijection with $\Ob \left( i^{-1} (\pi(y)) \right)$ via $(y, \widetilde x) \mapsto \widetilde x$.

It remains to check that the two actions of $\Stab_Y(y)$ correspond under this identification, as then it will follow that the orbits correspond as well: the two summations will be indexed by the same set and will coincide summand by summand. This is precisely the result of claim \ref{sameaction}.
\end{proof}
We now describe a very general setup for derived Hecke algebras. Let $G$ be a topological group acting transitively on a set $X$, for instance $X = G / K$ with the action by left multiplication.

Consider the topological groupoid $\left( G \curvearrowright X^2 \right)$ where $g.(x_1, x_2) = (gx_1, gx_2)$ whose topology is defined as in example \ref{topgroupoidexmp}. We denote this groupoid by $\underline G_X$. Similarly, let $\left( G \curvearrowright X^3 \right)$ be another topological groupoid, where $G$ acts diagonally as above. Let $i_{1,2}: \left( G \curvearrowright X^3 \right) \lra \underline G_X$ be the projection onto the first and second factors, and similarly define $i_{2,3}$ and $i_{1,3}$. Fix a base point $x_0 \in X$ and denote $H = \Stab_G(x_0)$.
\begin{fact} \label{generaldha} Suppose that the following conditions hold: \begin{enumerate}
\item For each $F_1, F_2 \in \mathbb H^*_c (\underline G_X)$, the cohomology class $i_{1,2}^* F_1 \cup i_{2,3}^* F_2$ is $i_{1,3}$-fiberwise compactly supported;
\item For all $x, z \in H \backslash G / H$, the subset $HxHzH$ is a finite union of $(H,H)$-cosets.
\item $i_{1,3}$ is a finite covering morphism.
\end{enumerate}
Then the compactly supported cohomology $\mathbb H^*_c(\underline G_X)$ admits the structure of a derived Hecke algebra under convolution, as follows: given $F_1, F_2 \in \mathbb H^*_c(\underline G_X)$ we define their convolution to be the cohomology class resulting from the following diagram \begin{displaymath} \xymatrix{ \underline G_X^{F_1} & & \underline G_X^{F_2} \\ & \left( G \curvearrowright X^3 \right) \ar[lu]^{i_{1,2}} \ar[ru]_{i_{2,3}} \ar[d]_{i_{1,3}} & \\ & \underline G_X & } \end{displaymath}
where we pullback $F_1$ and $F_2$ to $\left( G \curvearrowright X^3 \right)$, cup them, then pushforward along $i_{1,3}$ to $\underline G_X$. We denote this algebra by $\mathcal H(G,X)$.
\end{fact}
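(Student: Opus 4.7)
The plan is to verify three properties: the convolution is well-defined as a map $\mathbb{H}^*_c(\underline{G}_X) \times \mathbb{H}^*_c(\underline{G}_X) \to \mathbb{H}^*_c(\underline{G}_X)$, it admits a two-sided identity, and it is associative. The first two are relatively direct, and I expect associativity to be the main obstacle.

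For well-definedness, I would proceed as follows. Condition 3 tells us $i_{1,3}$ is a finite covering morphism, so pushforward along it preserves groupoid cohomology in the sense of definition \ref{pushforwarddef}. Condition 1 ensures that $i_{1,2}^* F_1 \cup i_{2,3}^* F_2$ is $i_{1,3}$-fiberwise compactly supported, so pushforward makes sense on it (in the extended sense noted in the remark after definition \ref{pushforwarddef}). It remains to verify that the resulting class is actually compactly supported on $\underline{G}_X$: an explicit unraveling shows that the support of the output at $(x, y) \in X^2$ corresponds to $(H, H)$-double cosets appearing in $HxHyH$ (after translating the base point to $x_0$), and condition 2 then guarantees finiteness of the support. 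The identity element is the class $\underline{1}$ supported on the diagonal $G$-orbit of $(x_0, x_0)$ with value $1 \in H^0(H, S)$; checking that $\underline{1} \circ F = F = F \circ \underline{1}$ is a direct unraveling of the explicit formula \ref{convexplicit}.

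The real work is associativity, for which I would mimic the diagram chase sketched in the commented-out proof of proposition \ref{convassociative}. Given $F_1, F_2, F_3 \in \mathbb{H}^*_c(\underline{G}_X)$, both $(F_1 \circ F_2) \circ F_3$ and $F_1 \circ (F_2 \circ F_3)$ can be represented by iterated push-pull diagrams. The strategy is to show that each of them equals the symmetric expression obtained by working over $(G \curvearrowright X^4)$, namely $(i_{1,4})_*\bigl(i_{1,2}^* F_1 \cup i_{2,3}^* F_2 \cup i_{3,4}^* F_3\bigr)$. To move between the iterated and symmetric diagrams I would use two formal tools: lemma \ref{cohomswitch}, applied to pullback squares of the form
\begin{displaymath}
\xymatrix{
(G \curvearrowright X^4) \ar[r]^{i_{1,2,3}} \ar[d]_{i_{1,3,4}} & (G \curvearrowright X^3) \ar[d]^{i_{1,3}} \\
(G \curvearrowright X^3) \ar[r]_{i_{1,2}} & \underline{G}_X
}
\end{displaymath}
(and its analogue with $i_{2,3}$ on the bottom), to commute pullbacks past pushforwards; and lemma \ref{corescupres}, in its projection-formula form, to absorb an additional pullback factor inside an already-pushed cohomology class. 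Applying these in sequence converts the diagram for $(F_1 \circ F_2) \circ F_3$ into the symmetric one involving $X^4$, and the same manipulations applied in the opposite order do the same for $F_1 \circ (F_2 \circ F_3)$.

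The main technical obstacle is verifying that the hypotheses of lemmas \ref{cohomswitch} and \ref{corescupres} actually hold at each step: one must check that the relevant intermediate classes are themselves fiberwise compactly supported, and that the projection maps between cartesian-product groupoids induce open inclusions of isotropy groups (in fact, they are covering morphisms here since the acting group $G$ is unchanged). All of these verifications reduce to the hypotheses of the fact together with the observation that each $i_{j_1, \ldots, j_r} : (G \curvearrowright X^n) \to (G \curvearrowright X^r)$ is a covering morphism, and that finite-index conditions propagate along such projections because all isotropy groups in $(G \curvearrowright X^n)$ are intersections of conjugates of $H$. Once these bookkeeping checks are complete, the two diagram chases produce identical symmetric expressions and associativity follows.
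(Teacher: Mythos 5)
Your proposal matches the paper's proof essentially step for step: the well-definedness and compact-support arguments use conditions 1--3 in the same way, the identity element is identified identically, and the associativity argument converts one iterated push-pull diagram into the symmetric expression over $(G \curvearrowright X^4)$ via the same pullback square and the same applications of lemmas \ref{cohomswitch} and \ref{corescupres}, then concludes by symmetry. The only cosmetic difference is that the paper explicitly reduces only $(F_1 \circ F_2) \circ F_3$ to the symmetric diagram and invokes symmetry, whereas you state you would do both reductions, but the underlying argument is the same.
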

\begin{rem}
Before proving the fact, we notice that if $H$ is compact and open in $G$ then all conditions are automatically satisfied. Indeed, $i_{1,3}$ is always a covering, so it is a finite covering as long as the index $[G_{x,z}: G_{x,y,z}]$ is finite, which holds once $H$ is compact open. Similarly, condition 2 follows immediately since $HxHzH$ is compact if $H$ is. Finally, condition 1 is implied by $G_{x,z}$ having finitely many orbits on the set of $y \in X$ such that $F_1(x,y) \neq 0 \neq F_2(y,z)$: since $F_1$ is compactly supported, $G_x$ has finitely many orbits on that set, so finiteness of the index $[G_x : G_{x,z}]$ implies condition 1.
\end{rem}
\begin{proof}
Conditions 1 guarantees that we can pushforward along $i_{1,3}$. To show that this pushforward is compactly supported, we notice that by linearity we can assume that $F_1$ is supported on the $G$-orbit of $(x_0, xx_0)$ and $F_2$ on the $G$-orbit of $(x_0, zx_0)$ - or in other words, $F_1$ is supported on the connected component $HxH$ and $F_2$ on $HzH$. Then unraveling the formulas for pullback, cup product and pushforward shows that the final cohomology class is supported on $HxHzH$: condition 2 assures that this is a finite union of $(H,H)$-cosets - or in other words, it consists of finitely many connected components.

It remains to prove that this operation is associative and defines thus an algebra structure: notice that the unit under this operation is the cohomology class supported on the $G$-orbit of $(x_0, x_0)$ and taking value $1(x_0, x_0) = 1 \in S \cong H^0(H,S)$, the unit element of the ring $S$.

It remains to show that this operation is associative: given $A, B, C \in \mathbb H^*_c(\underline G_X)$ we need to check that the following two diagrams output the same cohomology class:
\begin{displaymath} \xymatrix{ \underline G_X^A & & \underline G_X^B & \underline G_X^C \\ & \left( G \curvearrowright X^3 \right) \ar[lu]_{i_{1,2}} \ar[ru]^{i_{2,3}} \ar[d]_{i_{1,3}} & & \\ & \underline G_X^{A \circ B} & &  \left( G \curvearrowright X^3 \right) \ar[uu]_{i_{2,3}} \ar[ll]^{i_{1,2}} \ar[d]_{i_{1,3}} \\ & & & \underline G_X^{(A \circ B) \circ C} } \end{displaymath}
\begin{displaymath} \xymatrix{ \underline G_X^A & \underline G_X^B & & \underline G_X^C \\ & & \left( G \curvearrowright X^3 \right) \ar[lu]_{i_{1,2}} \ar[ru]^{i_{2,3}} \ar[d]_{i_{1,3}} & \\ \left( G \curvearrowright X^3 \right) \ar[uu]_{i_{1,2}} \ar[rr]^{i_{2,3}} \ar[d]_{i_{1,3}} & & \underline G_X^{B \circ C} &  \\ \underline G_X^{A \circ (B \circ C)} & & & } \end{displaymath}
We start with the following fact.
\begin{claim}
The square \begin{displaymath} \xymatrix{ \left( G \curvearrowright X^3 \right) \ar[d]_{i_{1,3}} & \left( G \curvearrowright X^4 \right) \ar[l]_{i_{1,2,3}} \ar[d]^{i_{1,3,4}}  \\ \underline G_X & \left( G \curvearrowright X^3 \right) \ar[l]^{i_{1,2}} } \end{displaymath} is a pullback square of groupoids. Moreover, it satisfies the additional assumptions of lemma \ref{cohomswitch}, that is to say: $i_{1,3}$ is a finite covering morphism and $i_{1,2}$ induces an injection of isotropy groups at all objects.
\end{claim}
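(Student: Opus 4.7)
The plan is to verify the two assertions of the claim separately. First I will identify the pullback square explicitly, and then I will check that the two maps satisfy the additional hypotheses required by lemma \ref{cohomswitch}.

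For the pullback statement, since all three groupoids in the cospan $\left( G \curvearrowright X^3 \right) \stackrel{i_{1,3}}{\lra} \underline G_X \stackrel{i_{1,2}}{\lla} \left( G \curvearrowright X^3 \right)$ arise from the same group $G$ acting on a set, and all morphisms come from the identity of $G$ paired with $G$-equivariant set maps, proposition 4.4 of \cite{bhk} says that the pullback is computed as the pullback of groups (which is obviously $G$) acting on the pullback of the underlying sets. I would then compute this pullback of sets directly: an element is a pair $\bigl( (x_1, x_2, x_3), (y_1, y_2, y_3)\bigr) \in X^3 \times X^3$ with $i_{1,3}(x_1, x_2, x_3) = i_{1,2}(y_1, y_2, y_3)$, i.e. $(x_1, x_3) = (y_1, y_2)$. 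Eliminating the redundant coordinates identifies this pullback with $X^4$ via $(x_1, x_2, x_3, y_3) \in X^4$, and under this identification the two projections are exactly $i_{1,2,3}$ (to the top-left copy of $X^3$) and $i_{1,3,4}$ (to the bottom-right copy of $X^3$). A quick check that the resulting square commutes via the equality $(x_1, x_3) \in X^2$ from either composition will finish this step.

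For the additional assumptions of lemma \ref{cohomswitch}, the statement that $i_{1,3}$ is a finite covering morphism is precisely condition 3 of fact \ref{generaldha}. The statement that $i_{1,2}: \left( G \curvearrowright X^3 \right) \lra \underline G_X$ induces injections of isotropy groups follows immediately from the fact that $i_{1,2}$ is a covering morphism: indeed, the same group $G$ acts on both source and target, and hence the induced map on $\Hom$-sets at each object is just the inclusion of a stabilizer $\Stab_G(x_1, x_2, x_3) = \Stab_G(x_1) \cap \Stab_G(x_2) \cap \Stab_G(x_3)$ into $\Stab_G(x_1) \cap \Stab_G(x_2) = \Stab_G(x_1, x_2)$.

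I do not expect any real obstacle here; the entire claim is essentially formal once the pullback is identified, and the two additional conditions unwind to either a hypothesis of fact \ref{generaldha} or to the trivial observation that a projection map of diagonal actions is a covering with inclusions on stabilizers. The only mildly delicate point is bookkeeping: making sure the indices $1,2,3,4$ in $X^4$ are consistently labeled so that $i_{1,2,3}$ and $i_{1,3,4}$ really are the projections induced by the universal property, rather than some permutation of them.
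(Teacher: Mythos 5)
Your proposal is correct and takes essentially the same route as the paper: invoke proposition 4.4 of \cite{bhk} to reduce the groupoid pullback to a pullback of sets under the common acting group $G$, identify that pullback of sets with $X^4$ via the coincidence $(x_1,x_3)=(y_1,y_2)$, and observe that the two additional hypotheses of lemma \ref{cohomswitch} are respectively condition 3 of fact \ref{generaldha} and the trivial stabilizer inclusion coming from a diagonal projection. The only thing the paper mentions that you omit is the remark that the pullback and homotopy pullback agree here because $i_{1,3}$ is a covering, but this is a technical aside and does not affect your argument.
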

\begin{proof}[Proof of claim] By assumption $i_{1,3}$  is a finite covering morphisms, and it is immediate to see that $i_{1,2}$ induces an injection of isotropy groups at all objects.

By proposition 4.4 in \cite{bhk}, the pullback square of a diagram of groups acting on sets is constructed by taking the pullback of the groups acting on the pullback of the sets. It is immediate that the pullback group is again $G$. Moreover, since $i_{1,3}$ is a covering, the pullback and the homotopy pullback coincide, hence a model for the object set of the pullback is \[ \left\{ \left( (x,y,z) , (a,b,c) \right) \in X^3 \times X^3 \, | \, i_{1,3}(x,y,z) = i_{1,2}(a,b,c) \right\} = \] \[ = \left\{ \left( (x,y,z) , (a,b,c) \right) \in X^3 \times X^3 \, | \, (x,z) = (a,b) \right\} = \left\{ \left( x =a, y, z=b, c \right) \right\} = X^4 \] with the maps $i_{1,3,4}$ and $i_{1,2,3}$ as above. 
\end{proof}
%
By assumption on $G \curvearrowright X$ the cohomology class $F = i_{1,2}^*A \cup i_{2,3}^* B$ is $i_{1,3}$-fiberwise compactly supported, thus lemma \ref{cohomswitch} applies and we can use the above pullback squares to modify the two original diagrams without impacting the final result in cohomology. The first diagram becomes \begin{displaymath} \xymatrix{ \underline G_X^A & & \underline G_X^B & \underline G_X^C \\ & \left( G \curvearrowright X^3 \right) \ar[lu]_{i_{1,2}} \ar[ru]^{i_{2,3}} &  \left( G \curvearrowright X^4 \right) \ar[l]_{i_{1,2,3}} \ar[dr]^{i_{1,3,4}} & \\ & & &  \left( G \curvearrowright X^3 \right) \ar[uu]_{i_{2,3}} \ar[d]_{i_{1,3}} \\ & & & \underline G_X^{(A \circ B) \circ C} } \end{displaymath}
A biproduct of lemma \ref{cohomswitch} was that $F_1 = i_{1,2,3}^* F$ is $i_{1,3,4}$-fiberwise compactly supported, thus we can apply lemma \ref{corescupres} to the subdiagram \begin{displaymath} \xymatrix{ \left( G \curvearrowright X^4 \right) \ar[dr]^{i_{1,3,4}} \ar[r]^{i_{3,4}} & \underline G_X \\ &  \left( G \curvearrowright X^3 \right) \ar[u]_{i_{2,3}} } \end{displaymath} to obtain that for each $F_1 \in \mathbb H^* \left( \left( G \curvearrowright X^4 \right) \right)$ that is $i_{1,3,4}$-fiberwise compactly supported and each $F_2 \in \mathbb H^* (\underline G_X)$, we have \[ (i_{1,3,4})_* F_1 \cup i_{2,3}^* F_2 = (i_{1,3,4})_* \left( F_1 \cup i_{3,4}^* F_2 \right) \] and hence we can replace the first diagram with \begin{displaymath} \xymatrix{ \underline G_X^A & & \underline G_X^B & \underline G_X^C \\ & \left( G \curvearrowright X^3 \right) \ar[lu]_{i_{1,2}} \ar[ru]^{i_{2,3}} &  \left( G \curvearrowright X^4 \right) \ar[l]_{i_{1,2,3}} \ar[dr]^{i_{1,3,4}} \ar[ru]^{i_{3,4}} & \\ & & &  \left( G \curvearrowright X^3 \right) \ar[d]_{i_{1,3}} \\ & & & \underline G^{(A \circ B) \circ C} } \end{displaymath} which is equivalent to \begin{displaymath} \xymatrix{ \underline G^A & \underline G_X^B & \underline G_X^C \\ &  \left( G \curvearrowright X^4 \right) \ar[lu]_{i_{1,2}} \ar[u]_{i_{2,3}} \ar[d]^{i_{1,4}} \ar[ru]_{i_{3,4}} & \\ & \underline G_X^{(A \circ B) \circ C} & } \end{displaymath}
This last diagram is obviously symmetric in $A, B, C$, which proves associativity.
\end{proof}

Suppose now we have another set $Y$ on which $G$ acts transitively. Fix a base point $y_0 \in Y$ and denote $L = \Stab_G(y_0)$. As before, we have the projection maps $i_{1,2}: \left( G \curvearrowright X^2 \times Y \right) \lra \underline G_X$, $i_{1,3}, i_{2,3}: \left( G \curvearrowright X^2 \times Y \right) \lra \left( G \curvearrowright X \times Y \right)$.
\begin{fact} \label{generalaction} Suppose that the following conditions hold: \begin{enumerate}
\item For each $F \in \mathbb H^*_c (\underline G_X)$, $C \in \mathbb H^*_c (G \curvearrowright X \times Y)$, the cohomology class $i_{1,2}^* F \cup i_{2,3}^* C$ is $i_{1,3}$-fiberwise compactly supported;
\item For all $x \in H \backslash G / H$, $z \in H \backslash G / L$ the subset $HxHzL$ is a finite union of $(H,L)$-cosets.
\item $i_{1,3}$ is a finite covering morphism.
\end{enumerate}
Then the derived Hecke algebra $\mathcal H(G,X)$ acts on the compactly supported cohomology $\mathbb H^*_c(G \curvearrowright X \times Y)$, as follows: given $F \in \mathcal H(G,X)$ and $C \in \mathbb H^*_c(G \curvearrowright X \times Y)$, we define their convolution to be the cohomology class resulting from the following diagram \begin{displaymath} \xymatrix{ \underline G_X^F & & \left( G \curvearrowright X \times Y \right)^C \\ & \left( G \curvearrowright X^2 \times Y \right) \ar[lu]^{i_{1,2}} \ar[ru]_{i_{2,3}} \ar[d]_{i_{1,3}} & \\ & \left( G \curvearrowright X \times Y \right) & } \end{displaymath}
where we pullback $F$ and $C$ to $\left( G \curvearrowright X^2 \times Y \right)$, cup them, then pushforward along $i_{1,3}$ to $\left( G \curvearrowright X \times Y \right)$.
\end{fact}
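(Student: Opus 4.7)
The proof will closely parallel that of Fact \ref{generaldha}, with $Y$ and $L = \Stab_G(y_0)$ playing the roles previously played by the third copy of $X$ and the group $H$. Two things must be checked: that the operation is well-defined (lands in $\mathbb H^*_c$ of the correct groupoid), and that it satisfies the module axiom $(F_1 \circ F_2).C = F_1.(F_2.C)$ for all $F_1, F_2 \in \mathcal H(G,X)$ and $C \in \mathbb H^*_c(G \curvearrowright X \times Y)$.

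Well-definedness follows the same pattern as for Fact \ref{generaldha}. Condition 3 ensures that the pushforward $(i_{1,3})_*$ is defined on any $i_{1,3}$-fiberwise compactly supported cohomology class, and condition 1 says that $i_{1,2}^* F \cup i_{2,3}^* C$ is precisely such a class, so the pushforward produces an element of $\mathbb H^*(G \curvearrowright X \times Y)$ with the correct invariance. To see that the output is supported on finitely many connected components, by linearity one may assume $F$ is supported on the $G$-orbit of $(x_0, x x_0)$ (i.e.\ on the double coset $HxH$) and $C$ on the orbit of $(x_0, z y_0)$ (i.e.\ on $HzL$). Unraveling the push-pull formulas as in equation \ref{convexplicit} shows that the resulting class is supported on $HxHzL$, which condition 2 guarantees is a finite union of $(H,L)$-cosets, i.e.\ finitely many connected components of $(G \curvearrowright X \times Y)$.

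For the module axiom, I would expand both sides into iterated push-pull diagrams passing through the auxiliary groupoid $(G \curvearrowright X^3 \times Y)$, exactly mirroring the associativity argument in Fact \ref{generaldha}. The key technical ingredient is that the square
\begin{displaymath}
\xymatrix{
(G \curvearrowright X^3 \times Y) \ar[d]_{i_{1,3,4}} \ar[r]^{i_{1,2,3}} & (G \curvearrowright X^3) \ar[d]^{i_{1,3}} \\
(G \curvearrowright X^2 \times Y) \ar[r]_{i_{1,2}} & \underline G_X
}
\end{displaymath}
is a pullback square of groupoids (via Proposition 4.4 in \cite{bhk}) satisfying the hypotheses of Lemma \ref{cohomswitch}: $i_{1,3}$ is a finite covering by condition 3, and $i_{1,2}$ induces injections on isotropy groups since it is itself a covering. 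Applying Lemma \ref{cohomswitch} lets one commute pushforward and pullback across this square, while Lemma \ref{corescupres} lets one pull cup products through pushforwards. Using these two moves in exactly the same pattern as in the proof of Fact \ref{generaldha}, both diagrams collapse to the common form
\begin{displaymath}
\xymatrix{
\underline G_X^{F_1} & \underline G_X^{F_2} & (G \curvearrowright X \times Y)^C \\
& (G \curvearrowright X^3 \times Y) \ar[lu]^{i_{1,2}} \ar[u]_{i_{2,3}} \ar[ru]_{i_{3,4}} \ar[d]_{i_{1,4}} & \\
& (G \curvearrowright X \times Y) &
}
\end{displaymath}
which yields the desired equality.

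The main obstacle will be the bookkeeping of these iterated diagrams and checking at each intermediate step that the relevant cohomology classes remain fiberwise compactly supported, so that Lemmas \ref{cohomswitch} and \ref{corescupres} can be applied. Each such check reduces to condition 1 together with the pullback square above, in direct analogy to the corresponding steps in the proof of Fact \ref{generaldha}; no genuinely new ingredient is required.
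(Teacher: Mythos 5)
Your proposal is correct and follows essentially the same route as the paper: well-definedness via conditions 1--3 with the support of the output landing in $HxHzL$, and the module axiom by reducing both bracketings through the auxiliary groupoid $\left( G \curvearrowright X^3 \times Y \right)$ using the same pullback square together with lemmas \ref{cohomswitch} and \ref{corescupres}. One small correction: in your pullback square the finite-covering hypothesis is needed for $i_{1,3}\colon \left( G \curvearrowright X^3 \right) \lra \underline G_X$, which follows from the standing assumptions on $G \curvearrowright X$ that make $\mathcal H(G,X)$ an algebra (the conditions of fact \ref{generaldha}), not from condition 3 of the present statement, since that condition concerns the map $\left( G \curvearrowright X^2 \times Y \right) \lra \left( G \curvearrowright X \times Y \right)$.
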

\begin{rem}
Before proving the fact, we notice that if $H$ is compact and open in $G$ and $L$ is closed in $G$ then all conditions are automatically satisfied.
Indeed, conditions 2 follows from $H$ being compact open, as then $HxH$ is a finite union of left $H$-cosets.
Since $i_{1,3}$ is always a covering, we notice that it is a finite covering as long as the index $[G_{x,z}: G_{x,y,z}]$ is finite for all $x,z \in X$ and $y \in Y$. This holds once $H$ is compact open and $L$ is closed because $G_{x,y}$ is closed in the compact set $G_x$, hence $G_{x,y}$ is compact and intersecting with the open $G_z$ yields that $G_{x,y,z}$ is open in $G_{x,y}$, thus finite-index.
Finally, condition 1 is implied by $G_{x,y}$ having finitely many orbits on the set of $z \in X$ such that $F(x,z) \neq 0 \neq C(z,y)$: assuming $F$ is supported on a single connected component $HzH$, and that (by $G$-invariance) $x=x_0$, it suffices to show that the set of such $z$'s is finite: this set is in bijection with $HzH / H$ which is finite as $H$ is open and compact.
\end{rem}
\begin{proof}
Conditions 1 guarantees that we can pushforward along $i_{1,3}$. To show that this pushforward is compactly supported, we notice that by linearity we can assume that $F$ is supported on the $G$-orbit of $(x_0, xx_0)$ and $C$ on the $G$-orbit of $(x_0, zy_0)$ - or in other words, $F$ is supported on the connected component $HxH$ and $C$ on $HzL$. Then unraveling the formulas for pullback, cup product and pushforward shows that the final cohomology class is supported on $HxHzL$: condition 2 assures that this is a finite union of $(H,L)$-cosets - or in other words, it consists of finitely many connected components.

It remains to prove that this defines indeed a left-action, in the sense that given $F_1, F_2 \in \mathcal H(G,X)$ and $C \in \mathbb H^*_c \left( G \curvearrowright X \times Y \right)$, we have $F_1.(F_2.C) = (F_1 \circ F_2).C$. In other words, we need to check that the following two diagrams output the same cohomology class:
\begin{displaymath} \xymatrix{ \underline G_X^{F_1} & & \underline G_X^{F_2} & \left( G \curvearrowright X \times Y \right)^C \\ & \left( G \curvearrowright X^3 \right) \ar[lu]_{i_{1,2}} \ar[ru]^{i_{2,3}} \ar[d]_{i_{1,3}} & & \\ & \underline G_X^{F_1 \circ F_2} & &  \left( G \curvearrowright X^2 \times Y \right) \ar[uu]_{i_{2,3}} \ar[ll]^{i_{1,2}} \ar[d]_{i_{1,3}} \\ & & & \left( G \curvearrowright X \times Y \right)^{(F_1 \circ F_2).C} } \end{displaymath}
\begin{displaymath} \xymatrix{ \underline G_X^{F_1} & \underline G_X^{F_2} & & \left( G \curvearrowright X \times Y \right)^C \\ & & \left( G \curvearrowright X^2 \times Y \right) \ar[lu]_{i_{1,2}} \ar[ru]^{i_{2,3}} \ar[d]_{i_{1,3}} & \\ \left( G \curvearrowright X^2 \times Y \right) \ar[uu]_{i_{1,2}} \ar[rr]^{i_{2,3}} \ar[d]_{i_{1,3}} & & \left( G \curvearrowright X \times Y \right)^{F_2.C} &  \\ \left( G \curvearrowright X \times Y \right)^{F_1.(F_2.C)} & & & } \end{displaymath}
We start with the following fact.
\begin{claim}
The square \begin{displaymath} \xymatrix{ \left( G \curvearrowright X^3 \right) \ar[d]_{i_{1,3}} & \left( G \curvearrowright X^3 \times Y \right) \ar[l]_{i_{1,2,3}} \ar[d]^{i_{1,3,4}}  \\ \underline G_X & \left( G \curvearrowright X^2 \times Y \right) \ar[l]^{i_{1,2}} } \end{displaymath} is a pullback square of groupoids. Moreover, it satisfies the additional assumptions of lemma \ref{cohomswitch}, that is to say: $i_{1,3}$ is a finite covering morphism and $i_{1,2}$ induces an injection of isotropy groups at all objects.
\end{claim}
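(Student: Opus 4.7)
The plan is to follow verbatim the analogous claim that was proved just before, in the context of fact \ref{generaldha}. There are three things to check: that $i_{1,3}$ is a finite covering morphism, that $i_{1,2}$ induces injections on isotropy groups, and that the displayed square really is the pullback of groupoids. The first item is literally one of the three standing hypotheses of fact \ref{generalaction}, so nothing needs to be said. For the second, I would observe that $i_{1,2}: \left( G \curvearrowright X^3 \right) \lra \underline G_X$ is a covering morphism, since the same group $G$ acts on source and target and the map is the obvious projection $(x_1,x_2,x_3) \mapsto (x_1,x_2)$ on objects; any covering morphism automatically induces an injection between isotropy groups (the inclusion of pointwise stabilizers inside pointwise stabilizers of a smaller tuple).

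For the identification of the pullback, I would invoke proposition 4.4 of \cite{bhk}, which says that in the category of groupoids of the form ``group acting on set'' with compatible pairs of maps, the pullback is computed by taking the pullback of the groups acting on the pullback of the sets. The pullback of groups along the identity maps is simply $G$. For the underlying set, a direct computation gives
\[ \left( X^2 \times Y \right) \times_{X \times Y} X^3 = \left\{ \left( (a,b,c), (x_1,x_2,x_3) \right) \in (X^2 \times Y) \times X^3 \;\middle|\; i_{1,2}(a,b,c) = i_{1,3}(x_1,x_2,x_3) \right\}, \]
and the condition $i_{1,2}(a,b,c) = (a,b) = (x_1,x_3) = i_{1,3}(x_1,x_2,x_3)$ forces $a = x_1$ and $b = x_3$, while $x_2$ and $c$ remain free. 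This identifies the pullback set with $X^3 \times Y$ (coordinates $(x_1,x_2,x_3,c)$), under which the projection onto $X^3$ becomes $i_{1,2,3}$ and the projection onto $X^2 \times Y$ becomes $i_{1,3,4}$, matching the diagram in the claim.

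There is no substantive obstacle: the proof is a direct diagram-chase plus one appeal to \cite{bhk}. Once the claim is in hand, the rest of the associativity argument in fact \ref{generalaction} proceeds exactly as in fact \ref{generaldha}, applying lemma \ref{cohomswitch} to swap a pullback past a pushforward and then lemma \ref{corescupres} to fold the final cup product into the pushforward, yielding two diagrams that are manifestly equal.
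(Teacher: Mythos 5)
Your argument is essentially the paper's: you identify the pullback via proposition 4.4 of \cite{bhk} (pullback group acting on the pullback set), the computation forces $a=x_1$ and $b=x_3$ so the object set is $X^3\times Y$ with the projections $i_{1,2,3}$ and $i_{1,3,4}$, and the covering/injectivity checks are handled the same way. Two labels should be corrected, though neither affects the substance. First, the finite-covering hypothesis you cite is not condition 3 of fact \ref{generalaction}: that condition concerns the map $i_{1,3}\colon (G\curvearrowright X^2\times Y)\lra (G\curvearrowright X\times Y)$, whereas the left vertical arrow of the square is $i_{1,3}\colon (G\curvearrowright X^3)\lra \underline G_X$; its finite-covering property comes instead from the standing assumption that the action $G\curvearrowright X$ satisfies the conditions of fact \ref{generaldha} (condition 3 there), which is implicit in $\mathcal H(G,X)$ being an algebra --- this is what the paper's ``by assumption on $G\curvearrowright X$'' refers to. Second, the map $i_{1,2}$ appearing in the square has source $(G\curvearrowright X^2\times Y)$, sending $(x_1,x_2,y)\mapsto (x_1,x_2)$, not $(G\curvearrowright X^3)$; your argument (the same group acts on source and target, so the map is a covering and hence injective on isotropy groups) applies verbatim to the correct map. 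Finally, the fiber product should be taken over $X^2$, the object set of $\underline G_X$, rather than over $X\times Y$; since the condition you actually impose is $i_{1,2}(a,b,c)=i_{1,3}(x_1,x_2,x_3)$ in $X^2$, this is only a typo.
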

\begin{proof}[Proof of claim] By assumption on $G \curvearrowright X$, $i_{1,3}$ is a finite covering morphisms, and moreover it is immediate that $i_{1,2}$ induces an injection of isotropy groups at all objects.

By proposition 4.4 in \cite{bhk}, the pullback square of a diagram of groups acting on sets is constructed by taking the pullback of the groups acting on the pullback of the sets. It is immediate that the pullback group is again $G$. Moreover, since $i_{1,3}$ is a covering, the pullback and the homotopy pullback coincide, hence a model for the object set of the pullback is \[ \left\{ \left( (x,y,z) , (a,b,c) \right) \in X^3 \times (X^2 \times Y) \, | \, i_{1,3}(x,y,z) = i_{1,2}(a,b,c) \right\} = \] \[ = \left\{ \left( (x,y,z) , (a,b,c) \right) \in X^3 \times (X^2 \times Y) \, | \, (x,z) = (a,b) \right\} = \left\{ \left( x =a, y, z=b, c \right) \right\} = X^3 \times Y \] with the maps $i_{1,3,4}$ and $i_{1,2,3}$ as above. 
\end{proof}
By assumption on $G \curvearrowright X$, the cohomology class $F = i_{1,2}^*F_1 \cup i_{2,3}^*F_2$ is $i_{1,3}$-fiberwise compactly supported, thus lemma \ref{cohomswitch} applies and we can use the above pullback squares to modify the two original diagrams without impacting the final result in cohomology. The first diagram becomes \begin{displaymath} \xymatrix{ \underline G_X^{F_1} & & \underline G_X^{F_2} & \left( G \curvearrowright X \times Y \right)^C \\ & \left( G \curvearrowright X^3 \right) \ar[lu]_{i_{1,2}} \ar[ru]^{i_{2,3}} &  \left( G \curvearrowright X^3 \times Y \right) \ar[l]_{i_{1,2,3}} \ar[dr]^{i_{1,3,4}} & \\ & & &  \left( G \curvearrowright X^2 \times Y \right) \ar[uu]_{i_{2,3}} \ar[d]_{i_{1,3}} \\ & & & \left( G \curvearrowright X \times Y \right)^{(F_1 \circ F_2).C} } \end{displaymath}
A biproduct of lemma \ref{cohomswitch} was that $\widetilde F = i_{1,2,3}^* F$ is $i_{1,3,4}$-fiberwise compactly supported, thus we can apply lemma \ref{corescupres} to the subdiagram \begin{displaymath} \xymatrix{ \left( G \curvearrowright X^3 \times Y \right) \ar[dr]^{i_{1,3,4}} \ar[r]^{i_{3,4}} & \left( G \curvearrowright X \times Y \right) \\ &  \left( G \curvearrowright X^2 \times Y \right) \ar[u]_{i_{2,3}} } \end{displaymath} to obtain that for each $\widetilde F \in \mathbb H^* \left( \left( G \curvearrowright X^3 \times Y \right) \right)$ that is $i_{1,3,4}$-fiberwise compactly supported and each $C \in \mathbb H^* \left( \left( G \curvearrowright X \times Y \right) \right)$, we have \[ (i_{1,3,4})_* \widetilde F \cup i_{2,3}^* C = (i_{1,3,4})_* \left( \widetilde F \cup i_{3,4}^* C \right) \] and hence we can replace the first diagram with \begin{displaymath} \xymatrix{ \underline G_X^{F_1} & & \underline G_X^{F_2} & \left( G \curvearrowright X \times Y \right)^C \\ & \left( G \curvearrowright X^3 \right) \ar[lu]_{i_{1,2}} \ar[ru]^{i_{2,3}} &  \left( G \curvearrowright X^3 \times Y \right) \ar[l]_{i_{1,2,3}} \ar[dr]^{i_{1,3,4}} \ar[ru]^{i_{3,4}} & \\ & & &  \left( G \curvearrowright X^2 \times Y \right) \ar[d]_{i_{1,3}} \\ & & & \left( G \curvearrowright X \times Y \right)^{(F_1 \circ F_2).C} } \end{displaymath} which is equivalent to \begin{displaymath} \xymatrix{ \underline G^{F_1} & \underline G_X^{F_2} & \left( G \curvearrowright X \times Y \right)^C \\ &  \left( G \curvearrowright X^3 \times Y \right) \ar[lu]_{i_{1,2}} \ar[u]_{i_{2,3}} \ar[d]^{i_{1,4}} \ar[ru]_{i_{3,4}} & \\ & \left( G \curvearrowright X \times Y \right)^{(F_1 \circ F_2).C} & } \end{displaymath}
This last diagram is obviously symmetric in $F_1, F_2, C$, which proves the claim.
\end{proof}

Obviously the compactly supported cohomology $\mathbb H^*_c \left( \left( G \curvearrowright X \times Y \right) \right)$ also admits a right action of the derived Hecke algebra $\mathcal H(G, Y)$ in a completely analogous way, as soon as the action $G \curvearrowright Y$ satisfies the conditions of fact \ref{generaldha}.
\begin{fact} \label{dhabimodule} Suppose the actions $G \curvearrowright X$ and $G \curvearrowright Y$ both satisfy the conditions of fact \ref{generaldha}. Then the compactly supported cohomology $\mathbb H^*_c \left( \left( G \curvearrowright X \times Y \right) \right)$ is a $\left( \mathcal H(G,X), \mathcal H(G,Y) \right)$-bimodule. 
\end{fact}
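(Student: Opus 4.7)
The plan is to show that the two module actions commute, i.e. $(F_1 . C).F_2 = F_1.(C.F_2)$ for all $F_1 \in \mathcal H(G,X)$, $C \in \mathbb H^*_c(G \curvearrowright X \times Y)$, $F_2 \in \mathcal H(G,Y)$. Following the model of fact \ref{bimodaction}, I would represent both sides by push-pull diagrams and transform them, via repeated applications of lemmas \ref{corescupres} and \ref{cohomswitch}, into a single symmetric diagram supported on the groupoid $\left( G \curvearrowright X^2 \times Y^2 \right)$.

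Concretely, I would first write $(F_1.C).F_2$ as the cohomology class output by the two-stage diagram: pullback $F_1$ and $C$ to $\left( G \curvearrowright X^2 \times Y \right)$ via $i_{1,2}$ and $i_{2,3}$, cup them and pushforward along $i_{1,3}$ to $\left( G \curvearrowright X \times Y \right)$ to get $F_1.C$, and then combine this with $F_2$ via the analogous diagram involving $\left( G \curvearrowright X \times Y^2 \right)$. Similarly, $F_1.(C.F_2)$ is described by first pairing $C$ with $F_2$, then $F_1$ with the result. The key observation is that the square
\begin{displaymath} \xymatrix{ \left( G \curvearrowright X^2 \times Y^2 \right) \ar[r]^{i_{1,2,4}} \ar[d]_{i_{1,3,4}} & \left( G \curvearrowright X^2 \times Y \right) \ar[d]^{i_{1,3}} \\ \left( G \curvearrowright X \times Y^2 \right) \ar[r]_{i_{1,3}} & \left( G \curvearrowright X \times Y \right) } \end{displaymath}
is a pullback of groupoids (same group $G$ acting, and the set pullback is literally $X^2 \times Y^2$), and by the assumptions on $G \curvearrowright X$ and $G \curvearrowright Y$ the horizontal and vertical legs are finite covering morphisms whose partners induce open injections on isotropy. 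Thus lemma \ref{cohomswitch} applies and I can swap each pushforward along a projection $i_{1,3}$ in one coordinate with the pullback in the other.

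Combined with lemma \ref{corescupres}, which lets me absorb an additional pullback factor into a cup product before the pushforward, these transformations let me rewrite both $(F_1.C).F_2$ and $F_1.(C.F_2)$ as the cohomology class obtained by pulling $F_1$, $C$, $F_2$ all the way up to $\left( G \curvearrowright X^2 \times Y^2 \right)$ via the appropriate forgetful maps $i_{1,2}$, $i_{2,3}$, $i_{3,4}$ (where the coordinates are $X \times X \times Y \times Y$), cupping them, and pushing down along $i_{1,4}$ to $\left( G \curvearrowright X \times Y \right)$. Because this final diagram is manifestly symmetric in the order in which one first contracts the two $X$'s versus the two $Y$'s, both sides agree.

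The main obstacle is purely diagrammatic bookkeeping: at every intermediate step I must verify both that the relevant cohomology class is fiberwise compactly supported (so that a pushforward is defined) and that the pullback square at hand meets the hypothesis of lemma \ref{cohomswitch} (finite covering on one leg, open injection of stabilizers on the other). Each such verification follows from the standing assumptions 1--3 of fact \ref{generaldha} applied either to $G \curvearrowright X$ or to $G \curvearrowright Y$, plus the observation that pulling back a finite covering morphism along a continuous morphism yields a finite covering morphism (used in the proof of lemma \ref{cohomswitch}). No new combinatorial input beyond what is already used to prove fact \ref{bimodaction} is needed.
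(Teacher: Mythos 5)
Your overall route is the paper's: unwind both $(F_1.C).F_2$ and $F_1.(C.F_2)$, via lemma \ref{cohomswitch} (base change) and lemma \ref{corescupres} (projection formula), into the single symmetric diagram on $\left( G \curvearrowright X^2 \times Y^2 \right)$ in which $F_1, C, F_2$ are pulled back along $i_{1,2}, i_{2,3}, i_{3,4}$, cupped, and pushed forward along $i_{1,4}$. However, the square you single out as the key observation is not the square the argument needs, and the exchange step as you describe it would fail. In computing $(F_1.C).F_2$ the composite to be rewritten is ``pushforward along $i_{1,3}\colon \left( G \curvearrowright X^2 \times Y \right) \lra \left( G \curvearrowright X \times Y \right)$ followed by pullback along $i_{1,2}\colon \left( G \curvearrowright X \times Y^2 \right) \lra \left( G \curvearrowright X \times Y \right)$'', i.e. the pushforward leg of the left action paired with the \emph{pullback} leg of the right action; the relevant base-change square is therefore
\begin{displaymath} \xymatrix{ \left( G \curvearrowright X^2 \times Y^2 \right) \ar[r]^{i_{1,2,3}} \ar[d]_{i_{1,3,4}} & \left( G \curvearrowright X^2 \times Y \right) \ar[d]^{i_{1,3}} \\ \left( G \curvearrowright X \times Y^2 \right) \ar[r]_{i_{1,2}} & \left( G \curvearrowright X \times Y \right) } \end{displaymath}
whose two composites are the projection onto $(x_1,y_1)$ (with the mirror square, upstairs legs $i_{2,3,4}$ and $i_{1,2,4}$ over $i_{1,3}\colon \left( G \curvearrowright X \times Y^2 \right) \lra \left( G \curvearrowright X \times Y \right)$ and $i_{2,3}\colon \left( G \curvearrowright X^2 \times Y \right) \lra \left( G \curvearrowright X \times Y \right)$, available for the other association, or one can simply invoke the symmetry of the final diagram as the paper does). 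Your square instead pairs the two pushforward legs: both of its maps to $\left( G \curvearrowright X \times Y \right)$ are the contraction maps $i_{1,3}$, and its two composites equal $i_{1,4}$. It is indeed a pullback square satisfying the hypotheses of lemma \ref{cohomswitch}, but applying that lemma to it only produces an identity for $i_{1,3}^* \bigl( (i_{1,3})_* F \bigr)$ --- a pullback that never occurs in either side of the bimodule identity, where what appears is $i_{1,2}^*(F_1.C)$, respectively $i_{2,3}^*(C.F_2)$. So the ``swap of the pushforward in one coordinate with the pullback in the other'' you announce is not what your square delivers.

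Once the correct base-change squares are substituted, the remainder of your outline --- the fiberwise compact-support checks coming from conditions 1--3 of fact \ref{generaldha} applied to each of $G \curvearrowright X$ and $G \curvearrowright Y$, the use of lemma \ref{corescupres} to absorb the third class into the cup product before pushing forward, composing the two pushforwards into $(i_{1,4})_*$, and concluding from the symmetry of the resulting diagram in $F_1, C, F_2$ --- coincides with the paper's proof and is fine.
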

\begin{proof}
We already described how the two actions are defined, so it remains to show that they commute, i.e. that given $F_1 \in \mathcal H(G,X)$, $F_2 \in \mathcal H(G,Y)$ and $C \in \mathbb H^*_c \left( \left( G \curvearrowright X \times Y \right) \right)$ we have $(F_1.C).F_2 = F_1.(C.F_2)$. This boils down to the following two diagrams outputting the same cohomology class:

\begin{displaymath} \xymatrix{ \underline G_X^{F_1} & & \left( G \curvearrowright X \times Y \right)^C & \underline G_Y^{F_2} \\ & \left( G \curvearrowright X^2 \times Y \right) \ar[lu]_{i_{1,2}} \ar[ru]^{i_{2,3}} \ar[d]_{i_{1,3}} & & \\ & \left( G \curvearrowright X \times Y \right)^{F_1.C} & &  \left( G \curvearrowright X \times Y^2 \right) \ar[uu]_{i_{2,3}} \ar[ll]^{i_{1,2}} \ar[d]_{i_{1,3}} \\ & & & \left( G \curvearrowright X \times Y \right)^{(F_1.C).F_2} } \end{displaymath}
\begin{displaymath} \xymatrix{ \underline G_X^{F_1} & \left( G \curvearrowright X \times Y \right)^C & & \underline G_Y^{F_2} \\ & & \left( G \curvearrowright X \times Y^2 \right) \ar[lu]_{i_{1,2}} \ar[ru]^{i_{2,3}} \ar[d]_{i_{1,3}} & \\ \left( G \curvearrowright X^2 \times Y \right) \ar[uu]_{i_{1,2}} \ar[rr]^{i_{2,3}} \ar[d]_{i_{1,3}} & & \left( G \curvearrowright X \times Y \right)^{C.F_2} &  \\ \left( G \curvearrowright X \times Y \right)^{F_1.(C.F_2)} & & & } \end{displaymath}
We start with the following fact.
\begin{claim}
The square \begin{displaymath} \xymatrix{ \left( G \curvearrowright X^2 \times Y \right) \ar[d]_{i_{1,3}} & \left( G \curvearrowright X^2 \times Y^2 \right) \ar[l]_{i_{1,2,3}} \ar[d]^{i_{1,3,4}}  \\ \left( G \curvearrowright X \times Y \right) & \left( G \curvearrowright X \times Y^2 \right) \ar[l]^{i_{1,2}} } \end{displaymath} is a pullback square of groupoids. Moreover, it satisfies the additional assumptions of lemma \ref{cohomswitch}, that is to say: $i_{1,3}$ is a finite covering morphism and $i_{1,2}$ induces an injection of isotropy groups at all objects.
\end{claim}
\begin{proof}[Proof of claim] By assumption on the actions $G \curvearrowright X$ and $G \curvearrowright Y$, $i_{1,3}$ is a finite covering morphisms, and it is immediate that $i_{1,2}$ induces an injection of isotropy groups at all objects.

By proposition 4.4 in \cite{bhk}, the pullback square of a diagram of groups acting on sets is constructed by taking the pullback of the groups acting on the pullback of the sets. It is immediate that the pullback group is again $G$. Moreover, since $i_{1,3}$ is a covering, the pullback and the homotopy pullback coincide, hence a model for the object set of the pullback is \[ \left\{ \left( (x,y,z) , (a,b,c) \right) \in (X^2 \times Y) \times (X \times Y^2) \, | \, i_{1,3}(x,y,z) = i_{1,2}(a,b,c) \right\} = \] \[ = \left\{ \left( (x,y,z) , (a,b,c) \right) \in (X^2 \times Y) \times (X \times Y^2) \, | \, (x,z) = (a,b) \right\} = \left\{ \left( x =a, y, z=b, c \right) \right\} = X^2 \times Y^2 \] with the maps $i_{1,3,4}$ and $i_{1,2,3}$ as above. 
\end{proof}
By assumption on $G \curvearrowright X$ and $G \curvearrowright Y$, the cohomology class $F = i_{1,2}^*F_1 \cup i_{2,3}^*C$ is $i_{1,3}$-fiberwise compactly supported, thus lemma \ref{cohomswitch} applies and we can use the above pullback squares to modify the two original diagrams without impacting the final result in cohomology. The first diagram becomes \begin{displaymath} \xymatrix{ \underline G_X^{F_1} & & \left( G \curvearrowright X \times Y \right)^C & \underline G_Y^{F_2} \\ & \left( G \curvearrowright X^2 \times Y \right) \ar[lu]_{i_{1,2}} \ar[ru]^{i_{2,3}} &  \left( G \curvearrowright X^2 \times Y^2 \right) \ar[l]_{i_{1,2,3}} \ar[dr]^{i_{1,3,4}} & \\ & & &  \left( G \curvearrowright X \times Y^2 \right) \ar[uu]_{i_{2,3}} \ar[d]_{i_{1,3}} \\ & & & \left( G \curvearrowright X \times Y \right)^{(F_1.C).F_2} } \end{displaymath}
A biproduct of lemma \ref{cohomswitch} was that $\widetilde F = i_{1,2,3}^* F$ is $i_{1,3,4}$-fiberwise compactly supported, thus we can apply lemma \ref{corescupres} to the subdiagram \begin{displaymath} \xymatrix{ \left( G \curvearrowright X^2 \times Y^2 \right) \ar[dr]^{i_{1,3,4}} \ar[r]^{i_{3,4}} & \underline G_Y \\ &  \left( G \curvearrowright X \times Y^2 \right) \ar[u]_{i_{2,3}} } \end{displaymath} to obtain that for each $\widetilde F \in \mathbb H^* \left( \left( G \curvearrowright X^3 \times Y \right) \right)$ that is $i_{1,3,4}$-fiberwise compactly supported and each $F_2 \in \mathbb H^* \left( \underline G_Y \right)$, we have \[ (i_{1,3,4})_* \widetilde F \cup i_{2,3}^* F_2 = (i_{1,3,4})_* \left( \widetilde F \cup i_{3,4}^* F_2 \right) \] and hence we can replace the first diagram with \begin{displaymath} \xymatrix{ \underline G_X^{F_1} & & \left( G \curvearrowright X \times Y \right)^C & \underline G_Y^{F_2} \\ & \left( G \curvearrowright X^2 \times Y \right) \ar[lu]_{i_{1,2}} \ar[ru]^{i_{2,3}} &  \left( G \curvearrowright X^2 \times Y^2 \right) \ar[l]_{i_{1,2,3}} \ar[dr]^{i_{1,3,4}} \ar[ru]^{i_{3,4}} & \\ & & &  \left( G \curvearrowright X \times Y^2 \right) \ar[d]_{i_{1,3}} \\ & & & \left( G \curvearrowright X \times Y \right)^{(F_1 .C).F_2} } \end{displaymath} which is equivalent to \begin{displaymath} \xymatrix{ \underline G^{F_1} & \left( G \curvearrowright X \times Y \right)^C & \underline G_Y^{F_2} \\ &  \left( G \curvearrowright X^2 \times Y^2 \right) \ar[lu]_{i_{1,2}} \ar[u]_{i_{2,3}} \ar[d]^{i_{1,4}} \ar[ru]_{i_{3,4}} & \\ & \left( G \curvearrowright X \times Y \right)^{(F_1.C).F_2} & } \end{displaymath}
This last diagram is obviously symmetric in $F_1, F_2, C$, which proves the claim.
\end{proof}

\section{Two definitions of the derived Hecke algebra} \label{dhadiscussion}
In this section we discuss the relationship between our definition of the derived Hecke algbera as $G$-equivariant cohomology classes and the following `categorical' definition.

Let $\mathrm G$ be a split, connected reductive group over a local, non-archimedean field $F$. Denote $G = \mathrm G(F)$ and let $K$ be an open compact subgroup of $G$. Let $S$ be a coefficient ring, and denote by $\mathbf 1$ the trivial representation of $K$ with $S$-coefficients, so that $S[G/K] = \cInd{G}{K} \mathbf 1$ is the compactly induced representation to $G$.

The classical Hecke algebra can be defined as $H(G,K) = \Hom_{S[G]} \left( S[G/K], S[G/K] \right)$, the endomorphism of $S[G/K]$ as a $G$-module.
In \cite{akshay}, Venkatesh defines the derived Hecke algebra as a graded algebra over the classical one by replacing $\Hom$ with its derived functor $\Ext$: \begin{equation} \label{extdefinition} \mathcal H(G,K) = \bigoplus_{n \ge 0} \Ext^n_{S[G]} \left( S[G/K], S[G/K] \right). \end{equation}
When the residue characteristic $p$ is not invertible in the coefficient ring $S$ (for instance $S = \Z / p^a \Z$), we can also follow Schneider's construction from \cite{schneider}: let $S[G /K] \lra \mathbf I$ be an injective resolution\footnote{the category of $S[G]$-smooth modules has always enough injectives, see for instance \cite{vigneras}, I.5.9c.} and consider the differential graded algebra $\Hom_{S[G]} \left( \mathbf  I, \mathbf I \right)^{op}$.
Its cohomology algebra is \[ \mathbb H^* \left( \Hom_{S[G]} \left( \mathbf  I, \mathbf I \right)^{op} \right) \cong \Ext^*_{S[G]} \left( S[G/K], S[G/K] \right) \] so it coincides with the definition in formula \ref{extdefinition} above.
Since remark 7 in \cite{schneider} holds for any compact open $K \subset G$, it shows that for any injective resolution $S[G/K] \lra \mathbf I$ one has \[ \mathbb H^* \left( \Hom_{S[G]} \left( \mathbf  I, \mathbf I \right)^{op} \right) \cong H^* \left( K, S[G/K] \right). \]
We can use Mackey theory to decompose the $K$-module $S[G/K]$ as $\bigoplus_{g \in K \backslash G / K} \cInd{K}{K \cap gKg^{-1}} \mathbf 1$. Then section 2.6 in \cite{sym} shows that \[ H^n \left( K, \bigoplus_{g \in K \backslash G / K} \cInd{K}{K \cap gKg^{-1}} \mathbf 1 \right) = \Ext^n_{S[K]} \left( \mathbf 1, \bigoplus_{g \in K \backslash G / K} \cInd{K}{K \cap gKg^{-1}} \mathbf 1 \right) \cong \] \[ \cong \bigoplus_{g \in K \backslash G / K} \Ext^n_{S[K]} \left( \mathbf 1, \cInd{K}{K \cap gKg^{-1}} \mathbf 1 \right) = \bigoplus_{g \in K \backslash G / K} H^n \left( K, \cInd{K}{K \cap gKg^{-1}} \mathbf 1 \right) \cong \bigoplus_{g \in K \backslash G / K} H^n \left( K \cap gKg^{-1}, \mathbf 1 \right) \] where the last isomorphism holds by Shapiro's lemma.
In particular, this is exactly the double coset description given in 2.4 of \cite{akshay}. Thus, the two possible definitions of a derived Hecke algebra ($G$-equivariant cohomology classes versus cohomology of the complex $\underline{\Hom}_{D(G)} \left( S[G/K], S[G/K] \right)$) coincide as $S$-modules, and it remains to check that the two multiplication operations agrees.
For the time being we take as definition of the derived Hecke algebra the one as $G$-equivariant cohomology classes given in section \ref{secgroupoids}, which is much more explicit and therefore easier to work with, and we leave it as conjecture that the two coincide:
\begin{conj} The cohomology algebra $\mathbb H^* \left( \Hom_{S[G]} \left( \mathbf  I, \mathbf I \right)^{op} \right)$ and the algebra of $G$-equivariant cohomology classes $\mathcal H_S(G,K)$ as defined in section \ref{secgroupoids} are isomorphic as graded algebras via their common double cosets description.
More precisely, the isomorphism of $S$-modules given by \[ \mathcal H_S(G,K) \lra \bigoplus_{g \in K \backslash G / K} H^n \left( K \cap gKg^{-1}, \mathbf 1 \right) \qquad F \mapsto \left( F(K,gK) \right)_{g \in K \backslash G / K} \] is in fact an algebra isomorphism, where the right hand side is canonically identified with the cohomology algebra $\mathbb H^* \left( \Hom_{S[G]} (\mathbf I, \mathbf I)^{op} \right)$ as explained above.
\end{conj}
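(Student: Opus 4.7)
The $S$-module identification
\[ \mathcal H_S(G,K) \;\cong\; \bigoplus_{g \in K\backslash G/K} H^*(K_g, S) \;\cong\; \Ext^*_{S[G]}(S[G/K], S[G/K]) \]
(with $K_g := K \cap gKg^{-1}$) has been established in the discussion preceding the conjecture, via Frobenius reciprocity, Mackey's decomposition $S[G/K] \cong \bigoplus_g \cInd{K}{K_g}\mathbf 1$, and Shapiro's lemma. The content of the conjecture is therefore that, under this identification, Yoneda composition on the $\Ext$-side matches the explicit convolution formula \eqref{convexplicit} on the equivariant-cohomology side.

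The plan is to compute Yoneda composition through a Mackey-compatible injective resolution. I would fix, for each double coset representative $g$, an injective resolution $\mathbf 1 \to \mathbf J_g$ in smooth $S[K_g]$-modules and set $\mathbf I := \bigoplus_{g} \cInd{G}{K_g}\mathbf J_g$. Exactness of compact induction gives that $\mathbf I$ is a resolution of $S[G/K]$, while the Frobenius adjunction together with exactness of restriction ensures that each term of $\mathbf I$ is injective in the smooth category. For $\alpha \in H^n(K_g, S)$ and $\beta \in H^m(K_h, S)$, the Yoneda product $\alpha \circ \beta$ is computed by lifting $\alpha$ (via Frobenius reciprocity) to a degree-$n$ chain endomorphism of $\mathbf I$ and composing with $\beta$ after restriction to $K$. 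Tracking this through the Mackey decomposition, the $g$-summand of $\mathbf I$ gets mapped to the $z$-summand whenever $z$ ranges over double coset representatives in $KgKhK$, and for each such $z$ the induced map in cohomology is, by Shapiro's lemma and the restriction-corestriction formula \eqref{coresintocup}, a composite of restriction to the common stabilizer $K \cap zKz^{-1} \cap ghKh^{-1}g^{-1}$, cup product with the appropriate restrictions of $\alpha$ and $\beta$, and corestriction. This matches exactly the convolution formula of proposition \ref{convgroupoid}, the double-coset sum corresponding to the $G_{x,y}$-orbit sum over the middle factor of the triple groupoid $(G \curvearrowright [G]^3)$ from Section \ref{secgroupoids}.

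The main obstacle is constructing the Mackey-compatible injective resolution; specifically, one needs that compact induction carries smooth $S[K_g]$-injectives to smooth $S[G]$-injectives. This follows formally from the Frobenius adjunction for compact induction together with exactness of restriction, combined with the fact that smooth $S[H]$-modules have enough injectives for any locally profinite $H$ and any coefficient ring $S$, a result of Vign\'eras (\cite{vigneras} I.5.9c). Once the resolution is in place, the remaining identification is a diagram chase essentially identical to the classical $H(G,K)$ computation, using the compatibility between Yoneda product in $\Ext$ and the cup-product/restriction/corestriction combinatorics of group cohomology, and closely parallels the projective-resolution computation of Appendix A in \cite{akshay}; one additional technical subtlety is that in the $p$-torsion setting the chain-level lift of $\alpha$ may not split diagonally across the Mackey summands of $\mathbf I$, and one must show that its off-diagonal contributions either vanish in cohomology or reassemble precisely into the orbit sum predicted by \eqref{convexplicit}.
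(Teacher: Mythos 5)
The statement you are proving is labeled a \emph{conjecture} in the paper; the author does not give a proof, and the remark immediately following it attributes a proof to Ollivier and Schneider (private communication, not reproduced). So there is no paper argument to compare your plan against; I can only assess the plan on its own terms.

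There is a genuine gap at the central technical step. You assert that $\cInd{G}{K_g}$ carries smooth $S[K_g]$-injectives to smooth $S[G]$-injectives, citing ``the Frobenius adjunction for compact induction together with exactness of restriction.'' But the Frobenius adjunction for compact induction with $K_g$ open reads $\Hom_G\left(\cInd{G}{K_g} V, W\right) \cong \Hom_{K_g}\left(V, \res{G}{K_g} W\right)$, i.e.\ $\cInd{G}{K_g}$ is \emph{left} adjoint to restriction. A left adjoint of an exact functor preserves projectives, not injectives. The functor that does preserve injectives by the standard argument is the full smooth induction $\Ind^G_{K_g}$, which is the \emph{right} adjoint of restriction; since $K_g$ has infinite index in $G$, compact induction and full induction differ, and no two-sided adjointness is available. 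The failure is not merely formal: already for $G = \Z$ discrete and $K = \{0\}$, the module $\cInd{G}{\{0\}}\,\Z/p = \Z/p[t,t^{-1}]$ is a non-Artinian PID and hence not self-injective, while $\Z/p$ is injective over itself. Thus $\mathbf I = \bigoplus_g \cInd{G}{K_g}\mathbf J_g$ need not be an injective resolution, and the chain-level computation of Yoneda products you sketch in the last paragraph does not get off the ground. (You also flag an unresolved ``off-diagonal'' issue at the end; even setting that aside, the resolution it relies on is not available.) To salvage the plan you would need a different device for computing the ring structure on $\Ext^*_{S[G]}(S[G/K], S[G/K])$ — for instance, working throughout with genuine injective resolutions of $S[G/K]$ and doing the Mackey/Shapiro comparison at the level of the hypercohomology spectral sequence, or adapting the projective-resolution argument of Appendix A of \cite{akshay} to a setting where those projectives exist — rather than hoping that compact induction preserves injectivity.
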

\begin{rem}
We have recently been informed by Schneider in a private communication that he and Ollivier proved this conjecture.
\end{rem}

\end{document}